\documentclass[a4paper,11pt]{amsart}
\usepackage{amssymb}
\usepackage[utf8]{inputenc}
\usepackage[margin=0.8in]{geometry}
\usepackage{hyperref,hypcap}
\hypersetup{colorlinks=true,allcolors=black,bookmarksdepth=3}
\usepackage{mathrsfs,bm}
\usepackage{mathtools}
\usepackage{galois}
\usepackage{tikz}
\tikzstyle{Solid} = [circle,fill,inner sep = 2]
\tikzstyle{Hollow} = [circle,draw,inner sep = 2,line width=1]
\tikzstyle{DSolid} = [rectangle,fill,inner sep = 2.5,line width=1]
\tikzstyle{DHollow} = [rectangle,draw,inner sep = 2.5,line width=1]
\tikzstyle{Semi} = [circle split,rotate=90,draw,inner sep = 2,line width = 1]
\usetikzlibrary{shapes,cd}
\usepackage[inline]{enumitem}
\usepackage{multicol}
\setlength\multicolsep{4pt}
\usepackage{array}
\usepackage{subcaption}

\usepackage[backend=bibtex,style=numeric-comp,sorting=nyt,doi=false,isbn=false,url=false,maxbibnames=99,maxcitenames=99]{biblatex}
\addbibresource{NLS-wick.bib}

\usepackage[nonewpage]{imakeidx}
\makeindex[columns=2,title=List of Symbols]

\theoremstyle{plain}
\newtheorem{theorem}{Theorem}[section]
\newtheorem{lemma}[theorem]{Lemma}
\newtheorem{proposition}[theorem]{Proposition}
\newtheorem{corollary}[theorem]{Corollary}
\theoremstyle{definition}
\newtheorem{definition}[theorem]{Definition}
\theoremstyle{remark}
\newtheorem{remark}[theorem]{Remark}

\numberwithin{equation}{section}
\numberwithin{figure}{section}
\numberwithin{table}{section}
\setcounter{tocdepth}{1}

\newcommand{\Tk}{T_{\mathrm{kin}}}
\newcommand{\Tt}{T_{\mathrm{trans}}}

\newcommand{\diff}{\mathop{}\!\mathrm{d}}

\newcommand{\Hess}{\bm{\mathrm{H}}}
\DeclareMathOperator{\card}{card}
\DeclareMathOperator{\diam}{diam}
\DeclareMathOperator{\supp}{supp}

\DeclareMathOperator{\ind}{ind}

\newcommand{\wick}[1]{{{:}#1{:}}}

\newcommand{\loc}{\mathrm{loc}}

\newcommand{\reg}{\mathrm{reg}}

\newcommand{\cp}{q}

\allowdisplaybreaks

\begin{document}

\title{Inhomogeneous turbulence for \\the Wick Nonlinear Schr\"odinger Equation}
\author[Z. Hani]{Zaher Hani}
\address[Zaher Hani]{University of Michigan Department of Mathematics}
\email{zhani@umich.edu}
\author[J. Shatah]{Jalal Shatah}
\address[Jalal Shatah]{New York University Courant Institute of Mathematical Sciences}
\email{shatah@cims.nyu.edu}
\author[H. Zhu]{Hui Zhu}
\address[Hui Zhu]{University of Michigan Department of Mathematics}
\curraddr{Imperial College London Department of Mathematics}
\email{zhuhui@umich.edu; hui.zhu@imperial.ac.uk}

\begin{abstract}
We introduce a simplified model for wave turbulence theory---the Wick NLS, of which the main feature is the absence of all self-interactions in the correlation expansions of its solutions.
For this model, we derive several wave kinetic equations that govern the effective statistical behavior of its solutions in various regimes.
In the homogeneous setting, where the initial correlation is translation invariant, we obtain a wave kinetic equation similar to the one predicted by the formal theory.
In the inhomogeneous setting, we obtain a wave kinetic equation that describes the statistical behavior of the wavepackets of the solutions, accounting for both the transport of wavepackets and collisions among them.
Another wave kinetic equation, which seems new in the literature, also appears in a certain scaling regime of this setting and provides a more refined collision picture.
\end{abstract}

\maketitle

\tableofcontents

\section{Introduction}

\emph{Wave turbulence theory} is a broad framework aimed at describing the nonequilibrium statistical mechanics of nonlinear wave systems.
At the core of this theory is a kinetic formalism known as the the \emph{wave kinetic theory}, whose aim is to describe via a collisional paradigm the mesoscopic evolution of a microscopic ensemble of waves.
Emerging as an extension of Boltzmann's \emph{kinetic theory} \cite{Boltzmann1872} for dilute gases, it was first initiated by \citeauthor{Peierls1929} \cite{Peierls1929} for the quantized wave equation, and later independently formulated by \citeauthor{Hasselmann1962} \cite{Hasselmann1962,Hasselmann1963a,Hasselmann1963b} for gravity water waves.
Since then, the subject evolved into a systematic framework for the study of statistical physics of nonlinear wave systems, partially due to works of Zakharov and his collaborators \cite{Zakharov1992} who discovered a close connection to the hydrodynamic turbulence, whence the name ``wave turbulence theory''.
See e.g., \cite{Nazarenko2011,Spohn2008}, for a detailed treatment.

The purpose of this paper is to introduce the \emph{Wick nonlinear Schr\"odinger equation} (WNLS) and conduct a self-contained study of its wave turbulence theory.
A much simplified collisional behavior of this model makes it an excellent ``mathematical laboratory'' for rigorously formulating and testing theoretical foundations of wave turbulence theory.
The WNLS renormalizes the well-known \emph{nonlinear Schr\"odinger equation} (NLS) by eliminating all \emph{self-interactions}, or \emph{normal ordering} the terms in the collisional expansion, as in quantum field theory (QFT).
The algebraic connection between WNLS and the Fock space formalism of QFT enables WNLS to be exactly solvable using \citeauthor{Dyson1949}'s \emph{perturbation method} \cite{Dyson1949} without any remainder.
This leads to two significant simplifications, namely simpler Feynman diagrams and the absence of technical and conceptual difficulties related to the convergence of the Dyson series.
Leveraging these inherent advantages allows for obtaining the effective statistical and kinetic behaviors in various regimes rigorously, with reasonable effort, and highlighting the connections between them.
Our main results will be presented in Theorems \ref{thm:cauchy}, \ref{thm:hom-inhom} and \ref{thm:inhom-2nd}.
They are also summarized (and advertised) in the diagram shown in Figure~\ref{fig:wk-theory}, where the terminologies and notations will be more comprehensible to the readers after completing this introductory section.

\begin{figure}[htb!]
    \begin{tikzpicture}[font=\footnotesize]
        \node(WNLS){WNLS};
        \node[label={[xshift=-4pt]right:{periodic}}](WNLSp) at (5,0) {WNLS};
        \node[label={[xshift=-4pt]right:{homogeneous}}](WK-hom) at (3.7,-1.8) {WK};
        \node[label={[xshift=-4pt]right:{semi-homogeneous}}](WK-semi) at (5.5,-4) {WK};
        \node[label={[xshift=-4pt]right:{inhomogeneous}}](WK-inhom) at (5.5,-5) {WK};
        \node(WK) at (2,-4) {WK};
        \node(WK2) at (-2.5,-4) {WK-2};
        \draw[->] (WNLS) to node[midway,below] {$\beta=\infty$} (WNLSp);
        \draw[->] (WNLSp) to node[pos=0.5,above,sloped]{Fourier} (WK-hom);
        \draw[->] (WNLS) to node[midway,above,sloped]{Microlocalization} node[midway,below,sloped]{$\alpha \le \beta$} (WK);
        \draw[->] (WNLS) to node[midway,above,sloped]{2nd Microlocalization} node[midway,below,sloped]{$\alpha \le \beta \in (1,\infty)$} (WK2);
        \draw[->] (WK) to node[pos=0.5,above,sloped]{$\beta=\infty$} (WK-hom);
        \draw[->] (WK) to node[midway,above,sloped]{$\alpha<\beta<\infty$}(WK-semi);
        \draw[->] (WK) to[out=-50,in=180] node[pos=0.7,above]{$\beta=\alpha$} (WK-inhom);
        \draw[->] (WK2) to node[midway,above,sloped]{Integrate in $\zeta$} (WK);
        \draw[->] (WK-semi) to node[pos=0.4,above,sloped]{At any $x$} (WK-hom);
    \end{tikzpicture}
\caption{Wave turbulence theory for WNLS}
\label{fig:wk-theory}
\end{figure}

\subsection{Wick NLS}
\label{sec:wick-nls}

Formally, the WNLS is defined by replacing the product in the nonlinearity of NLS with the \emph{Wick product} $\odot$\index{Operations and transforms!Wick product!Wick product $\odot$}, which will be defined in \S\ref{sec:wick-product}.
The equation is given by
\begin{equation}\tag{WNLS}
    \label{eq:WNLS}
    i \partial_t u + \frac{1}{4\pi} \Delta u
    = -\lambda u \odot \overline{u} \odot u ,
\end{equation}
where the parameter $\lambda \in \mathbb{R} \backslash \{0\}$\index{Parameters!Limit parameters!Strength of nonlinearity $\lambda$} is the \emph{strength of nonlinearity}.
We assume that $\lambda$ is small so that we are in the weakly nonlinear setting.
This is one of the parameters involved in the thermodynamic limit that will be taken to obtain the kinetic theory. 
The sign of $\lambda$ is unimportant in our analysis and will henceforth be fixed as \emph{positive} for concreteness. 
We should mention here that a quintic version of this equation has been introduced by De Suzzoni in \cite{DS22} to investigate other interesting questions in wave turbulence theory.

\subsubsection{Wick product}
\label{sec:wick-product}

Following \cite{Janson1997}, we fix a probability space~$\mathfrak{X}$\index{Sets and spaces!Probability spaces!Probability space $\mathfrak{X}$} and a \emph{complex Gaussian Hilbert space} $H$ over $\mathfrak{X}$, i.e., $H$\index{Sets and spaces!Wiener chaos!Gaussian Hilbert space $H$} is a complex Hilbert subspace of $L^2(\mathfrak{X})$ consisting of \emph{complex Gaussian random variables} (or \emph{Gaussians} for simplicity) with zero expected values.
For all $n \in \mathbb{N}$, let $H^{\wick{n}} = P_{n-1}^\perp \cap P_n$\index{Sets and spaces!Wiener chaos!Homogeneous Wiener chaos $H^{\wick{n}}$} where $P_n$ is the closure in $L^2(\mathfrak{X})$ of the set of all polynomials in $\mathbb{C}[H]$ of degrees $\le n$ with the convention that $P_{-1} = \emptyset$.
Elements of $H^{\wick{n}}$ are \emph{complex homogeneous Wiener chaos} of order $n$.
Assuming in addition that $H$ generates the same $\sigma$-algebra as that of $\mathfrak{X}$, there holds the \emph{Wiener--It\^o chaos decomposition} \cite{Wiener1938,Ito1951}: $L^2(\mathfrak{X}) = \bigoplus_{n \ge 0} H^{\wick{n}}$.
Let $\pi_n:L^2(\mathfrak{X}) \to H^{\wick{n}}$\index{Operations and transforms!Chaos projections!Chaos projection $\pi_n$} be the orthogonal projection, then the Wick product $\odot$ is a commutative and associative bilinear operator on $\bigcup_{n \ge 0} P_n$ such that $X \odot Y = \pi_{p+q} (XY)$ when $X \in H^{\wick{p}}$ and $Y \in H^{\wick{q}}$.
\index{Functions and random variables!Random variables!Random variables $X,Y,X^n,Y^n$}
It naturally extends to the direct product
$\mathfrak{H} = \prod_{n \ge 0} H^{\wick{n}}$\index{Sets and spaces!Wiener chaos!Chaos direct product $\mathfrak{H}$}, whose elements are formal sums $\sum_{n \ge 0} X^n$ with $X^n \in H^{\wick{n}}$.
In fact, it suffices to let
\begin{equation}
    \label{eq:def-wick-product}
    \sum_{n \ge 0} X^n \odot \sum_{n \ge 0} Y^n
    = \sum_{n \ge 0} \Bigl(\sum_{p+q=n}X^p \odot Y^q \Bigr).
\end{equation}
The WNLS is then an equation for \emph{generalized} stochastic processes $u : \mathbb{R} \times \mathbb{R}^d \to \mathfrak{H}$.

Under the name \emph{$S$-product}, the Wick product was first used by \citeauthor{Wick1950} \cite{Wick1950} as an algebraic technique to simplify \citeauthor{Dyson1949}'s computation \cite{Dyson1949} of the Heisenberg $S$-matrix, and has ever since been widely utilized in QFT (see e.g., \cite{Simon1974}).
Its closely related (see \S\ref{sec:wick-renorm}) probabilistic counterpart---the Wick product defined by~\eqref{eq:def-wick-product}---was introduced by \citeauthor{Hida1967} \cite{Hida1967} under the white noise framework.
Due to its natural connections with the It\^{o}--Skorohod integral, the Wick product has also been successfully applied to the study of SPDEs \cite{Lindstroem1991,Lindstroem1992}.

\subsubsection{Perturbative expansion}

\label{sec:perturb-exp}

We aim to study the dynamics of WNLS with well-prepared initial data (see \S\ref{sec:ensemble-wave}).
It is usually a highly nontrivial task to prove the long-time existence of solutions with such initial data for an arbitrary nonlinear wave system but it is not the case for WNLS, as Dyson's perturbation method solves the equation exactly.
In fact, write $u = \sum_{n \ge 0} u^n$ where $u^n \coloneq \pi_n u$, then WNLS is equivalent to the following system:
\begin{equation}\tag{WNLS$'$}
    \label{eq:WNLS-component}
    i \partial_t u^{n} + \frac{1}{4\pi} \Delta u^{n} = -\lambda \sum_{n_1+n_2+n_3=n} u^{n_1} \odot \overline {u^{n_2}} \odot u^{n_3}, \quad n \in \mathbb{N}.
\end{equation}

However, $u$ is a generalized stochastic process that lives in white noise distribution spaces such as the Hida and Kondratiev distribution spaces \cite{Hida1975,Kondratiev1993,Kondratiev1995,Kondratiev1996}, all of which are much larger than $L^2(\mathfrak{X})$.
This unfortunate fact is due to the factorial growth, as $n \to \infty$, of the number of possible pairings among $2n$ Gaussians, an estimate that is needed when computing the variance $\mathbb{E}|u^n|^2$ via Wick's probability theorem (or the complex Isserlis theorem). Note that for NLS:
\begin{equation}\tag{NLS}
    \label{eq:NLS}
    i \partial_t u + \frac{1}{4\pi} \Delta u = -\lambda |u|^2 u,
\end{equation}
the same issue requires truncating the Dyson series and estimating the remainders carefully.
This approach has been consistently employed in recent studies involving Feynman diagram expansions, beginning with \cite{Erdoes2008a,Lukkarinen2007} and continuing through more recent works aimed at rigorously justifying wave turbulence theory, which we will review in greater detail later. 
In the context of WNLS, we shall avoid all these technical difficulties by applying a chaos cutoff
\begin{equation*}
    \Pi_n = \sum_{0 \le j \le n} \pi_j: \mathfrak{H} \to \bigoplus_{0 \le j \le n} H^{\wick{j}}
    \index{Operations and transforms!Chaos projections!Chaos cutoff $\Pi_n$}.
\end{equation*}
Specifically, we shall study the variance $\mathbb E |\Pi_n u|^2$ and take the kinetic limit that involves taking $n\to \infty$.
Truncations like this are a common occurrence in theory. For instance, when studying the kinetic theory of discrete lattice systems, it is often necessary to take averages after a finite lattice truncation and then take the full lattice limit afterward.

\subsubsection{Wick renormalization}
\label{sec:wick-renorm}

Perhaps the greatest advantage of WNLS over NLS lies in the two essentially equivalent algebraic properties that can be described as follows:
\begin{enumerate}
    \item Absence of \emph{self-interactions}.
    Formally, solutions to WNLS are \emph{Wick renormalizations} of the Dyson series of NLS.
    This renormalization is executed by replacing all products of Gaussians with the corresponding Wick products:
    $ \prod \mathfrak{g}_j \mapsto \bigodot \mathfrak{g}_j.$\index{Functions and random variables!Random variables!Gaussians $\mathfrak{g}$}
    Equivalently, this amounts to subtracting from $\prod \mathfrak{g}_j$ all \emph{self-interactions}, or \emph{self-pairings}, among its factors of Gaussians (see e.g., \cite[\S3.1]{Janson1997} and \cite{Lukkarinen2016}).
    Consequently, when computing $\mathbb{E} |u^n|^2$, one only considers pairings that connect one Gaussian from $u^n$ and the other from $\overline{u^n}$.

    \item Preservation of \emph{normal ordering}.
    Recall that (see e.g., \cite{Janson1997}), if $\mathfrak{g} \in H^{\wick{1}}$ and $X \in H^{\wick{n}}$, then $\mathfrak{g} X = \mathcal{A}_{\mathfrak{g}}^\dagger X + \mathcal{A}_{\mathfrak{g}} X$, where $\mathcal{A}_{\mathfrak{g}}^\dagger X = \mathfrak{g} \odot X$\index{Operations and transforms!Wick product!Creation operator $\mathcal{A}_{\mathfrak{g}}^\dagger$} and $\mathcal{A}_{\mathfrak{g}} X = \pi_{n-1}(\overline{\mathfrak{g}} X)$\index{Operations and transforms!Wick product!Annihilation operator $\mathcal{A}_{\mathfrak{g}}$} define respectively the so-called creation operator and the annihilation operator.
    The passage from the Wick product to the \emph{Wick ordering}, also known the \emph{normal ordering} (as named in Wick's original work \cite{Wick1950}), is accomplished through the identity
    $\wick{\prod (\mathcal{A}_{\mathfrak{g}_j}^\dagger + \mathcal{A}_{\mathfrak{g}_j})}= \bigodot \mathfrak{g}_j$\index{Operations and transforms!Wick product!Wick ordering $\wick{~}$},
    where the the double-colon operation puts the product into normal order by moving all creation operators to the left of annihilation operators (see e.g., \cite[\S13.2]{Janson1997}).
    Solutions to WNLS, which are polynomials, are normal ordered.
    However, for other models, the normal ordering is usually not preserved in the perturbative expansion.
    For more discussions on this topic, see \cite{Lukkarinen2009}.
\end{enumerate}

The normal ordering has a profound effect on the analysis,  as it removes self-interactions and helps avoid major difficulties.
In fact, self-interactions can generate divergent terms in the Dyson series, and one has to find appropriate cancellations or gauge transformations to get rid of them.
Recent rigorous studies of homogeneous turbulence theory for NLS in \cite{Deng2021,Deng2021b} utilized both techniques, which constitute a significant portion of the analysis there.
It is worth noting that cancellations among divergent structures is fairly robust while gauge transformations are more delicate and dependent on the intrinsic symmetries of the model.
By working with WNLS as a renormalization of NLS, one is free from all such divergent interactions and is able to focus on the structural analysis of wave turbulence theory in various regimes.

\subsection{Ensemble of wavepackets}
\label{sec:ensemble-wave}

To model an ensemble of wavepackets using WNLS, we focus on solutions that are superpositions of wavepackets and satisfy the following assumptions:
\begin{enumerate}
    \item \emph{Discrete wavenumber assumption}: This states that these wavepackets have \emph{distinct} characteristic wavenumbers belonging to a \emph{discrete lattice} $\Lambda \subset \mathbb{R}^d$\index{Sets and spaces!Lattices!General lattice $\Lambda$}. We further assume that the mesh of $\Lambda$ is of order $L^{-1}$ with $L > 0$ being large\index{Parameters!Limit parameters!Large box parameter $L$}. 
    In the homogeneous setting, $L$ is the \emph{large box parameter} as it corresponds to the size of the spatial domain $\mathbb{R}^d / \Lambda$, a large periodic box of size $L$. 
    In general $L$, or more precisely $L^d$, is the parameter that signifies the \emph{number of wavepackets} that are effectively colliding or interacting in the process. 
    \item \emph{Narrow wavepacket assumption}: This states that these wavepackets have spectral width of order $\epsilon$ with $\epsilon \ge 0$\index{Parameters!Limit parameters!Narrow wavepacket parameter} being small, i.e., in the momentum space, each wavepacket is supported within an $\epsilon$-neighborhood of its characteristic wavenumber.
    \begin{enumerate}
        \item When $\epsilon = 0$, the wavepackets become plane waves and the solution $u$ is $\Lambda$-periodic.
        As a result, the energy spectrum (defined in \S\ref{sec:effective-dynamics}) of the solution is translation invariant in the position variable or, as we shall call it, homogeneous (see \S\ref{sec:space-homogeneity}).
        \item When $\epsilon>0$, this assumption is equivalent to the \emph{slow varying envelope approximation} in physics as these wavepackets now have wave lengths of scale $\epsilon^{-1}$.
        In this setting, the energy spectrum is no longer homogeneous, and the parameter $\epsilon$ measures the degree of its inhomogeneity (see \S\ref{sec:space-homogeneity}). 
    \end{enumerate}  
    \item \emph{Phase randomization assumption} or, following \cite{Zakharov1992}, the \emph{phase chaotization assumption}: This states that these wavepackets are \emph{uncorrelated}. 
    This assumption is closely related to the \emph{molecular chaos hypothesis} in the classical kinetic theory \cite{Boltzmann1872, Gallagher2013}.
\end{enumerate}

It is important to note that, while these assumptions are imposed to the solution $u$, in reality, they can only be assumed to hold initially.
Demonstrating the propagation of these properties by the underlying nonlinear wave system is a crucial undertaking, although it is frequently a challenging technical feat.
We will use a typical example of an initial distribution that satisfies the above three assumptions, which we will refer to as \emph{well-prepared} initial data, following the terminology in \cite{Deng2021b}.
Setting $\Lambda = \mathbb{Z}^d_L \coloneqq L^{-1} \mathbb{Z}^d_L$\index{Sets and spaces!Lattices!Integer lattice $\mathbb{Z}^d_L$}, well-prepared initial data are of the form:
\begin{equation}
    \label{eq:ini-data-well-prepared}
    u(0,x) = \frac{1}{L^{d/2}} \sum_{k \in \mathbb{Z}^d_L} \phi(\epsilon x,k) e^{2\pi i k \cdot x} \mathfrak{g}_k,
\end{equation}
where $(\mathfrak{g}_k)_{k \in \mathbb{Z}^d_L}$ is a sequence of independent and standard complex Gaussians in $H^{\wick{1}}$ and $\phi$ is a Schwartz function.
We will sometimes assume that $\phi$ has a compact spectrum in that its Fourier transform with respect to the position variable $x$, denoted by $\widehat{\phi} = \widehat{\phi}(\xi,k)$ is compactly supported in $\mathbb{R}^{2d}$.
Here and throughout this paper the Fourier transform is defined by $\widehat{f}(\xi) = \int e^{-2\pi i x\cdot \xi} f(x) \diff x$\index{Operations and transforms!Phase space transforms!Fourier transform $\widehat{f}$}.
We have imposed restrictive regularity conditions on the profile function $\phi$. While certain decay and smoothness assumptions in $k$ are necessary, we refrain from optimizing them within the scope of this paper. This decision is deliberate, aiming to maintain focus on the novel conceptual aspects of the problem rather than delving into intricate technical details.

To solve WNLS, we choose the ansatz
\begin{equation}
    \label{eq:ansatz-wp}
    u(t,x) = \frac{1}{L^{d/2}} \sum_{k \in \mathbb{Z}^d_L} A_{k}(t,\epsilon x) e^{2\pi i (k \cdot x - |k|^2t /2)},
    \index{Functions and random variables!Wavepacket functions!Amplitude $A_k$}
\end{equation}
and let the profiles $(A_k)_{k\in\mathbb{Z}^d_L}$ solve the following system of Cauchy problems:
\begin{equation}
    \label{eq:WNLS-wp-prof}
    \mathcal{L}_k^\epsilon A_k(t)
        = -\frac{\lambda}{L^d} \sum_{k_1-k_2+k_3=k}
        e^{2\pi i t \Omega(\bm{k})} A_{k_1}(t) \odot \overline{A_{k_2}(t)} \odot A_{k_3}(t),
    \quad k \in \mathbb{Z}^d_L,
\end{equation}
where the differential operator $\mathcal{L}_k^\epsilon = i \partial_t + i \epsilon k \cdot \nabla + (4\pi)^{-1}\epsilon^2 \Delta $ applies to the expansion $A_k = \sum_{n \ge 0} A_k^n$ in a chaos-wise manner, and the phase factor is defined by
\begin{equation}
    \label{eq:def-reso-factor}
    \Omega(\bm{k}) = \frac{1}{2} \bigl( |k_1-k_2+k_3|^2 - |k_1|^2 + |k_2|^2 - |k_3|^2 \bigr),
    \index{Functions and random variables!Resonance functions!Resonance factor $\Omega$}
    \quad
    \bm{k} = (k_1,k_2,k_3).
\end{equation}

Observe that setting $\epsilon = 0$ in~\eqref{eq:WNLS-wp-prof} yields ODEs in $t$ and the profiles $A_k$ become $x$-independent.
In that case, the ansatz~\eqref{eq:ansatz-wp} gives the Fourier series expansion of~$u$ and the initial condition on $A_k$ is given by $A_k(0,x) = \phi(0,k) \mathfrak{g}_k$.
When $\epsilon > 0$, the ansatz~\eqref{eq:ansatz-wp} gives a wavepacket expansion of~$u$ and the initial condition is given by $A_k(0,x) = \phi(x,k) \mathfrak{g}_k$.
For all values of $\epsilon$, one can solve~\eqref{eq:WNLS-wp-prof} iteratively with respect to the chaos order $n$, following the approach explained in \S\ref{sec:perturb-exp}.
Moreover, it is not difficult to see that, because well-prepared initial data live in $H^{\wick{1}}$, these solutions have vanishing chaos of even orders.
These observations are summarized in our first theorem stated below, which examines the global existence of solutions to WNLS and verifies the phase randomization assumption.

\begin{theorem}
    \label{thm:cauchy}
    For all $d \ge 1$, $ \lambda > 0$, $L > 0$, $\epsilon \ge 0$, and $\phi$ is a Schwartz function on $\mathbb{R}^{2d}$, there exists a unique sequence of stochastic processes $(A_k : \mathbb{R} \times \mathbb{R}^d \to \mathfrak{H})_{k\in\mathbb{Z}^d_L}$ such that
    \begin{enumerate}
        \item For all $k \in \mathbb{Z}^d_L$, we have $A_k(0,x) = \phi(x,k) \mathfrak{g}_k, $ if $ \epsilon > 0 $; and $
        A_k(0,x) = \phi(0,k) \mathfrak{g}_k, $ if $\epsilon = 0$;
        \item For all $k \in \mathbb{Z}^d_L$ and $n \in \mathbb{N}$, the stochastic process $A^n_k:\mathbb{R} \times \mathbb{R}^d \to H^{\wick{n}}$ is infinitely differentiable;
        \item Both sides of~\eqref{eq:WNLS-wp-prof} are equal pointwise as generalized stochastic processes in $\mathfrak{H}$.
    \end{enumerate}
    Moreover $A_k^{2n} = 0$ for all $k \in \mathbb{Z}^d_L$ and $n \in \mathbb{N}$; if $(n,k) \ne (n',k')$, then for all $t,t' \in \mathbb{R}$ and $x,x' \in \mathbb{R}^d$,
    \begin{equation}
        \label{eq:cdt-phase-randomization-wp}
        \mathbb{E}[A_{k}^n(t,x)\overline{A_{k'}^{n'}(t',x')}] = 0.
    \end{equation}
    Consequently, let $u:\mathbb{R} \times \mathbb{R}^d \to \mathfrak{H}$ be given by~\eqref{eq:ansatz-wp}, then $u$ is the unique generalized stochastic process which satisfies initial condition~\eqref{eq:ini-data-well-prepared} and pointwisely solve WNLS.
\end{theorem}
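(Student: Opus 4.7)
The plan is to construct the profiles chaos-order by chaos-order. Projecting~\eqref{eq:WNLS-wp-prof} onto the $n$-th chaos $H^{\wick n}$ and using the inclusion $H^{\wick p}\odot H^{\wick q}\subset H^{\wick{p+q}}$ yields, for each $k\in\mathbb{Z}^d_L$ and $n\ge 0$,
\[
    \mathcal{L}_k^\epsilon A_k^n(t) = -\frac{\lambda}{L^d}\sum_{\substack{k_1-k_2+k_3=k\\ n_1+n_2+n_3=n}} e^{2\pi i t\,\Omega(\bm k)}\, A_{k_1}^{n_1}\odot\overline{A_{k_2}^{n_2}}\odot A_{k_3}^{n_3}.
\]
At $n=0$ both the initial datum and the forcing vanish, so $A_k^0\equiv 0$; at $n=1$ any admissible triple $(n_1,n_2,n_3)$ contains a factor $A_\bullet^0$, so the equation is linear homogeneous and solved by the propagator of $\mathcal{L}_k^\epsilon$, producing a deterministic Schwartz coefficient times $\mathfrak{g}_k$. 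For $n\ge 2$ I would proceed by strong induction: the right-hand side becomes a known forcing built from $A_\bullet^m$ with $m<n$ only, and Duhamel's formula produces a unique differentiable $A_k^n$, with absolute convergence of the lattice sum inherited from the Schwartz decay of $\phi$ propagated through the iteration. Summing over $n$ and $k$ assembles the $u$ of~\eqref{eq:ansatz-wp}, which pointwisely solves~\eqref{eq:WNLS}, and chaos-by-chaos uniqueness of the linear construction upgrades to uniqueness of $u$.

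The parity claim $A_k^{2n}=0$ is a straightforward strong induction. Assuming $A_k^{2m}=0$ for all $m<n$, every triple with $n_1+n_2+n_3=2n$ must contain an even entry (a sum of three odd nonnegative integers is odd), so the corresponding factor $A_{k_j}^{n_j}$ vanishes---either because $n_j=0$ or by the induction hypothesis---and the forcing at level $2n$ is identically zero; combined with the zero initial datum, $A_k^{2n}\equiv 0$.

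The decorrelation~\eqref{eq:cdt-phase-randomization-wp} splits into two cases. When $n\ne n'$, the conclusion is immediate from Wiener--It\^o orthogonality $H^{\wick n}\perp H^{\wick{n'}}$, noting that complex conjugation preserves chaos order. When $n=n'$ but $k\ne k'$, the argument relies on carrying along the iteration the structural property that $A_k^n(t,x)$ admits a Wick-monomial expansion
\[
    A_k^n(t,x) = \sum_{\bm j\in(\mathbb{Z}^d_L)^n} c_k^n(t,x,\bm j)\,\mathfrak{g}_{j_1}\odot\overline{\mathfrak{g}_{j_2}}\odot\cdots\odot\mathfrak{g}_{j_n},
\]
with the coefficient $c_k^n$ supported only on tuples obeying the signed-momentum constraint $j_1-j_2+\cdots+j_n=k$; this is preserved at each induction step by the convolution $k_1-k_2+k_3=k$. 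Computing $\mathbb E[A_k^n\overline{A_{k'}^n}]$ via the complex Isserlis theorem, only pairings matching each $\mathfrak{g}_{j_i}$ in the first factor with a $\mathfrak{g}_{l_i}$ of equal index in the second survive, and every such pairing forces the signed sums to agree, hence $k=k'$. The main subtlety of the proof---and where I expect the bulk of the combinatorial bookkeeping to lie---is verifying cleanly the propagation of this signed-momentum constraint and the stability of the Wick-monomial form through each step of the Duhamel iteration; once that structural invariant is in hand, the expectation argument is essentially a one-line application of independence of the basis Gaussians.
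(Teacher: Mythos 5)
Your proposal is correct and follows essentially the same route as the paper: a chaos-by-chaos Dyson/Duhamel iteration for existence, uniqueness, and the parity claim (as in \S\ref{sec:cauchy}), and Wick's theorem together with the signed-momentum support constraint for the decorrelation~\eqref{eq:cdt-phase-randomization-wp} --- your monomial-level invariant is precisely the decoration constraint on couples that the paper encodes via Lemma~\ref{lem:deco-formula} and Lemma~\ref{lem:wick} and exploits in Propositions~\ref{prop:J-N-id} and~\ref{prop:J-E-id}, the tree/couple formalism being bookkeeping the paper needs only for the later kinetic analysis. The one imprecision is at chaos order zero, where the forcing is cubic in $A^0$ itself rather than identically zero, so one should argue (as the paper does) that $A^0$ solves the deterministic NLS system with null data and therefore vanishes.
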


\subsection{Effective dynamics}
\label{sec:effective-dynamics}

In order to gain insight into the statistical behavior of solutions to WNLS, we examine the effective dynamics of their correlation functions.
Although the analysis of temporal correlation is also an intriguing problem, we will confine ourselves to the study of the spatial correlation function $\mathbb{E} \{u(t,x+y/2) \overline{ u(t,x-y/2)} \}$\index{Functions and random variables!Correlation functions!Spatial correlation} and frequential correlation function $\mathbb{E}\{\widehat{u}(t,\xi+\eta/2) \overline{\widehat{u}(t,\xi-\eta/2)}\}$\index{Functions and random variables!Correlation functions!Frequential correlation}, where the Fourier transform $\widehat{u}$ is defined in terms of its chaos expansion. 
These two correlation functions are essentially equivalent, as they are related by the formula:
\begin{equation*}
    \begin{aligned}
        \mathbb{E} \mathcal{W}[u(t)](x,\xi) 
        & = \int e^{-2\pi i y \cdot \xi} \mathbb{E} \{u(t,x+y/2) \overline{ u(t,x-y/2)} \} \diff y\\
        & = \int e^{+ 2\pi i x \cdot \eta} \mathbb{E}\{\widehat{u}(t,\xi+\eta/2) \overline{\widehat{u}(t,\xi-\eta/2)}\} \diff \eta.
        \index{Functions and random variables!Energy spectra!Energy spectrum $\mathbb{E}\mathcal{W}$}
    \end{aligned}
\end{equation*}
The function $\mathcal{W}[u]$\index{Operations and transforms!Phase space transforms!Wigner distribution} is the Wigner distribution of $u$ (see e.g., \cite{Wigner1932,Lions1993,Gerard1997}).
It will be specifically defined in \S\ref{sec:energy-spec-non-periodic}, and its expected value will be called the energy spectrum of $u$.

If $u : \mathbb{R} \times \mathbb{R}^d \to \mathfrak{H}$ is a generalized stochastic process, then the energy spectrum $\mathbb{E} \mathcal{W}[u]$ cannot always be defined.
In this case, one needs to implement the chaos cutoff $\Pi_n$ and consider the truncated energy spectrum $\mathbb{E} \mathcal{W}[\Pi_n u]$ instead.

\subsubsection{Scaling regimes}
\label{sec:scaling-regime}

Wave turbulence theory conjectures that an ensemble of wavepackets whose evolution is governed by a system with a weak nonlinearity should manifest collisional behaviors at the \emph{kinetic time order} $\Tk$.
The linear part of the system, which is the underling linear Hamiltonian equation, makes wavepackets propagate at their respective group velocities, as manifested by the second term in equation~\eqref{eq:WNLS-wp-prof} which will become significant at some \emph{transport time order} $\Tt$.
Respectively, these two time orders satisfy
\begin{equation*}
    \Tk \sim \lambda^{-2},
    \index{Parameters!Time orders!Kinetic time order $\Tk$}
    \qquad
    \Tt \sim \epsilon^{-1}.
    \index{Parameters!Time orders!Transport time order $\Tt$}
\end{equation*}
Comparing their scales, we have the trichotomy of regimes:
\begin{enumerate}
    \item $\Tk \sim \Tt$, where collisions and transport of wavepackets are simultaneously observable;
    \item $\Tk \ll \Tt$, where only collisions are observable;
    \item $\Tk \gg \Tt$, where wavepackets disperse to infinity before the kinetic time order and therefore do not effectively collide with each other.
\end{enumerate}

We shall use the name \emph{effective dynamics} to denote any asymptotic dynamics of the energy spectrum at the kinetic time order $\mathbb{E} \mathcal{W}[\Pi_n u(t/\lambda^2)]$ under a certain thermodynamic limit:
\begin{multicols}{2}
\begin{enumerate}[label=(\roman*)]
    \item \emph{Weak nonlinearity limit:} $\lambda \to 0$;
    \item \emph{Large box limit:} $L \to \infty$;
    \item \emph{Narrow wavepacket limit:} $\epsilon \to 0$;
    \item \emph{Infinite chaos limit:} $n \to \infty$.
\end{enumerate}
\end{multicols}
Clearly, only the first two scenarios of the trichotomy of time orders correspond to nontrivial effective dynamics.
We shall therefore require that $\Tk \lesssim \Tt$, or equivalently $ \epsilon \lesssim \lambda^2 $.
To simplify the discussion, we will focus on the limit regimes prescribed by the scaling laws
\begin{equation}
    \label{eq:scaling-law}
    \lambda^{-2} = L^\alpha,
    \index{Parameters!Scaling laws!Kinetic scaling parameter}
    \qquad
    \epsilon^{-1} = L^\beta,
    \index{Parameters!Scaling laws!Transport scaling parameter}
\end{equation}
where $0 < \alpha, \beta \le \infty$, and state the results in terms of the fixed parameters $\alpha, \beta$. At the endpoints, we use the conventions that $\alpha = \infty$ corresponds to the linear case $\lambda = 0$, and $\beta = \infty$ corresponds to the periodic (or homogeneous) case $\epsilon = 0$. We remark that in the notation of \cite{Deng2021b, Deng2021, Deng2023}, the quantity $\gamma=\alpha/2$ represents the so-called \emph{scaling law}, which plays a decisive role in the analysis, and is the analog of the Boltzmann--Grad scaling law in Boltzmann's kinetic theory \cite{Grad1958,Lanford1975,Gallagher2013}. 

With these notations, the aforementioned trichotomy corresponds respectively to the relations: \begin{enumerate*}
    \item $\alpha = \beta$,
    \item $\alpha < \beta$,
    \item $\alpha > \beta$,
\end{enumerate*}
and we shall consider only the cases where $\alpha \le \beta$.
In \cite{Deng2021,Deng2021b, Deng2023}, the case $\beta = \infty$ is studied for $0<\alpha\leq 2$, and the kinetic time order is reached, with an additional arithmetic genericity assumption of the lattice $\Lambda$ in the case $\alpha=2$.
This genericity assumption is needed for number theoretic reasons whenever $\alpha\geq 2$.
To avoid imposing such assumptions, we shall restrict the regime to $0< \alpha <2$ for our main results (Theorems~\ref{thm:hom-inhom} and~\ref{thm:inhom-2nd}).

\subsubsection{Space homogeneity}
\label{sec:space-homogeneity}

The phase randomization condition~\eqref{eq:cdt-phase-randomization-wp} implies the computation:
\begin{equation}
    \label{eq:E-spec-wp-expansion}
    \mathbb{E} \mathcal{W}[\Pi_n u(t)](x,\xi)
    = \begin{cases}
        \displaystyle
        \frac{1}{L^d} \sum_{k \in \mathbb{Z}^d_L} 
        \mathbb{E} \mathcal{W}\bigl[\Pi_n A_{k}(t)\bigr]\Bigl(\epsilon x,\frac{\xi-k}{\epsilon}\Bigr)
        \frac{1}{\epsilon^d}, & \epsilon > 0;\\
        \displaystyle
        \frac{1}{L^d} \sum_{k \in \mathbb{Z}^d_L} \mathbb{E}\bigl|\Pi_n A_k(t)\bigr|^2 \bm{\delta}_k(\xi), & \epsilon = 0.
    \end{cases}
\end{equation}
In the periodic case $\epsilon=0$, the energy spectrum $\mathbb{E} \mathcal{W}[\Pi_n u]$ is position-independent, leading to an effective dynamics that we refer to as \emph{(space) homogeneous}.
However, when $\epsilon>0$, the effective dynamics may vary with respect to the position variable $x$, which we refer to as \emph{(space) inhomogeneous}.
In \S\ref{sec:wave-kinetic-eq}, we further distinguish the inhomogeneous setting by introducing a \emph{(space) semi-homogeneous} setting, where the effective dynamics is inhomogeneous but the transport of wavepackets is not observable. 

The homogeneous setting has been extensively investigated in various models, as evidenced by a number of studies including \cite{Buckmaster2021,Deng2021,Deng2021a,Deng2021b,Collot2020,Collot2019,Staffilani2021,Ma2022, Deng2023, Faou23, DH23-2}.
However, progress in the inhomogeneous setting has been limited, with the exception of recent work by \citeauthor{Ampatzoglou2021} \cite{Ampatzoglou2021} on NLS with a quadratic nonlinearity, which only reaches subcritical times.
In the stochastic setting, where time-dependent forcing is permitted in the equation, \citeauthor{Hannani2022} \cite{Hannani2022} have made contributions to the study of a KdV-type equation, reaching the kinetic time.
In an ongoing project, we aim to extend the results in \cite{Deng2021, Deng2023} for NLS to the inhomogeneous setting, thus contributing to the understanding of effective dynamics in this regime.

\subsubsection{Wave kinetic equation}
\label{sec:wave-kinetic-eq}

A formal analysis of wave turbulence theory for WNLS leads to the conjecture that the effective dynamics of the energy spectrum at the kinetic time order is given by the following wave kinetic equation:
\begin{equation}\tag{WK}
    \label{eq:WK}
    (\partial_t + \bm{1}_{\alpha=\beta} k \cdot \nabla) W_k
    = 2 \int_{\mathscr{D}_k}  \bm{\delta}\bigl(\Omega(\bm{k})\bigr) \prod_{1 \le j \le 3} W_{k_j} \diff{\bm{k}}, \quad k\in \mathbb{R}^d,
    \index{Functions and random variables!Kinetic limits!Kinetic limit $W_k$}
\end{equation}
where $W_k$ are functions of $(t,x)$, the factor $\Omega(\bm{k})$ is defined in \eqref{eq:def-reso-factor}, and $\mathscr{D}_k$ is the set of all $\bm{k} = (k_1,k_2,k_3) \in \mathbb{R}^{3d}$ such that $k_1-k_2+k_3=k$.
The Kronecker delta $\bm{1}_{\alpha=\beta}$ can be thought of as a ``switch'' that toggles the transport effect on and off in the equation, depending on the scaling regimes that we are working with.
As a result, the WK equation provides a combined description of the effective dynamics in both homogeneous and inhomogeneous settings.
It is important to compare WK with the wave kinetic equation for NLS which, in the homogeneous setting, is given by:
\begin{equation}
    \label{eq:WK-nls}
    \frac{\diff}{\diff t} N_k = 2 \int_{\mathscr{D}_k} \bm{\delta}\bigl(\Omega(\bm{k})\bigr) N_k N_{k_1} N_{k_2} N_{k_3} \Bigl(\frac{1}{N_k} - \frac{1}{N_{k_1}} + \frac{1}{N_{k_2}} - \frac{1}{N_{k_3}}\Bigr) \diff \bm{k}, \quad  k \in \mathbb{R}^d.
    \index{Functions and random variables!Kinetic limits!Homogeneous kinetic limit $N_k$}
\end{equation}
Within~\eqref{eq:WK-nls}, there exist four cubic integrands in the collision integral: $N_{k_1} N_{k_2} N_{k_3}$, $N_k N_{k_2} N_{k_3}$, $N_k N_{k_1} N_{k_3}$, and $N_k N_{k_1} N_{k_2}$. 
In WK however, there is only one such integrand, specifically $W_{k_1} W_{k_2} W_{k_3}$, which takes on the same form as the first integrand in \eqref{eq:WK-nls}. 
The reason for this difference, which will become more evident in our derivation in \S\ref{sec:hom} and \S\ref{sec:inhom}, is due to the absence of self-interactions in WNLS, for the last three integrands in \eqref{eq:WK-nls} all arise from self-interactions.

We are now ready to state our first main result on the wave turbulence theory of WNLS:

\begin{theorem}
    \label{thm:hom-inhom}
    Let $d \ge 3$,  $\alpha \in (0,2)$, $\beta \in [\alpha,\infty]$, and assume that $\widehat{\phi} \in C_c^\infty(\mathbb{R}^{2d})$.
    Let $(A_k)_{k \in \mathbb{Z}^d_L}$ be the global solution to~\eqref{eq:WNLS-wp-prof} given by Theorem~\ref{thm:cauchy}.
    Then there exists $T > 0$ and a smooth map $[-T,T] \times \mathbb{R}^{2d} \ni (t,x,k) \mapsto W_k(t,x) \in \mathbb{R}$ such that, under the scaling law~\eqref{eq:scaling-law}, there holds
    \begin{equation}
        \label{eq:A-W-convergence}
        \lim_{n \to \infty} \lim_{L\to \infty} \sup_{|t| \le T} \sup_{k \in \mathbb{Z}^d_L} \bigl\| \mathbb{E}\bigl|\Pi_n A_k\bigr|^2\bigl(t/\lambda^2,\cdot \bigr) - W_k(t, \cdot) \bigr\|_{L^\infty} = 0.
    \end{equation}
    Moreover $(W_k)_{k \in \mathbb{R}^d}$ solves WK with initial condition:
    \begin{equation}
        \label{eq:ini-WK}
        W_k(0,x) = \bigl|\phi(x,k)\bigr|^2, \  \text{if}\ \beta < \infty; \quad
        W_k(0,x) = \bigl|\phi(0,k)\bigr|^2, \  \text{if}\ \beta = \infty.
    \end{equation}
    Next, let $u$ be given by~\eqref{eq:ansatz-wp}.
    For all $\chi \in C_c^\infty(\mathbb{R}^{2d})$, define
    \begin{equation*}
        \langle n|\chi|n \rangle(t)= \iint \chi(x,\xi) \mathbb{E}\mathcal{W}[\Pi_n u] \bigl(t/\lambda^2,\bm{1}_{\epsilon \ne 0} \cdot x/\epsilon,\xi\bigr) \diff x \diff \xi,
        \index{Functions and random variables!Energy spectra!Observation $\langle n\vert \chi \vert n\rangle$}
    \end{equation*}
    where we shall omit the position variable from the energy spectrum $\mathbb{E}\mathcal{W}[\Pi_n u]$ when $\epsilon = 0$ as it is $x$-independent in that case.
    Then, under the scaling law~\eqref{eq:scaling-law}, there holds
    \begin{equation}
        \label{eq:E-spec-wigner-lim}
        \lim_{n \to \infty} \lim_{L \to \infty} \sup_{|t| \le T} \Bigl| \langle n|\chi|n \rangle(t) - \iint \chi(x,k) W_k(t,x) \diff x \diff k\Bigr| = 0.
    \end{equation}
\end{theorem}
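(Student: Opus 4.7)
My plan is to derive~\eqref{eq:A-W-convergence} by expanding $A_k^{2m+1}$ through the iteration of~\eqref{eq:WNLS-wp-prof}, computing $\mathbb{E}|\Pi_n A_k|^2$ exactly via Wick's probability theorem, and identifying the $L\to\infty$ limit with the Picard iteration of WK. Since the cubic nonlinearity strictly raises the chaos order by two, each $A_k^{2m+1}(t)$ is a \emph{finite} sum over rooted ternary trees $\mathcal{T}$ with $m$ internal vertices of the schematic form
\begin{equation*}
    A_k^{2m+1}(t) = \sum_{\mathcal{T}} \frac{(i\lambda)^m}{L^{dm}} \sum_{\bm{k}_\mathcal{T}} \mathcal{I}_\mathcal{T}(t;\bm{k}_\mathcal{T}) \bigodot_{\ell\in\mathrm{leaves}(\mathcal{T})} \phi_\ell(\cdot,k_\ell)\,\mathfrak{g}_{k_\ell},
\end{equation*}
where the leaf momenta $\bm{k}_\mathcal{T}$ satisfy the conservation law $k_1-k_2+k_3=k_v$ at each internal vertex~$v$, $\mathcal{I}_\mathcal{T}$ is an iterated time integral whose integrand is the product of the oscillatory phases $e^{2\pi i t_v \Omega(\bm{k}_v)}$ composed with the transport propagator generated by $\mathcal{L}_k^\epsilon$, and $\phi_\ell$ is $\phi$ or $\overline\phi$ according to the parity of $\ell$ in $\mathcal{T}$. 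Crucially, this Wick-renormalized Dyson expansion contains no self-interaction counterterms, which eliminates the resonant and quasi-resonant divergences that normally dominate the NLS analysis.

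Next, I would compute $\mathbb{E}|\Pi_n A_k|^2 = \sum_{m+m' \le n} \mathbb{E}(A_k^{2m+1}\overline{A_k^{2m'+1}})$ by pairing the two tree expansions with Isserlis' theorem. Because the leaves are joined by a Wick rather than an ordinary product, only bijective leaf-to-leaf pairings between $\mathcal{T}$ and $\mathcal{T}'$ survive. At the kinetic time $t/\lambda^2$, a stationary-phase and lattice-counting analysis, parallel to but considerably simpler than the one in \cite{Deng2021,Deng2021b,Deng2023}, should show that only the non-crossing ``ladder'' pairings (where $\mathcal{T}=\mathcal{T}'$ and the matching is the identity on leaves) contribute $O(1)$, while all other pairings are $o(1)$ as $L\to\infty$, uniformly in $m,m'\le n$, for $|t|\le T$ and $\alpha\in(0,2)$, $\beta\ge\alpha$. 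In this limit each $L$-Riemann sum $L^{-d}\sum$ converges to a Lebesgue integral, and each oscillatory time integral $\int_0^{t/\lambda^2} e^{2\pi i s \Omega}\,\diff s$ contributes a resonant factor $\bm{\delta}(\Omega(\bm{k}))$ times the outer time variable.

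The surviving ladder pairings are exactly the Picard iterates of the right-hand side of WK applied to the initial data $|\phi(x,k)|^2$. The transport switch $\bm{1}_{\alpha=\beta}$ emerges from the propagator of $\mathcal{L}_k^\epsilon$: at time $t/\lambda^2 = t L^\alpha$ the drift $\epsilon k\cdot\nabla = L^{-\beta}k\cdot\nabla$ produces a displacement of order $L^{\alpha-\beta} k t$, which converges to $k t$ when $\beta=\alpha$ and to $0$ when $\beta>\alpha$; the dispersion $\epsilon^2\Delta$ is $o(1)$ in either case. A Gronwall-style absolute summability argument for the iterated collision operator, using the Schwartz regularity of $\widehat\phi$ to control the momentum integrals and the smallness of $T$ to tame the tree combinatorics, should simultaneously produce a local smooth solution $W_k(t,x)$ of WK with initial condition~\eqref{eq:ini-WK} and bound the chaos tail $\sum_{m\ge M}$ uniformly by $C r^M$ with $r<1$, thereby justifying the exchange of limits in~\eqref{eq:A-W-convergence}.

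The principal obstacle will be the quantitative control of the non-ladder pairings: each crossing pairing gains a small power of $L^{-1}$ from a non-degenerate resonance hypersurface but loses combinatorially from the number of diagrams, and careful bookkeeping is needed to show that the net balance is $o(1)$. The restriction $\alpha<2$ is exactly what keeps the resonant mass below the critical threshold and prevents the number-theoretic accumulation on the resonance surface that would otherwise force a lattice genericity assumption. The second assertion~\eqref{eq:E-spec-wigner-lim} then follows by inserting~\eqref{eq:ansatz-wp} into $\mathcal{W}[\Pi_n u]$ and invoking~\eqref{eq:E-spec-wp-expansion}: the phase randomization from Theorem~\ref{thm:cauchy} kills all cross-$k$ terms, the rescaling built into the Wigner distribution converts $\iint\chi\,\mathbb{E}\mathcal{W}[\Pi_n u]$ into an $L$-Riemann sum $L^{-d}\sum_k \chi(\cdot,k)\,\mathbb{E}|\Pi_n A_k|^2$ modulo errors controlled by the Schwartz decay of $\widehat\phi$ and $\chi$, and~\eqref{eq:A-W-convergence} converts this Riemann sum into the claimed integral against $W_k(t,x)$.
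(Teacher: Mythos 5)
Your overall architecture (tree/Dyson expansion, Wick pairing into couples, lattice counting plus stationary phase to kill subleading diagrams, Riemann-sum-to-integral limit, and absolute summability of the Duhamel iterates for small $T$) is the same as the paper's, and your account of the transport switch $\bm{1}_{\alpha=\beta}$ and of the reduction of \eqref{eq:E-spec-wigner-lim} to \eqref{eq:A-W-convergence} via \eqref{eq:E-spec-wp-expansion} is sound. However, there is a concrete error in the identification of the leading diagrams. You claim that only the ``ladder'' pairings, i.e.\ couples with $\mathcal{T}=\mathcal{T}'$ and the identity matching on leaves, contribute $O(1)$, and that \emph{all} other pairings are $o(1)$ as $L\to\infty$. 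This is false for WNLS: already at order $1$ the set $\mathscr{K}_1$ contains two couples (Table~\ref{tab:eg-cp}), the identity matching and the one in which the first and third children are interchanged, and the second is \emph{not} suppressed, because the resonance factor $\Omega(k_1,k_2,k_3)$ on $\mathscr{D}_k$ is invariant under $k_1\leftrightarrow k_3$, so it carries no extra oscillation or counting gain. More generally the class that survives is the class of regular couples $\mathscr{K}^\reg_n$, built by choosing at each collision vertex one of two matchings ($\sigma=\pm$ in Lemma~\ref{lem:reg-cp-str}); these number $2^n\card(\mathscr{T}^\pm_n)$, a factor $2^n$ more than your ladders, and in particular the two trees of a regular couple need not be equal. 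This dichotomy is exactly the origin of the factor $2$ in front of the collision integral in WK (see the induction formula \eqref{eq:N-infty-hom-n-induction}); if you literally retain only identity ladders, your limit is the Picard iteration of an equation with half the collision kernel, and the resulting $W_k$ does not solve the stated WK, so \eqref{eq:A-W-convergence} and \eqref{eq:ini-WK} cannot both hold for it. The fix is to characterize the non-negligible couples structurally (irreducible factorization into order-one subcouples, as in Definition~\ref{def:cp-irreducible} and Lemma~\ref{lem:reg-cp-str}) rather than by equality of the two trees, and to prove the $o(1)$ bound only for couples with $\ind(\cp)\ge 1$, which is what the counting estimate of Proposition~\ref{prop:deco-count-cp} delivers through the gain $L^{-\nu\ind(\cp)}$.

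A secondary, smaller imprecision: your worry about balancing the combinatorial growth of crossing diagrams against the $L^{-\nu}$ gain ``uniformly in $m,m'\le n$'' is moot in this setting, since the chaos cutoff is fixed while $L\to\infty$ and only afterwards is $n\to\infty$ taken; at fixed $n$ the number of couples is a constant $C_n$, and the $n\to\infty$ step only requires the uniform-in-$L$-limit bounds $|N^{\cp,\infty}_{t,k}|\lesssim C^n t^n$ of Proposition~\ref{prop:N-infty-hom-est} together with the Catalan-number count of Lemma~\ref{lem:tree-regcp-counting}. Also note that in the homogeneous case $\beta=\infty$ the Picard iteration must be started from $|\phi(0,k)|^2$ rather than $|\phi(x,k)|^2$, as in \eqref{eq:ini-WK}.
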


The results of Theorem~\ref{thm:hom-inhom} can be summarized as follows and also in Table~\ref{tab:scaling-homogeneity}.
If $\beta = \infty$, then we are in the periodic case and the effective dynamics are homogeneous and independent of $x$.
When $\beta = \alpha$, i.e., $\Tk = \Tt$, one sees the transport of wavepackets and the effective dynamics is clearly inhomogeneous.
When $\alpha < \beta$, i.e., $\Tk < \Tt$, even though one does not observe the transport of wavepackets, the effective dynamics is not homogeneous.
In that case, the trace of $W_k$ at each position satisfies the homogeneous wave kinetic equation (i.e., the one without the transport term), but the initial condition depends on the position.
We shall therefore refer to this scenario as the semi-homogeneous setting.

\begin{table}[htb!]
    \begin{tabular}{|c|c|c|c|}
        \hline
        Scaling law & $\beta=\alpha$ & $\alpha < \beta < \infty$ & $\beta = \infty$ \\
        \hline
        Collision & Yes & Yes & Yes \\
        \hline
        Transport & Yes & No & No \\
        \hline
        Space homogeneity & Inhomogeneous & Semi-homogeneous & Homogeneous\\
        \hline
    \end{tabular}
    \caption{Relation between scaling law and space homogeneity}
    \label{tab:scaling-homogeneity}
\end{table}

In the limit~\eqref{eq:A-W-convergence}, we take the limit $n\to\infty$ at the very end so as to avoid the infinite chaos divergence discussed in \S\ref{sec:wick-renorm}.
In fact, the proof will show that almost all terms in the Dyson series of $\mathbb{E}|\Pi_n A_k|^2$ vanish after letting $L\to\infty$.
The remaining terms sum up to the $n$-th Duhamel iterate of WK and thus converge to its solution as $n\to\infty$.
It is natural to consider the weak convergence of the energy spectrum $\mathbb{E}\mathcal{W}[\Pi_n u]$ and it is generally impossible to give the weak convergence~\eqref{eq:E-spec-wigner-lim} a strong sense.
Indeed, as $\epsilon \to 0$, the support of the wavepacket ensemble shrinks---in the momentum space---to the lattice $\mathbb{Z}^d_L$;
equivalently, as per~\eqref{eq:E-spec-wp-expansion}, one immediately observes that this convergence involves a measure theoretical approximation of $\mathbb{E}\mathcal{W}[\Pi_n u]$ by a sum of Dirac masses.

\subsubsection{Finer effective dynamics}
\label{sec:finer-eff-dynamics}

A major novelty of this work is the discovery of a more sophisticated effective dynamics of the energy spectrum.
This is reflected in a refined version of the WK equation, which we believe has not been previously discovered or explored in the literature. 

The kinetic limit exhibited in Theorem~\ref{thm:hom-inhom} involves the following approximation as $\epsilon \to 0$:
\begin{equation*}
    \mathbb{E} \mathcal{W}\bigl[\Pi_n A_{k}(t)\bigr]\Bigl( x,\frac{\xi-k}{\epsilon}\Bigr)\frac{1}{\epsilon^d}
    \approx  \mathbb{E}\bigl|\Pi_n A_k\bigr|^2(t,x) \bm{\delta}_k(\xi),
\end{equation*}
which results in the loss of all frequential information for the profile $A_k$.
Surprisingly, we have discovered that it is possible to retain this information when $\epsilon>0$.
To reveal such finer structures, we need to widen the narrow wavepackets by introducing a subordinate wavenumber
\begin{equation*}
    \zeta = \frac{\xi-k}{\epsilon},
\end{equation*}
and consider the asymptotic dynamics of the energy spectrum $\mathbb{E} \mathcal{W}[\Pi_n A_{k}(t)](x,\zeta)$ instead of the variance $\mathbb{E} |\Pi_n A_k|^2(t,x)$.
Equivalently, this amounts to test the solution $u$ against observables of the form $\chi(x,\xi,(\xi-k)/\epsilon)$.
It is worth noting that while $\mathbb{E}|\Pi_n A_k|^2$ describes the mass distribution of $\Pi_n u$ in the phase space $\mathbb{R}^{2d}$, the energy spectrum $\mathbb{E} \mathcal{W}[\Pi_n A_k]$ captures a more detailed mass distribution in $\mathbb{R}^{3d}$ by performing a \emph{second microlocalization} at the wavenumber $k \in \mathbb{Z}^d_L$.
We shall show that such second microlocalization can be retained in the kinetic limit, and is reflected in the following WK-2 equation as a refinement to WK:
\begin{equation}\tag{WK-2}
    \label{eq:WK2}
    (\partial_t + \bm{1}_{\alpha=\beta} k \cdot \nabla) E_{k,\zeta}
    = 2 \int_{\mathscr{D}_k}  \bm{\delta}(\Omega(\bm{k})) \Bigl( \int_{\mathscr{D}_\zeta}  \prod_{1 \le j \le 3} E_{k_j,\zeta_j} \diff \bm{\zeta} \Bigr) \diff{\bm{k}}, \quad
    (k,\zeta) \in \mathbb{R}^{2d},
    \index{Functions and random variables!Kinetic limits!2nd microlocal kinetic limit $E_{k,\zeta}$}
\end{equation}
where $E_{k,\zeta}$ is a function of $(t,x)$.
Note that in WK-2, the collision integral describes not only collisions in terms of the wavenumber variable $k$, but also in terms of the subordinate wavenumber variable $\zeta$. As such, it describes a more elaborate energy transfer mechanism among wavepackets.
Of course, this is a phenomena unique to ensembles of wavepackets that are not plane waves.

\begin{theorem}
    \label{thm:inhom-2nd}
    Let $d \ge 3$, let $\alpha\in(0,2) $, $\beta \in [\alpha,\infty]$, and assume that $\widehat{\phi} \in C_c^\infty(\mathbb{R}^{2d})$.
    Let $(A_k)_{k \in \mathbb{Z}^d_L}$ be the global solution to~\eqref{eq:WNLS-wp-prof} given by Theorem~\ref{thm:cauchy}.
    Then there exists $T > 0$ and a smooth map $[-T,T] \times \mathbb{R}^{3d} \ni (t,x,k,\zeta) \mapsto E_{k,\zeta}(t,x)$, such that under the scaling law~\eqref{eq:scaling-law}, there holds
    \begin{equation}
        \label{eq:A-E-convergence}
        \lim_{n \to \infty} \lim_{L \to \infty} \sup_{|t| \le T} \sup_{k \in \mathbb{Z}^d_L} \sup_{\zeta \in \mathbb{R}^d} \bigl\|\mathbb{E}\mathcal{W}\bigl[\Pi_n A_k\bigl(t/\lambda^2\bigr)\bigr](\cdot,\zeta) - E_{k,\zeta}(t, \cdot) \bigr\|_{L^\infty} = 0.
    \end{equation}
    Moreover $(E_{k,\zeta})_{(k,\zeta)\in \mathbb{R}^{2d}}$ solves WK-2 with initial condition
    \begin{equation}
        \label{eq:ini-WK2}
        E_{k,\zeta}(0,x) = \mathcal{W}[\phi(\cdot,k)](x,\zeta).
    \end{equation}
    Next, let $u$ be given by~\eqref{eq:ansatz-wp} and assume in addition that $\beta > 1$.
    For all $\chi \in C_c^\infty(\mathbb{R}^{3d})$, define
    \begin{equation*}
        \langle n|\chi|n \rangle_k(t)
        =  \iint \chi\Bigl(x,\xi,\frac{\xi-k}{\epsilon}\Bigr) \mathbb{E} \mathcal{W}[\Pi_n u]\bigl(t/\lambda^2,x/\epsilon,\xi\bigr) \diff x \diff \xi.
        \index{Functions and random variables!Energy spectra!2nd microlocal observation $\langle n\vert\chi\vert n \rangle_k$}
    \end{equation*}
    Then under the scaling law~\eqref{eq:scaling-law}, there holds
    \begin{equation}
        \label{eq:u-second-ml}
        \lim_{n \to \infty} \lim_{L \to \infty} \sup_{|t| \le T} \sup_{k \in \mathbb{Z}^d_L} \Bigl| L^d
        \langle n|\chi|n \rangle_k(t)
        - \iint \chi(x,k,\zeta) E_{k,\zeta}(t,x) \diff x \diff \zeta \Bigr| = 0.
    \end{equation}    
\end{theorem}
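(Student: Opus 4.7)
The plan is to adapt the Feynman-diagram analysis underlying Theorem~\ref{thm:hom-inhom}, but now carrying the full Wigner transform $\mathcal{W}[\Pi_n A_k](x,\zeta)$ rather than only the variance $\mathbb{E}|\Pi_n A_k|^2(x)$. First I would expand $\Pi_n A_k$ via the Duhamel iteration of~\eqref{eq:WNLS-wp-prof} to finite chaos order. By Theorem~\ref{thm:cauchy}, taking the expectation of the product of the expansion with $\overline{\Pi_n A_k}$ and then the Wigner transform in the remaining spatial variable reduces to a sum over \emph{normal-ordered pairings} between the Gaussians appearing in the two factors, as self-pairings are absent. Organizing these into the tree (ladder) and non-tree diagrams that already appear in the proof of Theorem~\ref{thm:hom-inhom} is the starting bookkeeping.

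The key new step is to perform the Wigner transform at the diagrammatic level. Writing $y$ for the Wigner-conjugate variable to $\zeta$, the phase $e^{-2\pi i y\cdot \zeta}$ interacts—after the rescaling $x\mapsto x/\epsilon$—with the oscillatory factors $e^{2\pi i(k\cdot x - |k|^2 t/2)}$ and the Duhamel phases $e^{2\pi i t \Omega(\bm k)}$. For a tree diagram, the stationary-phase/resonance argument that drives~\eqref{eq:WK} should force the internal wavenumbers to the resonant set $\{\Omega(\bm k)=0\}$ while, crucially, the evaluation of the Wigner integral produces a \emph{convolution} in the subordinate wavenumbers $\bm{\zeta}=(\zeta_1,\zeta_2,\zeta_3)$ over the hyperplane $\zeta_1-\zeta_2+\zeta_3=\zeta$. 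Summing over all tree diagrams then reproduces exactly the $n$-th Duhamel iterate of~\eqref{eq:WK2} with initial data $\mathcal{W}[\phi(\cdot,k)](x,\zeta)$. The non-tree diagrams, which carry $L^{-d}$ combinatorial gains from unpaired wavenumber sums, should be handled by the same $L\to\infty$ estimates used for Theorem~\ref{thm:hom-inhom}, since the extra bounded phase $e^{-2\pi i y\cdot \zeta}$ is harmless for those estimates. Sending $n\to\infty$ and invoking local well-posedness for WK-2 then yields~\eqref{eq:A-E-convergence}.

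For the observable statement~\eqref{eq:u-second-ml}, I would substitute the ansatz~\eqref{eq:ansatz-wp} into the definition of $\langle n|\chi|n\rangle_k(t)$, apply phase randomization (Theorem~\ref{thm:cauchy}) to collapse the double sum over wavenumbers to the diagonal, change variables $\xi\mapsto k'+\epsilon\zeta$, and factor a $\mathcal{W}[\Pi_n A_{k'}](x,\zeta)$ out of the integrand. After multiplying by $L^d$, the remaining sum over $k'\in\mathbb{Z}^d_L$ is a Riemann sum with mesh $L^{-1}$ for the integral $\int\chi(x,k',\zeta)\mathcal{W}[\Pi_n A_{k'}](x,\zeta)\,dk'$, and the first part provides uniform-in-$k'$ convergence of $\mathcal{W}[\Pi_n A_{k'}]$ to $E_{k',\zeta}$. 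The hypothesis $\beta>1$ enters exactly here: it ensures that $\epsilon^{-1}=L^\beta\gg L$, so the $L^{-1}$-mesh is much finer than the $\epsilon$-scale oscillations of $\chi(x,\xi,(\xi-k)/\epsilon)$ on the support of one wavepacket, making the Riemann-sum approximation converge.

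The main obstacle I expect is the second paragraph: rigorously reading off the $\zeta$-convolution structure on $\mathscr{D}_\zeta$ from the Feynman-diagram computation. In the proof of Theorem~\ref{thm:hom-inhom} the $\zeta$ variable is effectively collapsed to $0$ from the start, so all the new information in WK-2 is carried by phases that were previously trivially integrated out; one must verify that the tree-diagram asymptotics do give precisely the affine constraint $\zeta_1-\zeta_2+\zeta_3=\zeta$ and not a distorted or smeared version. A secondary subtlety is obtaining the convergence~\eqref{eq:A-E-convergence} \emph{uniformly in} $\zeta\in\mathbb{R}^d$; since $\phi$ is Schwartz in $x$, the initial datum $\mathcal{W}[\phi(\cdot,k)](x,\zeta)$ decays in $\zeta$, and one needs to show this decay is propagated by the iterates of WK-2 with uniform constants on $[-T,T]$, which I would obtain by differentiating the Duhamel formula in $\zeta$ and bootstrapping decay estimates along with the contraction that gives local well-posedness.
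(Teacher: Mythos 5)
Your treatment of the observable limit \eqref{eq:u-second-ml} is where the proposal genuinely breaks. The claimed limit is $\iint \chi(x,k,\zeta) E_{k,\zeta}(t,x)\diff x \diff\zeta$ with the wavenumber $k$ \emph{fixed}: there is no $k'$-integration on the right-hand side, so a Riemann-sum-in-$k'$ mechanism cannot produce it. Dimensionally it also fails: each wavepacket term in the decomposition of $\mathbb{E}\mathcal{W}[\Pi_n u]$ contributes $O(L^{-d})$ to $\langle n|\chi|n\rangle_k(t)$, so if $O(L^d)$ lattice points $k'$ contributed (as in a genuine Riemann sum), then $L^d\langle n|\chi|n\rangle_k(t)$ would diverge like $L^d$. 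The actual role of $\beta>1$ is the opposite of the one you assign to it: since $\widehat\phi$ is compactly supported, the chaos-truncated solution has frequency support within distance $(2n+1)R\epsilon$ of $\mathbb{Z}^d_L$ (this is \eqref{eq:E-spec-supp-cdt}), and $\beta>1$ means $\epsilon=L^{-\beta}\ll L^{-1}$, so these spectral blobs are pairwise disjoint; on the $k'$-th blob with $k'\ne k$ the third argument $(\xi-k)/\epsilon$ of $\chi$ has size $\gtrsim L^{\beta-1}\to\infty$ and is killed by the compact support of $\chi$. Hence only the single term $k'=k$ survives, the factor $L^d$ exactly cancels the $L^{-d}$ wavepacket normalization, and the limit is the one-wavepacket quantity obtained from \eqref{eq:E-spec-E-id}. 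Your statement that ``the $L^{-1}$-mesh is much finer than the $\epsilon$-scale oscillations'' has the inequality backwards ($\epsilon\ll L^{-1}$), and the extraction-versus-averaging distinction is precisely the content of the hypothesis $\beta>1$.

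For \eqref{eq:A-E-convergence} your outline follows the same route as the paper (couple expansion with no self-pairings, suppression of non-regular couples, convergence of regular couples to the Duhamel iterates of WK-2, then $n\to\infty$), but the step you yourself flag as the main obstacle is exactly the new content and is left unresolved. In the paper it is not a stationary-phase asymptotic: writing the frequency decoration as $\bm{\xi}=\bm{\zeta}+\iota\bm{\eta}/2$ gives an \emph{exact} linear isomorphism $\mathscr{C}^\cp\times\mathscr{D}^\cp\simeq\mathscr{D}^{\tau^*}$ (Lemma~\ref{lem:cp-deco-wigner}), so the constraint $\zeta_1-\zeta_2+\zeta_3=\zeta$ at every collision is the algebraic $\mathscr{D}$-decoration constraint carried exactly through the expansion, while the $\mathscr{C}$-component $\bm{\eta}$ carries the spatial dependence; no smearing question arises. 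Relatedly, your sketch never produces the transport term $\bm{1}_{\alpha=\beta}\,k\cdot\nabla$ in \eqref{eq:WK2}: it comes from the cross term $2\epsilon\lambda^{-2}\,\bm{t}\cdot\Omega^{\tau^*}(\bm{k};\bm{\xi})$ in the expansion of the phase $\Omega^{\tau^*}(\bm{k}+\epsilon\bm{\xi})$, which is $O(1)$ precisely when $\alpha=\beta$ and must be converted into the explicit spatial shifts of Lemma~\ref{lem:T-op-transport-id}; without this ingredient you would only recover the semi-homogeneous version. Your final worry about uniformity in $\zeta$ is minor: with $\widehat\phi\in C_c^\infty$ the subordinate wavenumbers of an order-$n$ couple lie in a ball of radius $O(nR)$, so no propagation-of-decay bootstrap for WK-2 is needed.
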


In the limit~\eqref{eq:u-second-ml}, we require the condition $\beta > 1$ so that, after a chaos cutoff, the wavepackets are essentially disjoint in the momentum space. This allows us to extract the information of any wavepacket by second microlocalizing $u$ at the corresponding wavenumber. 
From a physical point of view, recall that the case when $\beta=\infty$ corresponds to the homogeneous periodic limit, so one can understand the regime $\beta>1$ as being close to homogeneous.
This disjointedness condition allows one to separate the wavepackets from others in the Fourier space and is vital in the second microlocal procedure that extracts information of $u$ from that of $A_k$.
One can also consider WK-2 as a lift of WK to $\mathbb{R}^{3d}$, in that WK can be recovered by formally integrating WK-2 in $\zeta$:
\begin{equation*}
    W_k = \int E_{k,\zeta} \diff \zeta.
\end{equation*}
Let us also remark that the weak convergence~\eqref{eq:u-second-ml} can be given a strong sense because, by zooming into the second microlocal regime, the measure theoretical approximation as discussed below Theorem~\ref{thm:hom-inhom} can be more finely characterized as a pointwise approximation of profile functions.
Indeed, our proof suggests that, if $\beta > 1$, then for some $R > 0$,
\begin{equation*}
    \lim_{n\to\infty} \lim_{L\to\infty} \sup_{|t|\le T} \sup_{k \in \mathbb{Z}^d_L} \sup_{x \in \mathbb{R}^d} \sup_{|\zeta| \le nR} |(\epsilon L)^d \mathbb{E} \mathcal{W}[\Pi_n u](t/\lambda^2,x/\epsilon,k+\epsilon \zeta) - E_{k,\zeta}(t,x)|
    = 0.
\end{equation*}

Let us conclude by revisiting Figure~\ref{fig:wk-theory}, which provides a summary and visualization of our main results.
To the best of our knowledge, this is the first attempt to elucidate the connections among various regimes of the wave turbulence theory.
Moreover, our work unveils for the first time the second microlocal kinetic limit WK-2.

\subsection*{Organization of the paper}

In~\S\ref{sec:tree-couple}, we review some combinatorial preliminaries---trees and couples---for the diagrammatic expansion of solutions to WNLS and their energy spectra. 
In~\S\ref{sec:oscillatory}, we analyze some oscillatory functionals which arise from iterating WNLS, and estimate their asymptotic behaviors in the large box limit.
In~\S\ref{sec:lattice-count-est}, we obtain a lattice counting estimate which is essential in proving that most of the terms from the iteration give negligible contributions to the effective dynamics.
In~\S\ref{sec:diagram}, we give diagrammatic expansions to solutions of WNLS and their energy spectra.
In~\S\ref{sec:hom}, we prove Theorem~\ref{thm:hom-inhom} in the homogeneous setting.
In~\S\ref{sec:inhom}, we prove Theorems~\ref{thm:hom-inhom} and~\ref{thm:inhom-2nd} in the inhomogeneous and semi-homogeneous setting.
Readers are also advised to refer to a list of symbols provided at the end of the paper for their convenience.

\section*{Acknowledgement}

The authors were partly supported by a Simons Collaboration Grant on Wave Turbulence.
The first author was also partly supported by NSF grant DMS-1654692. 
The authors thank the anonymous referees for their careful review of the manuscript, and their helpful suggestions to improve the exposition.

\section{Trees and couples}

\label{sec:tree-couple}

In this section, we introduce some combinatorial preliminaries for the Feynman diagrammatic expansion of solutions to WNLS and their energy spectra. 
Similar concepts have been used in previous works such as \cite{Lukkarinen2007,Lukkarinen2009,Deng2021b,Deng2021,Deng2021a}.
However, necessary and nontrivial adaptations shall be made to fit the Wick renormalized equation and the inhomogeneous setting we are working with.
After introducing the notions of trees and couples in \S\ref{sec:tree} for this problem, 
we introduce the new concepts of \emph{conjugate nodes} and \emph{irreducible couples} which are essential in the characterization of \emph{decorations} on couples presented in \S\ref{sec:deco} and in the study of topological structures of couples presented in \S\ref{sec:topo}.

\subsection{Ternary trees}

\label{sec:tree}

In \S\ref{sec:cauchy}, we will solve the Cauchy problem of WNLS by iteratively apply Duhamel's principle.
Due to the cubic nature of the nonlinearity, this iterative scheme can be completely illustrated using ternary trees.

\begin{definition}
    \label{def:tree}
    \emph{A ternary tree} $\tau$\index{Trees and couples!Ternary trees!Ternary tree $\tau$} is a rooted tree where each non-leaf (or branching) node $\mathfrak{b}$\index{Trees and couples!Nodes!Branching node $\mathfrak{b}$} has three children which we distinguish as left, middle, and right or $(\mathfrak{b}[1],\mathfrak{b}[2],\mathfrak{b}[3])$\index{Trees and couples!Nodes!Child nodes $\mathfrak{b}[j]$}.
    We say that a node $\mathfrak{m}$ is a descendent of another node $\mathfrak{n}$ (or that $\mathfrak{n}$ is an ancestor of $\mathfrak{m}$) if $\mathfrak{m}$ belongs to the subtree rooted at $\mathfrak{n}$.
    This gives a strict partial order $\prec$\index{Trees and couples!Ternary trees!Partial order $\prec$} on $\tau$ under which there exists a unique maximum element given by the root node $\mathfrak{r}^\tau$\index{Trees and couples!Nodes!Root node $\mathfrak{r}^\tau$} and for every $\mathfrak{m} \in  \tau \backslash \{\mathfrak{r}^\tau\}$, the set $\{\mathfrak{n}\in \tau : \mathfrak{m} \prec \mathfrak{n}\} $ admits a unique minimum $\mathfrak{m}^p$, which is the parent node $\mathfrak{m}^p$\index{Trees and couples!Nodes!Parent node $\mathfrak{m}^p$} of $\mathfrak{m}$.
    If $\mathfrak{n}\in \tau$ is a minimum node, then $\mathfrak{n}$ is \emph{a leaf node} of $\tau$, otherwise it is \emph{a branching node} of $\tau$.
    The set of all leaf nodes of $\tau$ is denoted by $\mathfrak{L}^\tau$\index{Trees and couples!Nodes!Set of leaf nodes $\mathfrak{L}^\tau$, $\mathfrak{L}^\cp$} and the set of all branching nodes of $\tau$ is denoted by $\mathfrak{B}^\tau$\index{Trees and couples!Nodes!Set of branching nodes $\mathfrak{B}^\tau$, $\mathfrak{B}^\cp$}.
    
    For all $n \in \mathbb{N}$, we define $\mathscr{T}_n^+$ resp.\ $\mathscr{T}_n^-$ to be set of all positive resp.\ negative ternary trees of order $n$ and set $\mathscr{T}_n = \mathscr{T}^+_n \cup \mathscr{T}^-_n$.\index{Trees and couples!Ternary trees!Sets of ternary trees $\mathscr{T}_n$, $\mathscr{T}^\pm_n$, $\mathscr{T}^\pm$}
    We also denote
    \begin{equation*}
        \mathscr{T}^\pm = \bigcup_{n \in \mathbb{N}} \mathscr{T}_n^\pm, \quad
        \mathscr{T} = \bigcup_{n \in \mathbb{N}} \mathscr{T}_n.
    \end{equation*}
\end{definition}

\begin{definition}
    \label{def:ter-tree-signed}
    \emph{A sign function}\index{Trees and couples!Ternary trees!Sign function $\iota$} on a ternary tree $\tau$ is a map $\iota : \tau \to \{+1,-1\}$ such that $\iota_{\mathfrak{b}[j]} = \iota_{\mathfrak{b}} (-1)^{j+1}$ for all $\mathfrak{b} \in \mathfrak{B}^\tau$ and for all $1 \le j \le 3$.
    A ternary tree equipped with a sign function is \emph{a signed ternary tree}.
    If $\tau$ is a signed ternary tree and $\mathfrak{n} \in \tau$, then $\mathfrak{n}$ is positive resp.\ negative if $\iota_{\mathfrak{n}} = +1$ resp.\ $\iota_{\mathfrak{n}} = -1$.
    The sign of $\tau$ is $\iota_\tau = \iota_{\mathfrak{r}^\tau}$.
    The ternary tree $\tau$ is positive resp.\ negative ternary if $\iota_\tau = +1$ resp.\ $\iota_\tau = -1$.
    In addition, we define the \emph{polarity}\index{Trees and couples!Ternary trees!Polarity $\varsigma_\tau$} of $\tau$ to be the complex number
    \begin{equation*}
        \varsigma_\tau = \prod_{\mathfrak{b} \in \mathfrak{B}^\tau} i \iota_{\mathfrak{b}}.
    \end{equation*}
\end{definition}

In Table~\ref{tab:eg-T-n} we list all elements in $\mathscr{T}^\pm_n$ when $n = 1,2,3$.
In these figures, we use a solid bullet $\bullet$ denote a positive node and use a hollow circle $\circ$ to denote a negative node.
For every branching node, its three children numbered $1,2,3$ are respectively the left node, the middle node, and the right node below it that are directly connected to it.
\begin{table}[htb!]
    \begin{tabular}{c|c|c|c}
        $\mathscr{T}^\sigma_n$ & $n=0$ & $n=1$ & $n=2$ \\
        \hline
        $\sigma = +$ &
        \begin{tikzpicture}[baseline=(root),scale=0.5,font=\footnotesize]
            \node(root)[Solid]{};
        \end{tikzpicture}
        &
        \begin{tikzpicture}[baseline=(root),scale=0.5,font=\footnotesize]
            \node(root)[Solid]{}
                child {node[Solid]{}}
                child {node[Hollow]{}}
                child {node[Solid]{}};
        \end{tikzpicture}
        &
        \begin{tikzpicture}[baseline=(root),scale=0.5,font=\footnotesize]
            \node(root)[Solid]{}
                child {node[Solid]{}
                    child {node[Solid]{}}
                    child {node[Hollow]{}}
                    child {node[Solid]{}}}
                child {node[Hollow]{}}
                child {node[Solid]{}};
        \end{tikzpicture}
        \begin{tikzpicture}[baseline=(root),scale=0.5,font=\footnotesize]
            \node(root)[Solid]{}
                child {node[Solid]{}}
                child {node[Hollow]{}
                    child {node[Hollow]{}}
                    child {node[Solid]{}}
                    child {node[Hollow]{}}}
                child {node[Solid]{}};
        \end{tikzpicture}
        \begin{tikzpicture}[baseline=(root),scale=0.5,font=\footnotesize]
            \node(root)[Solid]{}
                child {node[Solid]{}}
                child {node[Hollow]{}}
                child {node[Solid]{}
                    child {node[Solid]{}}
                    child {node[Hollow]{}}
                    child {node[Solid]{}}
                };
        \end{tikzpicture}\\[4.2em]
        \hline
        $\sigma = -$ &
        \begin{tikzpicture}[baseline=(root),scale=0.5,font=\footnotesize]
            \node(root)[Hollow]{};
        \end{tikzpicture}
        &
        \begin{tikzpicture}[baseline=(root),scale=0.5,font=\footnotesize]
            \node(root)[Hollow]{}
                child {node[Hollow]{}}
                child {node[Solid]{}}
                child {node[Hollow]{}};
        \end{tikzpicture}
        &
        \begin{tikzpicture}[baseline=(root),scale=0.5,font=\footnotesize]
            \node(root)[Hollow]{}
                child {node[Hollow]{}
                    child {node[Hollow]{}}
                    child {node[Solid]{}}
                    child {node[Hollow]{}}}
                child {node[Solid]{}}
                child {node[Hollow]{}};
        \end{tikzpicture}
        \begin{tikzpicture}[baseline=(root),scale=0.5,font=\footnotesize]
            \node(root)[Hollow]{}
                child {node[Hollow]{}}
                child {node[Solid]{}
                    child {node[Solid]{}}
                    child {node[Hollow]{}}
                    child {node[Solid]{}}}
                child {node[Hollow]{}};
        \end{tikzpicture}
        \begin{tikzpicture}[baseline=(root),scale=0.5,font=\footnotesize]
            \node(root)[Hollow]{}
                child {node[Hollow]{}}
                child {node[Solid]{}}
                child {node[Hollow]{}
                    child {node[Hollow]{}}
                    child {node[Solid]{}}
                    child {node[Hollow]{}}
                };
        \end{tikzpicture}
    \end{tabular}
    \caption{The set $\mathscr{T}^\pm_n$ when $n=0,1,2$}
    \label{tab:eg-T-n}
\end{table}

\begin{definition}[New trees from old]
    \label{def:tree-str}
    If $\tau,\tau'\in \mathscr{T}$ and $\mathfrak{l} \in \mathfrak{L}^\tau$ such that $\iota_{\mathfrak{l}} = \iota_{\tau'}$, then by attaching $\mathfrak{r}^{\tau'}$ to $\mathfrak{l}$ we obtain a new signed ternary tree $\tau \otimes_{\mathfrak{l}} \tau'$.
    Conversely, if $\tau \in \mathscr{T}$ and $\mathfrak{n} \in \tau$, then $\check{\tau}_\mathfrak{n} = \{\mathfrak{m} \in \tau : \mathfrak{m} \preceq \mathfrak{n}\}$ and $ \hat{\tau}_{\mathfrak{n}} = \{\mathfrak{m} \in \tau : \mathfrak{m} \not\prec \mathfrak{n}\}$\index{Trees and couples!Ternary trees!Subtrees $\hat{\tau}_{\mathfrak{n}}$, $\check{\tau}_{\mathfrak{n}}$} are both signed ternary trees and we have $\tau = \check{\tau}_{\mathfrak{n}} \otimes_{\mathfrak{n}} \hat{\tau}_{\mathfrak{n}}$. See Figure \ref{fig:tree-planting}(A).

    For all $\tau_1,\tau_2,\tau_3 \in \mathscr{T}$ with $(\iota_{\tau_1},\iota_{\tau_2},\iota_{\tau_3}) = (\pm 1, \mp 1, \pm 1)$, there exists a unique $\tau \coloneqq \otimes (\tau_1, \tau_2, \tau_3) \in \mathscr{T}^{\pm}$\index{Trees and couples!Ternary trees!Tree product $\oplus$} such that $(\tau_{\mathfrak{r}^\tau[1]},\tau_{\mathfrak{r}^\tau[2]},\tau_{\mathfrak{r}^\tau[3]}) = (\tau_1,\tau_2,\tau_3)$.
    Moreover, if $\tau_j \in \mathscr{T}_{n_j}$ for $1 \le j \le 3$, then $\tau \in \mathscr{T}^\pm_{n}$ where $n = n_1+n_2+n_3+1$. See Figure \ref{fig:tree-planting}(B).
\end{definition}

\begin{figure}[htb!]
    \begin{subfigure}{0.35\textwidth}
        \begin{tikzpicture}[baseline=(root),scale=0.5,font=\footnotesize]
            \node(root)[Solid,label=above:{$\tau$}]{}
                    child {node[Solid]{}
                        child {node[Solid]{}}
                        child {node[Hollow]{}}
                        child {node(l) [Solid,label=below:{$\mathfrak{l}$}]{}}}
                    child {node[Hollow]{}}
                    child {node [Solid]{}};
            \node(r) at (4,-2) [Solid,label=above:{$\tau'$}]{}
                    child {node[Solid]{}}
                    child {node[Hollow]{}}
                    child {node[Solid]{}};
            \draw[dashed,thick] (l) to[out=0,in=180] (r);
        \end{tikzpicture}
        \caption{$\tau \otimes_{\mathfrak{l}} \tau'$}
    \end{subfigure}
    \begin{subfigure}{0.3\textwidth}
        \begin{tikzpicture}[baseline=(root),scale=0.5,font=\footnotesize]
            \node(root) at (0,0) [Solid]{}
                child {node(c1)[Solid]{}}
                child {node(c2)[Hollow]{}}
                child {node(c3)[Solid]{}};
            \node(r1) at (-3,-2) [Solid,label=above:{$\tau_1$}]{};
            \node(r2) at (-1,-3) [Hollow,label=above:{$\tau_2$}]{}
                child {node[Hollow]{}}
                child {node[Solid]{}}
                child {node[Hollow]{}};
            \node(r3) at (4,-1) [Solid,label=above:{$\tau_3$}]{}
                    child {node[Solid]{}}
                    child {node[Hollow]{}
                        child {node[Hollow]{}}
                        child {node[Solid]{}}
                        child {node[Hollow]{}}}
                    child {node[Solid]{}};
            \draw[dashed,thick] (r1) to[out=0,in=210]  (c1);
            \draw[dashed,thick] (r2) to[out=0,in=-90]  (c2);
            \draw[dashed,thick] (r3) to[out=180,in=0]  (c3);
        \end{tikzpicture}
        \caption{$\otimes(\tau_1,\tau_2,\tau_3)$}
    \end{subfigure}
    \caption{Tree planting}
    \label{fig:tree-planting}
\end{figure}

\subsection{Couples}

\label{sec:couple}

When computing the energy spectrum of the solution, we use Wick's theorem~\eqref{eq:wick-identity} and reduce the calculations to all possible pairings between ternary trees.
This leads to the definition of couples.
As a consequence of the Wick renormalization, no self-interactions shall appear in pairings, in contrast to the non-renormalized model studied in~\cite{Deng2021b,Deng2021}.

\begin{definition}
    \label{def:pairing}
    For all $n \in \mathbb{N}$, we denote $\mathscr{T}^*_n = \mathscr{T}^+_n \times \mathscr{T}^-_n$\index{Trees and couples!Tree pairs!Sets of tree pairs $\mathscr{T}^*_n$, $\mathscr{T}^*$}.
    We also denote
    $\mathscr{T}^* = \bigcup_{n \in \mathbb{N}} \mathscr{T}^*_n$\index{Trees and couples!Tree pairs!Tree pair $\tau^*$}.
    If $\tau^* = (\tau^+,\tau^-) \in \mathscr{T}^*$, then we denote $\mathfrak{L}^{\tau^*} = \mathfrak{L}^{\tau^+} \cup \mathfrak{L}^{\tau^-}$, $\mathfrak{B}^{\tau^*} = \mathfrak{B}^{\tau^+} \cup \mathfrak{B}^{\tau^-}$ and $\mathfrak{r}^{\tau^*} = \{\mathfrak{r}^{\tau^+},\mathfrak{r}^{\tau^-}\}$.
    We also denote $\varsigma_{\tau^*} = \varsigma_{\tau^+} \varsigma_{\tau^-}$\index{Trees and couples!Tree pairs!Polarity for tree pairs $\varsigma_{\tau^*}$}.
    A \emph{pairing} $\wp$\index{Trees and couples!Couples!Pairing $\wp$} of $\tau^* \in \mathscr{T}^*$ is a partition of $\mathfrak{L}^{\tau^*}$ into doubletons, which are called \emph{leaf pairs}, such that for any $\mathfrak{p}\in \wp$, there holds that $\mathfrak{p} \not\subset \mathfrak{L}^{\tau^\pm}$ and $\iota(\mathfrak{p}) = \{+1,-1\}$ for all $\mathfrak{p} \in \wp$. 
    The set of all pairings of $\tau^*$ is denoted by $\mathfrak{P}^{\tau^*}$\index{Trees and couples!Couples!Set of pairings $\mathfrak{P}^{\tau^*}$}.
\end{definition}

\begin{definition}
    \label{def:cp}
    A \emph{couple} is an element $\cp = (\tau^*,\wp)$\index{Trees and couples!Couples!Couple $\cp$} with $\tau^* \in \mathscr{T}^*$ and $\wp \in \mathfrak{P}_{\tau^*}$.
    If $\tau^* \in \mathscr{T}^*_n$, then $\cp$ has \emph{order}~$n$.
    There is a unique couple with order zero and it is called the trivial couple.
    The set of all couples with order $n$ is denoted by $\mathscr{K}_n$\index{Trees and couples!Couples!Sets of couples $\mathscr{K}_n$, $\mathscr{K}_n$}. 
    We also denote $\mathscr{K} = \bigcup_{n \in \mathbb{N}} \mathscr{K}_{n}.$
    If $\cp = (\tau^*,\wp) \in \mathscr{K}$, then we shall denote $\mathfrak{B}^\cp = \mathfrak{B}^{\tau^*}$\index{Trees and couples!Couples!Sets of branching nodes for couples $\mathfrak{B}^\cp$}, $\mathfrak{L}^\cp = \mathfrak{L}^{\tau^*}$\index{Trees and couples!Couples!Sets of leaf nodes for couples $\mathfrak{L}^\cp$} and $\varsigma_\cp = \varsigma_{\tau^*}$\index{Trees and couples!Couples!Polarity for couples $\varsigma_\cp$}.
    By abusing the notation, we shall also use $\cp$ to denote the finite set $ \tau^+ \cup \tau^-$ when there is no ambiguity.
\end{definition}

In Table~\ref{tab:eg-cp}, we list all elements in $\mathscr{K}_0$ and $\mathscr{K}_1$.
In these figures, we use dotted curves to connect leaf nodes paired to each other.
\begin{table}[htb!]
    \begin{tabular}{c|c}
        $\mathscr{K}_0$ & $\mathscr{K}_1$\\
        \hline
        \begin{tikzpicture}[baseline=(r+),scale=0.5,font=\footnotesize]
            \node(r+) at (0,0) [Solid]{};
            \node(r-) at (2,0) [Hollow]{};
            \draw[-,dotted] (r+) to[out=-45,in=225] (r-);
        \end{tikzpicture}
        & \begin{tikzpicture}[baseline=(r+),scale=0.5,font=\footnotesize]
            \node(r+) at (-1,-7) [Solid]{}
                child {node(a+)[Solid]{}}
                child {node(b-)[Hollow]{}}
                child {node(c+)[Solid]{}};
            \node(r-) at (3.5,-7) [Hollow]{}
                child {node(a-)[Hollow]{}}
                child {node(b+)[Solid]{}}
                child {node(c-)[Hollow]{}};
            \draw[-,dotted] (a+) to[out=-45,in=225] (a-);
            \draw[-,dotted] (b-) to[out=-45,in=225] (b+);
            \draw[-,dotted] (c+) to[out=-45,in=225] (c-);
        \end{tikzpicture}
        \begin{tikzpicture}[baseline=(r+),scale=0.5,font=\footnotesize]
            \node(r+) at (-1,-7) [Solid]{}
                child {node(a+)[Solid]{}}
                child {node(b-)[Hollow]{}}
                child {node(c+)[Solid]{}};
            \node(r-) at (3.5,-7) [Hollow]{}
                child {node(c-)[Hollow]{}}
                child {node(b+)[Solid]{}}
                child {node(a-)[Hollow]{}};
            \draw[-,dotted] (a+) to[out=-30,in=210] (a-);
            \draw[-,dotted] (b-) to[out=-30,in=210] (b+);
            \draw[-,dotted] (c+) to[out=-30,in=210] (c-);
        \end{tikzpicture}
    \end{tabular}
    \caption{The set $\mathscr{K}_n$ when $n=0,1$}
    \label{tab:eg-cp}
\end{table}
\subsubsection{Conjugate nodes and irreducible couples}
There is a factorization structure for the set of all couples, which allows us to form general couples from \emph{irreducible couples}.
In fact, as we shall see in Lemma~\ref{lem:cp-attaching}, by attaching a couple to a leaf pair $\mathfrak{p}$ of another couple, one obtains a new couple.
In this new couple, elements of $\mathfrak{p}$ may no longer be leaves but we can still consider them as ``paired'' in the sense that they exhibit some behaviors that all leaf pairs have.
To make precise of this observation, we introduce \emph{conjugate nodes}.

\begin{definition}
    \label{def:conju-node}
    Let $\cp = (\tau^*,\wp) \in \mathscr{K}$.
    For all $\mathfrak{p} = \{\mathfrak{l}^+,\mathfrak{l}^-\}\in \wp$, denote by $\mathscr{A}_{\mathfrak{p}}$ the set of all $\mathfrak{n} \in \cp$ such that either $\mathfrak{n} \succeq \mathfrak{l}^+$ or $\mathfrak{n} \succeq \mathfrak{l}^-$.
    Two nodes $\mathfrak{m},\mathfrak{n} \in \cp$ are \emph{conjugate} and we denote $\mathfrak{m} \sim \mathfrak{n}$\index{Trees and couples!Couples!Conjugate relation $\sim$} if for all $\mathfrak{p} \in \wp$ we have either $\{\mathfrak{m},\mathfrak{n}\} \subset \mathscr{A}_{\mathfrak{p}}$ or $\{\mathfrak{m},\mathfrak{n}\} \subset \cp \backslash \mathscr{A}_{\mathfrak{p}}$.
\end{definition}

\begin{lemma}
    \label{lem:conju-node-char}
    If $\cp = (\tau^*,\wp) \in \mathscr{K}$ and $\mathfrak{m},\mathfrak{n} \in \cp$ such that $\mathfrak{m} \ne \mathfrak{n}$ and $\mathfrak{m} \sim \mathfrak{n}$, then $\mathfrak{m}$ and $\mathfrak{n}$ belong to different trees and have opposite signs.
    Moreover, if $\mathfrak{h} \in \cp$ such that $\mathfrak{m} \sim \mathfrak{n} \sim \mathfrak{h}$, then $\mathfrak{h} \in \{\mathfrak{m},\mathfrak{n}\}$.
    Therefore, every conjugate class contains either one or two nodes.
\end{lemma}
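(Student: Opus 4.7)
The plan is to first show that distinct conjugate nodes must lie in different trees, then derive the opposite-signs condition via an elementary leaf-counting invariant, and finally use the equivalence-relation property of $\sim$ to conclude the cardinality bound on conjugate classes.

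For the same-tree exclusion, I would argue by contradiction: if $\mathfrak{m}\neq\mathfrak{n}$ both lie in, say, $\tau^+$, I exhibit a leaf $\mathfrak{l}\in\mathfrak{L}^{\tau^+}$ that is a descendant of exactly one of them. When $\mathfrak{m}$ and $\mathfrak{n}$ are $\prec$-incomparable in $\tau^+$, any leaf in the subtree rooted at $\mathfrak{m}$ works (since $\mathfrak{n}$ has no descendants in that subtree); when $\mathfrak{m}\prec\mathfrak{n}$, the branching node $\mathfrak{n}$ has three children of which only one has $\mathfrak{m}$ in its subtree, so some leaf descended from $\mathfrak{n}$ is not descended from $\mathfrak{m}$; the case $\mathfrak{n}\prec\mathfrak{m}$ is symmetric. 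Since neither $\mathfrak{m}$ nor $\mathfrak{n}$ lies in $\tau^-$, for the pair $\mathfrak{p}\in\wp$ containing $\mathfrak{l}$, membership in $\mathscr{A}_\mathfrak{p}$ reduces to being an ancestor of $\mathfrak{l}$ in $\tau^+$, and $\mathfrak{l}$ was chosen precisely to separate $\mathfrak{m}$ from $\mathfrak{n}$, contradicting $\mathfrak{m}\sim\mathfrak{n}$.

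For the opposite-signs claim, I would first prove by induction on subtree size the leaf-counting identity that for every $\mathfrak{n}\in\cp$, the number of positive leaves minus the number of negative leaves in the subtree rooted at $\mathfrak{n}$ equals $\iota_\mathfrak{n}$; the inductive step uses directly that the three children's signs are $\iota_\mathfrak{n},-\iota_\mathfrak{n},\iota_\mathfrak{n}$ and hence sum to $\iota_\mathfrak{n}$. Now with $\mathfrak{m}\in\tau^+$ and $\mathfrak{n}\in\tau^-$ conjugate, the condition $\mathfrak{m}\sim\mathfrak{n}$ translates (using $\mathfrak{m}\notin\tau^-$ and $\mathfrak{n}\notin\tau^+$) into the statement that for every leaf pair $\mathfrak{p}=\{\mathfrak{l}^+,\mathfrak{l}^-\}\in\wp$, the $\tau^+$-leaf $\mathfrak{l}^+$ is a descendant of $\mathfrak{m}$ if and only if $\mathfrak{l}^-$ is a descendant of $\mathfrak{n}$. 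Thus $\wp$ restricts to a bijection between the leaves in the subtrees of $\mathfrak{m}$ and of $\mathfrak{n}$; since $\iota(\mathfrak{p})=\{+1,-1\}$, this bijection swaps positive with negative leaves, and the leaf-counting identity then yields $\iota_\mathfrak{n}=-\iota_\mathfrak{m}$.

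Finally, $\sim$ is immediately reflexive, symmetric, and transitive from its definition, hence an equivalence relation. Given distinct conjugates $\mathfrak{m},\mathfrak{n}$ (necessarily lying in different trees by the first step) and any $\mathfrak{h}$ with $\mathfrak{m}\sim\mathfrak{n}\sim\mathfrak{h}$, if $\mathfrak{h}\notin\{\mathfrak{m},\mathfrak{n}\}$ then applying the first step to $\{\mathfrak{h},\mathfrak{m}\}$ and to $\{\mathfrak{h},\mathfrak{n}\}$ forces $\mathfrak{h}$ to lie in a tree different from both $\tau^+$ and $\tau^-$, which is absurd. Hence $\mathfrak{h}\in\{\mathfrak{m},\mathfrak{n}\}$ and every conjugate class has cardinality at most two. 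The main nontrivial point is the sign identity, which the leaf-counting induction handles cleanly; everything else is a direct case analysis plus the standard equivalence-class argument.
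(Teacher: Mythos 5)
Your proof is correct and follows essentially the same route as the paper's: a separating leaf shows distinct conjugate nodes lie in different trees, the pairing then restricts to a sign-reversing bijection between the leaves under $\mathfrak{m}$ and under $\mathfrak{n}$ (forcing opposite signs), and the bound on class size follows from the first two claims. The only differences are cosmetic: you spell out the leaf-counting identity and the case analysis for choosing the separating leaf, which the paper leaves implicit, and you finish the last step via tree membership rather than signs.
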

\begin{proof}
    Because $\mathfrak{m} \ne \mathfrak{n}$, there exists $\mathfrak{l} \in \mathfrak{L}^\cp$ such that $\mathfrak{l} \prec \mathfrak{m}$ and $\mathfrak{l} \not\prec \mathfrak{n}$.
    Let $\mathfrak{p} = \{\mathfrak{l},\mathfrak{l}'\} \in \wp$.
    Because $\mathfrak{m} \sim \mathfrak{n}$ and $\mathfrak{m} \in \mathscr{A}_{\mathfrak{p}}$, we have $\mathfrak{n} \in \mathscr{A}_{\mathfrak{p}}$.
    Since $\mathfrak{l} \not\prec \mathfrak{n}$, we must have $\mathfrak{l}' \in \mathfrak{n}$.
    Now that $\mathfrak{l}$ and $\mathfrak{l}'$ belong to different trees, the nodes $\mathfrak{m}$ and $\mathfrak{n}$ must also belong to different trees.
    Therefore the pairing $\wp$ assigns every leaf $\mathfrak{l} \prec \mathfrak{m}$ uniquely to a leaf $\mathfrak{l}' \in \mathfrak{n}$ with an opposite sign and vice versa.
    We then deduce that $\mathfrak{m}$ and $\mathfrak{n}$ must have opposite signs.
    If $\mathfrak{m} \sim \mathfrak{n} \sim \mathfrak{h}$, then at least two nodes among $\mathfrak{m}$,$\mathfrak{n}$, and $\mathfrak{h}$ have the same sign and these two nodes must therefore be equal.
\end{proof}

\begin{definition}
    If $\cp \in \mathscr{K}$, then $\mathfrak{C}^\cp$ is the set of all conjugate classes of $\cp$ with $\pi^\cp : \cp \to \mathfrak{C}^\cp$ being the canonical quotient map.
    We decompose $\mathfrak{C}^\cp = \mathfrak{C}^\cp_1 \cup \mathfrak{C}^\cp_2$ where $\mathfrak{C}^\cp_1$ consists of singletons and $\mathfrak{C}^\cp_2$ consists of doubletons.\index{Trees and couples!Couples!Conjugate class $\mathfrak{C}^\cp$, $\mathfrak{C}^\cp_1$, $\mathfrak{C}^\cp_2$}
\end{definition}

\begin{lemma}
    \label{lem:cp-attaching}
    Let $\cp_j = (\tau^*_j,\wp_j) \in \mathscr{K}$ with $j \in \{0,1\}$ and let $\mathfrak{p} = \{\mathfrak{l}^+,\mathfrak{l}^-\} \in \wp_0$.
    Let $\sigma_\pm = \pm$ if $\mathfrak{l}^\pm \in \tau_0^\pm$ and let $\sigma_\pm = \mp$ if $\mathfrak{l}^\pm \in \tau_0^\mp$.
    By attaching (and thus identifying) $\mathfrak{r}^{\tau_1^+}$ with $\mathfrak{l}^+$ and $\mathfrak{r}^{\tau_1^-}$ with $\mathfrak{l}^-$, we obtain a new couple $\cp_0 \otimes_{\mathfrak{p}} \cp_1 = (\tau^*,\wp)$ with $\mathfrak{C}^{\cp_0 \otimes_{\mathfrak{p}} \cp_1} = \mathfrak{C}^{\cp_0} \cup \mathfrak{C}^{\cp_1}$\index{Trees and couples!Couples!Couple product $\otimes_{\mathfrak{p}}$} by letting
    \begin{equation*}
        \tau^\pm = \tau_0^\pm \otimes_{\mathfrak{l}^{\sigma_\pm}} \tau_1^{\sigma_{\pm}}, \quad
        \wp =\wp_0 \cup \wp_1 \backslash \{\mathfrak{p}\}.
    \end{equation*}
    Conversely, let $\cp = (\tau^*,\wp) \in \mathscr{K}$ and let $\mathfrak{c} = \{\mathfrak{n}^+,\mathfrak{n}^-\} \in \mathfrak{C}^\cp_2$.
    Let $\sigma_\pm = \pm$ if $\mathfrak{n}^\pm \in \tau^\pm$ and let $\sigma_\pm = \mp$ if $\mathfrak{n}^\pm \in \tau^\mp$.
    Denote $\hat{\tau}^\pm = \hat{\tau}^{\sigma_\pm}_{\mathfrak{n}^\pm}$ and $\check{\tau}^\pm = \check{\tau}^{\sigma_\pm}_{\mathfrak{n}^\pm} $ and let
    \begin{equation*}
        \hat{\cp}_{\mathfrak{c}}
        = (\hat{\tau}^*,\wp \backslash 2^{\mathfrak{L}^{\hat{\tau}^*}} \cup \{\mathfrak{c}\}), \quad
        \check{\cp}_{\mathfrak{c}}
        = (\check{\tau}^*,\wp \cap 2^{\mathfrak{L}^{\hat{\tau}^*}})
    \end{equation*}
    Then $\hat{\cp}_{\mathfrak{c}}, \check{\cp}_{\mathfrak{c}} \in \mathscr{K}$ and $\cp = \check{\cp}_{\mathfrak{c}} \otimes_{\mathfrak{c}} \hat{\cp}_{\mathfrak{c}}$.
\end{lemma}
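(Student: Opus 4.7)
The statement has two directions: a forward construction (attaching $\cp_1$ to $\cp_0$ at a leaf pair $\mathfrak{p}$) and a converse (splitting $\cp$ along a two-element conjugate class $\mathfrak{c}$). The forward direction is largely definition-chasing, whereas the converse hinges on a single structural consequence of $\mathfrak{n}^+\sim\mathfrak{n}^-$, which I expect to be the main obstacle.

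For the forward direction, I would first invoke Definition~\ref{def:tree-str}: the tree attachments $\tau^\pm_0 \otimes_{\mathfrak{l}^{\sigma_\pm}} \tau_1^{\sigma_\pm}$ are well-defined because $\iota_{\mathfrak{l}^\pm}=\pm 1=\iota_{\mathfrak{r}^{\tau_1^\pm}}$ by the sign convention on $\mathfrak{p}$ and on ternary trees. The set of leaves of $\tau^*$ equals $(\mathfrak{L}^{\tau_0^*}\setminus\mathfrak{p})\cup\mathfrak{L}^{\tau_1^*}$, so $\wp=(\wp_0\setminus\{\mathfrak{p}\})\cup\wp_1$ partitions $\mathfrak{L}^{\tau^*}$ into doubletons satisfying the sign and cross-tree constraints of Definition~\ref{def:pairing}. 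For the conjugate-class identity, the key observation is that for any node $\mathfrak{m}\in\cp_0$ and any pair $\mathfrak{p}''\in\wp_1$, one has $\mathfrak{m}\in\mathscr{A}^{\cp_0\otimes_\mathfrak{p}\cp_1}_{\mathfrak{p}''}$ iff $\mathfrak{m}\succeq\mathfrak{l}^+$ or $\mathfrak{m}\succeq\mathfrak{l}^-$, iff $\mathfrak{m}\in\mathscr{A}^{\cp_0}_{\mathfrak{p}}$. Thus testing a node of $\cp_0$ against any $\mathfrak{p}''\in\wp_1$ is the same as testing it against the removed pair $\mathfrak{p}$, so the conjugacy relation on $\cp_0$ is preserved; symmetrically for $\cp_1$; and no cross-conjugacy between $\cp_0\setminus\mathfrak{p}$ and $\cp_1\setminus\mathfrak{r}^{\tau_1^*}$ can arise because some pair in the other couple separates them. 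After the identifications $\mathfrak{l}^\pm=\mathfrak{r}^{\tau_1^\pm}$, the conjugate class $\{\mathfrak{l}^+,\mathfrak{l}^-\}\in\mathfrak{C}^{\cp_0}_2$ is identified with $\{\mathfrak{r}^{\tau_1^+},\mathfrak{r}^{\tau_1^-\}}\in\mathfrak{C}^{\cp_1}_2$, which is exactly the overlap in $\mathfrak{C}^{\cp_0}\cup\mathfrak{C}^{\cp_1}$.

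For the converse, the crucial step is to show that \emph{no leaf pair of $\wp$ straddles the subtree boundaries}: every $\mathfrak{p}'\in\wp$ is contained either in $\mathfrak{L}^{\hat{\tau}^*}$ or in $\mathfrak{L}^{\check{\tau}^*}$. Fix $\mathfrak{p}'\in\wp$. By $\mathfrak{n}^+\sim\mathfrak{n}^-$, either both $\mathfrak{n}^\pm\in\mathscr{A}_{\mathfrak{p}'}$ or neither is. In the first case, each $\mathfrak{n}^\pm$ is an ancestor of some leaf in $\mathfrak{p}'$; since $\mathfrak{n}^+$ and $\mathfrak{n}^-$ belong to different trees (Lemma~\ref{lem:conju-node-char}) and each $\mathfrak{p}'$ contains exactly one leaf of each tree, both leaves of $\mathfrak{p}'$ are descendents of $\mathfrak{n}^\pm$, hence lie in $\mathfrak{L}^{\check{\tau}^*}$; in the second case, neither leaf lies below $\mathfrak{n}^\pm$, so both lie in $\mathfrak{L}^{\hat{\tau}^*}$. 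Consequently the decomposition $\wp\cap 2^{\mathfrak{L}^{\check{\tau}^*}}$ together with $(\wp\cap 2^{\mathfrak{L}^{\hat{\tau}^*}})\cup\{\mathfrak{c}\}$ exhausts all leaves of the respective subtrees. The additional pair $\mathfrak{c}=\{\mathfrak{n}^+,\mathfrak{n}^-\}$ is an admissible leaf pair for $\hat{\cp}_{\mathfrak{c}}$ since $\mathfrak{n}^\pm$ have opposite signs and lie in different trees by Lemma~\ref{lem:conju-node-char}, so both $\hat{\cp}_{\mathfrak{c}}$ and $\check{\cp}_{\mathfrak{c}}$ are genuine couples.

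Finally, the identity $\cp=\check{\cp}_{\mathfrak{c}}\otimes_{\mathfrak{c}}\hat{\cp}_{\mathfrak{c}}$ is then an immediate unravelling of definitions: on the tree level, $\tau^\pm=\check{\tau}^{\sigma_\pm}_{\mathfrak{n}^\pm}\otimes_{\mathfrak{n}^\pm}\hat{\tau}^{\sigma_\pm}_{\mathfrak{n}^\pm}$ by the last sentence of Definition~\ref{def:tree-str}, and on the pairing level $\wp=(\wp\cap 2^{\mathfrak{L}^{\check{\tau}^*}})\cup(\wp\cap 2^{\mathfrak{L}^{\hat{\tau}^*}})$ by the straddling lemma, which matches the formula for the $\otimes_{\mathfrak{c}}$ operation (with the inserted pair $\mathfrak{c}$ removed again). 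The identity $\mathfrak{C}^{\cp_0\otimes_\mathfrak{p}\cp_1}=\mathfrak{C}^{\cp_0}\cup\mathfrak{C}^{\cp_1}$ established in the forward direction then gives, applied to $\cp_0=\hat{\cp}_\mathfrak{c}$ and $\cp_1=\check{\cp}_\mathfrak{c}$, a consistency check on the conjugate structure of $\cp$. The whole argument is combinatorial and local, and the only non-obvious input is the straddling claim, whose proof uses the definition of conjugacy in an essential way.
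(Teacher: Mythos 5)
Your proposal is correct and takes essentially the same approach as the paper: the paper's own proof declares the forward direction self-explanatory and reduces the converse to the single observation that every pair of $\wp$ lies entirely in $\mathfrak{L}^{\hat{\tau}^*}$ or entirely in $\mathfrak{L}^{\check{\tau}^*}$, which is precisely your no-straddling claim. You simply supply the details the paper omits, namely the conjugacy-based proof of that dichotomy (using that $\mathfrak{c}\in\mathfrak{C}^\cp_2$ and Lemma~\ref{lem:conju-node-char}) and the verification of $\mathfrak{C}^{\cp_0\otimes_{\mathfrak{p}}\cp_1}=\mathfrak{C}^{\cp_0}\cup\mathfrak{C}^{\cp_1}$ in the forward direction.
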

\begin{proof}
    The first half of the lemma is self explanatory.
    To prove the second half, it suffices to observe that for all $\mathfrak{p} \in \wp$, we have either $\mathfrak{p} \in 2^{\mathfrak{L}^{\hat{\tau}^*}}$ or $\mathfrak{p} \in \wp \backslash 2^{\mathfrak{L}^{\hat{\tau}^*}}$.
\end{proof}

\begin{definition}
    \label{def:cp-irreducible}
    A couple $\cp = (\tau^*,\wp) \in \mathscr{K} \backslash \mathscr{K}_0$ is \emph{irreducible} if $\mathfrak{C}^\cp_2 = \{\mathfrak{r}^{\tau^*}\} \cup \wp$.
    The set of irreducible subcouples of $\cp \in \mathscr{K}$, denoted by $\mathscr{I}_{\cp}$\index{Trees and couples!Couples!Sets of irreducible subcouples $\mathscr{I}_\cp$} is defined recursively as follows:
    \begin{enumerate}
        \item If $\cp \in \mathscr{K}_0$, then $\mathscr{I}_\cp = \emptyset$.
        \item If $\cp$ is irreducible, then $\mathscr{I}_\cp = \{\cp\}$.
        \item If $\cp = \cp_1 \otimes_{\mathfrak{c}} \cp_2$ for some $\mathfrak{c} \subset \mathfrak{C}^{\cp}_2$, then $\mathscr{I}_\cp = \mathscr{I}_{\cp_1} \cup \mathscr{I}_{\cp_2}$.
    \end{enumerate}
\end{definition}

One verifies that $\mathscr{I}_\cp$ is well-defined because it is independent of the choice of $\mathfrak{c} \in \mathfrak{C}^\cp_2$.
In Figure~\ref{fig:cp-fac} we give two examples of couples factorized into irreducible subcouples.
In these examples, nodes connected by dashed lines are attached together and nodes connected by dotted lines are conjugate to each other.

\begin{figure}[htb!]
    \centering
    \begin{subfigure}{0.4\textwidth}
        \begin{tikzpicture}[baseline=(root+),scale=0.5,font=\footnotesize]
            \node(root+)[Solid]{}
                child {node[Solid]{}
                    child {node(n6+)[Solid]{}}
                    child {node(n5-)[Hollow]{}}
                    child {node[Solid]{}
                        child {node(p+)[Solid]{}}
                        child {node(n4-)[Hollow]{}}
                        child {node(n3+)[Solid]{}}}}
                child {node(n2-)[Hollow]{}}
                child {node(n1+)[Solid]{}};
            \node(root-) at (4.5,0) [Hollow]{}
                child {node(n1-)[Hollow]{}}
                child {node[Solid]{}
                    child {node(n2+)[Solid]{}}
                    child {node(n3-)[Hollow]{}}
                    child {node[Solid]{}
                        child {node(n5+)[Solid]{}}
                        child {node(p-)[Hollow]{}}
                        child {node(n4+)[Solid]{}}}}
                child {node(n6-)[Hollow]{}};
            \node(r+) at (0,-6.5) [Solid]{}
                child {node(a+)[Solid]{}}
                child {node(b-)[Hollow]{}}
                child {node(c+)[Solid]{}};
            \node(r-) at (4.5,-6.5) [Hollow]{}
                child {node(a-)[Hollow]{}}
                child {node(b+)[Solid]{}}
                child {node(c-)[Hollow]{}};
            \draw[dotted] (n1+) to (n1-);
            \draw[dotted] (n2-) to (n2+);
            \draw[dotted] (n3+) to (n3-);
            \draw[dotted] (n4-) to[out=-30,in=210]  (n4+);
            \draw[dotted] (n5-) to[out=30,in=150]  (n5+);
            \draw[dotted] (n6+) to[out=75,in=165]  (n6-);
            \draw[dotted] (root+) to[out=15,in=165] (root-);
            \draw[dotted] (r+) to[out=15,in=165] (r-);
            \draw[dotted] (p+) to[out=-30,in=210] (p-);
            \draw[dashed,thick] (r+) to[out=135,in=-90] (p+);
            \draw[dashed,thick] (r-) to[out=45,in=-90] (p-);
            \draw[dotted] (a+) to[out=-30,in=210] (a-);
            \draw[dotted] (b-) to[out=-30,in=210]  (b+);
            \draw[dotted] (c+) to[out=-30,in=210] (c-);
        \end{tikzpicture}
        \caption{Conjugate nodes}
        \label{fig:conju-nodes}
    \end{subfigure}
    \begin{subfigure}{0.55\textwidth}
        \begin{tikzpicture}[baseline=(root+),scale=0.5,font=\footnotesize]
            \node(r+) at (0,0) [Solid]{}
                child {node(a+)[Solid]{}}
                child {node(b-)[Hollow]{}}
                child {node(c+)[Solid]{}};
            \node(r-) at (4.5,0) [Hollow]{}
                child {node(c-)[Hollow]{}}
                child {node(b+)[Solid]{}}
                child {node(a-)[Hollow]{}};

            \node(r1+) at (-3.75,-3) [Solid]{}
                child {node(a1+)[Solid]{}}
                child {node(b1-)[Hollow]{}}
                child {node(c1+)[Solid]{}};
            \node(r1-) at (0.75,-3) [Hollow]{}
                child {node(a1-)[Hollow]{}}
                child {node(b1+)[Solid]{}}
                child {node(c1-)[Hollow]{}};

            \node(r2+) at (4,-5) [Solid]{}
                child {node(a2+)[Solid]{}}
                child {node(b2-)[Hollow]{}}
                child {node(c2+)[Solid]{}};
            \node(r2-) at (8.5,-5) [Hollow]{}
                child {node(a2-)[Hollow]{}}
                child {node(b2+)[Solid]{}}
                child {node(c2-)[Hollow]{}};

            \node(r3+) at (-3.75,-6.5) [Solid]{}
                child {node(a3+)[Solid]{}}
                child {node(b3-)[Hollow]{}}
                child {node(c3+)[Solid]{}};
            \node(r3-) at (0.75,-6.5) [Hollow]{}
                child {node(c3-)[Hollow]{}}
                child {node(b3+)[Solid]{}}
                child {node(a3-)[Hollow]{}};

            \draw[dashed,thick] (r1+) to[out=60,in=180] (a+);
            \draw[dashed,thick] (r1-) to[out=0,in=-120] (a-);
            \draw[dashed,thick] (r2+) to[out=45,in=-45] (b+);
            \draw[dashed,thick] (r2-) to[out=150,in=-30] (b-);
            \draw[dashed,thick] (r3+) to[out=150,in=-90] (a1+);
            \draw[dashed,thick] (r3-) to[out=150,in=-90] (a1-);

            \draw[dotted] (r+) to[out=15,in=165] (r-);
            \draw[dotted] (a+) to[out=-30,in=210] (a-);
            \draw[dotted] (b-) to[out=-30,in=210]  (b+);
            \draw[dotted] (c+) to[out=-30,in=210] (c-);
            \draw[dotted] (r1+) to[out=15,in=165] (r1-);
            \draw[dotted] (a1+) to[out=-30,in=210] (a1-);
            \draw[dotted] (b1-) to[out=-30,in=210]  (b1+);
            \draw[dotted] (c1+) to[out=-30,in=210] (c1-);
            \draw[dotted] (r2+) to[out=15,in=165] (r2-);
            \draw[dotted] (a2+) to[out=-30,in=210] (a2-);
            \draw[dotted] (b2-) to[out=-30,in=210]  (b2+);
            \draw[dotted] (c2+) to[out=-30,in=210] (c2-);
            \draw[dotted] (r3+) to[out=15,in=165] (r3-);
            \draw[dotted] (a3+) to[out=-30,in=210] (a3-);
            \draw[dotted] (b3-) to[out=-30,in=210]  (b3+);
            \draw[dotted] (c3+) to[out=-30,in=210] (c3-);
        \end{tikzpicture}
        \caption{A regular couple}
        \label{fig:reg-cp}
    \end{subfigure}
    \caption{Examples of couple factorization}
    \label{fig:cp-fac}
\end{figure}

\subsubsection{Regular couples}

In~\cite{Deng2021}, regular couples were defined as the leading diagrams of the Feynman diagrammatic expansions, which makes them the only couples that contribute to the wave kinetic equations. In fact, it is shown that couples that are not regular either have a negligible contribution or cancel among themselves. For WNLS, only couples that are not self-interactive are taken into considerations.
Therefore, in this paper, regular couples are defined as follows.

\begin{definition}
    The regular index of a couple $\cp \in \mathscr{K}$ is defined by
    \begin{equation*}
        \ind(\cp) = \card(\mathscr{I}_\cp \backslash \mathscr{K}_1).\index{Trees and couples!Couples!Regular index $\ind(\cp)$}
    \end{equation*}
    A couple $\cp$ is \emph{regular} if $\ind(\cp) = 0$.
    The set of all regular couples is denoted by $\mathscr{K}^\reg$.
    For all $n \in \mathbb{N}$,  we denote $\mathscr{K}^\reg_n = \mathscr{K}^\reg \cap \mathscr{K}_n$.\index{Trees and couples!Couples!Sets of regular couples $\mathscr{K}_n$, $\mathscr{K}$}
\end{definition}

Clearly $\mathscr{K}^\reg_0 = \mathscr{K}_0$ and $\mathscr{K}^\reg_1 = \mathscr{K}_1$.
If $n \ge 2$, then $\mathscr{K}^\reg_n$ is a proper subset of $\mathscr{K}_n$.
In Figure~\ref{fig:reg-cp}, we give an example of a regular couple.

\begin{lemma}
    \label{lem:reg-cp-str}
    Let $\cp = (\tau^*,\wp) \in \mathscr{K}$.
    For all $\sigma \in \{+1,-1\}$ and for all $j \in \{1,2,3\}$, let $ \mathfrak{c}^\sigma_{j} = \{\mathfrak{r}^+_{j},\mathfrak{r}^-_{2+ \sigma (j-2)}\}$.
    Then $\cp \in \mathscr{K}^\reg$ if and only if either one of the following statements holds:
    \begin{enumerate}
        \item $\{\mathfrak{c}^+_1,\mathfrak{c}^+_2,\mathfrak{c}^+_3\} \subset \mathfrak{C}^\cp$ and $\{\check{\cp}_{\mathfrak{c}^+_1},\check{\cp}_{\mathfrak{c}^+_2},\check{\cp}_{\mathfrak{c}^+_3}\} \subset \mathscr{K}^\reg$,
        \item $\{\mathfrak{c}^-_1,\mathfrak{c}^-_2,\mathfrak{c}^-_3\} \subset \mathfrak{C}^\cp$ and $\{\check{\cp}_{\mathfrak{c}^-_1},\check{\cp}_{\mathfrak{c}^-_2},\check{\cp}_{\mathfrak{c}^-_3}\} \subset \mathscr{K}^\reg$.
    \end{enumerate}
    Conversely, for all $\cp_1,\cp_2,\cp_3 \in \mathscr{K}^\reg$ and for all $\sigma \in \{+ 1,-1\}$, there exists a unique regular couple $\cp\coloneqq\otimes^\sigma(\cp_1,\cp_2,\cp_3)$ such that $\check{\cp}_{\mathfrak{c}^\sigma_j} = \cp_j$ for all $ j \in \{1,2,3\} $.
\end{lemma}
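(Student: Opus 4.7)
The plan is to prove both the biconditional and the construction of $\otimes^\sigma$ by strong induction on the order $n$ of $\cp$. The base cases $n \in \{0,1\}$ will be handled by direct inspection of Table~\ref{tab:eg-cp}: the trivial couple has no root children and nothing to verify, while each of the two elements of $\mathscr{K}_1$ has its leaf pairing equal to $\{\mathfrak{c}^\sigma_1,\mathfrak{c}^\sigma_2,\mathfrak{c}^\sigma_3\}$ for exactly one sign $\sigma \in \{+1,-1\}$, with all three subcouples $\check{\cp}_{\mathfrak{c}^\sigma_j}$ trivial and hence in $\mathscr{K}^\reg$.

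For the inductive step of the forward direction, I first observe that a regular couple of order $n \ge 2$ cannot itself be irreducible, since otherwise $\{\cp\} = \mathscr{I}_\cp$ would fail to lie in $\mathscr{K}_1$. Hence there is an internal conjugate pair $\mathfrak{c} = \{\mathfrak{m}^+,\mathfrak{m}^-\} \in \mathfrak{C}^\cp_2 \setminus (\{\mathfrak{r}^{\tau^*}\} \cup \wp)$. Applying Lemma~\ref{lem:cp-attaching} gives the factorization $\cp = \check{\cp}_\mathfrak{c} \otimes_\mathfrak{c} \hat{\cp}_\mathfrak{c}$ with both factors regular of orders in $[1,n-1]$. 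The inductive hypothesis (or the base case) applied to $\hat{\cp}_\mathfrak{c}$ then yields a sign $\sigma$ realizing the root-children conjugacy pattern inside $\hat{\cp}_\mathfrak{c}$. The crucial step is to lift each $\mathfrak{r}^+_j \sim \mathfrak{r}^-_{2+\sigma(j-2)}$ from $\hat{\cp}_\mathfrak{c}$ back to $\cp$: since $\{\mathfrak{m}^+,\mathfrak{m}^-\}$ appears as a leaf pair in $\hat{\cp}_\mathfrak{c}$, the only new leaves $\mathfrak{m}^\pm$ of $\hat{\tau}^\pm$ must lie under matching root-children on the two sides, and the $\cp$-conjugacy of $\mathfrak{c}$ supplies the bijection between the $\cp$-leaves below $\mathfrak{m}^+$ and those below $\mathfrak{m}^-$, patching the $\hat{\cp}_\mathfrak{c}$-conjugacy into a full $\cp$-conjugacy. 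Once all three $\mathfrak{c}^\sigma_j$ are shown to lie in $\mathfrak{C}^\cp_2$, the regularity of each $\check{\cp}_{\mathfrak{c}^\sigma_j}$ follows from the decomposition $\mathscr{I}_\cp = \{\cp_0\} \cup \bigcup_{j=1}^{3} \mathscr{I}_{\check{\cp}_{\mathfrak{c}^\sigma_j}}$, where $\cp_0 \in \mathscr{K}_1$ realizes sign $\sigma$, together with $\mathscr{I}_\cp \subset \mathscr{K}_1$.

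For the backward direction, I reverse this construction: iteratively attach the three assumed-regular subcouples $\check{\cp}_{\mathfrak{c}^\sigma_j}$ to the three leaf pairs of the sign-$\sigma$ element $\cp_0 \in \mathscr{K}_1$ via Lemma~\ref{lem:cp-attaching}, yielding $\cp$ with $\mathscr{I}_\cp = \{\cp_0\} \cup \bigcup_j \mathscr{I}_{\check{\cp}_{\mathfrak{c}^\sigma_j}} \subset \mathscr{K}_1$, so that $\cp \in \mathscr{K}^\reg$. The existence and uniqueness of $\otimes^\sigma(\cp_1,\cp_2,\cp_3)$ then follow directly: graft the positive and negative trees of the $\cp_j$ onto the root-children of $\cp_0$ matching the sign $\sigma$, and concatenate the pairings; both the signed ternary tree structure and the pairing are plainly determined by $(\cp_1,\cp_2,\cp_3,\sigma)$.

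The hard part will be the lifting step of the forward direction. Conjugacy is a global condition on how the entire set of leaves of $\cp$ splits bijectively across $\tau^+$ and $\tau^-$, and the cut operation $\cp \mapsto \hat{\cp}_\mathfrak{c}$ collapses whole subtrees into isolated leaves. One must rule out configurations where $\mathfrak{m}^+$ sits below $\mathfrak{r}^+_j$ while $\mathfrak{m}^-$ sits below some incompatible $\mathfrak{r}^-_{j'}$ with $j' \ne 2+\sigma(j-2)$, which is forced by combining the $\hat{\cp}_\mathfrak{c}$-conjugacy of root-children with the fact that $\{\mathfrak{m}^+,\mathfrak{m}^-\}$ is itself a leaf pair in $\hat{\cp}_\mathfrak{c}$. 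Once this compatibility is secured, the $\cp$-conjugacy of $\mathfrak{c}$ supplies the missing bijection between the excised subtrees and completes the lift.
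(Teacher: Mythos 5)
Your proposal is correct and takes essentially the same route as the paper: the paper's own proof is a brief induction on the order of the couple resting on the definition of $\mathscr{I}_\cp$ and the attach/cut factorization of Lemma~\ref{lem:cp-attaching}, which is exactly the induction you carry out, with the lifting/descent of conjugacy of the root children across the cut (your ``hard part'') being the detail the paper leaves implicit. No changes needed.
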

\begin{proof}
    Suppose that $\cp \in \mathscr{K}^\reg_n$.
    The lemma clearly holds when $n = 1$.
    The general case follows by induction and the definition of the regular couple.
\end{proof}

\begin{remark}
    \label{rmk:I-q-ordering}
    The lemma above describes the structure of regular couples.
    As a consequence, if $\cp \in \mathscr{K}^\reg$, then we can naturally equip $\mathscr{I}_\cp$ with a strict partial order which makes it a tree (in fact $\mathscr{I}_\cp$ will be a ternary tree with child nodes not ordered).
    To do this, we first give a strict partial order on $\mathfrak{C}^\cp_2$ by setting $\{\mathfrak{b}_1^+,\mathfrak{b}_1^-\} \prec \{\mathfrak{b}_2^+,\mathfrak{b}_2^+\}$ if either $\mathfrak{b}_1^\pm \prec \mathfrak{b}_2^\pm$ or $\mathfrak{b}_1^\pm \prec \mathfrak{b}_2^\mp$.
    Then the strict partial order on $\mathscr{I}_\cp$ is defined via the identification $\mathscr{I}_\cp \ni r \mapsto \mathfrak{B}^r \in \mathfrak{C}^\cp_2$.
\end{remark}

\begin{lemma}
    \label{lem:tree-regcp-counting}
    For all $n \in \mathbb{N}$, we have
    \begin{equation*}
        \card (\mathscr{T}^\pm_n) =\frac{1}{2^n}\card (\mathscr{K}^\reg_n) = \frac{1}{2n+1}\binom{3n}{n}
        \lesssim 3^{3n}/ 2^{2n}.
    \end{equation*}
\end{lemma}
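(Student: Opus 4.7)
The plan is to extract matching recurrences for $b_n := \card(\mathscr{T}^+_n) = \card(\mathscr{T}^-_n)$ and $a_n := \card(\mathscr{K}^\reg_n)$ from the tree-building operations in Definition~\ref{def:tree-str} and the structural characterization of regular couples in Lemma~\ref{lem:reg-cp-str}, and then to recognise both counts as known instances of the Fuss--Catalan numbers.

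For the tree count, I would first observe that the sign function on any signed ternary tree is completely determined by the sign of the root via $\iota_{\mathfrak{b}[j]} = \iota_{\mathfrak{b}}(-1)^{j+1}$, so flipping the root sign gives a bijection $\mathscr{T}^+_n \leftrightarrow \mathscr{T}^-_n$; both have common cardinality $b_n$. Any positive tree of order $n \geq 1$ decomposes uniquely via $\otimes$ into an ordered triple of subtrees with signs $(+,-,+)$ and orders summing to $n-1$, which yields the recurrence
$$b_n = \sum_{n_1+n_2+n_3=n-1} b_{n_1} b_{n_2} b_{n_3}, \quad n \geq 1, \qquad b_0 = 1.$$
The generating function $B(x) = \sum_n b_n x^n$ then satisfies the cubic equation $B(x) = 1 + xB(x)^3$, and Lagrange inversion (equivalently, the standard Fuss--Catalan formula for ternary plane trees) yields $b_n = \frac{1}{2n+1}\binom{3n}{n}$.

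For the regular-couple count, Lemma~\ref{lem:reg-cp-str} shows that every regular couple of order $n \geq 1$ is of the form $\otimes^\sigma(\cp_1, \cp_2, \cp_3)$ for some $\sigma \in \{+1,-1\}$ and regular subcouples $\cp_j$ of orders summing to $n-1$. The sign $\sigma$ is uniquely determined by the couple itself: the two choices correspond to distinct conjugacy patterns between the children of $\mathfrak{r}^{\tau^+}$ and $\mathfrak{r}^{\tau^-}$, and Lemma~\ref{lem:conju-node-char} forces each of these child nodes to be conjugate to at most one other node, so only one pattern can actually hold. This yields the recurrence $a_n = 2 \sum_{n_1+n_2+n_3 = n-1} a_{n_1} a_{n_2} a_{n_3}$ with $a_0 = 1$, and a one-line induction comparing with the recurrence for $b_n$ gives $a_n = 2^n b_n$, which is the second equality.

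Finally, the asymptotic bound follows at once from Stirling's formula: $\binom{3n}{n} = (3n)!/(n!(2n)!) \sim C n^{-1/2} \cdot 3^{3n}/2^{2n}$, so $\frac{1}{2n+1}\binom{3n}{n} = O(n^{-3/2} \cdot 3^{3n}/2^{2n})$, comfortably inside the claimed estimate $\lesssim 3^{3n}/2^{2n}$. The only substantive obstacle in the whole argument is the uniqueness of $\sigma$ in the regular-couple factorization --- without it the count would be off by a factor of $2^n$; everything else reduces to the standard generating-function identity and routine induction.
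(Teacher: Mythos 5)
Your proof is correct and follows essentially the same route as the paper: the recurrences $b_n=\sum_{n_1+n_2+n_3=n-1}b_{n_1}b_{n_2}b_{n_3}$ and $a_n=2\sum_{n_1+n_2+n_3=n-1}a_{n_1}a_{n_2}a_{n_3}$ extracted from Definition~\ref{def:tree-str} and Lemma~\ref{lem:reg-cp-str}, the identification with the generalized (Fuss--Catalan) numbers, and Stirling's approximation for the bound. Your explicit verification via Lemma~\ref{lem:conju-node-char} that the sign $\sigma$ in the factorization $\otimes^\sigma(\cp_1,\cp_2,\cp_3)$ is uniquely determined (so the factor $2$ involves no double counting) is a detail the paper leaves implicit, but it does not change the argument.
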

\begin{proof}
    Note that $\card(\mathscr{T}^\pm_0) = \card(\mathscr{K}^\reg_0) = 1$.
    By Definition~\ref{def:tree-str} and Lemma~\ref{lem:reg-cp-str}, we have
    \begin{equation*}
        \card (\mathscr{T}^\pm_n)
        = \sum_{n_1+n_2+n_3=n-1} \prod_{1 \le j \le 3} \card (\mathscr{T}^\pm_{n_j}),
        \quad
        \card (\mathscr{K}^\reg_n)
        = 2 \sum_{n_1+n_2+n_3=n-1} \prod_{1 \le j \le 3} \card (\mathscr{K}^\reg_{n_j}).
    \end{equation*}
    Therefore, the identity in the lemma follows from the definition of the generalized Catalan number, and the estimate follows from Stirling's approximation. 
\end{proof}

\subsection{Decorations}

\label{sec:deco}

We introduce two types of decorations, type $\mathscr{D}$ and type $\mathscr{C}$.
Type $\mathscr{D}$ decorations have been used in~\cite{Deng2021b,Deng2021,Deng2021a} to describe collisions among Fourier modes.
However, type $\mathscr{D}$ decorations are insufficient in describing interactions among wavepackets due to the space inhomogeneity.
We therefore introduce type $\mathscr{C}$ decorations as a canonical algebraic complement of $\mathscr{D}$ decorations to complete the picture of interactions, as will be shown in Lemma~\ref{lem:cp-deco-wigner}.

In the following of this section, we let $\mathcal{G}$ be an additive Abelian group.

\begin{definition}
    \label{def:tree-deco}
    If $\tau \in \mathscr{T}$, then $\mathscr{D}^\tau(\mathcal{G})$\index{Trees and couples!Decorations!Type $\mathscr{D}$ decorations $\mathscr{D}^\tau$, $\mathscr{D}^{\tau^*}$, $\mathscr{D}^\cp$} and $\mathscr{C}^\tau(\mathcal{G})$\index{Trees and couples!Decorations!Type $\mathscr{C}$ decorations $\mathscr{C}^\tau$, $\mathscr{C}^{\tau^*}$, $\mathscr{C}^\cp$} are respectively sets of all maps from $\tau$ to $\mathcal{G}$ such that: if $\bm{\zeta} \in \mathscr{D}^\tau(\mathcal{G})$, $\bm{\eta} \in \mathscr{C}^\tau(\mathcal{G})$, and $\iota$ is any sign function on $\tau$, then for all $\mathfrak{b} \in \mathfrak{B}^\tau$, 
    \begin{equation}
        \label{eq:tree-deco-def}
        \iota_{\mathfrak{b}} \bm{\zeta}_{\mathfrak{b}} = \sum_{\mathfrak{n}^p = \mathfrak{b}} \iota_{\mathfrak{n}} \bm{\zeta}_{\mathfrak{n}},\quad
        \bm{\eta}_{\mathfrak{b}} = \sum_{\mathfrak{n}^p = \mathfrak{b}} \bm{\eta}_{\mathfrak{n}}.
    \end{equation}
    If $\zeta,\eta \in \mathcal{G}$, then $\mathscr{D}^\tau_\zeta(\mathcal{G})$ is the set of all $\bm{\zeta} \in \mathscr{D}^\tau(\mathcal{G}) $ with $\bm{\zeta}_{\mathfrak{r}^\tau} = \zeta$, and $\mathscr{C}^\tau_\eta(\mathcal{G})$ is the set of all $\bm{\eta} \in \mathscr{C}^\tau(\mathcal{G})$ with $\bm{\eta}_{\mathfrak{r}^\tau} = \eta$.

    If $\tau^* \in \mathscr{T}^*$, then $\mathscr{D}^{\tau^*}(\mathcal{G})$ is the set of all maps $\bm{\zeta} : \cp \to \mathcal{G}$ with $\bm{\zeta}|_{\tau^\pm} \in \mathscr{D}^{\tau^\pm}$ and $\mathscr{C}^{\tau^*}(\mathcal{G})$ is the set of all maps $\bm{\eta} : \cp \to \mathcal{G}$ with $\bm{\eta}|_{\tau^{\pm}} \in \mathscr{C}^{\tau^\pm}(\mathcal{G})$.
    If $\zeta^\pm \in \mathcal{G}$, then $\mathscr{D}^{\tau^*}_{\zeta^+,\zeta^-}(\mathcal{G})$ is the set of all $\bm{\zeta} \in \mathscr{D}^{\tau^*}(\mathcal{G})$ such that $\bm{\zeta}_{\mathfrak{r}^{\tau^\pm}} = \zeta^\pm$.
    If $\cp = (\tau^*,\wp) \in \mathscr{K}$, then $\mathscr{D}^\cp(\mathcal{G})$ is the set of $\bm{\zeta} \in \mathscr{D}^{\tau^*}(\mathcal{G})$ with $\bm{\zeta}_{\mathfrak{l}^+} = \bm{\zeta}_{\mathfrak{l}^-}$ for all $\mathfrak{p} = \{\mathfrak{l}^+,\mathfrak{l}^-\} \in \wp$, and $\mathscr{C}^\cp(\mathcal{G})$ is the set of all $\bm{\eta} \in \mathscr{C}^{\tau^*}(\mathcal{G})$ with $\bm{\eta}_{\mathfrak{l}^+} = \bm{\eta}_{\mathfrak{l}^-}$ for all $\mathfrak{p} = \{\mathfrak{l}^+,\mathfrak{l}^-\} \in \wp$.
    If $\zeta,\eta \in \mathcal{G}$, then $\mathscr{D}^\cp_\zeta(\mathcal{G})$ is the set of all $\bm{\zeta} \in \mathscr{D}^\cp(\mathcal{G})$ with $\bm{\zeta}_{\mathfrak{r}^{\tau^+}} = \bm{\zeta}_{\mathfrak{r}^{\tau^-}} = \zeta$ and $\mathscr{C}^\cp_\eta(\mathcal{G})$ is the set of all $\bm{\eta} \in \mathscr{C}^\cp(\mathcal{G})$ with $\bm{\eta}_{\mathfrak{r}^{\tau^+}} = \bm{\eta}_{\mathfrak{r}^{\tau^-}} = \eta$.
\end{definition}

\begin{lemma}
    \label{lem:deco-formula}
    If $\tau \in \mathscr{T}$ and $\bm{\zeta} \in \mathscr{D}^\tau(\mathcal{G})$, $\bm{\eta} \in \mathscr{C}^\tau(\mathcal{G})$, then for all $\mathfrak{n} \in \tau$, we have
    \begin{equation}
        \label{eq:deco-formula-tree}
        \iota_{\mathfrak{n}} \bm{\zeta}_{\mathfrak{n}}
        = \sum_{\mathfrak{l} \in \mathfrak{L}^\tau}  \iota_{\mathfrak{l}} \bm{\zeta}_{\mathfrak{l}} \bm{1}_{\mathfrak{n}\succeq \mathfrak{l}}, \quad
        \bm{\eta}_{\mathfrak{n}}
        = \sum_{\mathfrak{l} \in \mathfrak{L}^\tau}  \bm{\eta}_{\mathfrak{l}} \bm{1}_{\mathfrak{n}\succeq \mathfrak{l}}.
    \end{equation}
    Consequently, if $\cp \in \mathscr{K}$ and $\bm{\zeta} \in \mathscr{D}^\cp(\mathcal{G})$, $\bm{\eta} \in \mathscr{C}^\cp(\mathcal{G})$, then for all $\mathfrak{n} \in \cp$, we have
    \begin{equation}
        \label{eq:deco-formula-cp}
        \iota_{\mathfrak{n}} \bm{\zeta}_{\mathfrak{n}}
        = \sum_{\mathfrak{p} = \{\mathfrak{l}^+,\mathfrak{l}^-\} \in \wp} (\bm{1}_{\mathfrak{n} \succeq \mathfrak{l}^+} - \bm{1}_{\mathfrak{n} \succeq \mathfrak{l}^-}) \bm{\zeta}^\flat_{\mathfrak{p}}, \quad
        \bm{\eta}_{\mathfrak{n}}
        = \sum_{\mathfrak{p} = \{\mathfrak{l}^+,\mathfrak{l}^-\} \in \wp} (\bm{1}_{\mathfrak{n} \succeq \mathfrak{l}^+} + \bm{1}_{\mathfrak{n} \succeq \mathfrak{l}^-}) \bm{\eta}^\flat_{\mathfrak{p}},
    \end{equation}
    where for all $\mathfrak{p} = \{\mathfrak{l}^+,\mathfrak{l}^-\} \in \wp$, we denote $\bm{\zeta}^\flat_{\mathfrak{p}} = \bm{\zeta}_{\mathfrak{l}^+} = \bm{\zeta}_{\mathfrak{l}^-}$ and $\bm{\eta}^\flat_{\mathfrak{p}} = \bm{\eta}_{\mathfrak{l}^+} = \bm{\eta}_{\mathfrak{l}^-}$.
\end{lemma}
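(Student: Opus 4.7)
The plan is to prove the tree identities \eqref{eq:deco-formula-tree} first by induction on the height of the subtree rooted at $\mathfrak{n}$, and then derive the couple identities \eqref{eq:deco-formula-cp} as a direct corollary by applying the tree identities separately on $\tau^+$ and $\tau^-$ and regrouping leaves into pairs.

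For the tree identities, I would induct on the subtree $\check{\tau}_\mathfrak{n}$. The base case $\mathfrak{n} \in \mathfrak{L}^\tau$ is immediate, since the only nonzero term in the sum is the one with $\mathfrak{l} = \mathfrak{n}$. For the inductive step, suppose $\mathfrak{n} = \mathfrak{b} \in \mathfrak{B}^\tau$ and assume the identities hold for each child $\mathfrak{b}[j]$, $j \in \{1,2,3\}$. The defining relation \eqref{eq:tree-deco-def} applied at $\mathfrak{b}$ gives $\iota_\mathfrak{b} \bm{\zeta}_\mathfrak{b} = \sum_j \iota_{\mathfrak{b}[j]} \bm{\zeta}_{\mathfrak{b}[j]}$ and $\bm{\eta}_\mathfrak{b} = \sum_j \bm{\eta}_{\mathfrak{b}[j]}$, and substituting the inductive hypothesis into each summand yields the desired formula once one observes that the sets $\{\mathfrak{l} \in \mathfrak{L}^\tau : \mathfrak{l} \preceq \mathfrak{b}[j]\}$ for $j = 1,2,3$ partition $\{\mathfrak{l} \in \mathfrak{L}^\tau : \mathfrak{l} \preceq \mathfrak{b}\}$.

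For the couple identities, I would reduce to the tree case as follows. Given $\cp = (\tau^*,\wp) \in \mathscr{K}$ and $\mathfrak{n} \in \cp$, say $\mathfrak{n} \in \tau^\sigma$ for some $\sigma \in \{+,-\}$, I apply \eqref{eq:deco-formula-tree} in the tree $\tau^\sigma$ to write $\iota_\mathfrak{n} \bm{\zeta}_\mathfrak{n}$ and $\bm{\eta}_\mathfrak{n}$ as sums over $\mathfrak{l} \in \mathfrak{L}^{\tau^\sigma}$ with $\mathfrak{l} \preceq \mathfrak{n}$. I then regroup the sum according to the pairs $\mathfrak{p} = \{\mathfrak{l}^+,\mathfrak{l}^-\} \in \wp$; for any such $\mathfrak{p}$, exactly one of $\mathfrak{l}^+,\mathfrak{l}^-$ lies in $\tau^\sigma$, so at most one term contributes, and since $\bm{\zeta}_{\mathfrak{l}^+} = \bm{\zeta}_{\mathfrak{l}^-} = \bm{\zeta}^\flat_\mathfrak{p}$ with $\iota_{\mathfrak{l}^+} = +1$, $\iota_{\mathfrak{l}^-} = -1$ (from the pairing constraint in Definition~\ref{def:pairing}), the contribution becomes exactly $(\bm{1}_{\mathfrak{n} \succeq \mathfrak{l}^+} - \bm{1}_{\mathfrak{n} \succeq \mathfrak{l}^-}) \bm{\zeta}^\flat_\mathfrak{p}$, and similarly for $\bm{\eta}$ with a plus sign. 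The indicator $\bm{1}_{\mathfrak{n} \succeq \mathfrak{l}}$ automatically vanishes whenever $\mathfrak{l}$ lies in the opposite tree from $\mathfrak{n}$, so the extension of the sum to the full set of leaf pairs is harmless.

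There is no real obstacle here; the argument is purely structural. The only minor point to double-check is that the sign convention $\iota_{\mathfrak{l}^+} = +1$, $\iota_{\mathfrak{l}^-} = -1$ for each $\mathfrak{p} = \{\mathfrak{l}^+,\mathfrak{l}^-\} \in \wp$ indeed follows from the requirement $\iota(\mathfrak{p}) = \{+1,-1\}$ in Definition~\ref{def:pairing} together with the labelling convention $\mathfrak{l}^\pm \in \tau^\pm$ or $\mathfrak{l}^\pm \in \tau^\mp$ used implicitly in Lemma~\ref{lem:cp-attaching}; this compatibility is what drives the minus sign appearing in the $\bm{\zeta}$ identity and its absence in the $\bm{\eta}$ identity.
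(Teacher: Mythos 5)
Your proposal is correct and follows essentially the same route as the paper: the tree identities are obtained by structural induction from the defining relations~\eqref{eq:tree-deco-def}, and the couple identities follow by regrouping the leaf sum into pairs, using that each pair consists of leaves of opposite sign carrying the common value $\bm{\zeta}^\flat_{\mathfrak{p}}$ (resp.\ $\bm{\eta}^\flat_{\mathfrak{p}}$) and that the indicators kill the contributions from the tree not containing $\mathfrak{n}$. The sign point you flag at the end is indeed the convention $\iota_{\mathfrak{l}^\pm}=\pm 1$ built into the notation $\mathfrak{p}=\{\mathfrak{l}^+,\mathfrak{l}^-\}$ via the requirement $\iota(\mathfrak{p})=\{+1,-1\}$ in Definition~\ref{def:pairing}, exactly as the paper uses it.
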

\begin{proof}
    The identity~\eqref{eq:deco-formula-tree} follows from~\eqref{eq:tree-deco-def} and the mathematical induction on the order of the tree.
    To prove~\eqref{eq:deco-formula-tree}, it suffices to note that if $\cp = (\tau^*,\wp) \in \mathscr{K}$ and $\mathfrak{n} \in \cp$, then
    \begin{align*}
        \iota_{\mathfrak{n}} \bm{\zeta}_{\mathfrak{n}}
        & = \sum_{\mathfrak{l} \in \mathfrak{L}^{\tau^+}}  \iota_{\mathfrak{l}} \bm{\zeta}_{\mathfrak{l}} \bm{1}_{\mathfrak{n}\succeq \mathfrak{l}}
        + \sum_{\mathfrak{l} \in \mathfrak{L}^{\tau^-}}  \iota_{\mathfrak{l}} \bm{\zeta}_{\mathfrak{l}} \bm{1}_{\mathfrak{n}\succeq \mathfrak{l}}
        = \sum_{\mathfrak{p} = \{\mathfrak{l}^+,\mathfrak{l}^-\} \in \wp} (\iota_{\mathfrak{l}^+} \bm{\zeta}_{\mathfrak{l}^+} \bm{1}_{\mathfrak{n}\succeq \mathfrak{l}^+}  + \iota_{\mathfrak{l}^-} \bm{\zeta}_{\mathfrak{l}^-} \bm{1}_{\mathfrak{n}\succeq \mathfrak{l}^-}),\\
        \bm{\eta}_{\mathfrak{n}}
        & = \sum_{\mathfrak{l} \in \mathfrak{L}^{\tau^+}} \bm{\eta}_{\mathfrak{l}^+} + \sum_{\mathfrak{l} \in \mathfrak{L}^{\tau^-}} \bm{\eta}_{\mathfrak{l}^-} 
        = \sum_{\mathfrak{p} = \{\mathfrak{l}^+,\mathfrak{l}^-\} \in \wp} (\bm{\eta}_{\mathfrak{l}^+} \bm{1}_{\mathfrak{n}\succeq \mathfrak{l}^+}  + \bm{\eta}_{\mathfrak{l}^-} \bm{1}_{\mathfrak{n}\succeq \mathfrak{l}^-}). \qedhere
    \end{align*}
\end{proof}

\begin{corollary}
    \label{cor:conju-node-char-deco}
    If $\cp = (\tau^*,\wp) \in \mathscr{K}$ and $\mathfrak{m},\mathfrak{n} \in \cp$, then $\mathfrak{m} \sim \mathfrak{n}$ if and only if either one of the following two statements hold:
    \begin{enumerate}
        \item $\bm{\zeta}_{\mathfrak{m}} = \bm{\zeta}_{\mathfrak{n}}$ for all additive Abelian group $\mathcal{G}$ and for all $\bm{\zeta} \in \mathscr{D}^\cp(\mathcal{G})$;
        \label{item:conju-equiv-D}
        \item $\bm{\eta}_{\mathfrak{m}} = \bm{\eta}_{\mathfrak{n}}$ for all additive Abelian group $\mathcal{G}$ and for all $\bm{\eta} \in \mathscr{C}^\cp(\mathcal{G})$.
        \label{item:conju-equiv-C}
    \end{enumerate}
    Consequently, if $\cp \in \mathscr{K}$ and $\mathfrak{c} \in \mathfrak{C}^\cp_2$, then for all Abelian group $\mathcal{G}$ and for all $\bm{\zeta} \in \mathscr{D}^\cp(\mathcal{G})$, $\bm{\eta} \in \mathscr{C}^\cp(\mathcal{G})$, we have $\bm{\zeta}|_{\hat{\cp}_{\mathfrak{c}}} \in \mathscr{D}^{\hat{\cp}_{\mathfrak{c}}}(\mathcal{G})$, $\bm{\zeta}|_{\check{\cp}_{\mathfrak{c}}} \in \mathscr{D}^{\check{\cp}_{\mathfrak{c}}}(\mathcal{G})$, $\bm{\eta}|_{\hat{\cp}_{\mathfrak{c}}} \in \mathscr{C}^{\hat{\cp}_{\mathfrak{c}}}(\mathcal{G})$, $\bm{\eta}|_{\check{\cp}_{\mathfrak{c}}} \in \mathscr{C}^{\check{\cp}_{\mathfrak{c}}}(\mathcal{G})$.
\end{corollary}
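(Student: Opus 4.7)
The plan is to read off both equivalences directly from the explicit formulas in Lemma~\ref{lem:deco-formula}: in each expansion the coefficient of $\bm{\zeta}^\flat_{\mathfrak{p}}$ (respectively $\bm{\eta}^\flat_{\mathfrak{p}}$) in $\iota_{\mathfrak{n}}\bm{\zeta}_{\mathfrak{n}}$ (respectively $\bm{\eta}_{\mathfrak{n}}$) depends only on whether $\mathfrak{n}\in\mathscr{A}_{\mathfrak{p}}$, which is precisely the datum that Definition~\ref{def:conju-node} uses to characterize $\sim$.

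First I would handle the forward direction. Assume $\mathfrak{m}\sim\mathfrak{n}$; the case $\mathfrak{m}=\mathfrak{n}$ is trivial, so by Lemma~\ref{lem:conju-node-char} we may take $\mathfrak{m}\in\tau^+$, $\mathfrak{n}\in\tau^-$ with $\iota_{\mathfrak{m}}=+1$ and $\iota_{\mathfrak{n}}=-1$. For any pair $\mathfrak{p}=\{\mathfrak{l}^+,\mathfrak{l}^-\}\in\wp$ the leaf $\mathfrak{l}^{\pm}$ lies in $\tau^{\pm}$, so $\mathfrak{m}$ can only dominate $\mathfrak{l}^+$ and $\mathfrak{n}$ can only dominate $\mathfrak{l}^-$; the $\mathscr{D}$-coefficients $\bm{1}_{\mathfrak{m}\succeq\mathfrak{l}^+}-\bm{1}_{\mathfrak{m}\succeq\mathfrak{l}^-}$ and $\bm{1}_{\mathfrak{n}\succeq\mathfrak{l}^+}-\bm{1}_{\mathfrak{n}\succeq\mathfrak{l}^-}$ therefore collapse to $\bm{1}_{\mathfrak{m}\in\mathscr{A}_{\mathfrak{p}}}$ and $-\bm{1}_{\mathfrak{n}\in\mathscr{A}_{\mathfrak{p}}}$ respectively, while the $\mathscr{C}$-coefficients $\bm{1}_{\cdot\succeq\mathfrak{l}^+}+\bm{1}_{\cdot\succeq\mathfrak{l}^-}$ both reduce to $\bm{1}_{\cdot\in\mathscr{A}_{\mathfrak{p}}}$. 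Substituting these into~\eqref{eq:deco-formula-cp} and invoking $\bm{1}_{\mathfrak{m}\in\mathscr{A}_{\mathfrak{p}}}=\bm{1}_{\mathfrak{n}\in\mathscr{A}_{\mathfrak{p}}}$, which is the definition of $\mathfrak{m}\sim\mathfrak{n}$, yields $\iota_{\mathfrak{m}}\bm{\zeta}_{\mathfrak{m}}=-\iota_{\mathfrak{n}}\bm{\zeta}_{\mathfrak{n}}$ and $\bm{\eta}_{\mathfrak{m}}=\bm{\eta}_{\mathfrak{n}}$; combined with $\iota_{\mathfrak{m}}=-\iota_{\mathfrak{n}}$, both (1) and (2) follow.

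For the converse I would argue by contraposition. If $\mathfrak{m}\not\sim\mathfrak{n}$, pick $\mathfrak{p}_0\in\wp$ with exactly one of $\mathfrak{m},\mathfrak{n}$ in $\mathscr{A}_{\mathfrak{p}_0}$ and take $\mathcal{G}=\mathbb{Z}$. Since the branching-node constraints~\eqref{eq:tree-deco-def} are automatically satisfied as soon as one prescribes the leaf-pair values $(\bm{\zeta}^\flat_{\mathfrak{p}})_{\mathfrak{p}\in\wp}$, $(\bm{\eta}^\flat_{\mathfrak{p}})_{\mathfrak{p}\in\wp}$ and uses~\eqref{eq:deco-formula-cp} to extend upward, I can simply set $\bm{\zeta}^\flat_{\mathfrak{p}_0}=\bm{\eta}^\flat_{\mathfrak{p}_0}=1$ and all other leaf-pair values to $0$. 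The same coefficient analysis as above then gives $\bm{\zeta}_{\mathfrak{m}}\ne\bm{\zeta}_{\mathfrak{n}}$ and $\bm{\eta}_{\mathfrak{m}}\ne\bm{\eta}_{\mathfrak{n}}$, completing the equivalence.

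The final statement about $\hat{\cp}_{\mathfrak{c}}$ and $\check{\cp}_{\mathfrak{c}}$ is then essentially bookkeeping: restrictions satisfy~\eqref{eq:tree-deco-def} at every node in $\mathfrak{B}^{\hat{\cp}_{\mathfrak{c}}}\cup\mathfrak{B}^{\check{\cp}_{\mathfrak{c}}}\subset\mathfrak{B}^\cp$ because these constraints are inherited verbatim from $\cp$, and the leaf-pairing constraint on any $\mathfrak{p}\in\wp$ entirely contained in $\check{\tau}^*$ or in $\hat{\tau}^*$ is likewise inherited; the only non-inherited check is that the promoted pair $\mathfrak{c}=\{\mathfrak{n}^+,\mathfrak{n}^-\}$, which becomes a leaf pair of $\hat{\cp}_{\mathfrak{c}}$, receives matching decoration values, and this is exactly what the first half provides since $\mathfrak{c}\in\mathfrak{C}^\cp_2$ means $\mathfrak{n}^+\sim\mathfrak{n}^-$. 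I expect the only real subtlety to be the sign bookkeeping of the $\mathscr{D}$ case in the forward direction—one must observe that ancestors of $\mathfrak{l}^{\pm}$ are confined to $\tau^{\pm}$, so that the two indicator functions in~\eqref{eq:deco-formula-cp} combine into $\pm\bm{1}_{\cdot\in\mathscr{A}_{\mathfrak{p}}}$ with the sign dictated by $\iota$—but once this observation is unpacked the rest of the argument is routine.
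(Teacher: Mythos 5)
Your overall strategy coincides with the paper's: the forward direction via Lemma~\ref{lem:conju-node-char} and the expansion~\eqref{eq:deco-formula-cp}, the converse via explicit indicator test decorations (you take $\mathcal{G}=\mathbb{Z}$ with a single indicator where the paper takes $\mathcal{G}=\mathbb{Z}^\wp$ with all basis decorations at once --- an immaterial difference), and the restriction statement via matching values on the promoted pair $\mathfrak{c}$. There is, however, a concrete error in your forward direction: it is \emph{not} true that for a pair $\mathfrak{p}=\{\mathfrak{l}^+,\mathfrak{l}^-\}\in\wp$ the positive leaf $\mathfrak{l}^+$ lies in $\tau^+$ and the negative leaf in $\tau^-$. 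Definition~\ref{def:pairing} only requires that the two leaves lie in different trees \emph{and} carry opposite signs; which sign sits in which tree varies from pair to pair (in the first order-$1$ couple of Table~\ref{tab:eg-cp} the middle children are paired, and the positive leaf of that pair lies in $\tau^-$; the paper itself tracks this, e.g., through $\sigma_\pm$ in Lemma~\ref{lem:cp-attaching} and through $\iota_{\mathfrak{p}_j}=\bm{1}_{\mathfrak{l}^+_j\in\tau^+}$ in Lemma~\ref{lem:topo-linear-char}). Hence your claimed collapse of the $\mathscr{D}$-coefficients to $\bm{1}_{\mathfrak{m}\in\mathscr{A}_{\mathfrak{p}}}$ and $-\bm{1}_{\mathfrak{n}\in\mathscr{A}_{\mathfrak{p}}}$ fails for every pair whose positive leaf sits in $\tau^-$; the correct collapse is $\varepsilon_{\mathfrak{p}}\bm{1}_{\mathfrak{m}\in\mathscr{A}_{\mathfrak{p}}}$ and $-\varepsilon_{\mathfrak{p}}\bm{1}_{\mathfrak{n}\in\mathscr{A}_{\mathfrak{p}}}$ with the pair-dependent sign $\varepsilon_{\mathfrak{p}}=+1$ if $\mathfrak{l}^+\in\tau^+$ and $\varepsilon_{\mathfrak{p}}=-1$ otherwise. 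For the same reason your normalization $\iota_{\mathfrak{m}}=+1$, $\iota_{\mathfrak{n}}=-1$ is not a legitimate ``WLOG'' (only the oppositeness of the signs is guaranteed), though you never actually use the specific values. The ``subtlety'' you flag at the end --- that ancestors of $\mathfrak{l}^\pm$ are confined to $\tau^\pm$ --- is precisely the false statement.

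The damage is confined to the justification, not the endpoint: since the coefficients of $\bm{\zeta}^\flat_{\mathfrak{p}}$ in $\iota_{\mathfrak{m}}\bm{\zeta}_{\mathfrak{m}}$ and $\iota_{\mathfrak{n}}\bm{\zeta}_{\mathfrak{n}}$ are $\varepsilon_{\mathfrak{p}}\bm{1}_{\mathfrak{m}\in\mathscr{A}_{\mathfrak{p}}}$ and $-\varepsilon_{\mathfrak{p}}\bm{1}_{\mathfrak{n}\in\mathscr{A}_{\mathfrak{p}}}$, conjugacy still yields $\iota_{\mathfrak{m}}\bm{\zeta}_{\mathfrak{m}}=-\iota_{\mathfrak{n}}\bm{\zeta}_{\mathfrak{n}}$, and with $\iota_{\mathfrak{m}}=-\iota_{\mathfrak{n}}$ this gives $\bm{\zeta}_{\mathfrak{m}}=\bm{\zeta}_{\mathfrak{n}}$; the $\mathscr{C}$-part is untouched since both coefficients equal $\bm{1}_{\cdot\in\mathscr{A}_{\mathfrak{p}}}$. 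This is exactly the per-pair identity the paper asserts, $\iota_{\mathfrak{m}}(\bm{1}_{\mathfrak{m}\succeq\mathfrak{l}^+}-\bm{1}_{\mathfrak{m}\succeq\mathfrak{l}^-})=\iota_{\mathfrak{n}}(\bm{1}_{\mathfrak{n}\succeq\mathfrak{l}^+}-\bm{1}_{\mathfrak{n}\succeq\mathfrak{l}^-})$, so inserting $\varepsilon_{\mathfrak{p}}$ repairs your forward direction with no structural change. The converse is sound as written (any node dominates at most one leaf of any pair, so with your indicator decoration $|\bm{\zeta}_{\mathfrak{m}}|=\bm{1}_{\mathfrak{m}\in\mathscr{A}_{\mathfrak{p}_0}}\ne\bm{1}_{\mathfrak{n}\in\mathscr{A}_{\mathfrak{p}_0}}=|\bm{\zeta}_{\mathfrak{n}}|$ and likewise for $\bm{\eta}$), and your treatment of the ``consequently'' part --- which the paper leaves implicit --- correctly isolates the equality on $\mathfrak{c}=\{\mathfrak{n}^+,\mathfrak{n}^-\}$ as the only non-inherited constraint.
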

\begin{proof}
    By Lemma~\ref{lem:conju-node-char}, if $\mathfrak{m} \sim \mathfrak{n}$ and $\mathfrak{m} \ne \mathfrak{n}$, then $\mathfrak{m}$ and $\mathfrak{n}$ have opposite signs and belong to opposite trees.
    Therefore, for all $\mathfrak{p} = \{\mathfrak{l}^+,\mathfrak{l}^-\} \in \wp$, we have
    \begin{equation*}
        \iota_{\mathfrak{m}}(\bm{1}_{\mathfrak{m}\succeq \mathfrak{l}^+} - \bm{1}_{\mathfrak{m}\succeq \mathfrak{l}^-}) = \iota_{\mathfrak{n}}(\bm{1}_{\mathfrak{n}\succeq \mathfrak{l}^+} - \bm{1}_{\mathfrak{n}\succeq \mathfrak{l}^-}),\quad
        \bm{1}_{\mathfrak{m} \succeq \mathfrak{l}^+} + \bm{1}_{\mathfrak{m} \succeq \mathfrak{l}^-} = \bm{1}_{\mathfrak{n} \succeq \mathfrak{l}^+} + \bm{1}_{\mathfrak{m} \succeq \mathfrak{l}^-}.
    \end{equation*}
    We thus obtain~\eqref{item:conju-equiv-D} and~\eqref{item:conju-equiv-C} by Lemma~\ref{lem:deco-formula}.
    Conversely, let $\bm{\zeta} \in \mathscr{D}^\cp(\mathbb{Z}^\wp)$ resp.\ $\bm{\eta} \in \mathscr{C}^\cp(\mathbb{Z}^\wp)$ be such that $(\bm{\zeta}^\flat_{\mathfrak{p}})_{\mathfrak{p}'} = (\bm{\eta}^\flat_{\mathfrak{p}})_{\mathfrak{p}'}  = \bm{1}_{\mathfrak{p} = \mathfrak{p}'}$ for all $\mathfrak{p},\mathfrak{p}' \in \wp$.
    If $\mathfrak{m},\mathfrak{n} \in \cp$ is such that $\bm{\zeta}_{\mathfrak{m}} = \bm{\zeta}_{\mathfrak{n}}$ resp.\ $\bm{\eta}_{\mathfrak{m}} = \bm{\eta}_{\mathfrak{n}}$, then by Lemma~\ref{lem:deco-formula}, for all $\mathfrak{p} = \{\mathfrak{l}^+,\mathfrak{l}^-\}\in \wp$, we have
    \begin{equation*}
        \begin{aligned}
            \iota_{\mathfrak{m}} (\bm{1}_{\mathfrak{m}\succeq \mathfrak{l}^+} - \bm{1}_{\mathfrak{m}\succeq \mathfrak{l}^-}) = (\bm{\zeta}_{\mathfrak{m}})_{\mathfrak{p}} & = (\bm{\zeta}_{\mathfrak{n}})_{\mathfrak{p}} =\iota_{\mathfrak{n}} (\bm{1}_{\mathfrak{n}\succeq \mathfrak{l}^+} - \bm{1}_{\mathfrak{n}\succeq \mathfrak{l}^-}),\\
            \text{resp.}\ 
            \bm{1}_{\mathfrak{m}\succeq \mathfrak{l}^+} + \bm{1}_{\mathfrak{m}\succeq \mathfrak{l}^-} = (\bm{\eta}_{\mathfrak{m}})_{\mathfrak{p}} & = (\bm{\eta}_{\mathfrak{n}})_{\mathfrak{p}} = \bm{1}_{\mathfrak{n}\succeq \mathfrak{l}^+} + \bm{1}_{\mathfrak{n}\succeq \mathfrak{l}^-}.
        \end{aligned}
    \end{equation*}
    Therefore $\bm{1}_{\mathfrak{m}\succeq \mathfrak{l}^+} \pm \bm{1}_{\mathfrak{m}\succeq \mathfrak{l}^-} $ and $ \bm{1}_{\mathfrak{n}\succeq \mathfrak{l}^+} \pm \bm{1}_{\mathfrak{n}\succeq \mathfrak{l}^-} $ are either both vanishing, so that $\{\mathfrak{m},\mathfrak{n}\} \subset \cp \backslash \mathscr{A}_\mathfrak{p}$, or both non-vanishing, so that $\{\mathfrak{m},\mathfrak{n}\} \subset \mathscr{A}_\mathfrak{p}$.
    We thus obtain $\mathfrak{m} \sim \mathfrak{n}$ from Definition~\ref{def:conju-node}.
\end{proof}

As a consequence of Corollary~\ref{cor:conju-node-char-deco}, elements in $\mathscr{D}^\cp(\mathcal{G})$ or $\mathscr{C}^\cp(\mathcal{G})$ can be pushed down by the quotient map $\pi^\cp$ to maps from $\mathfrak{C}^\cp$ to $\mathcal{G}$.
In fact, if $\bm{\zeta} \in \mathscr{D}^\cp(\mathcal{G})$ and $\bm{\eta} \in \mathscr{C}^\cp(\mathcal{G})$, then these maps extend $\bm{\zeta}^\flat$ and $\bm{\eta}^\flat$ (which are defined in Lemma~\ref{lem:deco-formula}) from $\wp$ to the whole $\mathfrak{C}^\cp$.
With an abuse of notation, let us denote by $\bm{\zeta}^\flat$ and $\bm{\eta}^\flat$ the maps from $\mathfrak{C}^\cp$ to $\mathcal{G}$ such that
\begin{equation*}
    \bm{\zeta}^\flat \comp \pi^\cp = \bm{\zeta}, \quad
    \bm{\eta}^\flat \comp \pi^\cp = \bm{\eta}.\index{Trees and couples!Decorations!Pushforward to conjugate classes $\flat$}
\end{equation*}

\subsection{Topological structure}

\label{sec:topo}

We give two topological structures on every couple.
They are closely related to each other and crucial in connecting the linear relations of resonance functions defined \S\ref{sec:reso-function} with the irreducible factorization of couples.

\begin{definition}
    \label{def:cp-topo}
    If $\cp = (\tau^*,\wp) \in \mathscr{K}$, then $\mathfrak{T}^q$ is the topology on $\cp$ generated by $\{\mathscr{A}_{\mathfrak{p}}\}_{\mathfrak{p} \in \wp}$ and $\mathfrak{I}^\cp$ is the topology on $\mathfrak{B}^\cp$ generated by $\{\mathfrak{B}^{r}\}_{r \in \mathscr{I}_\cp}$.
\end{definition}

\begin{lemma}
    \label{lem:Kolmogorov-space}
    If $\cp = (\tau^*,\wp) \in \mathscr{K}$, then $(\mathfrak{C}^\cp,\pi^\cp \mathfrak{T}_\cp)$ is a Kolmogorov space.
    In fact, if $\mathfrak{c},\mathfrak{c}' \in \mathfrak{C}^\cp$ and $\mathfrak{c} \ne \mathfrak{c}'$, then there exists $\mathfrak{p} \in \wp$ such that $\mathscr{A}_{\mathfrak{p}}$ separates $\mathfrak{c}$ and $\mathfrak{c}'$, i.e., we have either $\mathfrak{c} \subset \mathscr{A}_{\mathfrak{p}}$ and $\mathfrak{c}' \cap \mathscr{A}_{\mathfrak{p}} = \emptyset$ or $\mathfrak{c}' \subset \mathscr{A}_{\mathfrak{p}}$ and $\mathfrak{c} \cap \mathscr{A}_{\mathfrak{p}} = \emptyset$.
\end{lemma}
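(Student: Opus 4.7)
The plan is to reduce everything to the separation statement asserted in the second sentence of the lemma, since that immediately implies the Kolmogorov ($T_0$) property for the quotient topology.

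First, I would record the elementary observation that every $\mathscr{A}_{\mathfrak{p}}$ is already saturated with respect to the conjugate relation. Indeed, Definition~\ref{def:conju-node} says that $\mathfrak{m} \sim \mathfrak{n}$ requires $\{\mathfrak{m},\mathfrak{n}\} \subset \mathscr{A}_{\mathfrak{p}}$ or $\{\mathfrak{m},\mathfrak{n}\} \subset \cp \setminus \mathscr{A}_{\mathfrak{p}}$ for \emph{every} $\mathfrak{p} \in \wp$. Hence each class $\mathfrak{c} \in \mathfrak{C}^\cp$ sits entirely inside $\mathscr{A}_{\mathfrak{p}}$ or entirely in its complement, so $\pi^\cp(\mathscr{A}_{\mathfrak{p}})$ is a well-defined subset of $\mathfrak{C}^\cp$ and $(\pi^\cp)^{-1}(\pi^\cp(\mathscr{A}_{\mathfrak{p}})) = \mathscr{A}_{\mathfrak{p}}$.

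Next, given distinct classes $\mathfrak{c}, \mathfrak{c}' \in \mathfrak{C}^\cp$, I would pick any representatives $\mathfrak{m} \in \mathfrak{c}$ and $\mathfrak{n} \in \mathfrak{c}'$. Since $\mathfrak{m}$ and $\mathfrak{n}$ lie in different conjugate classes, $\mathfrak{m} \not\sim \mathfrak{n}$, so the quantified condition in Definition~\ref{def:conju-node} must fail for at least one $\mathfrak{p} \in \wp$: that is, exactly one of $\mathfrak{m}, \mathfrak{n}$ belongs to $\mathscr{A}_{\mathfrak{p}}$. Without loss of generality, $\mathfrak{m} \in \mathscr{A}_{\mathfrak{p}}$ and $\mathfrak{n} \notin \mathscr{A}_{\mathfrak{p}}$. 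Combining with the first observation, $\mathfrak{c} \subset \mathscr{A}_{\mathfrak{p}}$ and $\mathfrak{c}' \cap \mathscr{A}_{\mathfrak{p}} = \emptyset$, which is the separation statement claimed in the lemma.

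Finally, I would deduce the Kolmogorov property. The topology $\pi^\cp \mathfrak{T}^\cp$ on $\mathfrak{C}^\cp$ is generated by the sets $\pi^\cp(\mathscr{A}_{\mathfrak{p}})$, which are well-defined as open sets by the saturation observation. The separation just established provides an open set $\pi^\cp(\mathscr{A}_{\mathfrak{p}})$ containing $\mathfrak{c}$ but not $\mathfrak{c}'$, which is precisely the $T_0$ axiom. There is really no obstacle here: the only subtlety is checking that $\mathscr{A}_{\mathfrak{p}}$-saturation holds for conjugate classes, and this is essentially built into how the equivalence relation was defined. The proof is therefore a clean unpacking of Definition~\ref{def:conju-node} combined with Definition~\ref{def:cp-topo}.
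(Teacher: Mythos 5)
Your proof is correct and takes essentially the same route as the paper, whose proof consists of the single remark that the lemma follows directly from the definitions of conjugate nodes and of the topology $\mathfrak{T}^\cp$. Your write-up is just a careful unpacking of that: each $\mathscr{A}_{\mathfrak{p}}$ is saturated under the conjugacy relation, and for representatives of distinct classes the conjugacy condition must fail at some $\mathfrak{p}\in\wp$, which yields the separating set and hence the Kolmogorov property.
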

\begin{proof}
    This follows directly from Definition~\ref{def:conju-node} and Definition~\ref{def:cp-topo}.
\end{proof}

\begin{lemma}
    If $\cp = (\tau^*,\wp) \in \mathscr{K}$, then $\mathfrak{U}^\cp = \{\mathscr{A}_{\mathfrak{p}_1} \cap \mathscr{A}_{\mathfrak{p}_2}\}_{\mathfrak{p}_1,\mathfrak{p}_2 \in \wp}$ is a base for $\mathfrak{T}^\cp$.
    In fact, for all nonempty $P \subset \wp$, we have
    $\bigcap_{\mathfrak{p} \in P} \mathscr{A}_{\mathfrak{p}} \in \mathfrak{U}^\cp.$
\end{lemma}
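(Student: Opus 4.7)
The plan is to prove the stronger ``in fact'' clause; the base assertion then follows immediately, since $\mathfrak{U}^\cp$ covers $\cp$ (every node with a descendant leaf $\mathfrak{l}$ lies in $\mathscr{A}_\mathfrak{p}=\mathscr{A}_\mathfrak{p}\cap\mathscr{A}_\mathfrak{p}\in\mathfrak{U}^\cp$ for the unique $\mathfrak{p}\ni\mathfrak{l}$), and any four-fold intersection of $\mathscr{A}$-sets collapses to a two-fold one by the clause, yielding the base closure axiom.

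The first step is a distributive decomposition. Because the two leaves $\{\mathfrak{l}^+,\mathfrak{l}^-\}$ of any pair $\mathfrak{p}\in\wp$ sit in opposite trees (Definition~\ref{def:pairing}), write $\mathscr{A}_\mathfrak{p}=A^+_\mathfrak{p}\cup A^-_\mathfrak{p}$, where $A^\pm_\mathfrak{p}=\{\mathfrak{n}\in\tau^\pm:\mathfrak{n}\succeq\mathfrak{l}^\pm\}$ is the totally ordered ancestor chain of $\mathfrak{l}^\pm$ in $\tau^\pm$. Since $A^+$-sets and $A^-$-sets sit in disjoint trees, distributing the intersection over $P$ gives
\begin{equation*}
    \bigcap_{\mathfrak{p}\in P}\mathscr{A}_\mathfrak{p}
    = \Bigl(\bigcap_{\mathfrak{p}\in P}A^+_\mathfrak{p}\Bigr) \cup \Bigl(\bigcap_{\mathfrak{p}\in P}A^-_\mathfrak{p}\Bigr)
    = \{\mathfrak{n}\in\tau^+:\mathfrak{n}\succeq\mathfrak{m}^+\} \cup \{\mathfrak{n}\in\tau^-:\mathfrak{n}\succeq\mathfrak{m}^-\},
\end{equation*}
where $\mathfrak{m}^\pm$ is the lowest common ancestor in $\tau^\pm$ of $\{\mathfrak{l}^\pm_\mathfrak{p}:\mathfrak{p}\in P\}$, using the fact that an intersection of root-ending chains is the chain ending at their deepest shared node. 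It therefore suffices to exhibit $\mathfrak{p}_1,\mathfrak{p}_2\in\wp$ with $\mathrm{LCA}(\mathfrak{l}^+_{\mathfrak{p}_1},\mathfrak{l}^+_{\mathfrak{p}_2})=\mathfrak{m}^+$ and $\mathrm{LCA}(\mathfrak{l}^-_{\mathfrak{p}_1},\mathfrak{l}^-_{\mathfrak{p}_2})=\mathfrak{m}^-$ simultaneously; then $\mathscr{A}_{\mathfrak{p}_1}\cap\mathscr{A}_{\mathfrak{p}_2}$ matches the display above.

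The main obstacle is matching the LCA condition on both signs at once. The case $|P|=1$ is trivial: take $\mathfrak{p}_1=\mathfrak{p}_2$ to be the unique element. Assume now $|P|\ge 2$; then the positive and negative leaves indexed by $P$ are each distinct, so $\mathfrak{m}^\pm$ is a branching node, and the ``colorings'' $c^\pm\colon P\to\{1,2,3\}$ sending $\mathfrak{p}$ to the index of the child subtree of $\mathfrak{m}^\pm$ containing $\mathfrak{l}^\pm_\mathfrak{p}$ each attain at least two distinct values (otherwise the LCA would be strictly lower). A pair $\mathfrak{p}_1,\mathfrak{p}_2\in P$ realizes the desired LCAs precisely when $c^+(\mathfrak{p}_1)\ne c^+(\mathfrak{p}_2)$ and $c^-(\mathfrak{p}_1)\ne c^-(\mathfrak{p}_2)$. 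I establish existence by contradiction: suppose no such pair exists in $P$. Fix any $\mathfrak{p}_a\in P$; since $c^+$ attains $\ge 2$ values, pick $\mathfrak{p}_b\in P$ with $c^+(\mathfrak{p}_b)\ne c^+(\mathfrak{p}_a)$, which forces $c^-(\mathfrak{p}_b)=c^-(\mathfrak{p}_a)$; analogously pick $\mathfrak{p}_c\in P$ with $c^-(\mathfrak{p}_c)\ne c^-(\mathfrak{p}_a)$, forcing $c^+(\mathfrak{p}_c)=c^+(\mathfrak{p}_a)$. But then $c^+(\mathfrak{p}_b)\ne c^+(\mathfrak{p}_c)$ and $c^-(\mathfrak{p}_b)\ne c^-(\mathfrak{p}_c)$, a contradiction. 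The resulting pair $(\mathfrak{p}_1,\mathfrak{p}_2)$ witnesses membership in $\mathfrak{U}^\cp$ and completes the proof.
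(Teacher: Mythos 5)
Your proof is correct. The reduction of $\bigcap_{\mathfrak{p}\in P}\mathscr{A}_{\mathfrak{p}}$ to the union of the two ancestor cones above $\mathfrak{m}^{+}$ and $\mathfrak{m}^{-}$ is valid (the chains $A^{\pm}_{\mathfrak{p}}$ live in disjoint trees, so the intersection distributes, and an intersection of ancestor chains is the chain above the least common ancestor), the leaves of distinct pairs are indeed distinct because $\wp$ is a partition of $\mathfrak{L}^{\tau^*}$, and your two-coloring contradiction argument correctly produces a single pair $\{\mathfrak{p}_1,\mathfrak{p}_2\}\subset P$ realizing both LCAs simultaneously; the covering remark disposes of the empty intersection, so the base claim follows.

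Your route differs from the paper's in organization, though the combinatorial heart is of the same pigeonhole flavor. The paper first reduces, by induction on $\card(P)$, to the case where $P$ is a tripleton $\{\mathfrak{a},\mathfrak{b},\mathfrak{c}\}$; it then observes that within each tree $\tau^{\sigma}$ at least two of the three minima $\min(\mathscr{A}_{\mathfrak{a}}\cap\mathscr{A}_{\mathfrak{b}}\cap\tau^{\sigma})$, $\min(\mathscr{A}_{\mathfrak{b}}\cap\mathscr{A}_{\mathfrak{c}}\cap\tau^{\sigma})$, $\min(\mathscr{A}_{\mathfrak{c}}\cap\mathscr{A}_{\mathfrak{a}}\cap\tau^{\sigma})$ coincide, so each sign selects a two-element subset $C^{\sigma}$ of the three pairs, and two $2$-subsets of a $3$-set must meet, yielding the desired pair. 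You instead treat arbitrary $P$ in one pass: you make the least-common-ancestor structure explicit and replace the induction by the elementary lemma that two non-constant colorings of $P$ admit a pair of elements differing in both colorings. The paper's induction keeps each step minimal and stays entirely inside the language of the sets $\mathscr{A}_{\mathfrak{p}}$; your version avoids the induction, gives a cleaner closed-form description of $\bigcap_{\mathfrak{p}\in P}\mathscr{A}_{\mathfrak{p}}$ as two ancestor cones, and isolates the combinatorial ingredient as a reusable statement. Both are complete proofs of the lemma.
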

\begin{proof}
    By mathematical induction, it suffices prove the lemma when $P$ is a tripleton.
    Write $P = \{\mathfrak{a},\mathfrak{b},\mathfrak{c}\}$.
    Observe that for any $\sigma \in \{+,-\}$, at least two nodes among $\min\{\mathscr{A}_{\mathfrak{a}} \cap \mathscr{A}_{\mathfrak{b}} \cap \tau^\sigma\}$, $\min\{\mathscr{A}_{\mathfrak{b}} \cap\mathscr{A}_{\mathfrak{c}} \cap \tau^\sigma\}$ and $\min\{\mathscr{A}_{\mathfrak{c}} \cap \mathscr{A}_{\mathfrak{a}} \cap \tau^\sigma\}$ are the same.
    Therefore, there exists subsets $C^+$ and $C^-$ of $\{\{\mathfrak{a},\mathfrak{b}\},\{\mathfrak{b},\mathfrak{c}\},\{\mathfrak{c},\mathfrak{a}\}\}$ with $\card(C^\pm) = 2$ such that if $Q^\pm = \{\mathfrak{f}^\pm,\mathfrak{g}^\pm\} \in C^\pm$, then
    \begin{equation*}
        \mathscr{A}_{\mathfrak{a}} \cap \mathscr{A}_{\mathfrak{b}} \cap \mathscr{A}_{\mathfrak{c}} \cap \tau^\pm
        = \mathscr{A}_{\mathfrak{f}^\pm} \cap \mathscr{A}_{\mathfrak{g}^\pm} \cap \tau^\pm.
    \end{equation*}
    Because $C^\pm$ are subsets of a tripleton and $\card(C^\pm) = 2$, we always have $C^+ \cap C^- \ne \emptyset$.
    Let $\{\mathfrak{p}_1,\mathfrak{p}_2\} \in C^+ \cap C^-$, then $\mathscr{A}_\mathfrak{a} \cap \mathscr{A}_{\mathfrak{b}} \cap \mathscr{A}_{\mathfrak{c}} \cap \tau^\pm = \mathscr{A}_{\mathfrak{p}_1} \cap \mathscr{A}_{\mathfrak{p}_2} \cap \tau^\pm$.
    Consequently, we have $\mathscr{A}_\mathfrak{a} \cap \mathscr{A}_{\mathfrak{b}} \cap \mathscr{A}_{\mathfrak{c}}  = \mathscr{A}_{\mathfrak{p}_1} \cap \mathscr{A}_{\mathfrak{p}_2} \in \mathfrak{U}^\cp$.
\end{proof}

In Figure~\ref{fig:top-base} we show an example of $\mathscr{A}_{\mathfrak{p}_1} \cap \mathscr{A}_{\mathfrak{p}_2}$, where $\mathfrak{p}_1$ and $\mathfrak{p}_2$ are marked by double dotted lines and $\mathscr{A}_{\mathfrak{p}_1} \cap \mathscr{A}_{\mathfrak{p}_2}$ consist of square nodes.
\begin{figure}[htb!]
    \begin{tikzpicture}[baseline=(root+),scale=0.5,font=\footnotesize]
        \node(root+)[DSolid]{}
            child {node[DSolid]{}
                child {node(n6+)[Solid]{}}
                child {node(n5-)[Hollow]{}}
                child {node[Solid]{}
                    child {node(p+)[Solid]{}}
                    child {node(n4-)[Hollow]{}}
                    child {node(n3+)[Solid]{}}}}
            child {node(n2-)[Hollow]{}}
            child {node(n1+)[Solid]{}};
        \node(root-) at (4.5,0) [DHollow]{}
            child {node(n1-)[Hollow]{}}
            child {node[DSolid]{}
                child {node(n2+)[Solid]{}}
                child {node(n3-)[Hollow]{}}
                child {node[Solid]{}
                    child {node(n5+)[Solid]{}}
                    child {node(p-)[Hollow]{}}
                    child {node(n4+)[Solid]{}}}}
            child {node(n6-)[Hollow]{}};
        \draw[dotted] (n1+) to (n1-);
        \draw[dotted] (n2-) to (n2+);
        \draw[dotted,double,thick] (n3+) to (n3-);
        \draw[dotted] (n4-) to[out=-30,in=210]  (n4+);
        \draw[dotted,double,thick] (n5-) to[out=30,in=150]  (n5+);
        \draw[dotted] (n6+) to[out=75,in=165]  (n6-);
        \draw[dotted] (root+) to[out=15,in=165] (root-);
        \draw[dotted] (p+) to[out=-30,in=210] (p-);
    \end{tikzpicture}
    \caption{An example of $\mathscr{A}_{\mathfrak{p}_1} \cap \mathscr{A}_{\mathfrak{p}_2}$}
    \label{fig:top-base}
\end{figure}

\begin{lemma}
    \label{lem:topo-linear-char}
    Let $\cp \in \mathscr{K}$.
    For all $\mathfrak{n} \in \cp$, let $\Gamma^\cp_{\mathfrak{n}}$\index{Functions and random variables!Resonance functions!Matrices for dispersion relation $\Gamma^\cp_{\mathfrak{n}}$} be the $\wp \times \wp$ matrix such that
    \begin{equation*}
        \Gamma^\cp_{\mathfrak{n}}(\mathfrak{p}_1,\mathfrak{p}_2)
        = (\bm{1}_{\mathfrak{n} \succeq \mathfrak{l}_1^+} - \bm{1}_{\mathfrak{n} \succeq \mathfrak{l}_1^-}) (\bm{1}_{\mathfrak{n} \succeq \mathfrak{l}_2^+} - \bm{1}_{\mathfrak{n} \succeq \mathfrak{l}_2^-})
    \end{equation*} 
    for all $\mathfrak{p}_1 = \{\mathfrak{l}_1^+,\mathfrak{l}_1^-\} $ and $\mathfrak{p}_2 = \{\mathfrak{l}_2^+,\mathfrak{l}_2^-\}$ in $\wp$.
    For all $\lambda:\cp\to\mathbb{C}$, the following three statements are equivalent:
    \begin{enumerate*}
        \item $\sum_{\mathfrak{n}\in \mathfrak{c}} \lambda_{\mathfrak{n}} = 0$ for all $\mathfrak{c} \in \mathfrak{C}^\cp$;
        \label{item:linear-rel-C}
        \item $\sum_{\mathfrak{n} \in \mathscr{B}} \lambda_{\mathfrak{n}} = 0$ for all $\mathscr{B} \in \mathfrak{U}^\cp$;
        \label{item:linear-rel-U}
        \item $\sum_{\mathfrak{n} \in \cp} \lambda_{\mathfrak{n}} \Gamma^\cp_{\mathfrak{n}} = 0$.
        \label{item:linear-rel-Gamma}
    \end{enumerate*}
\end{lemma}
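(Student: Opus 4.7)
The plan is to reduce all three statements to combinatorial identities involving the partition $\mathfrak{C}^\cp$ and the generating family $\{\mathscr{A}_\mathfrak{p}\}_{\mathfrak{p}\in\wp}$. The first step is to rewrite each entry of $\Gamma^\cp_\mathfrak{n}$ as a signed indicator. Setting $\epsilon^\mathfrak{p}_\mathfrak{n} = \bm{1}_{\mathfrak{n}\succeq\mathfrak{l}^+} - \bm{1}_{\mathfrak{n}\succeq\mathfrak{l}^-}$ for $\mathfrak{p} = \{\mathfrak{l}^+,\mathfrak{l}^-\} \in \wp$, I invoke Definition~\ref{def:pairing}, which forces $\mathfrak{l}^+$ and $\mathfrak{l}^-$ to lie in opposite trees, to conclude that $\epsilon^\mathfrak{p}_\mathfrak{n}$ vanishes off $\mathscr{A}_\mathfrak{p}$ and equals $a(\mathfrak{p})\sigma$ on $\mathscr{A}_\mathfrak{p}\cap\tau^\sigma$, where $a(\mathfrak{p}) \in \{\pm 1\}$ records which of $\mathfrak{l}^\pm$ belongs to $\tau^+$ and $\sigma\in\{\pm 1\}$ labels the two trees. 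A short case check then yields
\[
    \Gamma^\cp_\mathfrak{n}(\mathfrak{p}_1,\mathfrak{p}_2) = \epsilon^{\mathfrak{p}_1}_\mathfrak{n}\epsilon^{\mathfrak{p}_2}_\mathfrak{n} = a(\mathfrak{p}_1)a(\mathfrak{p}_2)\,\bm{1}_{\mathfrak{n}\in\mathscr{A}_{\mathfrak{p}_1}\cap\mathscr{A}_{\mathfrak{p}_2}},
\]
with the sign $a(\mathfrak{p}_1)a(\mathfrak{p}_2)\in\{\pm 1\}$ independent of both $\mathfrak{n}$ and $\sigma$.

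With this identification, (2)$\Leftrightarrow$(3) is immediate: the preceding base lemma identifies $\mathfrak{U}^\cp$ with the family of double intersections $\mathscr{A}_{\mathfrak{p}_1}\cap\mathscr{A}_{\mathfrak{p}_2}$, so the $(\mathfrak{p}_1,\mathfrak{p}_2)$ entry of $\sum_\mathfrak{n}\lambda_\mathfrak{n}\Gamma^\cp_\mathfrak{n}$ vanishes iff the corresponding element of $\mathfrak{U}^\cp$ has vanishing $\lambda$-sum. For (1)$\Leftrightarrow$(2) I use the fact, immediate from Definition~\ref{def:conju-node} and Lemma~\ref{lem:Kolmogorov-space}, that the conjugate classes are precisely the atoms of the finite Boolean algebra generated by $\{\mathscr{A}_\mathfrak{p}\}_{\mathfrak{p}\in\wp}$. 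The implication (1)$\Rightarrow$(2) is then immediate since every $\mathscr{B}\in\mathfrak{U}^\cp$ is a disjoint union of such atoms. For the converse I fix a class $\mathfrak{c}$, set $P = \{\mathfrak{p}\in\wp : \mathfrak{c}\subset\mathscr{A}_\mathfrak{p}\}$ (nonempty, since every node of $\cp$ is an ancestor of some leaf and every leaf lies in a unique pair), and expand by inclusion--exclusion:
\[
    \bm{1}_\mathfrak{c} = \prod_{\mathfrak{p}\in P}\bm{1}_{\mathscr{A}_\mathfrak{p}}\prod_{\mathfrak{p}\notin P}\bigl(1-\bm{1}_{\mathscr{A}_\mathfrak{p}}\bigr) = \sum_{Q\subset\wp\setminus P}(-1)^{|Q|}\bm{1}_{\bigcap_{\mathfrak{p}\in P\cup Q}\mathscr{A}_\mathfrak{p}}.
\]
Each intersection on the right lies in $\mathfrak{U}^\cp$ by the base lemma, so $\sum_{\mathfrak{n}\in\mathfrak{c}}\lambda_\mathfrak{n}$ becomes an alternating sum of quantities that all vanish by (2).

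The only bookkeeping requiring attention is the sign computation in the first step, which furnishes the bridge between the bilinear formulation (3) and the purely set-theoretic formulations (1) and (2); the remainder is a mechanical unpacking of the atoms-versus-Boolean-algebra dictionary. No step is genuinely delicate.
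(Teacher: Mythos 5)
Your proof is correct. The entrywise factorization $\Gamma^\cp_{\mathfrak{n}}(\mathfrak{p}_1,\mathfrak{p}_2)=a(\mathfrak{p}_1)a(\mathfrak{p}_2)\,\bm{1}_{\mathfrak{n}\in\mathscr{A}_{\mathfrak{p}_1}\cap\mathscr{A}_{\mathfrak{p}_2}}$, hence the equivalence of (2) and (3), and the implication (1)$\Rightarrow$(2) via the fact that each $\mathscr{B}\in\mathfrak{U}^\cp$ is saturated under conjugacy, coincide with the paper's steps (your $\pm1$-valued $a(\mathfrak{p})$ is in fact the correct form of the sign the paper writes as $\iota_{\mathfrak{p}_1}\iota_{\mathfrak{p}_2}$ with $\iota_{\mathfrak{p}_j}=\bm{1}_{\mathfrak{l}^+_j\in\tau^+}$). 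Where you genuinely diverge is the converse (2)$\Rightarrow$(1): the paper argues by contradiction, choosing $\mathscr{B}\in\mathfrak{U}^\cp$ containing a violating class so as to minimize $\card(\pi^\cp\mathscr{B})$, and then uses the separation property of Lemma~\ref{lem:Kolmogorov-space} to cut $\mathscr{B}$ by some $\mathscr{A}_{\mathfrak{p}}$ and strictly decrease that count; you instead identify each conjugate class as an atom of the Boolean algebra generated by $\{\mathscr{A}_{\mathfrak{p}}\}_{\mathfrak{p}\in\wp}$ and expand $\bm{1}_{\mathfrak{c}}$ by inclusion--exclusion, so that $\sum_{\mathfrak{n}\in\mathfrak{c}}\lambda_{\mathfrak{n}}$ becomes an alternating sum of sums over elements of $\mathfrak{U}^\cp$. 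Both routes rest on the same unnamed base lemma that every nonempty intersection $\bigcap_{\mathfrak{p}\in P}\mathscr{A}_{\mathfrak{p}}$ lies in $\mathfrak{U}^\cp$; yours is more explicit and dispenses with the Kolmogorov lemma altogether (you cite it, but the atom description already follows from Definition~\ref{def:conju-node}), at the cost of a $2^{|\wp\setminus P|}$-term expansion, whereas the paper's minimality argument avoids any expansion. One cosmetic point: the nonemptiness of $P$ needs the observation that if one element of $\mathfrak{c}$ lies in $\mathscr{A}_{\mathfrak{p}}$ then all of $\mathfrak{c}$ does, which is immediate from the definition of conjugacy but deserves a half-sentence.
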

\begin{proof}
    Observe that if $\mathfrak{p}_1 = \{\mathfrak{l}^+_1,\mathfrak{l}^-_1\},\mathfrak{p}_2 = \{\mathfrak{l}^+_2,\mathfrak{l}^-_2\} \in \wp$ and $\mathscr{B} = \mathscr{A}_{\mathfrak{p}_1} \cap \mathscr{A}_{\mathfrak{p}_2} \in \mathfrak{U}^\cp$, then
    \begin{equation*}
        \sum_{\mathfrak{n} \in \cp} \lambda_{\mathfrak{n}} \Gamma^\cp_{\mathfrak{n}}(\mathfrak{p}_1,\mathfrak{p}_2)
        = \sum_{\mathfrak{n} \in \mathscr{B}} \lambda_{\mathfrak{n}} (\bm{1}_{\mathfrak{n} \succeq \mathfrak{l}_1^+} - \bm{1}_{\mathfrak{n} \succeq \mathfrak{l}_1^-}) (\bm{1}_{\mathfrak{n} \succeq \mathfrak{l}_2^+} - \bm{1}_{\mathfrak{n} \succeq \mathfrak{l}_2^-})
        = \iota_{\mathfrak{p}_1} \iota_{\mathfrak{p}_2} \sum_{\mathfrak{n} \in \mathscr{B}} \lambda_{\mathfrak{n}},
    \end{equation*}
    where $\iota_{\mathfrak{p}_j} = \bm{1}_{\mathfrak{l}^+_j \in \tau^+}$ for $j \in \{1,2\}$.
    Consequently, the statements~\eqref{item:linear-rel-U} and~\eqref{item:linear-rel-Gamma} are equivalent and it suffices to show that teh statements~\eqref{item:linear-rel-C} and~\eqref{item:linear-rel-U} are equivalent.
    By Definition~\ref{def:conju-node} and Lemma~\ref{lem:conju-node-char}, every $\mathscr{B} \in\mathfrak{U}^\cp$ is a disjoint union of elements in $\mathfrak{C}^\cp$.
    Therefore~\eqref{item:linear-rel-C} implies~\eqref{item:linear-rel-U}.

    Next we show that~\eqref{item:linear-rel-U} implies~\eqref{item:linear-rel-C}.
    Let $S$ be the set of all $\mathscr{B} \in \mathfrak{U}^\cp $ such that $\sum_{\mathfrak{n} \in \mathfrak{c}} \lambda_{\mathfrak{n}} \ne 0$ for some $\mathfrak{c} \in \pi^\cp \mathscr{B}$.
    Suppose that~\eqref{item:linear-rel-C} is untrue, then $S \ne \emptyset$.
    In fact, if $\mathfrak{c} \in \mathfrak{C}^\cp$ violates~\eqref{item:linear-rel-C} and $\mathfrak{c} \in \mathscr{A}_\mathfrak{p}$ for some $\mathfrak{p} \in \wp$, then $ \mathscr{A}_\mathfrak{p} \cap \mathscr{A}_\mathfrak{p} \in S$.
    Now fix any $\mathscr{B} \in S$ which minimizes $\card(\pi^\cp \mathscr{B})$ and fix any $\mathfrak{c} \in \pi^\cp \mathscr{B}$ which violates~\eqref{item:linear-rel-C}.
    The condition~\eqref{item:linear-rel-U} implies the existence of an element $\mathfrak{c}' \in \pi^\cp \mathscr{B} \backslash \{\mathfrak{c}\}$ which also violates~\eqref{item:linear-rel-C}.
    By Lemma~\ref{lem:Kolmogorov-space}, there exists $\mathfrak{p} \in \wp$ such that $\mathscr{A}_{\mathfrak{p}}$ separates $\mathfrak{c}$ and $\mathfrak{c}'$.
    Assume without losing generality that $\mathfrak{c} \in \mathscr{A}_{\mathfrak{p}}$.
    Then $\mathfrak{c} \subset \mathscr{B} \cap \mathscr{A}_{\mathfrak{p}} \in S$.
    Now that $\mathfrak{c}' \notin \pi^\cp(\mathscr{B} \cap \mathscr{A}_{\mathfrak{p}})$, we thus have
    \begin{equation*}
        \card(\pi^\cp(\mathscr{B} \cap \mathscr{A}_{\mathfrak{p}})) < \card(\pi^\cp \mathscr{B}),
    \end{equation*}
    which contradicts the minimizing property of $\mathscr{B}$.
\end{proof}

\begin{corollary}
    \label{cor:reso-matrix-linear-rel}
    Let $\cp \in \mathscr{K}$ and let $\mu$ be a map from $\mathfrak{B}^\cp$ to $\mathbb{C}$.
    Then
    \begin{equation*}
        \sum_{\mathfrak{b} \in \mathfrak{B}^\cp} \mu_{\mathfrak{b}} \Bigl(\iota_{\mathfrak{b}} \Gamma^\cp_{\mathfrak{b}} - \sum_{\mathfrak{n}^p = \mathfrak{b}} \iota_{\mathfrak{n}} \Gamma^\cp_{\mathfrak{n}}\Bigr) = 0
    \end{equation*}
    if and only if $\mu$ is locally constant with respect to the topology $\mathfrak{I}^\cp$.
    This means that for all $r \in \mathscr{I}_\cp$ and for all $\mathfrak{b}_1,\mathfrak{b}_2 \in \mathfrak{B}^{r} \subset \mathfrak{B}^\cp$, we have $\mu_{\mathfrak{b}_1} = \mu_{\mathbf{b}_2}$.
\end{corollary}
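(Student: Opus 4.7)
The plan is to collapse the left-hand side into a single linear combination of the matrices $\Gamma^\cp_{\mathfrak{n}}$ and then apply Lemma~\ref{lem:topo-linear-char} to translate the resulting vanishing condition into constraints on $\mu$. To set this up, I extend $\mu$ to all of $\cp$ by setting $\mu_{\mathfrak{l}} = 0$ for every leaf $\mathfrak{l} \in \mathfrak{L}^\cp$ and adopting the convention $\mu_{\mathfrak{n}^p} = 0$ whenever $\mathfrak{n}$ is a root. Since every non-root $\mathfrak{n}$ has exactly one parent $\mathfrak{n}^p \in \mathfrak{B}^\cp$, swapping the summation order in the second sum yields
\[
    \sum_{\mathfrak{b} \in \mathfrak{B}^\cp} \mu_{\mathfrak{b}}\Bigl(\iota_{\mathfrak{b}} \Gamma^\cp_{\mathfrak{b}} - \sum_{\mathfrak{n}^p = \mathfrak{b}} \iota_{\mathfrak{n}}\Gamma^\cp_{\mathfrak{n}}\Bigr) = \sum_{\mathfrak{n} \in \cp} \lambda_{\mathfrak{n}}\Gamma^\cp_{\mathfrak{n}}, \qquad \lambda_{\mathfrak{n}} \coloneqq \iota_{\mathfrak{n}}\bigl(\mu_{\mathfrak{n}} - \mu_{\mathfrak{n}^p}\bigr).
\]
By the equivalence of \eqref{item:linear-rel-C} and \eqref{item:linear-rel-Gamma} in Lemma~\ref{lem:topo-linear-char}, the stated identity holds if and only if $\sum_{\mathfrak{n} \in \mathfrak{c}} \lambda_{\mathfrak{n}} = 0$ for every conjugate class $\mathfrak{c} \in \mathfrak{C}^\cp$.

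Next, I enumerate the four types of conjugate classes, using Lemma~\ref{lem:conju-node-char} which forces every doubleton to consist of two nodes of opposite signs belonging to different trees. Singleton classes $\{\mathfrak{n}\}$ (necessarily interior branching nodes) give $\mu_{\mathfrak{n}} = \mu_{\mathfrak{n}^p}$; the root pair gives $\mu_{\mathfrak{r}^{\tau^+}} = \mu_{\mathfrak{r}^{\tau^-}}$; each leaf pair $\{\mathfrak{l}^+,\mathfrak{l}^-\} \in \wp$ gives $\mu_{(\mathfrak{l}^+)^p} = \mu_{(\mathfrak{l}^-)^p}$; and each remaining doubleton $\{\mathfrak{n}^+,\mathfrak{n}^-\}$ in $\mathfrak{C}^\cp_2$, which arises as the root pair of some non-outermost irreducible subcouple, yields $\mu_{\mathfrak{n}^+} - \mu_{\mathfrak{n}^-} = \mu_{(\mathfrak{n}^+)^p} - \mu_{(\mathfrak{n}^-)^p}$.

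It remains to match these relations with local constancy of $\mu$ with respect to $\mathfrak{I}^\cp$. The key combinatorial input, implicit in Lemma~\ref{lem:cp-attaching} and Definition~\ref{def:cp-irreducible}, is that $\{\mathfrak{B}^r\}_{r \in \mathscr{I}_\cp}$ partitions $\mathfrak{B}^\cp$, so local constancy is equivalent to $\mu$ being constant on every $\mathfrak{B}^r$. The backward direction is straightforward: for a singleton $\mathfrak{n}$ the parent $\mathfrak{n}^p$ lies in the same $\mathfrak{B}^r$ (since $\mathfrak{n}$ is not a root of $r$), and both elements of each doubleton likewise lie in common $\mathfrak{B}^r$'s. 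For the forward direction, the singleton conditions propagate $\mu$ along ancestry chains inside any fixed $r$, yielding constants $c^r_+$ and $c^r_-$ on $\mathfrak{B}^r \cap \tau^+$ and $\mathfrak{B}^r \cap \tau^-$ respectively; the doubleton relation at the root pair $\{\mathfrak{R}^+,\mathfrak{R}^-\}$ of $r$ then reads $c^r_+ - c^r_- = c^{r'}_+ - c^{r'}_-$ where $r'$ is the parent subcouple, and an induction along the nesting order on $\mathscr{I}_\cp$ cascades the equality $c_+ = c_-$ from the outermost subcouple (where it is exactly the root-pair condition) inward to every $r$.

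The main obstacle is this final combinatorial step: justifying the partition property of $\{\mathfrak{B}^r\}_{r \in \mathscr{I}_\cp}$ and tracking how the cut-point relations propagate coherently from outer to inner irreducible subcouples along the nesting tree. Once this is in hand, the forward and backward implications combine to yield the claimed equivalence.
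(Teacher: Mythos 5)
Your reduction is exactly the paper's: extend $\mu$ by zero to leaves and to (fictitious) parents of roots, rewrite the left-hand side as $\sum_{\mathfrak{n}\in\cp}\iota_{\mathfrak{n}}(\mu_{\mathfrak{n}}-\mu_{\mathfrak{n}^p})\Gamma^\cp_{\mathfrak{n}}$, invoke Lemma~\ref{lem:topo-linear-char} to pass to the class-wise conditions, and read these off as the four relations you list (singletons, root pair, leaf pairs, interior doubletons); this part is correct and coincides with the published argument. Where you diverge is the endgame. The paper first treats an irreducible couple directly (there the only doubletons are the root pair and the leaf pairs, so the singleton relations propagate $\mu$ up each tree and the root-pair relation glues the two trees) and then handles the general case by induction on the order of $\cp$, splitting $\cp=\cp_1\otimes_{\mathfrak{c}}\cp_2$ with $\cp_2$ irreducible and using Lemma~\ref{lem:cp-attaching}; you instead run an outside-in cascade over the nesting order on $\mathscr{I}_\cp$. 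Your cascade is sound (and in some ways more transparent about how the relation at a cut class $\{\mathfrak{n}^+,\mathfrak{n}^-\}$ transfers the equality $c_+=c_-$ from the parent subcouple to the child; note only that the right-hand side of your relation is $\pm(c^{r'}_+-c^{r'}_-)$ depending on which tree of $r'$ each parent lies in, which is harmless since you only use its vanishing). However, the facts you defer as ``the main obstacle''---that $\{\mathfrak{B}^r\}_{r\in\mathscr{I}_\cp}$ partitions $\mathfrak{B}^\cp$, that every branching doubleton class is the root pair of exactly one irreducible subcouple, and that the parents of such a root pair (and of each leaf pair) lie in a single, namely the parent, subcouple---are precisely what the paper's inductive peeling supplies, via $\mathfrak{C}^{\cp_1\otimes_{\mathfrak{c}}\cp_2}=\mathfrak{C}^{\cp_1}\cup\mathfrak{C}^{\cp_2}$ and Definition~\ref{def:cp-irreducible}; they are routine by induction on the factorization, but a complete write-up must include that short induction rather than leave it as a placeholder. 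Also, in your backward direction, for a leaf pair what you actually need in a common $\mathfrak{B}^r$ is the two \emph{parents} of the paired leaves, not the leaves themselves; your phrasing should be adjusted accordingly.
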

\begin{proof}
    Using the convention that $\mu_{\mathfrak{n}^p} = 0$ if $\mathfrak{n}$ is a root and $\mu_{\mathfrak{n}} = 0$ if $\mathfrak{n} \in \mathfrak{L}^\cp$. Then
    \begin{equation*}
        \sum_{\mathfrak{b} \in \mathfrak{B}^\cp} \mu_{\mathfrak{b}} \Bigl( \iota_{\mathfrak{b}} \Gamma^{\cp}_{\mathfrak{b}} - \sum_{\mathfrak{n}^p = \mathfrak{b}} \iota_{\mathfrak{n}} \Gamma^{\cp}_{\mathfrak{n}} \Bigr)
        = \sum_{\mathfrak{n} \in \cp} \iota_{\mathfrak{n}} (\mu_{\mathfrak{n}} - \mu_{\mathfrak{n}^\cp}) \Gamma^\cp_{\mathfrak{n}}.
    \end{equation*}
    By Lemma~\ref{lem:topo-linear-char}, the above sum is zero if and only if for all $\mathfrak{c} \in \mathfrak{C}^\cp$ we have
    \begin{equation}
        \label{eq:linear-rel-coeff}
        \sum_{\mathfrak{n} \in \mathfrak{c}} \iota_{\mathfrak{n}}  (\mu_{\mathfrak{n}} - \mu_{\mathfrak{n}^\cp}) = 0,
    \end{equation}
    or equivalently, if $\mathfrak{c} = \{\mathfrak{n}\} \in \mathfrak{C}^\cp_1$, then $\mu_{\mathfrak{n}} = \mu_{\mathfrak{n}^\cp}$, and if $\mathfrak{c} = \{\mathfrak{n}^+,\mathfrak{n}^-\} \in \mathfrak{C}^\cp_2$, then $\mu_{\mathfrak{n}^+} - \mu_{\mathfrak{n}^-} = \mu_{(\mathfrak{n}^+)^p} - \mu_{(\mathfrak{n}^-)^p}$.
    When $\cp = (\tau^*,\wp)$ is irreducible, by Definition~\ref{def:cp-irreducible}, this is equivalent to the conditions that $\mu_{\mathfrak{r}^{\tau^+}} = \mu_{\mathfrak{r}^{\tau^-}}$ and $ \mu_{\mathfrak{b}} = \mu_{\mathfrak{r}^{\tau^\pm}}$ for all $\mathfrak{b} \in \mathfrak{B}^{\tau^\pm}$, which is clearly equivalent to $\mu$ being constant on $\mathfrak{B}^\cp$.
    The general case follows by mathematical induction on the order of $\cp$.
    Assume that~\eqref{eq:linear-rel-coeff} implies that $\mu$ is locally constant with respect to $\mathfrak{I}^\cp$ when the order of $\cp$ is $< n$.
    Now, let $\cp \in \mathscr{K}_n$ be an irreducible couple and write $\cp = \cp_1 \otimes_{\mathfrak{c}} \cp_2$ for some $\mathfrak{c} \in \mathfrak{C}^\cp_2$ such that both $\cp_1$ and $\cp_2$ are nontrivial and $\cp_2$ is irreducible, then by the induction hypothesis, the function $\mu|_{\mathfrak{B}^{\cp_1}}$ is locally constant with respect to $\mathfrak{I}^{\cp_1}$.
    Moreover, writing $\cp_2 = (\tau^*_2,\wp_2)$, then $\mu_{\mathfrak{b}} = \mu_{\mathfrak{r}^{\tau^\pm_2}}$ when $\mathfrak{b} \in \mathfrak{B}^{\tau^\pm_2}$ and $\mu_{\mathfrak{r}^{\tau^+_2}} - \mu_{\mathfrak{r}^{\tau^-_2}} = \mu_{(\mathfrak{r}^{\tau^+_2})^p} - \mu_{(\mathfrak{r}^{\tau^-_2})^p} = 0 $ because the $(\mathfrak{r}^{\tau^\pm_2})^p \in \mathfrak{B}^{\cp_1}$.
    This shows that $\mu|_{\mathfrak{B}^\cp_2}$ is constant and therefore $\mu$ is locally constant with respect to $\mathfrak{I}^\cp$.
\end{proof}

\section{Oscillatory sums and integrals}

\label{sec:oscillatory}

The purpose of this section is to analyze the oscillatory functionals defined in Definition~\ref{def:functional-S}.
Estimates and asymptotic behaviors of these functionals are essential in the study of the large box limit $L \to \infty$ for regular couples.

\begin{definition}
    \label{def:functional-S}
    For $n \ge 1$ and $d \ge 1$, denote for simplicity $X^{n,d} = \mathbb{R}^{n} \times (\mathbb{R}^{2d})^n$.
    For all $L \in (0,\infty]$, the linear function $\mathcal{S}_L$\index{Operations and transforms!Oscillatory sums and integrals!Oscillatory sum/integral operator $\mathcal{S}_L$} on $ \mathscr{S}(X^{n,d}) $ is defined by
    \begin{equation*}
        \mathcal{S}_L(\Phi)
        =
        \begin{cases}
            \displaystyle
            \frac{1}{L^{2nd}} \sum_{\bm{z} \in (\mathbb{Z}^{2d}_L)^n} \int_{\mathbb{R}^n} e^{2\pi i \bm{s} \cdot \varpi(\bm{z})} \Phi(\bm{s},\bm{z}) \diff \bm{s}, & L < \infty; \\
            \displaystyle
            \iint_{X^{n,d}} e^{2\pi i \bm{s} \cdot \varpi(\bm{z})} \Phi(\bm{s},\bm{z}) \diff \bm{s} \diff \bm{z},                                       & L = \infty,
        \end{cases}
    \end{equation*}
    where the phase factor $\varpi = (\varpi_j)_{1\le j \le n}: (\mathbb{R}^{2d})^n \to \mathbb{R}^n$ is defined by setting $\varpi_j(\bm{z}) = \bm{x}_j \cdot \bm{y}_j$ if $\bm{z} = (\bm{x}_j,\bm{y}_j)_{1 \le j \le n}$ and $\bm{s} \cdot \varpi(\bm{z})$ is defined as an inner product in $\mathbb{R}^n$.
\end{definition}

In this section, we shall fix $\alpha \in (0,2)$ and write $\gamma = L^\alpha$ for simplicity.
We define the scaling operator $\theta_L $ on $ C^\infty(X^{n,d}) $ by setting
$\theta_L \Phi(\bm{s},\bm{z}) = \Phi(\bm{s}/\gamma,\bm{z}).$\index{Operations and transforms!Oscillatory sums and integrals!Scaling operator $\theta_L$}
We also denote by $\theta_\infty$ the trace operator defined by $\theta_\infty \Phi(\bm{z}) = \Phi(0,\bm{z})$.

\subsection{Estimates of upper bounds}

We first obtain upper bounds for $\mathcal{S}_L$ in both the cases $L<\infty$ and $L = \infty$.
The following lemma will be frequently used.

\begin{lemma}
    \label{lem:gauss-sum-fourier-decomp}
    For all $\bm{\xi},\bm{z} \in (\mathbb{R}^{2d})^n$, let $\psi_{\bm{\xi}}(\bm{z}) = e^{2\pi i \bm{\xi} \cdot \bm{z}}$.
    For $\Phi \in C^\infty(X^{n,d})$ and $\chi \in C_c^\infty(X^{n,d})$, denote
    $\widehat{\Phi}^\chi_{\bm{\xi}}(\bm{s},\bm{z}) = \chi(\bm{s},\bm{z}) \widehat{\Phi}(\bm{s},\bm{\xi})$,
    where $\widehat{\Phi}(\bm{s},\cdot)$ is the Fourier transform of $\Phi(\bm{s},\cdot)$.
    Then for all $L \in (0,\infty]$, we have the identity
    \begin{equation*}
        \mathcal{S}_L(\chi\Phi)
        = \int_{(\mathbb{R}^{2d})^n} \mathcal{S}_L(\psi_{\bm{\xi}}\widehat{\Phi}^\chi_{\bm{\xi}}) \diff \bm{\xi}.
    \end{equation*}
\end{lemma}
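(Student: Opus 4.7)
The identity is, in essence, a Fourier inversion in the $\bm{z}$ variable followed by an interchange of integration. I would proceed in three short steps.

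First, I would apply partial Fourier inversion in $\bm{z}$ at each fixed $\bm{s}$, namely
\begin{equation*}
    \Phi(\bm{s},\bm{z}) = \int_{(\mathbb{R}^{2d})^n} \widehat{\Phi}(\bm{s},\bm{\xi}) \, e^{2\pi i \bm{\xi} \cdot \bm{z}} \diff \bm{\xi}
    = \int_{(\mathbb{R}^{2d})^n} \widehat{\Phi}(\bm{s},\bm{\xi}) \, \psi_{\bm{\xi}}(\bm{z}) \diff \bm{\xi}.
\end{equation*}
Multiplying by $\chi(\bm{s},\bm{z})$ and using the definition of $\widehat{\Phi}^\chi_{\bm{\xi}}$ gives the pointwise identity
\begin{equation*}
    (\chi \Phi)(\bm{s},\bm{z}) = \int_{(\mathbb{R}^{2d})^n} \widehat{\Phi}^\chi_{\bm{\xi}}(\bm{s},\bm{z}) \, \psi_{\bm{\xi}}(\bm{z}) \diff \bm{\xi}.
\end{equation*}

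Next, I would substitute this representation into $\mathcal{S}_L(\chi\Phi)$ and interchange the $\bm{\xi}$-integral with the $\bm{s}$-integral (and, when $L < \infty$, with the $\bm{z}$-sum; when $L = \infty$, with the $\bm{z}$-integral). After the swap, the inner expression is exactly $\mathcal{S}_L(\psi_{\bm{\xi}} \widehat{\Phi}^\chi_{\bm{\xi}})$ by the very definition of $\mathcal{S}_L$, which yields the claimed identity in both cases uniformly.

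The only delicate point is the interchange of integrals. Since $\chi \in C_c^\infty(X^{n,d})$, its support is compact in $(\bm{s},\bm{z})$, so when $L < \infty$ only finitely many lattice points $\bm{z} \in (\mathbb{Z}_L^{2d})^n$ contribute and the $\bm{s}$-integration is over a bounded set; when $L = \infty$, the $(\bm{s},\bm{z})$-integration is over a compact set. Combined with the rapid decay in $\bm{\xi}$ of $\widehat{\Phi}(\bm{s},\bm{\xi})$ inherited from the Schwartz regularity implicit in the domain of $\mathcal{S}_L$ (Definition~\ref{def:functional-S}), the integrand $\chi(\bm{s},\bm{z}) \widehat{\Phi}(\bm{s},\bm{\xi}) \, e^{2\pi i \bm{\xi} \cdot \bm{z}} \, e^{2\pi i \bm{s} \cdot \varpi(\bm{z})}$ is absolutely integrable in $(\bm{s},\bm{z},\bm{\xi})$, so Fubini--Tonelli applies. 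I expect this step to be the only minor obstacle; it is purely cosmetic, amounting to verifying that a localized version of $\Phi$ and its Fourier transform have sufficient decay, after which the cases $L<\infty$ and $L=\infty$ are handled by a single line.
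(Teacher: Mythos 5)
Your proposal is correct and follows essentially the same route as the paper: the paper's proof simply plugs the Fourier inversion identity $(\chi\Phi)(\bm{s},\bm{z}) = \chi(\bm{s},\bm{z})\int e^{2\pi i \bm{\xi}\cdot\bm{z}}\widehat{\Phi}(\bm{s},\bm{\xi})\diff\bm{\xi}$ into $\mathcal{S}_L(\chi\Phi)$ and swaps the order of integration/summation, exactly as you do. Your additional remarks justifying the Fubini step via the compact support of $\chi$ and the decay of $\widehat{\Phi}$ are a harmless elaboration of what the paper leaves implicit.
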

\begin{proof}
    It suffices to plug the following identity into $\mathcal{S}_L(\chi\Phi)$:
    \begin{equation*}
        (\chi \Phi)(\bm{s},\bm{z})
        = \chi(\bm{s},\bm{z}) \int_{(\mathbb{R}^{2d})^n} e^{2\pi i \bm{\xi} \cdot \bm{z}} \widehat{\Phi}(\bm{s},\bm{\xi})\diff \bm{\xi}.
        \qedhere
    \end{equation*}
\end{proof}

When $L < \infty$, we estimate the oscillatory sum $\mathcal{S}_L$ functional by transforming it into expressions involving Gauss sums.
Then we use the following Hua's lemma borrowed from~\cite{Deng2021}.

\begin{lemma}
    \label{lem:gauss-sum-def-est}
    For all $s,r \in \mathbb{R}$, $h \in \mathbb{Z}$ and $x \ge 0$, we denote by $G_h(s,r,x)$ the Gauss sum
    \begin{equation*}
        G_h(s,r,x) = \sum_{h \le n \le h + \lfloor x \rfloor} e^{\pi i (s n^2 + rn)}.
        \index{Functions and random variables!Number theoretical functions!Gauss sum $G_h$}
    \end{equation*}
    Then for all $N \in \mathbb{N} \backslash \{0\}$, we have the uniform estimates
    \begin{equation*}
        \sup_{r \in \mathbb{R}} \sup_{h \in \mathbb{Z}} \|G_h(\cdot,r,N)\|_{L^4}^4 \lesssim N^2 \log(1+N), \quad
        \sup_{r \in \mathbb{R}} \sup_{h \in \mathbb{Z}} \|G_h(\cdot,r,N)\|_{L^6}^6 \lesssim N^4.
    \end{equation*}
\end{lemma}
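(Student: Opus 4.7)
The plan is to apply orthogonality in $s$ to convert each $L^p$ norm into a count of integer solutions of a quadratic Diophantine equation, and then bound those counts by classical divisor sum estimates.

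Since $s \mapsto e^{\pi i s n^2}$ has period $2$ for $n \in \mathbb{Z}$, the $L^p$ norm in $s$ is taken on an interval of length $2$, say $[0,2]$. Expanding $|G_h(s,r,N)|^{2k}$ as a sum over $\mathbf{n} = (n_1,\ldots,n_{2k}) \in [h, h+N]^{2k}$ and integrating in $s$ annihilates all terms whose quadratic phase $Q(\mathbf{n}) \coloneqq \sum_{j=1}^{2k}(-1)^{j+1} n_j^2$ is nonzero, while the surviving linear phase $e^{\pi i r \sum_j (-1)^{j+1} n_j}$ is of unit modulus. By the triangle inequality, the lemma is reduced to the uniform-in-$h$ lattice point bounds
\begin{equation*}
\mathcal{N}_k(h,N) \coloneqq \#\bigl\{\mathbf{n} \in \mathbb{Z}^{2k} \cap [h,h+N]^{2k} : Q(\mathbf{n}) = 0\bigr\} \lesssim
\begin{cases} N^2 \log(1+N), & k = 2,\\ N^4, & k = 3.\end{cases}
\end{equation*}

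For $k = 2$, I would rewrite $n_1^2 + n_3^2 = n_2^2 + n_4^2$ as $(n_1 - n_2)(n_1 + n_2) = (n_4 - n_3)(n_4 + n_3)$ and substitute $n_j = h + m_j$ with $m_j \in \{0, 1, \ldots, N\}$. The two ``small'' factors $n_1 - n_2 = m_1 - m_2$ and $n_4 - n_3 = m_4 - m_3$ have size at most $N$, while the two ``large'' factors equal $2h + O(N)$. Counting solutions of $uv = wz$ with $|u|, |w| \le N$ and $v, z$ in an $h$-translated interval of length $2N$, via the classical divisor sum $\sum_{|m| \lesssim N^2} d(|m|) \lesssim N^2 \log N$, yields the target bound. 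For $k = 3$, after the same reduction, $\mathcal{N}_3$ equals $\sum_M r(M)^2$, where $r(M)$ is the number of representations of $M$ as $a^2 + b^2 + c^2$ with $a, b, c \in [h, h+N]$. Since $\sum_M r(M) = (N+1)^3$, combining this with the classical bound $r(M) \lesssim M^{1/2}$ (valid for $M \lesssim (h+N)^2$) gives $\sum_M r(M)^2 \lesssim \sum_{M \lesssim (h+N)^2} M$, which after again using the substitution $n_j = h + m_j$ collapses to $O(N^4)$.

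The main obstacle is uniformity in $h$: naive divisor bounds on quantities like $n_i^2 - n_j^2$, which can be as large as $hN$, introduce unwanted $h$-dependence. The cure is the substitution $n_j = h + m_j$, which isolates $h$ into a single fixed additive offset, so that the divisor function only acts on $m$-expressions of size $O(N^2)$ and all resulting bounds are independent of $h$. This reduction is essentially the content of Hua's lemma, and the full argument is carried out in \cite{Deng2021}, from which the present lemma is borrowed.
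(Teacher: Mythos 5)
Your reduction of the two moments, via periodicity and orthogonality in $s$, to the lattice counts $\mathcal{N}_2(h,N)$ and $\mathcal{N}_3(h,N)$ is correct and standard, but both counting steps — which is exactly where the required uniformity in $h$ must be earned — are not actually established. For $k=2$ you must count $uv=wz$ with $|u|,|w|\le N$ but $v,z\in[2h,2h+2N]$; these products have size up to $N(|h|+N)$, not $O(N^2)$, so the classical bound $\sum_{|m|\lesssim N^2}d(|m|)\lesssim N^2\log N$ is not the relevant object, and the claim that after the substitution ``the divisor function only acts on $m$-expressions of size $O(N^2)$'' is unsubstantiated. One can force products of size $O(N^2)$ by differencing, writing $u(v-z)=(w-u)z$, but then each pair $(u,v-z)$ contributes up to $d(|u(v-z)|)$ factorizations and one lands on $\sum_{m\lesssim N^2}d(m)^2\sim N^2\log^3N$, which misses the stated bound; the clean $N^2\log N$, uniformly in the location of the interval, requires an extra idea such as the gcd parametrization $u=gu'$, $w=gw'$ with $\gcd(u',w')=1$, forcing $v=w'v'$, $z=u'v'$, and then summing $\frac{N}{\max(|u'|,|w'|)}\bigl(\frac{N}{\max(|u'|,|w'|)}+1\bigr)$ over $u',w'$. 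Nothing of this sort appears in your write-up.

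The $k=3$ step is not merely incomplete but incorrect as written. From $\sum_M r(M)=(N+1)^3$ and the (correct form of the) classical bound $r_3(M)\lesssim M^{1/2+\epsilon}$, the most you can conclude is $\sum_M r(M)^2\lesssim (|h|+N)^{1+\epsilon}N^3$, since the relevant $M$ are of size $\sim h^2$; this is far larger than $N^4$ when $|h|\gg N$, and no substitution $n_j=h+m_j$ removes this $h$-dependence. The displayed chain $\sum_M r(M)^2\lesssim\sum_{M\lesssim(h+N)^2}M$ is a non sequitur and is itself of size $(|h|+N)^4$. What your scheme would actually need is a bound of the form $r(M)\lesssim N$ for representations restricted to a box of side $N$, which is neither classical nor proved here. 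Note also that this paper does not prove the lemma at all — it is borrowed from \cite{Deng2021} — and the standard route there and in the circle-method literature is different from yours: one uses the pointwise Weyl--Gauss sum bound on Farey arcs, precisely the estimate quoted later in this paper as~\eqref{eq:gauss-sum-est-bourgain}, and integrates $N^{2k}v^{-k}(1+N|\beta|^{1/2})^{-2k}$ over $|\beta|\le 1/(vN)$, summing over $a \bmod v$ and $v\le N$; this yields $N^2\log N$ for $k=2$ and $N^4$ for $k=3$, uniformly in $h$ and $r$, because Weyl differencing behind that pointwise bound eliminates both the shift $h$ and the linear phase $r$. If you want a purely counting-based proof, you must supply the gcd argument for $k=2$ and an entirely different argument for $k=3$; deferring these to the citation leaves the essential content of the lemma unproved.
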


\begin{proposition}
    \label{prop:gauss-sum-upper-bound}
    Let $d \ge 3$, $n \ge 1$, $\delta \in (0,2-\alpha)$.
    If $L \ge 1$, then for all $(\bm{u},\bm{\xi}) \in X^{n,d}$ and all $\Phi \in L^\infty W^{2d,1}\cap L^1L^\infty$ such that $\supp \Phi \subset \mathbb{R}^n \times B$ for some bounded $B \subset (\mathbb{R}^{2d})^n$, we have
    \begin{equation}
        \label{eq:gauss-sum-upper-bound-pre}
        \sup_{(\bm{u},\bm{\xi}) \in X^{n,d}} |\mathcal{S}_L(\psi_{\bm{\xi}} \theta_L \Phi_{\bm{u}})|
        \lesssim C^n \ell(B)^{2nd}(\|\Phi\|_{L^\infty W^{2d,1}} + \gamma \|\Phi\|_{L^1_{|\bm{s}|\ge L^{\delta}}L^\infty})
    \end{equation}
    for some universal constant $C > 0$.
    In~\eqref{eq:gauss-int-upper-bound-pre}, we denote $\Phi_{\bm{u}}(\bm{s},\bm{z}) = e^{2\pi i \bm{s} \cdot \bm{u}} \Phi(\bm{s},\bm{z})$ and $\ell(B) = \langle 1 + \diam B \rangle$.
    Consequently if $\Phi \in \mathscr{S}'(X^{n,d})$ is such that $\supp \Phi \subset Q \times B$ for some bounded subsets $Q \subset \mathbb{R}^n$, then for sufficiently large $L > 0$, we have
    \begin{equation}
        \label{eq:gauss-sum-upper-bound}
        | \mathcal{S}_L(\theta_L \Phi)| \lesssim C^n \ell(B)^{4nd} \|\widehat{\Phi}\|_{L^\infty L^1}.
    \end{equation}
\end{proposition}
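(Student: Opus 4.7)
The plan for the pointwise bound~\eqref{eq:gauss-sum-upper-bound-pre}, which is the heart of the proposition, is to begin with the change of variable $\bm{s} \mapsto \gamma\bm{s}'$ that sends the integration variable to the natural scale of $\Phi$, producing
\[
    \mathcal{S}_L(\psi_{\bm{\xi}}\theta_L\Phi_{\bm{u}})
    = \frac{\gamma^n}{L^{2nd}}\sum_{\bm{z}\in(\mathbb{Z}^{2d}_L)^n}\int_{\mathbb{R}^n}
    e^{2\pi i\bigl[\gamma\bm{s}'\cdot\varpi(\bm{z})+\bm{\xi}\cdot\bm{z}+\bm{s}'\cdot\bm{u}\bigr]}\Phi(\bm{s}',\bm{z})\,\diff\bm{s}',
\]
and then to split the $\bm{s}'$-integral into the two regions $\{|\bm{s}'|\le L^\delta\}$ and $\{|\bm{s}'|>L^\delta\}$, which will give rise respectively to the two terms in~\eqref{eq:gauss-sum-upper-bound-pre}. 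The key mechanism is that the bilinear phase $e^{2\pi i\gamma s'_j\bm{x}_j\cdot\bm{y}_j}$ is diagonalized by the polarization identity $\bm{x}_j\cdot\bm{y}_j = \tfrac14\sum_{k=1}^d[(x_{j,k}+y_{j,k})^2-(x_{j,k}-y_{j,k})^2]$, so that each $\bm{z}_j$-sum becomes a product of $2d$ one-dimensional Gauss sums of length $\sim L\ell(B)$ in the effective parameter $\tilde\sigma_j\sim L^{\alpha-2}s'_j$, to which Lemma~\ref{lem:gauss-sum-def-est} applies.

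For the short-time region $\{|\bm{s}'|\le L^\delta\}$, I would first Fourier-expand $\Phi$ in the $\bm{z}$-variable, pulling out $\langle\bm{\eta}\rangle^{-2d}$-type decay in the dual variable at the cost of the $W^{2d,1}_{\bm{z}}$-norm of $\Phi$. With the $\bm{z}$-cutoff decoupled, the $\bm{z}$-sum reduces to a pure product of $2dn$ Gauss sums, and because $\gamma|\bm{s}'|\lesssim L^{\alpha+\delta}\ll L^2$ on this region (using $\delta<2-\alpha$), the trivial bound $|G|\le L\ell(B)$ suffices. Combined with the $L^\infty_{\bm{s}}$ control of $\Phi$ and the prefactor $\gamma^n/L^{2nd}$, this yields the first term $\ell(B)^{2nd}\|\Phi\|_{L^\infty W^{2d,1}}$.

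The long-time region $\{|\bm{s}'|>L^\delta\}$ is the delicate part: a naive bound yields $\gamma^n\ell(B)^{2nd}\|\Phi\|_{L^1L^\infty}$, so one must extract a gain of $\gamma^{n-1}$ from oscillation in $\bm{z}$ alone. For this I would use the $L^6$-bound of Lemma~\ref{lem:gauss-sum-def-est}, accessible because $d\ge 3$ makes $2d\ge 6$; interpolated against the trivial $L^\infty$ bound it yields $\|G\|_{L^{2d}_{\tilde\sigma}}\lesssim (L\ell(B))^{1-1/d}$, which in turn gives the integrated estimate $\|T_j\|_{L^p_{s'_j}}\lesssim \ell(B)^{2d-2/p}\gamma^{-1/p}$ for every $p\ge 1$, where $T_j(s'_j) = L^{-2d}\sum_{\bm{z}_j}e^{2\pi i\gamma s'_j\bm{x}_j\cdot\bm{y}_j}\tilde{\Phi}_j$. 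A careful H\"older pairing in the $\bm{s}'$-variables---matching $n-1$ of these Gauss-sum factors against the product structure of $\prod_j T_j$ and one factor against the $L^1_{\bm{s}}L^\infty_{\bm{z}}$-norm of $\Phi$---then absorbs $\gamma^{n-1}$ from the $\gamma^n$ prefactor and leaves the target $\gamma\ell(B)^{2nd}\|\Phi\|_{L^1_{|\bm{s}|\ge L^\delta}L^\infty}$. Orchestrating this H\"older argument so that the final norm on $\Phi$ is exactly $L^1_{\bm{s}}L^\infty_{\bm{z}}$, rather than a stronger mixed norm, is the principal technical subtlety.

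The consequence~\eqref{eq:gauss-sum-upper-bound} follows from~\eqref{eq:gauss-sum-upper-bound-pre} by invoking Lemma~\ref{lem:gauss-sum-fourier-decomp}. Choose a product cutoff $\chi(\bm{s},\bm{z}) = \chi_1(\bm{s}/\gamma)\chi_2(\bm{z})\in C_c^\infty(X^{n,d})$ with $\chi_1\equiv 1$ on $Q$ and $\chi_2\equiv 1$ on $B$, so that $\theta_L\Phi = \chi\cdot\theta_L\Phi$. Then Lemma~\ref{lem:gauss-sum-fourier-decomp} gives $\mathcal{S}_L(\theta_L\Phi) = \int\mathcal{S}_L(\psi_{\bm{\xi}}\widehat{(\theta_L\Phi)}^\chi_{\bm{\xi}})\,\diff\bm{\xi}$, where the integrand equals $\chi_2(\bm{z})\chi_1(\bm{s}/\gamma)\widehat{\Phi}(\bm{s}/\gamma,\bm{\xi})$ and hence fits the form required by~\eqref{eq:gauss-sum-upper-bound-pre} with $\bm{u}=0$. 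For $L$ large enough that $L^\delta$ exceeds $\sup_{\bm{s}\in\supp\chi_1}|\bm{s}|$, the second term in~\eqref{eq:gauss-sum-upper-bound-pre} vanishes, and the first term contracts to $\ell(B)^{2d}\|\widehat{\Phi}(\cdot,\bm{\xi})\|_{L^\infty_{\bm{s}}}$ because the $W^{2d,1}_{\bm{z}}$-derivatives fall entirely on $\chi_2$. Integrating over $\bm{\xi}$ produces the desired bound $C^n\ell(B)^{4nd}\|\widehat\Phi\|_{L^\infty L^1}$.
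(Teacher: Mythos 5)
Your split into the regions $\{|\bm{s}'|\le L^\delta\}$ and $\{|\bm{s}'|>L^\delta\}$ is the right first move, but you have assigned the trivial and the delicate estimates to the wrong regions, and the assignment you chose does not close. On the short-time region the trivial Gauss-sum bound cannot suffice: after the rescaling $\bm{s}\mapsto\gamma\bm{s}'$ the prefactor is $\gamma^n/L^{2nd}$, and bounding each of the $2dn$ Gauss sums by its length $\sim L\ell(B)$ gives (already for $n=1$) a contribution of size $\gamma\,\ell(B)^{2d}\|\Phi\|_{L^1L^\infty}$, which overshoots the target first term $\ell(B)^{2d}\|\Phi\|_{L^\infty W^{2d,1}}$ by the divergent factor $\gamma=L^\alpha$. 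The condition $\gamma|\bm{s}'|\ll L^2$ only says the Gauss-sum parameter $\tau=\gamma s'/L^2$ is $O(1)$; near $\tau=0$ the Gauss sums genuinely attain their trivial size, so the needed gain of $\gamma$ can only be recovered on average in $\tau$, i.e.\ by integrating the $L^4$/$L^6$ moment bounds of Lemma~\ref{lem:gauss-sum-def-est} over the short $\tau$-interval of length $\lesssim\gamma L^{\delta-2}$ (this is exactly where $d\ge3$, giving at least six Gauss-sum factors, and $\delta<2-\alpha$ enter). In other words, the Hua-type input that you spend on the long-time region is indispensable precisely on the short-time region, together with a summation-by-parts in the lattice variables to produce the $W^{2d,1}$ norm while keeping the support cutoff (a pure Fourier decoupling of the $\bm{z}$-cutoff leaves you with non-convergent full-lattice Gauss sums).

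Conversely, the long-time region requires no oscillation at all once you first reduce to $n=1$ and then apply the one-dimensional estimate $n$ times, which is how the paper proceeds: for $n=1$ the crude bound of the sum over $\{|s'|\ge L^\delta\}$ is exactly $\gamma\,\ell(B)^{2d}\|\Phi\|_{L^1_{|s|\ge L^\delta}L^\infty}$, the factor $\gamma$ being nothing but the Jacobian of the rescaling, and this is precisely the second term of \eqref{eq:gauss-sum-upper-bound-pre}. Your perceived need to extract a gain of $\gamma^{n-1}$ there, and the ensuing interpolation/H\"older ``orchestration'' whose key step you acknowledge is not carried out, is an artifact of not performing this reduction. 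Your deduction of \eqref{eq:gauss-sum-upper-bound} from \eqref{eq:gauss-sum-upper-bound-pre} via Lemma~\ref{lem:gauss-sum-fourier-decomp} is essentially the paper's argument and is fine, but as it stands the main estimate \eqref{eq:gauss-sum-upper-bound-pre} is not proved: the step ``the trivial bound $|G|\le L\ell(B)$ suffices'' on $\{|\bm{s}'|\le L^\delta\}$ is false by a factor of $L^\alpha$.
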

\begin{proof}
    To prove~\eqref{eq:gauss-sum-upper-bound-pre}, it suffices to focus on the case $n=1$.
    In fact, the general case follows by applying the $n=1$ estimate $n$ times.
    Let $(u,\xi) \in X^{1,d}$.
    Let $\chi \in C_c^\infty(\mathbb{R})$ be such that $\chi(s) = 1$ when $|s| \le 1$ and let $\chi^\delta(s) = \chi(s/L^{\delta})$.
    Then we write $\mathcal{S}_L(\psi_{\bm{\xi}} \theta_L \Phi_{\bm{u}}) = P + Q$, where
    \begin{equation*}
        P = \mathcal{S}_L\bigl((1-\chi^\delta) \psi_{\xi} \theta_L \Phi_u\bigr), \quad
        Q = \mathcal{S}_L(\chi^\delta \psi_{\xi} \theta_L \Phi_u),
    \end{equation*}
    and estimate these two integrals separately.    
    By the definition of $\mathcal{S}_L$ and $\theta_L$, we have
    \begin{equation*}
        \begin{aligned}
            |P|
            & \le \frac{1}{L^{2d}} \sum_{z \in \mathbb{Z}^{2d}_L}  \int \bigl| (1-\chi)(s/(\gamma/L^\delta)) \Phi(s/\gamma,z)\bigr| \diff s= \frac{\gamma}{L^{2d}} \sum_{z \in \mathbb{Z}^{2d}_L} \int |(1-\chi)(s/L^\delta)\Phi(s,z)| \diff s\\
            & \lesssim \frac{\gamma}{L^{2d}} \sum_{z \in \mathbb{Z}^{2d}_L} \|\Phi(\cdot,z)\|_{L^1_{|s|\ge L^\delta}}
            \lesssim \gamma \ell(B)^{2d} \|\Phi\|_{L^1_{|s|\ge L^\delta} L^\infty}.
        \end{aligned}
    \end{equation*}
    To estimate $Q$, define the linear transform
        $\varphi : \mathbb{R}^{2d} \to \mathbb{R}^{2d}$,
        $(p,q) \mapsto (x,y) = (p+q,p-q)/2,$
    and note that $(\varpi \comp \varphi)(p,q) = (|p|^2 - |q|^2)/2$.
    Let $B_L = \{Lz: z \in B\}$, let $\tau = (s + \gamma u)/L^2$ and denote $(p,q) = \varphi(z)$, $(\zeta,\eta) = \varphi^\dagger(\xi)/L$, where $\varphi^\dagger$ is the adjoint of $\xi$, then
    \begin{equation*}
        \begin{aligned}
            Q
            & = \frac{1}{L^{2d}} \sum_{z \in B_L} \int e^{2\pi i s( \varpi(z)/L^2 + u)} e^{2\pi i \xi \cdot z/L} \chi\Bigl(\frac{s}{\gamma L^\delta}\Bigr) \Phi\Bigl(\frac{s}{\gamma},\frac{z}{L}\Bigr) \diff s\\
            & = \frac{L^2}{L^{2d}} \sum_{z \in B_L} \int e^{2\pi i \tau (\varpi(z) + uL^2)} e^{2\pi i \xi\cdot (z) /L}
            \chi\Bigl(\frac{\tau L^2}{\gamma L^{\delta}}\Bigr) \Phi\Bigl(\frac{\tau L^2}{\gamma},\frac{z+z_0}{L}\Bigr) \diff \tau\\
            & = \frac{L^2}{L^{2d}} \sum_{(p,q) \in \varphi(B_L)} \int e^{\pi i \tau (|p|^2-|q|^2)} e^{2\pi i (\zeta \cdot p + \eta \cdot q)}
            e^{2\pi i \tau u L^2}
            \chi\Bigl(\frac{\tau L^2}{\gamma L^{\delta}}\Bigr) \Phi\Bigl(\frac{\tau L^2}{\gamma},\frac{\varphi(p,q)}{L}\Bigr) \diff \tau.
        \end{aligned}
    \end{equation*}
    Note that $\ell(\varphi(B_L)) \le L \ell(B)$.
    We can choose $(p_0,q_0) \in \mathbb{Z}^d$ such that
    \begin{equation*}
        \varphi(B_L) \subset [(p_0,q_0) - \ell(B) L,(p_0,q_0) + \ell(B) L]^{2d}.
    \end{equation*}
    Let $\tilde{\zeta} = 2(\zeta + \tau p_0)$, $\tilde{\eta} = 2(\eta + \tau q_0)$, $\varrho = - \tau(|p_0|^2-|q_0|^2) + 2\zeta \cdot p_0 + 2 \eta \cdot q_0 + 2 \tau uL^2$, and let
    \begin{equation}
        \label{eq:integrand-gauss-sum-int}
        \tilde{\Phi}(\tau,p,q) = e^{\pi i \varrho} \chi\Bigl(\frac{\tau L^2}{\gamma L^\delta}\Bigr) \Phi\Bigl(\frac{\tau L^2}{\gamma}, \frac{\varphi(p+p_0,q+q_0)}{L}\Bigr)
    \end{equation}
    then we can write
    \begin{equation*}
        Q = \frac{L^2}{L^{2d}}
        \sum_{(p+p_0,q+q_0) \in \varphi(B_L)}
        \int e^{\pi i \tau(|p|^2-|q|^2)} e^{\pi i \tilde{\zeta}\cdot p + \tilde{\eta}\cdot q} \tilde{\Phi}(\tau,p,q) \diff \tau.
    \end{equation*}
    Let $\Gamma_1 = (2 \mathbb{Z}^d) \times \mathbb{Z}^d$, $\Gamma_2 = \mathbb{Z}^d \times (2\mathbb{Z}^d)$, $\Gamma_3 = (2 \mathbb{Z}^d) \times (2\mathbb{Z}^d)$, and let $\iota_1 = 1$, $\iota_2 = -1$, $\iota_3 = 2$.
    Let $h \in \mathbb{Z}$ be such that $h < -L\ell(B)$.
    Then we have $Q = \sum_{1 \le m \le 3} \iota_m Q_m$, where
    \begin{equation}
        \label{eq:int-by-part-gauss-sum}
        \begin{aligned}
            Q_m
            & = \frac{L^2}{L^{2d}}
            \sum_{(p,q) \in \Gamma_m}
            \int e^{\pi i \tau(|p|^2-|q|^2)} e^{\pi i \tilde{\zeta}\cdot p + \tilde{\eta}\cdot q} \tilde{\Phi}(\tau,p,q) \diff \tau\\
            & = \frac{L^2}{L^{2d}} \iint_{\mathbb{R}^{2d}} \int \Lambda \tilde{\Phi}(\tau,w) \prod_{1\le j\le d} G_h(\alpha_m^2 \tau,\alpha_m \tilde{\zeta}_j,p_j) G_h(-\beta_m^2 \tau,\beta_m\tilde{\eta}_j,q_j) \diff p \diff q \diff \tau,
        \end{aligned}
    \end{equation}
    where $\alpha_m,\beta_m \in \{1,2\}$ and $\Lambda = \prod_{1 \le j \le d} \partial_{p_j} \partial_{q_j}$.
    Note that if $(\tau,p,q) \in \supp \tilde{\Phi} $, then $|\tau| \lesssim \gamma L^{\delta-2} \lesssim 1$ and $|p|+|q| \lesssim \ell(B) L$.
    Using the trivial bound $\|G_h(\cdot,\cdot,N)\|_{L^\infty} \le N$ for all $N \in \mathbb{N}$, Lemma~\ref{lem:gauss-sum-def-est}, H\"older's inequality, and the condition $d \ge 3$, we have
    \begin{equation*}
        \begin{aligned}
            |Q_m|
            & \lesssim \frac{\ell(B)^{2d-6}}{L^{4}}
            \iint \|\Lambda \tilde{\Phi}(\cdot,w)\|_{L^\infty} \int \prod_{1\le j\le 3} |G_h(\alpha_j^2 \sigma, \alpha_j \tilde{\zeta}_j,p_j) G_h(-\beta_j^2 \sigma, \beta_j \tilde{\eta}_j,q_j)| \diff \tau \diff p \diff q \\
            & \lesssim \ell(B)^{2d-2} \|\Lambda \tilde{\Phi}\|_{L^\infty L^1}
            \lesssim \ell(B)^{2d} \|\Phi\|_{L^\infty W^{2d,1}}.
        \end{aligned}
    \end{equation*}

    To prove~\eqref{eq:gauss-sum-upper-bound}, let $\chi \in C_c^\infty(X^{n,d})$ be such that $\chi \Phi = \Phi$.
    For all $L$ and all  $L^\delta \ge \langle \ell(Q) \rangle$ $\bm{\xi} \in (\mathbb{R}^{2d})^n$, we have the following estimate and conclude by Lemma~\eqref{lem:gauss-sum-fourier-decomp}:
    \begin{equation*}
        |\mathcal{S}_L(\psi_{\bm{\xi}} \theta_L \widehat{\Phi}^\chi_{\bm{\xi}})|
        \lesssim C^n \ell(B)^{2nd}\|\widehat{\Phi}^\chi_{\bm{\xi}}\|_{L^\infty W^{2d,1}}
        \lesssim C^n \ell(B)^{4nd}\|\chi\|_{L^\infty W^{2d,\infty}} \|\widehat{\Phi}(\cdot,\bm{\xi})\|_{L^\infty}.
        \qedhere
    \end{equation*}
\end{proof}

When $L=\infty$, we estimate the oscillatory integral $\mathcal{S}_\infty$ functional via integration by parts.

\begin{proposition}
    \label{prop:gauss-int-upper-bound}
    Let $d \ge 1$ and $n \ge 1$.
    If $\Phi \in L^\infty W^{2n,\infty}$ and suppose that $\supp \Phi \subset \mathbb{R}^n \times B$ for some bounded subset $B \subset (\mathbb{R}^{2d})^n$, then
    \begin{equation}
        \label{eq:gauss-int-upper-bound-pre}
        \sup_{\bm{\xi} \in \mathbb{R}^{nd}} |\mathcal{S}_\infty(\psi_{\bm{\xi}} \Phi)| \lesssim C^n \ell(B)^{2nd} \|\Phi\|_{L^\infty W^{2n,\infty}}
    \end{equation}
    for some universal constant $C > 0$.
    Consequently, if $\widehat{\Phi} \in L^\infty L^1$ and $\supp \Phi \subset \mathbb{R}^n \times B$, then
    \begin{equation}
        \label{eq:gauss-int-upper-bound}
        |\mathcal{S}_\infty(\Phi)|
        \lesssim C^n \ell(B)^{2nd} \|\widehat{\Phi}\|_{L^\infty L^1}.
    \end{equation}
\end{proposition}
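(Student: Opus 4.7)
The second estimate \eqref{eq:gauss-int-upper-bound} will follow from the first \eqref{eq:gauss-int-upper-bound-pre} by a verbatim adaptation of the concluding step of the proof of Proposition~\ref{prop:gauss-sum-upper-bound}. Namely, picking $\chi \in C_c^\infty(X^{n,d})$ with $\chi\Phi = \Phi$ and using Lemma~\ref{lem:gauss-sum-fourier-decomp}, one writes $\mathcal{S}_\infty(\Phi) = \int \mathcal{S}_\infty(\psi_{\bm\xi} \widehat{\Phi}^\chi_{\bm\xi})\diff \bm\xi$, then applies \eqref{eq:gauss-int-upper-bound-pre} to each inner functional, using $\|\widehat{\Phi}^\chi_{\bm\xi}\|_{L^\infty W^{2n,\infty}} \lesssim \|\chi\|_{W^{2n,\infty}}\|\widehat\Phi(\cdot,\bm\xi)\|_{L^\infty}$, and finally integrating in $\bm\xi$. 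I will therefore focus on \eqref{eq:gauss-int-upper-bound-pre}.

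Since the phase $\bm s\cdot \varpi(\bm z) + \bm\xi\cdot \bm z = \sum_{j=1}^n (s_j x_j\cdot y_j + \xi_j\cdot z_j)$ and the $L^\infty$-structure of the norm decouple the $n$ index blocks, it is enough to prove the one-block estimate and then iterate, absorbing the remaining variables into the $L^\infty$-norm at each step. This yields the product structure $C^n$ and $\prod_j \ell(B_j)^{2d} \le \ell(B)^{2nd}$. Thus I reduce to showing, for $\Phi \in L^\infty W^{2,\infty}$ with $\supp \Phi \subset \mathbb{R}\times B$ and $B\subset \mathbb{R}^{2d}$ bounded, that $\sup_\xi |\iint e^{2\pi i(sx\cdot y + \xi \cdot z)}\Phi(s,z)\diff s \diff z| \lesssim \ell(B)^{2d}\|\Phi\|_{L^\infty W^{2,\infty}}$.

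To prove this, split the $s$-integral according to $|s|\le 1$ versus $|s|\ge 1$. The contribution from $|s|\le 1$ is estimated trivially by $2\ell(B)^{2d}\|\Phi\|_{L^\infty}$. For the contribution from $|s|\ge 1$, let $\Psi(z) = sx\cdot y + \xi\cdot z$ and introduce the integration-by-parts operator $L = (1+|\nabla_z\Psi|^2)^{-1}(\mathrm{Id} + (2\pi i)^{-1}\nabla_z\Psi\cdot \nabla_z)$, which satisfies $Le^{2\pi i\Psi} = e^{2\pi i \Psi}$ (this is the continuum analog of the Hua-lemma step in the proof of Proposition~\ref{prop:gauss-sum-upper-bound}). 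Applying $L^\dagger$ twice produces an integrand whose main term is bounded by $(1+|\nabla_z\Psi|^2)^{-2}\|\Phi\|_{W^{2,\infty}}$, with lower-order terms controlled similarly. The affine change of variables $u = sy+\xi_1,\, v = sx+\xi_2$ has Jacobian $|s|^{2d}$ and transforms this amplitude into $|s|^{-2d}(1+|u|^2+|v|^2)^{-2}$ integrated over the image $\xi + sB'$ of $B$. Combining the trivial bound $|s|^{-2d}\cdot \mathrm{Vol}(sB') \le \ell(B)^{2d}$ with the decay bound obtained by extending the integration to $\mathbb{R}^{2d}$ (finite for $d=1$, logarithmic for $d=2$, and of order $(|s|\ell(B))^{2d-4}$ for $d\ge 3$) yields $|F(s,\xi)| \lesssim \ell(B)^{2d}\|\Phi\|_{L^\infty W^{2,\infty}}\min(1,|s|^{-2})$, which integrates over $|s|\ge 1$ to the desired bound.

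The main technical obstacle is bookkeeping within the integration by parts. Since $\nabla^2\Psi$ has off-diagonal entries of size $|s|$, derivatives of $(1+|\nabla_z\Psi|^2)^{-1}$ carry hidden factors of $|s|$, and one must verify these are absorbed by the $|s|^{-2d}$ Jacobian gain rather than destroying the decay. A secondary subtlety is the uniformity in $\xi$: the stationary point $z_*(s,\xi) = -(\xi_2/s,\xi_1/s)$ may lie in $B$ or outside, but the estimate goes through uniformly once the weight $(1+|u|^2+|v|^2)^{-2}$ is controlled in the shifted coordinates, which depend on $B$ only through its diameter and volume.
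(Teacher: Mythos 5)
Your reduction of \eqref{eq:gauss-int-upper-bound} to \eqref{eq:gauss-int-upper-bound-pre} via Lemma~\ref{lem:gauss-sum-fourier-decomp} is exactly the paper's, but your proof of \eqref{eq:gauss-int-upper-bound-pre} follows a genuinely different route (variable-coefficient non-stationary-phase operator after splitting $|s|\le 1$ / $|s|\ge 1$), and the bookkeeping you defer is precisely where it breaks on part of the stated range. With $\Psi=s\,x\cdot y+\xi\cdot z$ one has $\Delta_z\Psi=0$ but $\nabla_z\cdot\bigl(\nabla_z\Psi/(1+|\nabla_z\Psi|^2)\bigr)=-4s\,u\cdot v/\langle\nabla_z\Psi\rangle^{4}$ with $u=sy+\xi_1$, $v=sx+\xi_2$, so each application of $L^\dagger$ produces, besides the good terms, a coefficient of size $|s|\langle\nabla_z\Psi\rangle^{-2}$; after two applications the worst contribution is $\sim s^2\langle\nabla_z\Psi\rangle^{-4}|\Phi|$. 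In the variables $(u,v)$ its $z$-integral over $B$ is $|s|^{2-2d}\int_{\mathrm{image}}\langle w\rangle^{-4}\diff w$, which is $\lesssim |s|^{-2}\ell(B)^{2d-4}$ for $d\ge 3$ and $\lesssim |s|^{-2}\log(2+|s|\ell(B))$ for $d=2$, but for $d=1$ it is merely $O(1)$ (and the intermediate term $|s|\langle\nabla_z\Psi\rangle^{-3}|\nabla\Phi|$ gives only $O(|s|^{-1})$), so your claimed bound $\min(1,|s|^{-2})$ fails and $\int_{|s|\ge 1}$ diverges; taking more integrations by parts only worsens this, since once $\langle w\rangle^{-2m}$ is integrable on $\mathbb{R}^{2}$ each further application adds a net power of $|s|$. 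Since the proposition is stated for all $d\ge 1$, this is a real gap, not a detail. A secondary issue is your reduction to one block: you cannot literally ``absorb the remaining variables into the $L^\infty$-norm at each step,'' because after estimating block $1$ in absolute value the remaining $\bm{s}$-integrations still run over all of $\mathbb{R}^{n-1}$ and your block-$1$ bound carries no decay in them; the iteration only works if you keep the oscillation in the later blocks and differentiate under the (conditionally convergent) inner integral, spending two $z_j$-derivatives per block, which is where $W^{2n,\infty}$ enters.

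The paper's proof avoids both problems by a different mechanism: it first completes the square, replacing $\Phi$ by $\Phi_{\bm{\xi}}(\bm{s},\bm{z})=e^{2\pi i \bm{s}^{-1}\cdot\varpi(\bm{\xi})}\Phi(\bm{s},\bm{z}-\bm{s}^{-1}\cdot\tilde{\bm{\xi}}/2)$ so that the phase is purely $\bm{s}\cdot\varpi(\bm{z})$ and the $\bm{\xi}$-dependence sits only in a unimodular factor and a translation of the amplitude, and then integrates by parts with the \emph{constant-coefficient} operator $\Lambda=\prod_{j}(1-\Delta_{\bm{z}_j}/4\pi^2)$, for which $e^{-2\pi i\bm{s}\cdot\varpi(\bm{z})}\Lambda e^{2\pi i\bm{s}\cdot\varpi(\bm{z})}=\prod_j\langle\bm{s}_j|\bm{z}_j|\rangle^{2}$ exactly. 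Because $\Lambda$ has constant coefficients, no derivatives ever fall on $s$-dependent coefficients (so no factors of $|s|$ appear), all $2n$ derivatives land on $\Phi$, and one integrates first in $\bm{s}_j$ to get $\pi/|\bm{z}_j|$ and then in $\bm{z}_j$ to get $\lesssim\ell(B)^{2d-1}$ per block, uniformly in $\bm{\xi}$, for every $d\ge 1$ and all $n$ blocks at once. I suggest you either adopt this device, or restrict your statement to $d\ge 2$ and write out explicitly the coefficient-term estimates you currently only sketch.
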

\begin{proof}
    For all $\bm{s} \in \mathbb{R}^n \backslash \{0\}$ denote $\bm{s}^{-1} = (\bm{s}_j^{-1})_{1\le j \le n}$.
    For all $\bm{\xi} = (\bm{\zeta}_j,\bm{\eta}_j)_{1 \le j \le n} \in (\mathbb{R}^{2d})^n$ denote $\tilde{\bm{\xi}} = (\bm{\eta}_j,\bm{\zeta}_j)_{1 \le j \le n} \in (\mathbb{R}^{2d})^n$.
    Then $ \mathcal{S}_\infty(\psi_{\bm{\xi}}\Phi) = \mathcal{S}_\infty(\Phi_{\bm{\xi}})$, where
    \begin{equation*}
        \Phi_{\bm{\xi}}(\bm{s},\bm{z}) = e^{2\pi i \bm{s}^{-1} \cdot \varpi(\bm{\xi})} \Phi(\bm{s},\bm{z} - \bm{s}^{-1} \cdot \tilde{\bm{\xi}}/2).
    \end{equation*}
    Define the differential operator $\Lambda = \prod_{j = 1}^n \bigl(1 - \frac{\Delta_{\bm{z}_j}}{4\pi^2}\bigr)$, then we have the identity
    \begin{equation*}
        e^{-2\pi i \bm{s} \cdot \varpi(\bm{z})} \Lambda e^{2\pi i \bm{s} \cdot \varpi(\bm{z})} =\prod_{1\le j \le n}  \langle \bm{s}_j |\bm{z}_j| \rangle^2.
    \end{equation*}
    Integrate by part with respect to $\bm{z}$ using the identity above and perform the change of variables $\bm{s}_j \mapsto \bm{s}_j / |\bm{z}_j|$ for all $1 \le j \le n$.
    We obtain the estimate~\eqref{eq:gauss-int-upper-bound-pre}
    \begin{align*}
        |\mathcal{S}_\infty(\Phi_{\bm{\xi}})|
        & \le \iint \frac{|\Lambda \Phi_{\bm{\xi}}(\bm{s},\bm{z})|}{\prod_{j=1}^n  \langle \bm{s}_j |\bm{z}_j| \rangle^2} \diff \bm{s} \diff \bm{z}                                                     \le \|\Lambda \Phi_{\bm{\xi}}\|_{L^\infty L^\infty} \iint \frac{\bm{1}_B(\bm{z} - \bm{s}^{-1} \cdot \tilde{\bm{\xi}}/2)}{\prod_{j=1}^n |\bm{z}_j| \langle \bm{s}_j\rangle^2} \diff \bm{s} \diff \bm{z} \\
        & \le \pi^n \|\Lambda \Phi_{\bm{\xi}}\|_{L^\infty L^\infty} \int \frac{\bm{1}_{|\bm{z}_j| \le \ell(B)}}{\prod_{j=1}^n |\bm{z}_j| } \diff \bm{z} 
        \lesssim \pi^n \|\Phi\|_{L^\infty W^{2n,\infty}} \prod_{1\le j \le n} \int_{|\bm{z}_j|\le \ell(B)}  |\bm{z}_j|^{-1} \diff \bm{z}_j\\
        & \lesssim \pi^n \|\Phi\|_{L^\infty W^{2n,\infty}}    \Bigl( \int_0^{\ell(B)} r^{2d-2} \diff r\Bigr)^n
        \lesssim \pi^n \ell(B)^{2nd} \|\Phi\|_{L^\infty W^{2n,\infty}} .
    \end{align*}
    To prove~\eqref{eq:gauss-int-upper-bound}, it suffices to choose $\chi \in C_c^\infty((\mathbb{R}^{2d})^n)$ such that $\chi \bm{1}_B = \chi$ and $\diam \supp \chi \le \ell(B)$.
    Therefore $\chi \Phi = \Phi$ and we conclude using Lemma~\ref{lem:gauss-sum-fourier-decomp}.
\end{proof}

Next we prove the localization property (at $\bm{s} = 0$) of the oscillatory integral $\mathcal{S}_\infty(\theta_L \Phi)$ as $L \to \infty$.

\begin{lemma}
    \label{lem:gauss-int-trace}
    Let $n \ge 1$ and $d \ge 1$.
    If $\Phi \in W^{1,\infty} W^{2n,\infty}$ and suppose that $\supp \Phi \subset \mathbb{R}^n \times B$ for some bounded subset $B \subset (\mathbb{R}^{2d})^n$ and suppose that $\theta_\infty \Phi = 0$.
    Then there exists $\nu > 0$ such that, for all $L \ge 1$ and all $\varphi \in L^\infty(\mathbb{R}^n)$, we have
    \begin{equation}
        \label{eq:gauss-int-trace-pre}
        \sup_{\bm{\xi} \in (\mathbb{R}^{2d})^n}|\mathcal{S}_\infty(\varphi \psi_{\bm{\xi}} \theta_L \Phi) |
        \lesssim C^n L^{-\nu} \ell(B)^{2nd} \|\varphi\|_{L^\infty} \|\Phi\|_{W^{1,\infty} W^{2n,\infty}}
    \end{equation}
    for some universal constant $C > 0$.
    Consequently, if $\widehat{\Phi} \in W^{1,\infty} L^1$ and $\supp \Phi \subset \mathbb{R}^n \times B$, then
    \begin{equation}
        \label{eq:gauss-int-trace}
        |\mathcal{S}_\infty(\varphi \theta_L \Phi)| \lesssim C^n L^{-\nu} \ell(B)^{2nd} \|\varphi\|_{L^\infty} \|\widehat{\Phi}\|_{W^{1,\infty} L^1}.
    \end{equation}
\end{lemma}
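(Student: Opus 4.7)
The plan is to adapt the integration-by-parts argument of Proposition~\ref{prop:gauss-int-upper-bound}, extracting a quantitative $L^{-\nu}$ gain from the hypothesis $\theta_\infty\Phi=0$. Fix a smooth cutoff $\chi_0\in C_c^\infty(\mathbb{R}^n)$ with $\chi_0=1$ on $\{|\bm{s}|\le 1/2\}$ and $\supp\chi_0\subset\{|\bm{s}|\le 1\}$, and set $\chi_L(\bm{s})=\chi_0(L^{\nu_1}\bm{s})$ for a threshold $\nu_1\in(0,\alpha)$ to be optimized at the end. I split $\mathcal{S}_\infty(\varphi\psi_{\bm{\xi}}\theta_L\Phi)=I_{\mathrm{sm}}+I_{\mathrm{lg}}$ according to whether $\bm{s}$ lies in $\supp\chi_L$ or its complement. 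On the near-zero region the vanishing of $\Phi$ at $\bm{s}=0$ provides Taylor smallness; on the complement the full oscillation of $e^{2\pi i\bm{s}\cdot\varpi(\bm{z})}$ can be exploited, provided the $\bm{\xi}$-phase is first absorbed as in Proposition~\ref{prop:gauss-int-upper-bound}.

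For $I_{\mathrm{sm}}$, the hypothesis $\theta_\infty\Phi=0$ combined with the mean-value theorem gives $|\theta_L\Phi(\bm{s},\bm{z})|\le\|\partial_{\bm{s}}\Phi\|_{L^\infty L^\infty}|\bm{s}|/\gamma$, so a trivial integration bound yields $|I_{\mathrm{sm}}|\lesssim\ell(B)^{2nd}\|\varphi\|_{L^\infty}\|\Phi\|_{W^{1,\infty}L^\infty}L^{-(n+1)\nu_1-\alpha}$. For $I_{\mathrm{lg}}$, I invoke the completing-the-square identity from the proof of Proposition~\ref{prop:gauss-int-upper-bound} to rewrite $I_{\mathrm{lg}}=\mathcal{S}_\infty(\Psi_{\bm{\xi}})$ with $\Psi_{\bm{\xi}}(\bm{s},\bm{z})=(1-\chi_L)(\bm{s})\varphi(\bm{s})e^{2\pi i\bm{s}^{-1}\cdot\varpi(\bm{\xi})}\Phi(\bm{s}/\gamma,\bm{z}-\bm{s}^{-1}\tilde{\bm{\xi}}/2)$, thereby absorbing $\psi_{\bm{\xi}}$ at the cost of a translation of the $\bm{z}$-support. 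Integrating by parts with $\Lambda=\prod_{j=1}^n(1-\Delta_{\bm{z}_j}/(4\pi^2))$ produces $|I_{\mathrm{lg}}|\le\iint|\Lambda\Psi_{\bm{\xi}}|/\prod_j\langle\bm{s}_j|\bm{z}_j|\rangle^2\,d\bm{s}\,d\bm{z}$, and since $\Lambda$ acts only on $\bm{z}$-derivatives of $\Phi$ (which also vanish at $\bm{s}=0$), I get the refined bound $|\Lambda\Psi_{\bm{\xi}}|\lesssim\|\varphi\|_{L^\infty}\|\Phi\|_{W^{1,\infty}W^{2n,\infty}}\min(1,|\bm{s}|/\gamma)\bm{1}_B(\bm{z}-\bm{s}^{-1}\tilde{\bm{\xi}}/2)$. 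I interpolate $\min(1,|\bm{s}|/\gamma)\le(|\bm{s}|/\gamma)^\theta\le\gamma^{-\theta}\sum_k|\bm{s}_k|^\theta$ for some $\theta\in(0,1)$, change variable $\bm{z}\mapsto\bm{z}+\bm{s}^{-1}\tilde{\bm{\xi}}/2$, and use the elementary one-dimensional estimate $\int d\bm{s}_j/\langle\bm{s}_j|\bm{z}_j+\bm{s}_j^{-1}\tilde{\bm{\xi}}_j/2|\rangle^2\le\pi/|\bm{z}_j|$ (obtained by completing the square inside the bracket) to conclude $|I_{\mathrm{lg}}|\lesssim C^n\ell(B)^{2nd}\|\varphi\|_{L^\infty}\|\Phi\|_{W^{1,\infty}W^{2n,\infty}}\gamma^{-\theta}$. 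Taking $\theta=1/2$ and optimizing $\nu_1$ yields \eqref{eq:gauss-int-trace-pre} for some $\nu>0$ (for instance $\nu=\alpha/2$, up to an arbitrary loss).

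The Fourier-averaged consequence~\eqref{eq:gauss-int-trace} then follows from Lemma~\ref{lem:gauss-sum-fourier-decomp}: picking $\chi\in C_c^\infty(X^{n,d})$ with $\chi\Phi=\Phi$ and $\diam\supp\chi\lesssim\ell(B)$, one writes $\mathcal{S}_\infty(\varphi\theta_L\Phi)=\int\mathcal{S}_\infty(\varphi\psi_{\bm{\eta}}\widehat{\theta_L\Phi}^\chi_{\bm{\eta}})\,d\bm{\eta}$ and applies \eqref{eq:gauss-int-trace-pre} pointwise in $\bm{\eta}$, after noting that the condition $\theta_\infty\Phi=0$ is preserved by the Fourier transform in $\bm{z}$. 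The main technical obstacle is the $\bm{\xi}$-uniform treatment of the shifted denominators $\prod_j\langle\bm{s}_j|\bm{z}_j+\bm{s}_j^{-1}\tilde{\bm{\xi}}_j/2|\rangle^{-2}$: although the shift $\bm{s}^{-1}\tilde{\bm{\xi}}/2$ can be arbitrarily large and depends delicately on $\bm{\xi}$, the algebraic identity $\langle s|\bm{a}+s^{-1}\bm{b}|\rangle^2\ge 1+|\bm{a}|^2(s+\bm{a}\cdot\bm{b}/|\bm{a}|^2)^2$ reduces each one-dimensional integral to a standard Cauchy integral whose value $\pi/|\bm{a}|$ is completely independent of $\bm{b}$, so that the full $\bm{\xi}$-dependence dissolves and the bound closes uniformly.
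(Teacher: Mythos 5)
Your skeleton follows the paper's: you absorb $\psi_{\bm{\xi}}$ through the substitution $\Phi_{\bm{\xi}}$ from Proposition~\ref{prop:gauss-int-upper-bound}, integrate by parts in $\bm{z}$ with $\Lambda$, extract smallness from $\theta_\infty\Phi=0$, and close \eqref{eq:gauss-int-trace} via Lemma~\ref{lem:gauss-sum-fourier-decomp} (the trace condition indeed passes to $\widehat{\Phi}(\cdot,\bm{\eta})$). The near-zero term $I_{\mathrm{sm}}$ is also fine. The gap is in $I_{\mathrm{lg}}$, precisely at the step you flag as the ``main technical obstacle''. After interpolating $\min(1,|\bm{s}|/\gamma)\le(|\bm{s}|/\gamma)^{\theta}\le\gamma^{-\theta}\sum_k|\bm{s}_k|^{\theta}$ and translating $\bm{z}$, the $k$-th factor is $\int |\bm{s}_k|^{\theta}\,\langle \bm{s}_k|\bm{z}_k+\bm{s}_k^{-1}\tilde{\bm{\xi}}_k/2|\rangle^{-2}\,\mathrm{d}\bm{s}_k$, and the completing-the-square bound $1+|\bm{z}_k|^2(\bm{s}_k+c_k)^2$ with $c_k=\bm{z}_k\cdot\tilde{\bm{\xi}}_k/(2|\bm{z}_k|^2)$ only dissolves the $\bm{\xi}$-dependence when the weight is $1$ (the cases $j\ne k$). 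With the weight $|\bm{s}_k|^{\theta}$ the Lorentzian is centered at $\bm{s}_k\approx-c_k$ while the weight is centered at $0$, and one checks directly (take $\tilde{\bm{\xi}}_k$ parallel to $\bm{z}_k$ with $|\tilde{\bm{\xi}}_k|=M$ large; on $|\bm{s}_k+c_k|\le|\bm{z}_k|^{-1}$ the denominator is $O(1)$ and $|\bm{s}_k|^{\theta}\sim(M/|\bm{z}_k|)^{\theta}$) that the integral is $\gtrsim M^{\theta}|\bm{z}_k|^{-1-\theta}$, i.e.\ it grows like $|\tilde{\bm{\xi}}_k|^{\theta}$. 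Since \eqref{eq:gauss-int-trace-pre} is a supremum over all $\bm{\xi}$, and the reduction through Lemma~\ref{lem:gauss-sum-fourier-decomp} only gives an $L^1$ norm in the dual variable, a loss $|\tilde{\bm{\xi}}|^{\theta}$ cannot be absorbed; for $|\tilde{\bm{\xi}}|\gtrsim\gamma$ it cancels the gain $\gamma^{-\theta}$ entirely. So the claim ``the full $\bm{\xi}$-dependence dissolves and the bound closes uniformly'' is exactly where the argument breaks.

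The fix is to never pair an unbounded $\bm{s}$-weight against the shifted Lorentzian in an $\bm{s}$-first integration. The paper does this by reducing to $n=1$ and splitting at $|s|=L^{\varepsilon}$ with $\varepsilon\in(0,\alpha/2)$: the trace factor $\gamma^{-1}|s|$ is used only where it is bounded by $L^{\varepsilon-\alpha}$, while on $|s|\ge L^{\varepsilon}$ the gain $L^{-\varepsilon}$ comes from the tail of the $s$-integral, and one takes $\nu=\min\{\varepsilon,\alpha-2\varepsilon\}$. Alternatively, your interpolation can be salvaged by integrating in $\bm{z}$ first: for fixed $\bm{s}$, $\int_{|\bm{z}'_j|\le\ell(B)}(1+|\bm{s}_j\bm{z}'_j+\bm{b}_j|^2)^{-1}\,\mathrm{d}\bm{z}'_j\le C\min\{\ell(B)^{2d},|\bm{s}_j|^{-2}\ell(B)^{2d-2}\}$ uniformly in the shift $\bm{b}_j$ (the centered ball is the worst case since the kernel is radially decreasing), and then $\int|\bm{s}_k|^{\theta}\min\{\ell(B)^{2d},|\bm{s}_k|^{-2}\ell(B)^{2d-2}\}\,\mathrm{d}\bm{s}_k<\infty$ for $\theta\in(0,1)$, so the $\gamma^{-\theta}$ gain survives uniformly in $\bm{\xi}$; with this ordering the auxiliary cutoff $\chi_L$ and the term $I_{\mathrm{sm}}$ are not even needed. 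As written, however, the large-$|\bm{s}|$ estimate does not yield \eqref{eq:gauss-int-trace-pre}.
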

\begin{proof}
    By~\eqref{eq:gauss-int-upper-bound-pre}, we may restrict ourselves to the case where $n = 1$ hen estimate~\eqref{eq:gauss-int-trace-pre}.
    Following the proof of Proposition~\ref{prop:gauss-int-upper-bound}, it suffices to estimate $\mathcal{S}_\infty(\varphi \theta_L \Phi_\xi)$.
    Let $\varepsilon \in (0,\alpha/2)$ and let $\Lambda$ be defined as in the proof Proposition~\ref{prop:gauss-int-upper-bound}.
    Using the finite increment estimate and the assumption $ \theta_\infty \Phi(\cdot) = 0$, we obtain the upper bound
    \begin{equation*}
        |\Lambda \Phi_\xi(s/\gamma,z)| \le
        \begin{cases}
            \|\Phi\|_{L^\infty W^{2,\infty}}, & |s| \ge L^{\varepsilon};\\
            \gamma^{-1} |s| \|\partial_s \Phi\|_{L^\infty W^{2,\infty}}, & |s| \le L^{\varepsilon},
        \end{cases}
    \end{equation*}
    which yields the following estimate as in the proof of Proposition~\ref{prop:gauss-int-upper-bound}:
    \begin{equation*}
        \begin{aligned}
            |\mathcal{S}_\infty(\varphi \theta_L \Phi_{\xi} )|
            & \lesssim \|\varphi\|_{L^\infty}  \iint \frac{|\Lambda \Phi_\xi(s/\gamma,z)| }{1+s^2|z|^2} \diff s \diff z\\
            & \lesssim  \|\varphi\|_{L^\infty} \|\Phi\|_{W^{1,\infty} W^{2,\infty}}
            \int_{|z|\le \ell(B)} \int \frac{\bm{1}_{|s|\ge L^{\varepsilon}} + \gamma^{-1}|s| \bm{1}_{|s|\le L^{\varepsilon}}}{1+s^2|z|^2} \diff s \diff z \\
            & \lesssim  \|\varphi\|_{L^\infty} \|\Phi\|_{W^{1,\infty} W^{2,\infty}}
            \int_0^{\ell(B)} \int \frac{1_{|s| \ge L^\varepsilon}+ \gamma^{-1}|s|\bm{1}_{|s|\le L^\varepsilon}}{1+s^2 r^2} r^{2d-1} \diff s \diff r.
        \end{aligned}
    \end{equation*}
    We split the integral into the sum of two terms and estimate them separately:
    \begin{align*}
        \int_0^{\ell(B)} & \int \frac{\bm{1}_{|s| \ge L^\varepsilon}}{1+s^2 r^2} r^{2d-1}\diff s \diff r
        \lesssim \int_0^{\ell(B)} \int \frac{\bm{1}_{|s| \ge L^\varepsilon r}}{1+s^2} r^{2d-2}\diff s \diff r      \\
        & \lesssim \int_0^{\ell(B)} \Bigl(\frac{\pi}{2} - \arctan(L^\varepsilon r) \Bigr) r^{2d-2} \diff r 
        \lesssim L^{-\varepsilon} \int_0^{\ell(B)} r^{2d-3} \diff r
        \lesssim L^{-\varepsilon} \ell(B)^{2d-2}, \\
        \int_0^{\ell(B)} &\int \frac{|s| \bm{1}_{|s| \le L^\varepsilon}}{1+s^2r^2} r^{2d-1} \diff s \diff r
        \lesssim \int_0^{\ell(B)} \int \frac{|s| \bm{1}_{|s| \le L^\varepsilon r}}{1+s^2} r^{2d-3} \diff s \diff r \\
         & \lesssim \int_0^{\ell(B)} \log(1+L^{2\varepsilon} r^2) r^{2d-3} \diff r 
         \lesssim L^{2\varepsilon} \int_0^{\ell(B)} r^{2d-1} \diff r
        \lesssim L^{2\varepsilon} \ell(B)^{2d}.
    \end{align*}
    Therefore, choosing $\nu = \min\{\varepsilon,\alpha-2\varepsilon\} > 0$, we conclude with the estimate
    \begin{equation*}
        |\mathcal{S}_\infty(\theta_L \varphi \Phi_{\xi})| \lesssim  \|\varphi\|_{L^\infty} \|\Phi\|_{W^{1,\infty} W^{2,\infty}} (L^{-\varepsilon} + \gamma^{-1} L^{2\varepsilon}) \ell(B)^{2d}.
    \end{equation*}
    The estimate~\eqref{eq:gauss-int-trace} follows from Lemma~\ref{lem:gauss-sum-fourier-decomp} similarly as for the proof of~\eqref{eq:gauss-int-upper-bound}.
\end{proof}

\subsection{Integral approximation}

We now prove the convergence of the Riemann sum functional $\mathcal{S}_L \comp \theta_L$ to the integral functional $\mathcal{S}_\infty \comp \theta_\infty$ as $L \to \infty$.
As Proposition~\ref{prop:gauss-sum-upper-bound} implies, the integral $\mathcal{S}_\infty(\theta_\infty \Phi)$ is well-defined if $\widehat{\Phi}(0,\cdot) \in L^1$.
To prove the convergence, we adapt the number theoretical ideas used in~\cite{Deng2021} to our situation where general scaling laws are considered.
In the following proposition, the weighted Sobolev space $L^1_1 = W^{0,1}_1$ will be used.
Generally, if $m,\ell \ge 0$, $p \in (1,\infty]$ and $N \in \mathbb{N}$, then the weighted Sobolev space $W^{m,p}_\ell(\mathbb{R}^N)$\index{Sets and spaces!Function spaces!Weighted Sobolev spaces $W^{m,p}_{\ell}$} is the completion of $C_c^\infty(\mathbb{R}^N)$ under the norm
\begin{equation*}
    \|\varphi\|_{W^{m,p}_\ell} = \|\langle x \rangle^\ell \varphi(x)\|_{W^{m,p}}.
\end{equation*}

\begin{proposition}
    \label{prop:int-approx}
    Let $d \ge 3$ and $n \ge 1$, and let $\Phi \in \mathscr{S}'(X^{n,d})$ be such $\supp \Phi \subset \mathbb{R}^n \times B$ where $B$ is a bounded subset of $(\mathbb{R}^{2d})^n$ and $\widehat{\Phi} \in W^{1,\infty} L^1_1$, then there exists $C > 0$ and $\nu > 0$ such that
    \begin{equation*}
        |\mathcal{S}_L(\theta_L\Phi) - \mathcal{S}_\infty(\theta_\infty\Phi)| \lesssim C^n \langle n \rangle^d \ell(B)^{4nd} L^{-\nu} \|\widehat{\Phi}\|_{W^{1,\infty} L^1_1}.
    \end{equation*}
\end{proposition}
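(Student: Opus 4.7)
The plan is to split the error via the intermediate quantity $\mathcal{S}_\infty(\theta_L\Phi)$:
\begin{equation*}
\mathcal{S}_L(\theta_L\Phi) - \mathcal{S}_\infty(\theta_\infty\Phi) = \underbrace{\bigl[\mathcal{S}_L(\theta_L\Phi) - \mathcal{S}_\infty(\theta_L\Phi)\bigr]}_{R_L \text{ (Riemann error)}} + \underbrace{\bigl[\mathcal{S}_\infty(\theta_L\Phi) - \mathcal{S}_\infty(\theta_\infty\Phi)\bigr]}_{T_L \text{ (scaling error)}},
\end{equation*}
and handle the two pieces by different mechanisms. The first piece measures the discrepancy between Riemann sum and integral in $\bm{z}$, while the second measures the error of freezing $\bm{s}/\gamma$ at $0$ in $\Phi$.

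For the scaling error, introduce $\tilde{\Phi}(\bm{s},\bm{z}) = \Phi(\bm{s},\bm{z}) - \Phi(0,\bm{z})$, which satisfies $\theta_L\tilde{\Phi} = \theta_L\Phi - \theta_\infty\Phi$ and $\theta_\infty\tilde{\Phi} = 0$. Lemma~\ref{lem:gauss-int-trace} applied with $\varphi = 1$ gives $|T_L| = |\mathcal{S}_\infty(\theta_L\tilde{\Phi})| \lesssim C^n L^{-\nu}\ell(B)^{2nd}\|\widehat{\tilde{\Phi}}\|_{W^{1,\infty}L^1}$, and since $\widehat{\tilde{\Phi}}(\bm{s},\bm{\xi}) = \widehat{\Phi}(\bm{s},\bm{\xi}) - \widehat{\Phi}(0,\bm{\xi})$ the right-hand side is dominated by $\|\widehat{\Phi}\|_{W^{1,\infty}L^1}$, hence a fortiori by $\|\widehat{\Phi}\|_{W^{1,\infty}L^1_1}$.

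For the Riemann error, I would apply Poisson summation on the lattice $(\mathbb{Z}^{2d}_L)^n$ to the $\bm{z}$-compactly supported function $F_{\bm{s}}(\bm{z}) = e^{2\pi i \bm{s}\cdot\varpi(\bm{z})}\Phi(\bm{s}/\gamma,\bm{z})$; integrating the resulting identity in $\bm{s}$ yields
\begin{equation*}
R_L = \sum_{\bm{k}\in (L\mathbb{Z}^{2d})^n\setminus\{0\}} \mathcal{S}_\infty(\psi_{-\bm{k}}\theta_L\Phi).
\end{equation*}
Each summand is an oscillatory integral whose combined phase $\bm{s}\cdot\varpi(\bm{z}) - \bm{k}\cdot\bm{z}$ has $\bm{z}$-gradient $\bm{s}\cdot\nabla\varpi(\bm{z}) - \bm{k}$; replaying the integration-by-parts of Proposition~\ref{prop:gauss-int-upper-bound} against this gradient yields rapid decay in $|\bm{k}|$ whenever $|\bm{k}|\gg\gamma\ell(B)$, so the tail sum is $O(L^{-N})$ for any $N$. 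The bulk shell $L\le|\bm{k}|\lesssim\gamma\ell(B)$ is treated by Fourier-decomposing in $\bm{z}$ via Lemma~\ref{lem:gauss-sum-fourier-decomp}, rescaling $\bm{s} = \gamma\bm{\sigma}$, and performing Fresnel stationary phase in $\bm{z}$ against the combined phase $\gamma\bm{\sigma}\cdot\varpi(\bm{z}) + (\bm{\xi}-\bm{k})\cdot\bm{z}$ whose stationary point is $|\bm{z}_*|\sim|\bm{\xi}-\bm{k}|/(\gamma|\bm{\sigma}|)$: the stationary amplitude has size $\gamma^{-nd}\prod_j|\sigma_j|^{-d}$, and only $O((\gamma\ell(B)/L)^{2nd})$ lattice points contribute, so the net size is bounded by $\gamma^{nd}\ell(B)^{2nd}/L^{2nd} = \ell(B)^{2nd}L^{(\alpha-2)nd}$, which yields an $L^{-\nu}$ factor with $\nu > 0$ precisely because $\alpha<2$. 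The weight $\langle\bm{\xi}\rangle$ in the $L^1_1$ norm then absorbs the single factor of $|\bm{\xi}|$ produced by one integration by parts against the linear phase $e^{2\pi i \bm{\xi}\cdot\bm{z}}$.

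The principal obstacle is the chirp analysis in this bulk shell in the regime $\alpha \in (1,2)$, where $\gamma = L^\alpha$ exceeds the lattice scale $L$ and naive integration by parts in $\bm{z}$ against $e^{-2\pi i\bm{k}\cdot\bm{z}}$ alone fails to close. The resolution via the joint stationary phase in $(\bm{z},\bm{\sigma})$ described above, combined with a small-$|\bm{s}|$ cutoff in the spirit of the proof of Lemma~\ref{lem:gauss-int-trace}, is delicate; moreover, the polynomial prefactor $\langle n\rangle^d$ (as opposed to an exponential in $n$) in the target bound arises from careful counting of the contributing lattice shells across the $n$ factors of the phase and must be tracked through the entire chain of estimates.
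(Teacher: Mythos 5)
Your treatment of the scaling error $T_L=\mathcal{S}_\infty(\theta_L\Phi)-\mathcal{S}_\infty(\theta_\infty\Phi)$ is fine and is essentially how the paper uses Lemma~\ref{lem:gauss-int-trace} for its third term. The genuine gap is in the Riemann error $R_L=\mathcal{S}_L(\theta_L\Phi)-\mathcal{S}_\infty(\theta_L\Phi)$. The paper does \emph{not} compare sum and integral over the whole range of $\bm{s}$: it first splits $\bm{s}$ into a major-arc region $|\bm{s}|\lesssim L^{1-\delta}$, where Poisson summation produces only nonstationary modes (this is exactly why Lemma~\ref{lem:major-arc-sum} carries that support restriction, so that $|\nabla_{\bm z}\varphi_{\bm\omega}|\gtrsim|\bm\omega|+L^{1-\delta/2}$ and no $\bm q\neq0$ mode has a critical point in $B$), and a minor-arc region $L^{1-\delta}\lesssim|\bm{s}|\lesssim\gamma$, where the lattice sum is \emph{not} close to the integral pointwise in $\bm s$ and is instead bounded by genuine arithmetic input: Dirichlet approximation together with the Gauss-sum bounds of Lemma~\ref{lem:gauss-sum-def-est} and~\eqref{eq:gauss-sum-est-bourgain} (Lemma~\ref{lem:minor-arc-sum}), with the corresponding integral piece small by $\langle \bm s_j|\bm z_j|\rangle^{-2}$ decay. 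Your single global Poisson summation pushes all of this into the bulk shell $L\le|\bm k|\lesssim\gamma\ell(B)$ and then estimates the modes by absolute values of their stationary-phase main terms, which throws away precisely the arithmetic cancellation (the oscillation of the critical values $-L^2\bm q_1\cdot\bm q_2/\bm s$ across $\bm q$) that the minor-arc lemma exploits.

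Moreover the bookkeeping of that bulk-shell bound is not correct. For fixed $\bm q$ the critical point lies in $\supp\Phi(\bm s/\gamma,\cdot)$ only for $|\bm s|\gtrsim L|\bm q|/\ell(B)$, and the $\bm s$-integral of the stationary amplitude $|\bm s|^{-d}$ (per factor) gives $(L|\bm q|/\ell(B))^{1-d}$, not a uniform $\gamma^{-d}$; summing absolute values over $0<|\bm q|\lesssim\gamma\ell(B)/L$ is dominated by the outermost shells and yields, already for $n=1$,
\begin{equation*}
    \sum_{0<|\bm q|\lesssim\gamma\ell(B)/L}\Bigl(\frac{L|\bm q|}{\ell(B)}\Bigr)^{1-d}\;\sim\;\ell(B)^{2d}\,L^{\alpha(d+1)-2d},
\end{equation*}
which is larger than your claimed $\ell(B)^{2d}L^{(\alpha-2)d}$ by a factor $L^{\alpha}$ and, more importantly, is not $o(1)$ once $\alpha\ge 2d/(d+1)$ (e.g.\ $\alpha\ge 3/2$ when $d=3$), well inside the range $\alpha\in(0,2)$ for which the proposition is asserted and used. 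So the ``delicate chirp regime'' you flag is not resolved by the joint stationary phase as sketched: either you must restrict Poisson summation to small $|\bm s|$ and handle large $|\bm s|$ on the lattice side by the Hua/Bourgain Gauss-sum estimates as the paper does, or you must extract cancellation in the $\bm q$-sum, which amounts to redoing that circle-method analysis in dual form. As written, the proof of the bulk-shell estimate, and hence of the proposition in the regime $\alpha$ close to $2$, is missing.
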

\begin{proof}
    Let $\chi \in C_c^\infty(X^{n,d})$ be such that $\chi \Phi = \Phi$.
    Let $\varphi \in C_c^\infty(\mathbb{R}^n)$ be such that $\varphi(\bm{s})=1$ for $|\bm{s}| \le 1/2$ and $\varphi(\bm{s}) = 0$ for $|\bm{s}| \ge 1$.
    Let $\delta \in (0,2-\alpha)$.
    Let $\chi^0,\chi^\infty$ be given respectively by $\chi^0(\bm{s},\cdot) = \varphi(L^{\alpha+\delta-1}\bm{s}/\gamma) \chi(\bm{s}/\gamma,\cdot)$ and $\chi^\infty(\bm{s},\cdot) = (1-\varphi)(L^{\alpha+\delta-1}\bm{s}/\gamma) \chi(\bm{s}/\gamma,\cdot)$.
    Write
    \begin{align*}
        \mathcal{S}_L(\theta_L \Phi) & - \mathcal{S}_\infty(\theta_\infty\Phi)
        = \mathcal{S}_L(\chi^\infty \theta_L\Phi)\\
        & + [\mathcal{S}_L(\chi^0 \theta_L\Phi) - \mathcal{S}_\infty(\chi^0 \theta_L\Phi)] 
        + [\mathcal{S}_\infty(\chi^0 \theta_L\Phi) - \mathcal{S}_\infty(\theta_\infty\Phi)].
    \end{align*}
    We conclude by estimating these three terms on the right hand side separately.
    In fact, by the minor arc estimate (Lemma~\ref{lem:minor-arc-sum}), the major arc estimate (Lemma~\ref{lem:major-arc-sum}) and Lemma~\ref{lem:gauss-int-trace}, there exists $\nu > 0$ such that the following estimates hold:
    \begin{align*}
        |\mathcal{S}_L(\chi^\infty \theta_L\Phi)|
        & \lesssim C^n \langle n \rangle^d \ell(B)^{4nd} L^{-\nu} \|\widehat{\Phi}\|_{L^\infty L^1}, \\
        |\mathcal{S}_L(\chi^0 \theta_L\Phi) - \mathcal{S}_\infty(\chi^0 \theta_L\Phi)|
        & \lesssim C^n \ell(B)^{2nd} L^{-\nu} \|\widehat{\Phi}\|_{L^\infty L^1_1},\\
        |\mathcal{S}_\infty(\chi^0 \theta_L\Phi) - \mathcal{S}_\infty(\theta_\infty\Phi)|
        & \lesssim C^n \ell(B)^{2nd} L^{-\nu} \|\widehat{\Phi}\|_{W^{1,\infty} L^1}.
        \qedhere
    \end{align*}
\end{proof}

\begin{lemma}
    \label{lem:minor-arc-sum}
    Let $d \ge 3$, $n \ge 1$, and let $\Phi \in C^\infty(X^{n,d})$ be such that $\supp \Phi \subset \{1 \ge |\bm{s}| \ge L^{1-\alpha-\delta} \} \times B$ for some $\delta \in (0,2-\alpha)$ where $B$ is bounded, then there exists $\nu > 0$ such that for some universal constant $C > 0$ and sufficiently large $L > 0$, we have
    \begin{equation*}
        \sup_{\bm{\xi} \in (\mathbb{R}^{2d})^n} |\mathcal{S}_L(\psi_{\bm{\xi}} \theta_L \Phi)| \lesssim C^n \langle n\rangle^d \ell(B)^{4nd} L^{-\nu} \|\Phi\|_{L^\infty W^{2n,\infty}}.
    \end{equation*}
    Consequently, if $\supp \Phi \subset \{1 \ge |\bm{s}| \ge L^{1-\alpha-\delta} \} \times B$ and $\widehat{\Phi} \in L^{\infty} L^1$, then
    \begin{equation*}
        |\mathcal{S}(\theta_L \Phi)| \lesssim C^n \langle n\rangle^d \ell(B)^{4nd} L^{-\nu} \|\widehat{\Phi}\|_{L^\infty L^1}.
    \end{equation*}
\end{lemma}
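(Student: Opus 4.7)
The strategy adapts the Gauss-sum reduction from Proposition~\ref{prop:gauss-sum-upper-bound}, with the new essential ingredient being that the support hypothesis $|\bm s|\ge L^{1-\alpha-\delta}$ confines the Gauss-sum frequency to a minor-arc region, enabling a power saving. First I reduce to $n=1$: by pigeonhole, some coordinate $\bm s_j$ of $\bm s$ satisfies $|\bm s_j|\ge L^{1-\alpha-\delta}/\sqrt n$; after summing over $j$ (the cost absorbed in $\langle n\rangle^d$), I apply the $L^\infty$-type bound of Proposition~\ref{prop:gauss-sum-upper-bound} to the other $n-1$ time--phase-space slots, producing a $C^{n-1}\ell(B)^{2(n-1)d}$ factor and leaving a one-dimensional analogue with $|s|\ge L^{1-\alpha-\delta}$.

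Next I perform the change of variables $\tau=(s+\gamma u)/L^2$ and $(p,q)=\varphi(z)$ precisely as in~\eqref{eq:int-by-part-gauss-sum}, rewriting $\mathcal{S}_L(\psi_\xi\theta_L\Phi)$ as a finite combination of integrals of $\Lambda\tilde\Phi(\tau,p,q)$ against products of $2d$ Gauss sums $G_h(\pm\alpha_m^2\tau,\cdot,N)$ with $N\lesssim\ell(B)L$. Under the support hypothesis, the integration variable $\tau$ is confined to $L^{-1-\delta}\lesssim|\tau|\lesssim L^{\alpha-2}$, i.e., $\tau$ is bounded away from $0\bmod\mathbb{Z}$ on the scale $L^{-1-\delta}$ while still small.

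The heart of the argument is a minor-arc Gauss-sum estimate on this $\tau$-range. For each such $\tau$, Dirichlet's approximation theorem gives $\tau=a/q+\theta$ with $(a,q)=1$, $1\le q\le L^\eta$, $|\theta|\le 1/(qL^\eta)$, for a small parameter $\eta>0$ to be chosen. I split the $\tau$-integration into minor arcs (where $q\ge L^{\eta'}$ for some $0<\eta'<\eta$) and major arcs (where $q<L^{\eta'}$). On the minor arc, Weyl's inequality yields $|G_h(\tau,\cdot,N)|\lesssim N^{1-\sigma}$ for some $\sigma=\sigma(\eta')>0$, and combining with Lemma~\ref{lem:gauss-sum-def-est} on the remaining $2d-1$ Gauss factors via H\"older (using $d\ge 3$ to spare enough factors) produces an $L^{-\sigma}$ gain. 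On the major arc, the total $\tau$-measure is $\lesssim\sum_{q\le L^{\eta'}}q\cdot (qL^\eta)^{-1}\lesssim L^{\eta'-\eta}$, and $|\tau|\ge L^{-1-\delta}$ together with $\eta<1+\delta$ evacuates the otherwise-dominant pole at $a/q=0/1$; the trivial bound $|G_h|\le N$ on this small measure then yields an $L^{\eta'-\eta}$ gain. Choosing $\eta,\eta'$ compatibly produces the desired $L^{-\nu}$.

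The main obstacle is the compatible orchestration of the minor/major arc split: the lower bound $|\tau|\ge L^{-1-\delta}$ must be strong enough to eliminate the dominant pole at $\tau=0$, while the Weyl saving must survive being paired with $2d-1$ other Gauss factors in H\"older (where $d\ge 3$ is essential). The companion estimate for $\widehat\Phi\in L^\infty L^1$ then follows from the first half by Lemma~\ref{lem:gauss-sum-fourier-decomp}, exactly as in the proof of Proposition~\ref{prop:gauss-sum-upper-bound}.
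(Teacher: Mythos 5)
Your reduction to $n=1$, the change of variables leading to the Gauss--sum representation \eqref{eq:int-by-part-gauss-sum}, and your treatment of the large-denominator regime are sound and essentially coincide with the paper's argument: there, Dirichlet approximation is applied with parameter comparable to the Gauss-sum length $N\sim\ell(B)L$, and the constraint $|\tau|\le L^{\alpha-2}$ forces any approximant $u/v$ with $u\ne 0$ to satisfy $v\gtrsim L^{2-\alpha}$, after which \eqref{eq:gauss-sum-est-bourgain} (your Weyl saving) gives $|G_h|\lesssim \ell(B)L^{\alpha/2}$. The genuine gap is in your major-arc step, i.e.\ the region where $\tau$ is close to $0$ (which, since $|\tau|\le L^{\alpha-2}$, is the only small-denominator rational that can matter). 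First, your inequality is backwards: the arc about $0/1$ has radius $\sim L^{-\eta}$, so it is disjoint from the support $\{|\tau|\ge L^{-1-\delta}/n\}$ only if $\eta\ge 1+\delta$, whereas you stipulate $\eta<1+\delta$, in which case the pole at $0$ is \emph{not} evacuated. Second, the ``small measure times trivial bound'' estimate does not close quantitatively: the benchmark is that Hua's lemma (Lemma~\ref{lem:gauss-sum-def-est}) plus trivial bounds over the \emph{whole} $\tau$-range already yields only $O(1)$ with no saving, as in Proposition~\ref{prop:gauss-sum-upper-bound}; conceding the full trivial bound $N^{2d}\sim(\ell(B)L)^{2d}$ on a set of $\tau$-measure $L^{\eta'-\eta}$ contributes, after the prefactor $L^{2}/L^{2d}$ and the $dp\,dq$ integration of $\Lambda\tilde{\Phi}$ (which is $O(1)$ in $L$), a term of size $\sim\ell(B)^{2d}L^{2+\eta'-\eta}\|\Phi\|_{L^\infty W^{2d,1}}$. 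You would thus need $\eta-\eta'>2$, which is incompatible with your choice $\eta<1+\delta$ whenever $\delta\le 1$, and $\delta\in(0,2-\alpha)$ is given and may be small.

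The fix --- and what the paper does --- is a \emph{pointwise} oscillation bound near zero rather than a measure argument: for $|\tau|\ge L^{-1-\delta}/n$, the $u=0$, $v=1$ case of \eqref{eq:gauss-sum-est-bourgain} already gives $|G_h(\tau,r,N)|\lesssim|\tau|^{-1/2}\lesssim\sqrt{n}\,L^{(1+\delta)/2}$, which together with the $u\ne0$ case yields the uniform improvement $|G_h|\lesssim\langle n\rangle^{1/2}\ell(B)L^{1-\varepsilon}$ with $\varepsilon=\min\{2-\alpha,1-\delta\}/2$ for \emph{every} $\tau$ in the support; no decomposition of the $\tau$-integral into arcs is needed at all. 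One then concludes as you do on the minor arcs, but with the correct bookkeeping: the improved pointwise bound on $2d-4$ of the Gauss factors and Hua's $L^4$ estimate via H\"older on the remaining four (this is where $d\ge3$ enters), giving the $L^{-\nu}$ gain; the companion statement for $\widehat{\Phi}\in L^\infty L^1$ then follows from Lemma~\ref{lem:gauss-sum-fourier-decomp} as you say.
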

\begin{proof}
    Note that if $|\bm{s}| \ge L^{1-\alpha-\delta}$ then there exists $j \in \{1,\ldots,n\}$ such that $|s_j| \gtrsim L^{1-\alpha-\delta} / n$.
    Therefore, by Proposition~\ref{prop:gauss-sum-upper-bound}, it suffices to prove the lemma when $n=1$, which will follow by showing that the integral $Q_m$ given by~\eqref{eq:int-by-part-gauss-sum} satisfies the estimate $|I| \lesssim L^{-\nu}$ for some $\nu > 0$.

    Observe that by the support condition of $\Phi$, if $(\tau,p,q) \in \supp \tilde{\Phi}$ where $\tilde{\Phi}$ is defined by~\eqref{eq:integrand-gauss-sum-int}, then $L^{\alpha-2} \ge |\tau| \ge L^{-1-\delta}/n$ and
    $|p|+|q| \lesssim \ell(B) L$.
    By the Dirichlet approximation lemma, for such $\tau$ and all $N \in \mathbb{N} \backslash \{0\}$, there exists $u,v \in \mathbb{N}$ with $0 \le u < v \le N $ such that either $u = 0$ or $\mathrm{gcd}(u,v) = 1$ and $|\tau - u/v| < 1/|nv|$.
    We borrow the estimate from~\cite[Lemma 3.18]{Bourgain1993},
    \begin{equation}
        \label{eq:gauss-sum-est-bourgain}
        \sup_{r \in \mathbb{R}} \sup_{h \in \mathbb{Z}} |G_h(\tau,r,N)| \le \frac{N}{\sqrt{v}(1+N|(\tau-u/v)|^{1/2})}.
    \end{equation}
    If $u = 0$, then we use $|\tau| \ge L^{-1-\delta} /n $, $v \ge 1$, and~\eqref{eq:gauss-sum-est-bourgain} to estimate
    \begin{equation*}
        |G_h(\tau,r,N)| \le |\tau|^{-1/2} \le \sqrt{n} L^{1/2+\delta/2}.
    \end{equation*}
    If $u \ne 0$, then $ N \ge v \ge 2$.
    By the triangular inequality, we have
    \begin{equation*}
        1/v \le u/v \le |\tau| + |\tau-u/v| < L^{\alpha-2} + 1/(Nv),
    \end{equation*}
    which implies $v > L^{2-\alpha}(1-1/N) \gtrsim L^{2-\alpha}$.
    In this case, if $N \lesssim \ell(B) L$, then
    \begin{equation*}
        |G_h(\tau,r,N)| \le N/\sqrt{v} \lesssim \ell(B) L / L^{1-\alpha/2} = \ell(B) L^{\alpha/2}.
    \end{equation*}
    Therefore, in both cases, if $N \lesssim \ell(B) L$, then
    \begin{equation*}
        |G_h(\tau,r,N)| \lesssim \langle n \rangle^{1/2} \ell(B) L^{\max\{1+\delta,\alpha\}/2} = \langle n \rangle^{1/2} \ell(B) L^{1-\varepsilon},
    \end{equation*}
    where $\varepsilon = \min\{2-\alpha,1-\delta\} / 2 > 0$.
    By Lemma~\ref{lem:gauss-sum-def-est} and H\"older's inequality, we have
    \begin{align*}
        |Q_m|
        & \lesssim (\langle n \rangle^{1/2} \ell(B) L^{1-\varepsilon})^{2d-2}
        \frac{L^{2}}{L^{2d}}
        \iint \|\Lambda \tilde{\Phi}(\cdot,p,q)\|_{L^\infty}\\
        & \qquad \qquad \int \prod_{1\le j \le 2} |G_h(\alpha_j^2 \tau, \alpha_j \tilde{\zeta}_j,p_j) G_h(-\beta_j^2 \tau, \beta_j \tilde{\eta}_j,q_j)| \diff \tau \diff p \diff q  \\
        & \lesssim \langle n \rangle^{d-1} \ell(B)^{2d-2} L^{2 + (2d-4)(1-\varepsilon) + 2-2d} (1 + \log\ell(B) + \log L) \|\Lambda \tilde{\Phi}\|_{L^\infty L^1} \\
        & \lesssim \langle n \rangle^d \ell(B)^{4d} L^{- 2(d-2)\varepsilon} (1 + \log\ell(B) + \log L) \|\Phi\|_{L^\infty W^{2,\infty}},
    \end{align*}
    which concludes the proof of the lemma.
\end{proof}

\begin{lemma}
    \label{lem:major-arc-sum}
    Let $d \ge 3$, $n \ge 1$, and let $\Phi \in C^\infty(X^{n,d})$ be such that $\supp \Phi \subset \{|\bm{s}| \le L^{1-\delta}\} \times B$ where $B$ is bounded.
    Let $N \in \mathbb{N} $ with  $N > \max\{d,(1-\delta)/(1-\delta/2)\}$ and set $\nu = (1-\delta/2) N - (1-\delta) > 0$.
    Then for all $\bm{\xi} \in (\mathbb{R}^{2d})^n$ and for all $L \gg \ell(B)^{\delta/2}$, we have the uniform estimate
    \begin{equation}
        \label{eq:gauss-sum-est-major-arc}
        |\mathcal{S}_L(\psi_{\bm{\xi}} \Phi) - \mathcal{S}_\infty(\psi_{\bm{\xi}} \Phi)| \lesssim C^n \ell(B)^{2nd} \|\Phi\|_{L^\infty W^{\max\{2n,N\},\infty}} (L^{-\nu} + \bm{1}_{|\bm{\xi}| \ge L/2}).
    \end{equation}
    Consequently, if $\supp \Phi \subset \{|\bm{s}| \le L^{1-\alpha-\delta}\} \times B$, then
    \begin{equation*}
        |\mathcal{S}_L(\theta_L\Phi) - \mathcal{S}_\infty(\theta_L\Phi)|
        \lesssim C^n \ell(B)^{2nd} L^{-\min\{\nu,1\}} \|\widehat{\Phi}\|_{L^\infty L^1_1}.
    \end{equation*}
\end{lemma}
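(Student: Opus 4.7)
The plan is to compare $\mathcal{S}_L(\psi_{\bm{\xi}}\Phi)$ with $\mathcal{S}_\infty(\psi_{\bm{\xi}}\Phi)$ via Poisson summation in the lattice variable $\bm{z}$, and then exploit the $s$-support restriction $|\bm{s}|\le L^{1-\delta}$ together with integration by parts in $\bm{z}$ to extract decay from each nontrivial Poisson mode. As in Propositions~\ref{prop:gauss-sum-upper-bound} and~\ref{prop:gauss-int-upper-bound}, I would first reduce to the case $n=1$, the general case following by iterating the one-variable estimate factor by factor.

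Setting $H_s(z)=e^{2\pi i s x\cdot y}\Phi(s,z)$ with $z=(x,y)$, Poisson summation in $z\in\mathbb{Z}^{2d}_L$ applied to $e^{2\pi i\xi\cdot z}H_s(z)$ yields
\begin{equation*}
    \mathcal{S}_L(\psi_\xi\Phi) - \mathcal{S}_\infty(\psi_\xi\Phi)
    = \sum_{m\in\mathbb{Z}^{2d}\setminus\{0\}}\int_{|s|\le L^{1-\delta}}\widehat{H_s}(Lm-\xi)\diff s.
\end{equation*}
For $|\xi|\ge L/2$ I would discard this identity and bound the left-hand side crudely by $\lesssim \ell(B)^{2d}\|\Phi\|_{L^\infty}L^{1-\delta}$, which accounts for the $\bm{1}_{|\bm{\xi}|\ge L/2}$ term in~\eqref{eq:gauss-sum-est-major-arc}.

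For the main range $|\xi|<L/2$, each $m\ne 0$ satisfies $|Lm-\xi|\gtrsim L|m|\ge L/2$. I would integrate by parts $N$ times in $z$ against the linear phase $e^{-2\pi i(Lm-\xi)\cdot z}$, splitting the derivatives between the $x$- and $y$-components according to which coordinates of $m$ are nonzero so that the resulting summability requires only $N>d$ (rather than $N>2d$). This produces the factor $\langle Lm-\xi\rangle^{-N}$ together with $\|(1-\Delta_z/(4\pi^2))^{\lceil N/2\rceil}H_s\|_{L^1}$. Since each $z$-derivative of the phase $e^{2\pi i s x\cdot y}$ brings down at most $|s|\ell(B)$ on $\supp\Phi$, the Leibniz rule gives
\begin{equation*}
    \bigl\|(1-\Delta_z/(4\pi^2))^{\lceil N/2\rceil}H_s\bigr\|_{L^1}\lesssim (1+|s|\ell(B))^N\ell(B)^{2d}\|\Phi\|_{L^\infty W^{N,\infty}}.
\end{equation*}
Integration over $|s|\le L^{1-\delta}$ contributes $L^{1-\delta}(1+L^{1-\delta}\ell(B))^N$, and balancing these factors against the decay $\langle Lm-\xi\rangle^{-N}$, while invoking the hypothesis $L\gg\ell(B)^{\delta/2}$ to control the $\ell(B)$-powers, yields precisely the stated exponent $\nu=(1-\delta/2)N-(1-\delta)$, positive exactly when $N>(1-\delta)/(1-\delta/2)$.

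For the consequence, the time rescaling $\theta_L$ (by $\gamma=L^\alpha$) converts $\supp\Phi\subset\{|\bm{s}|\le L^{1-\alpha-\delta}\}\times B$ into $\supp(\theta_L\Phi)\subset\{|\bm{s}|\le L^{1-\delta}\}\times B$. I would then decompose $\Phi$ into its spatial Fourier modes via Lemma~\ref{lem:gauss-sum-fourier-decomp} and apply~\eqref{eq:gauss-sum-est-major-arc} to each mode: the $|\bm{\xi}|<L/2$ portion contributes $L^{-\nu}\|\widehat\Phi\|_{L^\infty L^1}$, while the $|\bm{\xi}|\ge L/2$ portion is absorbed via $\int_{|\bm{\xi}|\ge L/2}|\widehat\Phi(\bm{s},\bm{\xi})|\diff\bm{\xi}\lesssim L^{-1}\|\widehat\Phi\|_{L^\infty L^1_1}$, which is exactly where the weight in the $L^1_1$ norm enters. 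Taking the worse of the two rates produces the announced $L^{-\min\{\nu,1\}}$. The main technical obstacle is the delicate balancing of the $(1+L^{1-\delta}\ell(B))^N$ factor from the oscillatory derivatives against the $L^{-N}$ decay from the Poisson modes: this is where the hypothesis $L\gg\ell(B)^{\delta/2}$ is sharp, ensuring the $\ell(B)$-growth never overpowers the decay gained from each integration by parts.
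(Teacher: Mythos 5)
Your overall skeleton (Poisson summation in $\bm{z}$, identifying the zero mode with $\mathcal{S}_\infty(\psi_{\bm{\xi}}\Phi)$, handling nonzero modes by integration by parts, then Lemma~\ref{lem:gauss-sum-fourier-decomp} plus the weight $|\bm{\xi}|/L$ for the consequence) is the same as the paper's. However, there are two genuine gaps in how you estimate the nonzero Poisson modes.

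First, in the range $|\bm{\xi}|<L/2$ you integrate by parts only against the linear phase $e^{-2\pi i(Lm-\bm{\xi})\cdot\bm{z}}$ and treat $e^{2\pi i \bm{s}\cdot\varpi(\bm{z})}$ as part of the amplitude. Each such integration by parts gains $|Lm-\bm{\xi}|^{-1}\sim (L|m|)^{-1}$ but loses a factor of order $1+|\bm{s}|\ell(B)\lesssim L^{1-\delta}\ell(B)$, so after $N$ steps and the $\bm{s}$-integration your bound is of size $L^{1-\delta}\bigl(L^{-\delta}\ell(B)\bigr)^{N}$. This is \emph{not} $\lesssim L^{-\nu}$ with $\nu=(1-\delta/2)N-(1-\delta)$: even ignoring the $\ell(B)^N$ factor you would need $\delta\ge 1-\delta/2$, and the hypothesis $L\gg\ell(B)^{\delta/2}$ cannot repair this. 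The assertion that the balancing ``yields precisely the stated exponent $\nu$'' is exactly the step that fails. The paper avoids the loss by keeping the quadratic term inside the phase: writing $\varphi_{\bm{\omega}}(\bm{z})=\bm{s}\cdot\varpi(\bm{z})+\bm{z}\cdot\bm{\omega}$, on the support of $\Phi$ one has $|\nabla_{\bm{z}}\varphi_{\bm{\omega}}|\gtrsim|\bm{\omega}|+L^{1-\delta/2}$ (this is where $L\gg\ell(B)^{\delta/2}$ enters, to absorb $L^{1-\delta}\ell(B)$) together with $|\nabla^2_{\bm{z}}\varphi_{\bm{\omega}}|\lesssim|\bm{s}|\le|\nabla_{\bm{z}}\varphi_{\bm{\omega}}|$, so each non-stationary-phase step gains $(|\bm{\omega}|+L^{1-\delta/2})^{-1}$ with no $|\bm{s}|\ell(B)$ penalty; summing over $\bm{q}\neq 0$ then gives $L^{1-\delta-(1-\delta/2)N}=L^{-\nu}$.

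Second, for $|\bm{\xi}|\ge L/2$ your crude bound $\ell(B)^{2d}\|\Phi\|_{L^\infty}L^{1-\delta}$ grows in $L$ and therefore does not prove the stated uniform estimate~\eqref{eq:gauss-sum-est-major-arc}, whose right-hand side carries no $L$-growth in front of $\bm{1}_{|\bm{\xi}|\ge L/2}$; downstream this would also degrade the rate in the consequence (you would get at best $L^{-\delta}$ from the weight argument, or need a heavier weight than $L^1_1$). The paper instead observes that nonzero modes $\bm{\omega}=\bm{\xi}+L\bm{q}$ with $|\bm{\omega}|\lesssim L^{1-\delta/2}$ can occur only when $|\bm{\xi}|\ge L/2$, that there are $\mathcal{O}(1)$ such $\bm{q}$, and bounds each of them uniformly in $L$ by Proposition~\ref{prop:gauss-int-upper-bound}, whose integration by parts produces the convergent factor $\prod_j\langle \bm{s}_j|\bm{z}_j|\rangle^{-2}$ and hence needs no restriction on the $\bm{s}$-support; all remaining nonzero modes are covered by the non-stationary-phase bound above. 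You should adopt this two-region treatment of the nonzero modes rather than the trivial bound.
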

\begin{proof}
    For $\bm{\xi} \in (\mathbb{R}^{2d})^n$, $\bm{q} \in (\mathbb{Z}^{2d})^n$, denote $\bm{\omega} = \bm{\xi}+L\bm{q}$.
    By the Poisson summation formula,
    \begin{equation}
        \label{eq:poisson-summation}
        \mathcal{S}_L(\psi_{\bm{\xi}} \Phi)
        = \sum_{\bm{q} \in (\mathbb{Z}^{2d})^n}
        \mathcal{S}_\infty(\psi_{\bm{\omega}} \Phi)
        = \sum_{\bm{q} \in (\mathbb{Z}^{2d})^n}
        \iint e^{2\pi i \varphi_{\bm{\omega}}(\bm{z})} \Phi(\bm{s},\bm{z}) \diff \bm{s} \diff \bm{z},
    \end{equation}
    where $\varphi_{\bm{\omega}}(\bm{z}) = \varpi(\bm{z}) + \bm{z} \cdot \bm{\omega}$.
    Note that the term with $\bm{q} = 0$ in the summation~\eqref{eq:poisson-summation} is equal to the integral $\mathcal{S}_\infty(\psi_{\bm{\xi}} \Phi)$.
    Therefore, it suffices to estimate the terms with $\bm{q} \ne 0$.

    By the translation invariance of the norm on the right hand side of~\eqref{eq:gauss-sum-est-major-arc}, we may assume that $\supp \Phi \subset \mathbb{R}^n \times \{\bm{z} \in (\mathbb{R}^{2d})^n : |\bm{z}| \le 2 \ell(B)\}$.
    Let $\chi \in C_c^\infty((\mathbb{R}^{2d})^n)$ be such that $1-\chi$ vanishes near the origin, and let $\chi_L(\bm{\omega}) = \chi(\bm{\omega}/L^{1-\delta/2})$.
    We shall estimate the two integrals $(1-\chi_L)(\bm{\omega}) \mathcal{S}_\infty(\psi_{\bm{\omega}} \Phi )$ and $\chi_L(\bm{\omega}) \mathcal{S}_\infty(\psi_{\bm{\omega}} \Phi )$ separately.
    Note that if $\bm{q} \ne 0$, $\bm{\omega} \in \supp \chi_L$, and $(\bm{s},\bm{z}) \in \supp \psi_{\bm{\xi}} \Phi$, then for $L \gg \ell(B)^{\delta/2}$, we have
    \begin{equation*}
        |\nabla_{\bm{z}} \varphi_{\bm{\omega}}|
    \gtrsim 2|\bm{\omega}| - |\bm{s}| |\bm{z}|
    \gtrsim |\bm{\omega}| + L^{1-\delta/2} - L^{1-\delta} \ell(B) \gtrsim |\bm{\omega}| + L^{1-\delta/2}
    \end{equation*}
    and $|\nabla_{\bm{z}}^2 \varphi_{\bm{\omega}}| \lesssim |\bm{s}| \lesssim L^{1-\delta} \le |\nabla_{\bm{z}} \varphi_{\bm{\omega}}|.$
    By the stationary phase method, for all $N \in \mathbb{N}$, we have
    \begin{equation*}
        \begin{aligned}
            |(1-\chi_L)(\bm{\omega}) \mathcal{S}_\infty(\psi_{\bm{\omega}} \Phi )|
            & \lesssim L^{1-\delta} (|\bm{\omega}| + L^{1-\delta/2})^{-N} \|\Phi\|_{L^\infty W^{N,1}} \\
            & \lesssim L^{1-\delta-(1-\delta/2)N}
            (L^{\delta/2} |\bm{q}+\bm{\xi}/L| + 1)^{-N} \|\Phi\|_{L^\infty W^{N,1}}.
        \end{aligned}
    \end{equation*}
    Let $N > \max\{d,(1-\delta)/(1-\delta/2)\}$, then $\nu = (1-\delta/2) N - (1-\delta) > 0$.
    Summing up in $\bm{q} \ne 0$, we have the following estimate which is uniform in $\bm{\xi} \in (\mathbb{R}^{2d})^n$:
    \begin{equation*}
        \begin{aligned}
            \sum_{\bm{q} \in (\mathbb{Z}^{2d})^n \backslash \{0\}}
            |(1 &-\chi_L)(\bm{\omega}) \mathcal{S}_L(\psi_{\bm{\omega}} \Phi )|
            \lesssim L^{-\nu} \Bigl( \sum_{\bm{q} \in (\mathbb{Z}^{2d})^n \backslash \{0\}} (L^{\delta/2} |\bm{q}+\bm{\xi}/L| + 1)^{-N}\Bigr) \|\Phi\|_{L^\infty W^{N,1}}  \\
            & \lesssim L^{-\nu} \Bigl( \sum_{\bm{q} \in (\mathbb{Z}^{2d})^n} (|\bm{q}| + 1)^{-N}\Bigr) \|\Phi\|_{L^\infty W^{N,1}}
            \lesssim L^{-\nu} \ell(B)^{2nd} \|\Phi\|_{L^\infty W^{N,\infty}}.
        \end{aligned}
    \end{equation*}
    On the other hand, by Proposition~\ref{prop:gauss-int-upper-bound}, we have
    \begin{equation*}
        |\chi_L(\bm{\omega}) \mathcal{S}_L(\psi_{\bm{\omega}} \Phi)|
        \lesssim C^n \ell(B)^{2nd} |\chi_L(\bm{\omega})|  \| \Phi\|_{L^\infty W^{2n,\infty}}
        \lesssim C^n \ell(B)^{2nd}  \bm{1}_{|\bm{q}+\bm{\xi}/L|\lesssim L^{-\delta/2}} \| \Phi\|_{L^\infty W^{2n,\infty}}.
    \end{equation*}
    If $\bm{q} \ne 0$ and $\bm{1}_{|\bm{q}+\bm{\xi}/L|\lesssim L^{-\delta/2}} \ne 0$, then $|\bm{\xi}|/L \ge |\bm{q}| - L^{-\delta/2} \ge 1/2$.
    Moreover, for all $\bm{\xi} \in (\mathbb{R}^{2d})^n$, the number of $\bm{q} \in (\mathbb{Z}^{2d})^n $ such that $|\bm{q}+\bm{\xi}/L|\lesssim L^{-\delta/2}$ is at most $\mathcal{O}(1)$.
    Consequently, 
    \begin{equation*}
        \sum_{\bm{q} \in (\mathbb{Z}^{2d})^n \backslash \{0\}} |\chi_L(\bm{\omega}) \mathcal{S}_L(\psi_{\bm{\omega}} \Phi)|
        \lesssim C^n \ell(B)^{2nd} \bm{1}_{|\bm{\xi}|\ge L/2} \| \Phi\|_{L^\infty W^{2n,\infty}}.
    \end{equation*}
    The last estimate follows by Lemma~\ref{lem:gauss-sum-fourier-decomp} and $\bm{1}_{|\bm{\xi}| \ge L/2} \le 2 |\bm{\xi}|/L$.
\end{proof}

\section{Lattice counting estimates}

\label{sec:lattice-count-est}

The purpose of this section is to obtain a lattice counting estimate (Proposition~\ref{prop:deco-count-cp}) for \emph{resonance functions} (Definition~\ref{def:reso-function}) on couples.
Resonance functions are phase factors that appear in the diagrammatic expansions (see \S\ref{sec:cauchy}) for solutions to WNLS. We will use their counting estimate to obtain upper bounds for couples and show that non-regular couples give negligible contributions to the effective dynamics of WNLS.

\subsection{Resonance functions}

We now define resonance functions on trees and couples.
Using the topological structure introduced in \S\ref{sec:topo}, we characterize linear relations among resonance functions on couples.
We shall henceforth adopt the following shorthand notations, which will be used throughout \S\ref{sec:lattice-count-est}, \S\ref{sec:diagram}, \S\ref{sec:hom}, and \S\ref{sec:inhom}:
If $\tau \in \mathscr{T}$, $\tau^* \in \mathscr{T}^*$, $\cp \in \mathscr{K}$ and $L \in (0,\infty)$,
then we denote $\mathscr{D}^\tau = \mathscr{D}^\tau(\mathbb{R}^d)$, $\mathscr{D}^{\tau,L} = \mathscr{D}^\tau(\mathbb{Z}^d_L)$, $\mathscr{D}^{\tau^*} = \mathscr{D}^{\tau^*}(\mathbb{R}^d)$, $\mathscr{D}^{\tau^*,L} = \mathscr{D}^{\tau^*}(\mathbb{Z}^d_L)$, $\mathscr{D}^\cp = \mathscr{D}^\cp(\mathbb{R}^d)$, $\mathscr{D}^{\cp,L} = \mathscr{D}^\cp(\mathbb{Z}^d_L)$.
In this convention, for all $ k \in \mathbb{R}^d $, we denote $\mathscr{D}^\tau_k = \mathscr{D}^\tau_k(\mathbb{R}^d)$, $\mathscr{D}^{\tau^*}_k = \mathscr{D}^{\tau^*}_k(\mathbb{R}^d)$ and $\mathscr{D}^\cp_k = \mathscr{D}^\cp_k(\mathbb{R}^d)$;
for all $k \in \mathbb{Z}^d_L$, we denote $\mathscr{D}^{\tau,L}_k = \mathscr{D}^{\tau}_k(\mathbb{Z}^d_L)$, $\mathscr{D}^{\tau^*,L}_k = \mathscr{D}^{\tau^*}_k(\mathbb{Z}^d_L)$ and $\mathscr{D}^{\cp,L}_k = \mathscr{D}^\cp_k(\mathbb{Z}^d_L)$.\index{Trees and couples!Decorations!Decorations valued in $\mathbb{R}^d$ or $\mathbb{Z}^d_L$}

\label{sec:reso-function}

\begin{definition}
    \label{def:reso-function}
    If $\tau \in \mathscr{T} \backslash \mathscr{T}_0$, then the resonance function on $\tau$ is the bilinear map $\Omega^\tau = (\Omega^\tau_{\mathfrak{b}})_{\mathfrak{b} \in \mathfrak{B}^\tau}: \mathscr{D}^\tau \times \mathscr{D}^\tau \to \mathbb{R}^{\mathfrak{B}^\tau}$ defined as follows: if $\mathfrak{b} \in \mathfrak{B}^\tau$ and $\bm{\zeta},\bm{\eta} \in \mathscr{D}^\tau$, then
    \begin{equation*}
        \Omega^\tau_{\mathfrak{b}}(\bm{\zeta},\bm{\eta})
        = \frac{1}{2} \Bigl( \iota_{\mathfrak{b}} \bm{\zeta}_{\mathfrak{b}} \cdot \bm{\eta}_{\mathfrak{b}} - \sum_{\mathfrak{n}^p = \mathfrak{b}} \iota_{\mathfrak{n}} \bm{\zeta}_{\mathfrak{n}} \cdot \bm{\eta}_{\mathfrak{n}}\Bigr).
        \index{Functions and random variables!Resonance functions!Resonance functions on trees and couples $\Omega^{\tau}$, $\Omega^\cp$}
    \end{equation*}
    If $\tau^* \in \mathscr{T}^*$, then the resonance function on $\tau^*$ is the bilinear map $\Omega^{\tau^*} = (\Omega^{\tau^*}_{\mathfrak{b}})_{\mathfrak{b} \in \mathfrak{B}^{\tau^*}} : \mathscr{D}^{\tau^*} \times \mathscr{D}^{\tau^*} \to \mathbb{R}^{\mathfrak{B}^{\tau^*}}$ such that 
    $\Omega^{\tau^*}_{\mathfrak{b}}(\bm{\zeta},\bm{\eta})
    = \Omega^{\tau^\pm}_{\mathfrak{b}}(\bm{\zeta}|_{\tau^\pm},\bm{\eta}|_{\tau^\pm})$ for all $\mathfrak{b} \in \tau^\pm$ and for all $\bm{\zeta},\bm{\eta} \in \mathscr{D}^{\tau^*}$.
    If $\cp = (\tau^*,\wp) \in \mathscr{K}$, then the resonance function on $\cp$ is the bilinear map $\Omega^{\cp} = (\Omega^{\cp}_{\mathfrak{b}})_{\mathfrak{b} \in \mathfrak{B}^\cp} : \mathscr{D}^\cp \times \mathscr{D}^\cp \to \mathbb{R}^{\mathfrak{B}^\cp}$ defined by setting $\Omega^{\cp} = \Omega^{\tau^*}|_{\mathscr{D}^\cp \times \mathscr{D}^\cp}$.
\end{definition}

We shall always use the convention $\Omega(\bm{\zeta}) = \Omega(\bm{\zeta},\bm{\zeta})$ if $\Omega$ is a bilinear map.

\begin{lemma}
    \label{lem:reso-form-linear-indep}
    Let $\cp \in \mathscr{K} \backslash \mathscr{K}_0$.
    Then for all map $\bm{\mu}: \mathfrak{B}^\cp \to \mathbb{C}$, we have $ \bm{\mu} \cdot \Omega^\cp = 0$ if and only if $\bm{\mu}$ is locally constant with respect to the topology $\mathfrak{I}_\cp$ given in Definition~\ref{def:cp-topo}.
\end{lemma}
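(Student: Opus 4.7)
The plan is to reduce the statement to Corollary~\ref{cor:reso-matrix-linear-rel} by rewriting the bilinear form $\bm{\mu}\cdot\Omega^\cp$ in terms of the leaf-pair coordinates $\bm{\zeta}^\flat,\bm{\eta}^\flat:\wp\to\mathbb{R}^d$ introduced after Lemma~\ref{lem:deco-formula}. The key algebraic observation is that, since $\iota_{\mathfrak{n}}^2 = 1$, applying~\eqref{eq:deco-formula-cp} to both factors yields, for every $\mathfrak{n}\in\cp$,
\begin{equation*}
    \bm{\zeta}_{\mathfrak{n}}\cdot\bm{\eta}_{\mathfrak{n}} = \sum_{\mathfrak{p}_1,\mathfrak{p}_2\in\wp} \Gamma^\cp_{\mathfrak{n}}(\mathfrak{p}_1,\mathfrak{p}_2)\,\bm{\zeta}^\flat_{\mathfrak{p}_1}\cdot\bm{\eta}^\flat_{\mathfrak{p}_2},
\end{equation*}
where $\Gamma^\cp_{\mathfrak{n}}$ is the matrix from Lemma~\ref{lem:topo-linear-char}.

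First, I would plug this identity into the definition of $\Omega^\cp_{\mathfrak{b}}$ and sum against $\bm{\mu}$, obtaining
\begin{equation*}
    2\,\bm{\mu}\cdot\Omega^\cp(\bm{\zeta},\bm{\eta}) = \sum_{\mathfrak{p}_1,\mathfrak{p}_2\in\wp} M(\mathfrak{p}_1,\mathfrak{p}_2)\, \bm{\zeta}^\flat_{\mathfrak{p}_1}\cdot\bm{\eta}^\flat_{\mathfrak{p}_2}, \quad M \coloneqq \sum_{\mathfrak{b}\in\mathfrak{B}^\cp} \mu_{\mathfrak{b}}\Bigl(\iota_{\mathfrak{b}}\Gamma^\cp_{\mathfrak{b}} - \sum_{\mathfrak{n}^p=\mathfrak{b}} \iota_{\mathfrak{n}}\Gamma^\cp_{\mathfrak{n}}\Bigr).
\end{equation*}
Next, I would check that $\bm{\zeta}\mapsto\bm{\zeta}^\flat$ is a bijection between $\mathscr{D}^\cp$ and the space of all maps $\wp\to\mathbb{R}^d$: injectivity is immediate from~\eqref{eq:deco-formula-cp}, and surjectivity follows because, given any $\bm{\zeta}^\flat$, formula~\eqref{eq:deco-formula-cp} extended to all of $\cp$ satisfies the branching relation~\eqref{eq:tree-deco-def} by a short induction on the tree height and trivially restricts to $\bm{\zeta}^\flat_{\mathfrak{p}}$ on each leaf pair. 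Consequently $\bm{\zeta}^\flat,\bm{\eta}^\flat$ vary freely over $(\mathbb{R}^d)^\wp$, and choosing $\bm{\zeta}^\flat = \bm{1}_{\mathfrak{p}_1}v$ and $\bm{\eta}^\flat = \bm{1}_{\mathfrak{p}_2}v$ with $v\in\mathbb{R}^d\setminus\{0\}$ shows that $\bm{\mu}\cdot\Omega^\cp\equiv 0$ on $\mathscr{D}^\cp\times\mathscr{D}^\cp$ if and only if the $\wp\times\wp$ matrix $M$ vanishes.

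Finally, the matrix $M$ is precisely the combination appearing in Corollary~\ref{cor:reso-matrix-linear-rel}, which identifies $M = 0$ with $\bm{\mu}$ being locally constant relative to the topology $\mathfrak{I}^\cp$. Combining the two equivalences completes the proof. There is no substantive obstacle here: the argument is a clean algebraic repackaging. The only subtle point is tracking the $\iota$ signs correctly in the first step, but they absorb neatly because $\iota_{\mathfrak{n}}^2 = 1$ allows the sign factor in Lemma~\ref{lem:deco-formula} to be folded symmetrically into the two factors defining $\Gamma^\cp_{\mathfrak{n}}$.
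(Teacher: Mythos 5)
Your proposal is correct and follows essentially the same route as the paper: expand $\Omega^\cp_{\mathfrak{b}}(\bm{\zeta},\bm{\eta})$ via Lemma~\ref{lem:deco-formula} in the free leaf-pair coordinates $\bm{\zeta}^\flat,\bm{\eta}^\flat$, identify the resulting $\wp\times\wp$ matrix $M$, and invoke Corollary~\ref{cor:reso-matrix-linear-rel}. Your explicit verification that $\bm{\zeta}\mapsto\bm{\zeta}^\flat$ is a bijection onto $(\mathbb{R}^d)^\wp$ is a point the paper leaves implicit, but it is the same argument.
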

\begin{proof}
    By Lemma~\ref{lem:deco-formula}, if $\bm{\zeta},\bm{\eta} \in \mathscr{D}^\cp$ and $\mathfrak{b} \in \mathfrak{B}^\cp$, then
    \begin{align*}
        \Omega^\cp_{\mathfrak{b}}(\bm{\zeta},\bm{\eta})
        & = \frac{1}{2} \Bigl( \iota_{\mathfrak{b}}\sum_{\mathfrak{p}_1,\mathfrak{p}_2 \in \wp} \Gamma^\cp_{\mathfrak{b}}(\mathfrak{p}_1,\mathfrak{p}_2) \bm{\zeta}^\flat_{\mathfrak{p}_1} \cdot \bm{\eta}^\flat_{\mathfrak{p}_2} - \sum_{\mathfrak{n}^p = \mathfrak{b}} \iota_{\mathfrak{n}} \sum_{\mathfrak{p}_1,\mathfrak{p}_2 \in \wp} \Gamma^\cp_{\mathfrak{n}}(\mathfrak{p}_1,\mathfrak{p}_2) \bm{\zeta}^\flat_{\mathfrak{p}_1} \cdot \bm{\eta}^\flat_{\mathfrak{p}_2}  \Bigr)\\
        & = \frac{1}{2} \sum_{\mathfrak{p}_1,\mathfrak{p}_2 \in \wp} \Bigl(\iota_{\mathfrak{b}} \Gamma^\cp_{\mathfrak{b}} - \sum_{\mathfrak{n}^p = \mathfrak{b}} \iota_{\mathfrak{n}} \Gamma^\cp_{\mathfrak{n}}\Bigr)(\mathfrak{p}_2,\mathfrak{p}_2) \bm{\zeta}^\flat_{\mathfrak{p}_1}  \cdot  \bm{\eta}^\flat_{\mathfrak{p}_2} ,
    \end{align*}
    where $(\Gamma^\cp_{\mathfrak{n}})_{\mathfrak{n} \in \cp}$ is defined in Lemma~\ref{lem:topo-linear-char}.
    Therefore $\bm{\mu} \cdot \Omega^\cp = 0$ if and only if
    \begin{equation*}
        \sum_{\mathfrak{b} \in \mathfrak{B}^\cp} \bm{\mu}_{\mathfrak{b}}
        \Bigl(\iota_{\mathfrak{b}} \Gamma^\cp_{\mathfrak{b}} - \sum_{\mathfrak{n}^p = \mathfrak{b}} \Gamma^\cp_{\mathfrak{n}}\Bigr)(\mathfrak{p}_2,\mathfrak{p}_2) = 0
    \end{equation*}
    for all $\mathfrak{p}_1,\mathfrak{p}_2 \in \wp$.
    We conclude by Corollary~\ref{cor:reso-matrix-linear-rel}.
\end{proof}

\begin{definition}
    \label{def:edge-momentum-var}
    If $\tau \in \mathscr{T} \backslash \mathscr{T}_0$, then the linear map $\Lambda^\tau: \mathscr{D}^\tau  \to (\mathbb{R}^{2d})^{\mathfrak{B}^\tau}$ is defined by letting $\Lambda^\tau \bm{\zeta} = \bm{z} = (\bm{x}_{\mathfrak{b}},\bm{y}_{\mathfrak{b}})_{\mathfrak{b} \in \mathfrak{B}^\tau}$ where $\iota_{\mathfrak{b}}\bm{x}_{\mathfrak{b}} = \bm{\zeta}_{\mathfrak{b}} - \bm{\zeta}_{\mathfrak{b}[1]}$ and $\iota_{\mathfrak{b}}\bm{y}_{\mathfrak{b}} = \bm{\zeta}_{\mathfrak{b}} - \bm{\zeta}_{\mathfrak{b}[3]}$.\index{Operations and transforms!Change of variables!Change of variables in decorations $\Lambda^\tau$, $\Lambda^\cp$}
    If $\cp = (\tau^*,\wp) \in \mathscr{K} \backslash \mathscr{K}_0$, then the linear maps $\Lambda^{\cp,\pm} : \mathscr{D}^\cp \to (\mathbb{R}^{2d})^{\mathfrak{B}^{\tau^\pm}}$ are defined by setting $\Lambda^{\cp,\pm} \bm{\zeta} = \Lambda^{\tau^\pm} (\bm{\zeta}|_{\tau^\pm})$.
    We also denote $\Lambda^\tau_\zeta = \Lambda^\tau|_{\mathscr{D}^\tau_\zeta}$ and $\Lambda^{\cp,\pm}_\zeta = \Lambda^{\cp,\pm}|_{\mathscr{D}^\cp_\zeta}$ for all $\zeta \in \mathbb{R}^d$.
\end{definition}

Note that if $\bm{\zeta} \in \mathscr{D}^\tau$ and $\Lambda^\tau \bm{\zeta} = (\bm{x}_{\mathfrak{b}},\bm{y}_{\mathfrak{b}})_{\mathfrak{b} \in \mathfrak{B}^\tau}$, then for all $\mathfrak{b} \in \mathfrak{B}^\tau$, we have
\begin{equation}
    \label{eq:Omega-change-var}
    \Omega^\tau_{\mathfrak{b}}(\bm{\zeta}) = \iota_{\mathfrak{b}} \bm{x}_{\mathfrak{b}} \cdot \bm{y}_{\mathfrak{b}}.
\end{equation}

\begin{lemma}
    \label{lem:tree-deco-change-var}
    Let $\cp \in \mathscr{K}^\reg \backslash \mathscr{K}_0$ and let $\vartheta^{\cp,\pm}: \mathscr{I}_\cp \to \mathfrak{B}^{\tau^\pm}$ be such that $\mathfrak{B}^{\mathfrak{f}} \cap \mathfrak{B}^{\tau^\pm} = \{\vartheta^{\cp,\pm}_{\mathfrak{f}}\}$ for all $\mathfrak{f} \in \mathscr{I}_\cp$.
    There exists a linear map $\Lambda^\cp : \mathscr{D}^\cp \to (\mathbb{R}^{2d})^{\mathscr{I}_\cp}$ which makes the diagram commute:
    \begin{center}
        \begin{tikzcd}
            0 \arrow[r] & \mathbb{R}^d \arrow[d,equal] \ar[r] & \mathscr{D}^\cp \arrow[r,"\Lambda^{\cp,\pm}"] \arrow[d,equal] &  (\mathbb{R}^{2d})^{\mathfrak{B}^{\tau^\pm}} \arrow[r] & 0 \\
            0 \arrow[r] & \mathbb{R}^d \arrow[r] & \mathscr{D}^\cp \arrow[r,"\Lambda^\cp"] & (\mathbb{R}^{2d})^{\mathscr{I}_\cp} \arrow[r] \arrow[u,"\vartheta^{\cp,\pm}"'] & 0
        \end{tikzcd}
    \end{center}
    where we identify the space of constant $\mathscr{D}$-decorations on $\cp$ with $\mathbb{R}^d$.
    Moreover the horizontal lines in the diagram are short exact sequences.
    In other words, we have $\Lambda^\cp = \Lambda^{\cp,\pm} \comp \vartheta^{\cp,\pm}$ and $\ker \Lambda^{\cp} = \Lambda^{\cp,\pm} = \mathbb{R}^d$.
    We shall also denote $\Lambda^\cp_\zeta = \Lambda^\cp|_{\mathscr{D}^\cp_\zeta}$ for all $\zeta \in \mathbb{R}^d$.
\end{lemma}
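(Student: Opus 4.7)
The plan is to establish the diagram in three stages: first reduce the couple setting to the tree setting by showing that the restriction $\mathscr{D}^\cp \to \mathscr{D}^{\tau^\pm}$ is an isomorphism; then verify that the tree-level map $\Lambda^\tau$ fits in the analogous short exact sequence; and finally exploit the special structure of regular couples to define $\Lambda^\cp$ and transport the short exactness to the bottom row.

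For the first stage, I consider the restriction $\rho^\pm : \mathscr{D}^\cp \to \mathscr{D}^{\tau^\pm}$, $\bm{\zeta} \mapsto \bm{\zeta}|_{\tau^\pm}$. Injectivity is quick: if $\bm{\zeta}|_{\tau^+} = 0$ then all leaves of $\tau^+$ carry the value $0$, hence by the leaf-pair constraint so do all leaves of $\tau^-$, and Lemma~\ref{lem:deco-formula} applied to $\tau^-$ forces $\bm{\zeta} \equiv 0$ everywhere. A dimension count gives $\dim \mathscr{D}^\cp = d(2n+1) = \dim \mathscr{D}^{\tau^\pm}$ when $\cp$ has order $n$ (both sides are parametrized by the leaves of one tree), so $\rho^\pm$ is an isomorphism.

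For the second stage, I will show that $\Lambda^\tau : \mathscr{D}^\tau \to (\mathbb{R}^{2d})^{\mathfrak{B}^\tau}$ is surjective with kernel the constant decorations. If $\Lambda^\tau(\bm{\zeta}) = 0$, then $\bm{\zeta}_{\mathfrak{b}} = \bm{\zeta}_{\mathfrak{b}[1]} = \bm{\zeta}_{\mathfrak{b}[3]}$ for every $\mathfrak{b} \in \mathfrak{B}^\tau$; substituting this into the branching relation $\iota_{\mathfrak{b}} \bm{\zeta}_{\mathfrak{b}} = \sum_j \iota_{\mathfrak{b}[j]} \bm{\zeta}_{\mathfrak{b}[j]}$ also forces $\bm{\zeta}_{\mathfrak{b}[2]} = \bm{\zeta}_{\mathfrak{b}}$, so $\bm{\zeta}$ is constant on $\tau$. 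Since $\dim \mathscr{D}^\tau = d(2n+1)$, $\dim (\mathbb{R}^{2d})^{\mathfrak{B}^\tau} = 2dn$, and the kernel has dimension $d$, surjectivity follows from rank-nullity (or from an explicit top-down recursion that sets the root value arbitrarily and propagates via the branching relations). Composing with $\rho^\pm$ from the first stage delivers the top short exact sequence for $\Lambda^{\cp,\pm}$.

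For the third stage, an easy induction on $n$ based on Lemma~\ref{lem:reg-cp-str} shows that every irreducible subcouple $r$ of a regular $\cp$ has order one, so $\mathfrak{B}^r \cap \mathfrak{B}^{\tau^\pm}$ is a singleton and $\vartheta^{\cp,\pm}$ is a (unique) bijection $\mathscr{I}_\cp \to \mathfrak{B}^{\tau^\pm}$. Defining $\Lambda^\cp_r(\bm{\zeta}) = \Lambda^{\cp,+}_{\vartheta^{\cp,+}_r}(\bm{\zeta})$, the bottom short exact sequence is obtained by pulling the top one back through the bijection $\vartheta^{\cp,+}$. The hard part will be to verify that the same $\Lambda^\cp$ is produced from the $-$ selector: one must unfold the branching relations at the two root nodes of each order-one irreducible subcouple $r$ together with the three leaf pairings internal to $r$, and check that under the identifications induced by the sign $\sigma \in \{+1,-1\}$ classifying $r$ via Lemma~\ref{lem:reg-cp-str}, the values $\Lambda^{\cp,+}_{\vartheta^{\cp,+}_r}(\bm{\zeta})$ and $\Lambda^{\cp,-}_{\vartheta^{\cp,-}_r}(\bm{\zeta})$ coincide (at worst up to a canonical sign that is absorbed in the indexing of $(\mathbb{R}^{2d})^{\mathscr{I}_\cp}$). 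This is a check local to each irreducible subcouple, so it reduces to a finite case analysis in $\sigma$, and once verified the commutativity of the full diagram is immediate.
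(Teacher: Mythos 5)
Your proposal is correct and follows essentially the same route as the paper: reduce everything to the tree-level short exact sequence for $\Lambda^\tau$ (where the paper establishes surjectivity and the kernel at once via the explicit inverse formula~\eqref{eq:deco-change-var-formula}, while you use a kernel computation plus a dimension count and rank--nullity, together with the restriction isomorphism $\mathscr{D}^\cp \simeq \mathscr{D}^{\tau^\pm}$ that the paper leaves implicit), and derive the compatibility of the $+$ and $-$ selectors from the fact that conjugate nodes carry equal decoration values. The local case check per irreducible subcouple that you defer to the end, including the sign you anticipate, is exactly what the paper packages by citing Corollary~\ref{cor:conju-node-char-deco} for the commutativity of the diagram.
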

\begin{proof}
    It suffices to show that if $\tau \in \mathscr{T} \backslash \mathscr{T}_0$, then we have a short exact sequence
    \begin{center}
        \begin{tikzcd}
            0 \arrow[r] & \mathbb{R}^d \arrow[r] & \mathscr{D}^\tau \arrow[r,"\Lambda^{\tau}"] & (\mathbb{R}^{2d})^{\mathfrak{B}^\tau} \arrow[r] & 0
        \end{tikzcd}
    \end{center}
    The commutative diagram follows from Corollary~\ref{cor:conju-node-char-deco}.
    In fact, if $\zeta \in \mathbb{R}^d$ and $\bm{z} = (\bm{x}_{\mathfrak{b}},\bm{y}_{\mathfrak{b}})_{\mathfrak{b} \in \mathfrak{B}^\tau} \in (\mathbb{R}^{2d})^{\mathfrak{B}^\tau}$, then the unique $\bm{\zeta} \in \mathscr{D}^\tau_\zeta$ such that $\Lambda^\tau \bm{\zeta} = \bm{z}$ is given by
    \begin{equation}
        \label{eq:deco-change-var-formula}
        \bm{\zeta}_{\mathfrak{n}} = \zeta + \sum_{\mathfrak{n} \preceq \mathfrak{m} \ne \mathfrak{r}^\tau} \iota_{\mathfrak{n}} \bm{h}_{\mathfrak{n}},\quad
        \text{where}\ \bm{h}_{\mathfrak{n}} =
        \begin{cases}
            \bm{x}_{\mathfrak{n}^p},                               & \mathfrak{n} = \mathfrak{n}^p[1]; \\
            - \bm{x}_{\mathfrak{n}^p} - \bm{y}_{\mathfrak{n}^p} , & \mathfrak{n} = \mathfrak{n}^p[2]; \\
            \bm{y}_{\mathfrak{n}^p},                               & \mathfrak{n} =  \mathfrak{n}^p[3].
        \end{cases}
        \qedhere
    \end{equation}
\end{proof}

We now state, in the following Proposition~\ref{prop:deco-count-cp}, the counting estimate for decorations on couples.
This estimate \eqref{eq:deco-count-cp} follows directly from \cite[Proposition~8.6]{Deng2023} (by setting $n_{\mathrm{sub}} = \rho_{\mathrm{sub}} = 2n$, $\Delta_{\mathrm{sub}} = q_{\mathrm{sub}} = 0$) and holds true for $d \ge 3$ and $\alpha \in (0,2)$.
However, as this Proposition is an overkill for our purposes, and for the sake of completeness, we shall present in \S\ref{sec:cp-counting-est} a much simpler proof \eqref{eq:deco-count-cp}  that is sufficient for us here and covers a narrower range of parameters where $d \ge 3$ and $0 < \alpha < 2/(1+4/d^*)$ with $d^* = d$ if $d$ is even and $d^* = d+1$ if $d$ is odd.

\begin{proposition}
    \label{prop:deco-count-cp}
    Assume that $d \ge 3$ and $0 < \alpha < 2/(1+4/d^*)$.
    Let $\cp \in \mathscr{K}_{n}$ with $n \in \mathbb{N} \backslash \{0\}$.
    Let $B \subset \mathscr{D}^\cp$ and let $Q \subset \mathbb{R}^{\mathfrak{B}^\cp}$ be bounded sets.
    Assume furthermore that $B$ is bounded by $nR$ for some $R \ge 1$ and $Q$ is bounded by $1$.
    For $k \in \mathbb{Z}^d_L$, let $G^{\cp,L}_k(B,Q)$ be the set of all $\bm{k} \in \mathscr{D}^{\cp,L}_k \cap B$ such that $\gamma \Omega^\cp(\bm{k}) \in Q$, then for some $C > 0$, $\nu > 0$, and $\delta > 0$, the following estimate holds when $L^\delta > n $:
    \begin{equation}
        \label{eq:deco-count-cp}
        \sup_{k \in \mathbb{Z}^d_L} \card G^{\cp,L}_k(B,Q) \lesssim C^n (nR)^{4nd} L^{n(2d-\alpha)} L^{-\nu \ind (q)}.
    \end{equation}
\end{proposition}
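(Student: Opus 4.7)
The overall plan is to induct on the order $n$ of $\cp$, using the irreducible factorization from Lemma~\ref{lem:cp-attaching} and the parametrization of decorations provided by Lemma~\ref{lem:tree-deco-change-var}, and to extract the $L^{-\nu \ind(\cp)}$ gain one irreducible subcouple at a time. The induction will be anchored on the \emph{regular} case $\ind(\cp)=0$. In that case, $\cp$ decomposes as $n$ order-one irreducibles, and by Lemma~\ref{lem:tree-deco-change-var} a decoration in $\mathscr{D}^{\cp,L}_k \cap B$ is parametrized, after fixing the root at $k$, by $n$ independent pairs $(\bm{x}_{\mathfrak{f}},\bm{y}_{\mathfrak{f}}) \in (\mathbb{Z}^d_L)^2$, one per $\mathfrak{f} \in \mathscr{I}_\cp$, satisfying $|(\bm{x}_{\mathfrak{f}},\bm{y}_{\mathfrak{f}})| \lesssim nR$. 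In view of~\eqref{eq:Omega-change-var} together with the conjugacy of the two branching nodes inside each $\mathfrak{f}$ (Lemma~\ref{lem:reg-cp-str} and Corollary~\ref{cor:conju-node-char-deco}), the constraint $\gamma \Omega^\cp(\bm{k}) \in Q$ decouples into $n$ independent slab conditions $|\bm{x}_{\mathfrak{f}} \cdot \bm{y}_{\mathfrak{f}}| \lesssim L^{-\alpha}$. A direct bilinear slab count --- for each fixed $\bm{x}_{\mathfrak{f}}$, bound the number of lattice $\bm{y}_{\mathfrak{f}}$ inside a slab of width $L^{-\alpha}/|\bm{x}_{\mathfrak{f}}|$, then sum over $\bm{x}_{\mathfrak{f}}$ using $d \ge 3$ to absorb the singular $1/|\bm{x}_{\mathfrak{f}}|$ factor --- yields $\lesssim C(nR)^{2d} L^{2d-\alpha}$ per irreducible component, and multiplying over $\mathscr{I}_\cp$ gives the claimed bound with $\nu=0$.

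For $\ind(\cp) \geq 1$, I would factor $\cp = \cp_0 \otimes_{\mathfrak{c}} \cp_1$ via Lemma~\ref{lem:cp-attaching}, choosing $\cp_1 \in \mathscr{I}_\cp \setminus \mathscr{K}_1$ of order $m \geq 2$, so that $\ind(\cp_0) = \ind(\cp) - 1$ and the order of $\cp_0$ is $n - m < n$. By Corollary~\ref{cor:conju-node-char-deco} any decoration on $\cp$ is the gluing of compatible decorations on $\cp_0$ and $\cp_1$ that agree on the conjugate class $\mathfrak{c}$; summing over the value of the shared node in $\mathbb{Z}^d_L$ and applying the inductive hypothesis to $\cp_0$ reduces the whole problem to establishing the \emph{sharpened} estimate
\begin{equation*}
    \sup_{k' \in \mathbb{Z}^d_L}\card G^{\cp_1,L}_{k'}(B',Q') \lesssim C^m (mR)^{4md} L^{m(2d-\alpha)} L^{-\nu}
\end{equation*}
uniformly over all irreducible $\cp_1$ of order $m \geq 2$, with a fixed $\nu > 0$ independent of $m$. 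Heuristically this gain must be present because, by Lemma~\ref{lem:reso-form-linear-indep}, on an irreducible couple the $2m$ resonances $(\Omega^{\cp_1}_{\mathfrak{b}})_{\mathfrak{b}\in\mathfrak{B}^{\cp_1}}$ admit only the single trivial linear relation, so $2m-1$ scalar constraints act on the $2md$-dimensional decoration space --- a surplus of $m-1 \geq 1$ over what the regular count saturates.

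To turn the heuristic into a rigorous estimate, I would replace $\bm{1}_{\gamma\Omega^{\cp_1}(\bm{k}) \in Q'}$ by a smooth Fourier majorant, writing
\begin{equation*}
    \card G^{\cp_1,L}_{k'}(B',Q') \lesssim \int_{\mathbb{R}^{2m}} |\widehat{\rho}(\bm{s})| \Bigl|\sum_{\bm{k}} e^{2\pi i \bm{s} \cdot \gamma \Omega^{\cp_1}(\bm{k})} \chi(\bm{k})\Bigr| \diff \bm{s},
\end{equation*}
and, using Lemma~\ref{lem:tree-deco-change-var} and identity~\eqref{eq:Omega-change-var} to change variables to the edge parameters $\bm{z} = (\bm{x}_{\mathfrak{b}}, \bm{y}_{\mathfrak{b}})_{\mathfrak{b}}$, recognize the inner lattice sum as an instance of the oscillatory sum $\mathcal{S}_L(\theta_L \Phi)$ from Definition~\ref{def:functional-S}. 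The one-dimensional nondegeneracy guaranteed by Lemma~\ref{lem:reso-form-linear-indep} forces at least one coordinate of $\bm{s}$ into the minor-arc regime $|\bm{s}_j| \geq L^{1-\alpha-\delta}/\gamma$, where Lemma~\ref{lem:minor-arc-sum} delivers the power gain $L^{-\nu}$ through the Hua $L^4$ and $L^6$ bounds of Lemma~\ref{lem:gauss-sum-def-est}. The restriction $\alpha < 2/(1+4/d^*)$ enters precisely here: it is the range in which the Hua bounds combine favorably with Bourgain's Dirichlet-approximation estimate~\eqref{eq:gauss-sum-est-bourgain}, whereas the full range $\alpha \in (0,2)$ demands the heavier machinery of~\cite{Deng2023}.

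The hardest step will be the last one: making the $L^{-\nu}$ extraction \emph{uniform} in $m$ and in the combinatorial type of the irreducible couple, since the number of oscillatory integrations in $\mathcal{S}_L$ scales with $m$ while the nondegeneracy supplied by Lemma~\ref{lem:reso-form-linear-indep} is only a one-dimensional property of the resonance family. One must carefully isolate a single privileged direction in $\bm{s}$ along which the minor-arc mechanism fires, control the remaining $2m-1$ directions by the baseline bound from Proposition~\ref{prop:gauss-sum-upper-bound}, and verify that the resulting constants depend on $m$ only through the factor $C^m (mR)^{4md}$ without hidden factorial or polynomial-in-$m$ losses.
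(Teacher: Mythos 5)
Your reduction to irreducible couples and your identification of Lemma~\ref{lem:reso-form-linear-indep} as the structural nondegeneracy input do match the paper's Steps~1 and~4, but the heart of the proposition --- a uniform $L^{-\nu}$ gain for a single irreducible couple of order $m\ge 2$ --- is precisely the step you delegate to a mechanism that does not apply. After the change of variables of Lemma~\ref{lem:tree-deco-change-var}, only the resonances of the \emph{positive} tree take the product form $\bm{x}_{\mathfrak{b}}\cdot\bm{y}_{\mathfrak{b}}$; by Lemma~\ref{lem:reso-form-linear-indep} the negative-tree resonances are genuinely different quadratic polynomials in the same edge variables, containing squares and cross terms as in~\eqref{eq:omega-l-r-linear-decomp}. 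Hence your inner lattice sum $\sum_{\bm{k}}e^{2\pi i \bm{s}\cdot\gamma\Omega^{\cp_1}(\bm{k})}\chi(\bm{k})$, with $\bm{s}$ dual to \emph{all} resonances, is not an instance of $\mathcal{S}_L(\theta_L\Phi)$ from Definition~\ref{def:functional-S}, and the estimates of \S\ref{sec:oscillatory} do not cover its phase. Moreover, ``nondegeneracy forces at least one coordinate of $\bm{s}$ into the minor-arc regime'' is not a valid deduction: $\bm{s}$ is a free dual variable, the region of small $\bm{s}$ is always present and carries the main contribution, and if no gain were extracted there you would merely recover the regular-couple bound. The gain must come from equidistribution of the extra resonance on the set cut out by the diagonal constraints, and the paper obtains it by purely geometric counting: all edge pairs but one privileged pair are counted with the mini-couple bound~\eqref{eq:mini-cp-card-est}, and the last pair is counted under \emph{two} simultaneous constraints, coordinate plane by coordinate plane, via the coarea estimate of Lemma~\ref{lem:lattice-count-coarea} in the nondegenerate directions and the quadratic sublevel-set count of Lemma~\ref{lem:count-poly-quasi-zero} in the degenerate ones, followed by an optimization in $\delta$. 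Nothing in your sketch substitutes for this; the ``hardest step'' you flag is not a uniformity issue but the absence of the gain mechanism itself.

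Two further defects. First, the restriction $0<\alpha<2/(1+4/d^*)$ does not come from Hua's bounds or Bourgain's estimate~\eqref{eq:gauss-sum-est-bourgain} --- those hold for all $\alpha\in(0,2)$, $d\ge3$ and serve the baseline counts and \S\ref{sec:oscillatory} --- but from solving the inequalities $\alpha+2\delta-2\min\{\alpha,1\}<0$, $\alpha-d\delta<0$, $\min\mu>0$ at the end of the geometric argument. Second, your base case is weaker than claimed: the naive slab count (fix $\bm{x}$, count $\bm{y}$ in a slab of width $L^{-\alpha}/|\bm{x}|$, sum over $\bm{x}$) carries an unavoidable ``one lattice layer per slab'' floor producing a term of size $(nR)^{2d-1}L^{2d-1}$, which exceeds $L^{2d-\alpha}$ whenever $\alpha>1$, a range the proposition allows when $d\ge 5$. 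To reach $L^{2d-\alpha}$ for all admissible $\alpha$ you need either a divisibility-refined count or, as in the paper, Proposition~\ref{prop:gauss-sum-upper-bound} applied to a smooth majorant as in~\eqref{eq:mini-cp-card-est}.
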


\subsection{Geometric counting methods}
\label{sec:geo-counting}

We now prove lattice counting results required in the proof of Proposition~\ref{prop:deco-count-cp}.

\begin{definition}
    \label{def:Z-set}
    If $f : \mathbb{R}^m \to \mathbb{R}^n$ is a map between Euclidean spaces of finite dimensions and $\delta \ge 0$, $r \ge 0$, then we denote by $Z^\delta_r(f)$ the set of all $x \in \mathbb{R}^m$ such that $|f(x)| \le r L^{-\delta}$.
    We also denote $Z^\delta(f) = Z^\delta_1(f)$ for simplicity.
\end{definition}

\begin{lemma}
    \label{lem:lattice-count-coarea}
    Let $u \in W^{1,\infty}(\mathbb{R}^2,\mathbb{R}^2)$ be such that
    \begin{equation*}
        N(u) \coloneqq \sup_{z \in \mathbb{R}^2, Ju(z) \ne 0} \card u^{-1}(z) < \infty,
    \end{equation*}
    where $Ju$ is the Jacobian determinant of $u$.
    Let $\alpha > 0$, $\delta \in (0,1)$, let $B \subset \mathbb{R}^2$ be a bounded set, let $\tilde{B}$ be the $L^{-1}$-neighborhood of $B$, and assume that $L^{1-\delta} \ge \|\nabla Ju\|_{L^\infty(\tilde{B})}$, then
    \begin{equation}
        \label{eq:lattice-est-coarea}
        \card ( Z^\alpha(u) \backslash Z^\delta(Ju) \cap \mathbb{Z}^2_L \cap B) \lesssim L^{2+\delta-2\min\{\alpha,1\}} \langle \|\nabla u\|_{L^\infty(\tilde{B})} \rangle^2 N(u).
    \end{equation}
    Consequently, if $f,g$ are polynomials of at most second degree whose coefficients for second order terms are bounded by $n \in \mathbb{N}$ and whose coefficients for first order terms are bounded by $nR$ where $R \ge 1$, and assume that $B$ is also bounded by $nR$, then when $L^{1-\delta} \gg n^2 R$ we have
    \begin{equation*}
        \card ( Z^\alpha(f) \cap Z^\alpha(g) \backslash Z^\delta(\nabla f \times \nabla g) \cap \mathbb{Z}^2_L \cap B) \lesssim (nR)^4 L^{2+\delta-2\min\{\alpha,1\}}.
    \end{equation*}
\end{lemma}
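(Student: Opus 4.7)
The plan is to convert the discrete count to a volume estimate on an inflated region, where a uniform lower bound on $|Ju|$ permits an area-formula reduction to the image side. Denote $A = Z^\alpha(u) \setminus Z^\delta(Ju) \cap \mathbb{Z}^2_L \cap B$. Attaching to each $x \in A$ the box $Q_x = x + [-1/(2L),1/(2L))^2$ yields a pairwise disjoint family, so $\tilde A \coloneqq \bigcup_{x\in A} Q_x$ satisfies $\tilde A \subset \tilde B$ and $\#A = L^2 m(\tilde A)$.

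The key pointwise control on $\tilde A$ comes from the gradient hypothesis: every $y \in Q_x$ lies within $\sqrt{2}/(2L)$ of $x$ where $|Ju(x)| > L^{-\delta}$, so
\begin{equation*}
    |Ju(y)| > L^{-\delta} - \tfrac{\sqrt 2}{2L} \|\nabla Ju\|_{L^\infty(\tilde B)} \ge c\, L^{-\delta}, \quad c = 1 - 1/\sqrt{2} > 0.
\end{equation*}
Coupling this with the area formula $\int_{\tilde A} |Ju| \diff y = \int \#(u^{-1}(z) \cap \tilde A)\diff z$ and the bound $\#(u^{-1}(z) \cap \tilde A) \le N(u)$ for a.e. $z$ (valid since $|Ju|$ is bounded away from zero on $\tilde A$, so Sard's theorem applies) gives
\begin{equation*}
    m(\tilde A) \le c^{-1} L^\delta \int_{\tilde A} |Ju| \le c^{-1} L^\delta N(u)\, m(u(\tilde A)).
\end{equation*}
Since $u(\tilde A) \subset B\bigl(0, L^{-\alpha} + L^{-1}\|\nabla u\|_{L^\infty(\tilde B)}\bigr)$, assembling the estimates yields
\begin{equation*}
    \#A \lesssim L^{2+\delta} N(u) \bigl(L^{-\alpha} + L^{-1}\|\nabla u\|_{L^\infty(\tilde B)}\bigr)^2.
\end{equation*}
Splitting into the cases $\alpha \le 1$ (factor out $L^{-\alpha}$ and use $L^{\alpha-1}\le 1$) and $\alpha > 1$ (factor out $L^{-1}$) turns the squared bracket into $L^{-2\min\{\alpha,1\}}\langle \|\nabla u\|_{L^\infty(\tilde B)}\rangle^2$ up to a constant, producing~\eqref{eq:lattice-est-coarea}.

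For the polynomial corollary, set $u = (f,g)$ so that $Ju = \partial_1 f\, \partial_2 g - \partial_2 f\, \partial_1 g = \nabla f \times \nabla g$ is itself a polynomial of degree at most $2$. Direct expansion using the coefficient bounds yields $\|\nabla u\|_{L^\infty(\tilde B)} \lesssim n^2 R$ and $\|\nabla Ju\|_{L^\infty(\tilde B)} \lesssim n^3 R$, and the assumption $L^{1-\delta} \gg n^2 R$ (with a sufficiently large implicit constant) secures the gradient condition of the main estimate. B\'ezout's theorem bounds the number of intersection points of two quadratic curves by $\deg f \cdot \deg g = 4$ whenever the corresponding value is regular, yielding $N(u) \le 4$, and substitution produces the claimed bound $(nR)^4 L^{2+\delta-2\min\{\alpha,1\}}$. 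The principal delicate point throughout is securing the \emph{uniform} positive lower bound on $|Ju|$ across the entire $L^{-1}$-inflation of the lattice points of $A$---this is precisely what the hypothesis on $\|\nabla Ju\|_{L^\infty(\tilde B)}$ buys, and without it the transition from the discrete to the continuous count breaks down.
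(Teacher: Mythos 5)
Your proof is correct and essentially identical to the paper's: both inflate each lattice point to an $L^{-1}$-box inside $\tilde B$, use the finite-increment bound with the hypothesis on $\|\nabla Ju\|$ to propagate $|u|\lesssim L^{-\min\{\alpha,1\}}\langle\|\nabla u\|_{L^\infty(\tilde B)}\rangle$ and $|Ju|\gtrsim L^{-\delta}$ to the inflated set, and then apply the area/coarea formula together with the multiplicity bound $N(u)$ to control its measure by that of the image disc of radius $\sim L^{-\min\{\alpha,1\}}\langle\|\nabla u\|_{L^\infty(\tilde B)}\rangle$. Your treatment of the polynomial corollary (taking $u=(f,g)$, bounding the coefficient growth on $\tilde B$, and invoking B\'ezout at regular values to get $N(u)\le 4$) is exactly the intended, though unwritten, route in the paper.
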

\begin{proof}
    For all $z \in \mathbb{Z}^2_L \cap B$, let $Q_z \subset \mathbb{R}^2 \cap \tilde{B}$ be the closed square centered at $z$, with edges parallel to the axes and with edge length $L^{-1}$.
    By the finite increment theorem, if in addition $z \in Z^\alpha(u) \backslash Z^\delta(Ju) $ and $L^{1-\delta} \ge \|\nabla Ju\|_{L^\infty(\tilde{B})}$, then
    \begin{equation*}
        Q_z \subset M = Z^{\min\{\alpha,1\}}_{\langle \|\nabla u\|_{L^\infty(\tilde{B})} \rangle}(u) \backslash Z^{\delta}_{1/2}(Ju) \cap \tilde{B}.
    \end{equation*}
    Therefore, observing that $u(M) \subset \{z \in \mathbb{R}^2 : |z| \le \langle \|\nabla u\|_{L^\infty(\tilde{B})} \rangle L^{-\min\{\alpha,1\}}\}$, we conclude~\eqref{eq:lattice-est-coarea} using the area method and the coarea formula:
    \begin{align*}
        L^{-2}&\card\bigl( Z^\alpha(u) \backslash Z^\delta(Ju) \cap \mathbb{Z}^2_L \cap B\bigr)
        \le |M|
        \lesssim L^{\delta} \int \bm{1}_M(x) |\nabla Ju(x)| \diff x \\
        & = L^\delta \int_{\mathbb{R}^2} \card\{x \in M : u(x) = z\}\diff z
        \le L^\delta \int_{\mathbb{R}^2} \bm{1}_{u(M)}(z) \card u^{-1}(z) \diff z \\
        & \le L^\delta |u(M)| N(u)
        \lesssim L^{\delta-2\min\{\alpha,1\}} \langle \|\nabla u\|_{L^\infty} \rangle^2 N(u).\qedhere
    \end{align*}
\end{proof}

\begin{lemma}
    \label{lem:count-poly-quasi-zero}
    Let $\delta \ge \varepsilon > 0$.
    Let $f : \mathbb{R}^2 \to \mathbb{R}$ be a polynomial for at most second degree such that the coefficients of its second degree monomials are integers bounded by $n \in \mathbb{N}$.
    Suppose in addition that if $\Hess_f = 0$, then $|\nabla f(0)| \ge L^{-\varepsilon}$.
    Then for all $B \in \mathbb{R}^2$ bounded by $nR$ where $R \ge 1$, we have
    \begin{equation*}
        \card(Z^{2\delta}(f) \cap B \cap \mathbb{Z}^d_L) \lesssim (n R)^2
        L^{2-\min\{2\varepsilon,2\delta-\varepsilon,1\}}.
    \end{equation*}
\end{lemma}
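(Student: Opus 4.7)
The plan is to split $\card(Z^{2\delta}(f)\cap B\cap\mathbb{Z}^2_L)$ according to whether $|\nabla f|$ is above or below the threshold $L^{-\varepsilon}$. On the ``good'' region $G=\{|\nabla f|\ge L^{-\varepsilon}\}$, the sublevel set $\{|f|\le L^{-2\delta}\}$ sits inside a strip of transverse width $\lesssim L^{\varepsilon-2\delta}$ around the zero set $\{f=0\}$, producing the exponent $2\delta-\varepsilon$. On the ``critical'' region $C=\{|\nabla f|<L^{-\varepsilon}\}$, the affinity of $\nabla f$ in $z$ combined with the integer lower bound on $\Hess_f$ (when nonzero) confines $z$ to a thin neighborhood of the critical locus, producing the exponent $2\varepsilon$. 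The $\min$ with $1$ reflects the lattice-spacing floor: a strip of width below $L^{-1}$ contains at most $O(1)$ lattice points per transverse slice.

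First I would dispose of $\Hess_f=0$: then $f(z)=a\cdot z+c$ is affine with $|a|=|\nabla f(0)|\ge L^{-\varepsilon}$, so $Z^{2\delta}(f)\cap B$ is contained in a rectangle of dimensions $\lesssim nR\times L^{\varepsilon-2\delta}$, containing $\lesssim (nRL)(L^{1+\varepsilon-2\delta}+1)\le (nR)^2\,L^{2-\min\{2\delta-\varepsilon,1\}}$ points of $\mathbb{Z}^2_L$. When $\Hess_f\ne 0$, on the good region $G$ I would use a slice argument: after (if necessary) the integer-preserving change of coordinates $(x,y)\mapsto(x+y,y)$, at least one diagonal quadratic coefficient, say the $x^2$-coefficient $a$, is a nonzero integer, so for each lattice value of $y$ the set $\{x:|f(x,y)|\le L^{-2\delta}\}$ is at most two intervals of total measure $\lesssim L^{\varepsilon-2\delta}$ on $G$ (using $|\partial_x f|\ge L^{-\varepsilon}/\sqrt{2}$), giving $\lesssim L^{1+\varepsilon-2\delta}+1$ lattice $x$'s per slice; multiplying by $\lesssim nRL$ lattice $y$'s yields the required bound for $G$. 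On the critical region $C$, the map $z\mapsto\nabla f(z)=\nabla f(0)+\Hess_f z$ is affine; when $\det\Hess_f\ne 0$, integrality forces $|\det\Hess_f|\ge 1$ so $\sigma_{\min}(\Hess_f)\gtrsim 1/n$, and $C$ sits inside an ellipse around $x_c=-\Hess_f^{-1}\nabla f(0)$ of area $\lesssim L^{-2\varepsilon}$ and largest semi-axis $\lesssim nL^{-\varepsilon}$, containing $\lesssim L^{2-2\varepsilon}+nL^{1-\varepsilon}$ lattice points. The remaining rank-one sub-case $\det\Hess_f=0$ with $\Hess_f\ne 0$ is handled by reducing $f$, via an integer-preserving linear change of variables, to the normal form $\alpha u^2+\beta u+\gamma v+\delta_0$ with $|\alpha|\ge 1$, and combining $|\alpha|\ge 1$ with the gradient hypothesis to control lattice $u$'s within the critical strip in the null direction.

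The main obstacle is this rank-one sub-case: there the critical set can be a long strip in the direction in which $f$ is nearly constant, and gradient control along that direction is unavailable. A cleaner unified approach, which I would try first, is to apply Lemma~\ref{lem:lattice-count-coarea} to the auxiliary map $u=(f,\partial_y f):\mathbb{R}^2\to\mathbb{R}^2$, whose Jacobian $Ju=\partial_x f\cdot\partial_{yy} f-\partial_y f\cdot\partial_{xy} f$ is an affine function of $x$ with slope equal to $-\det\Hess_f$ (hence a nonzero integer with $|\text{slope}|\ge 1$ when $\det\Hess_f\ne 0$). This converts the two-variable count directly into the coarea estimate~\eqref{eq:lattice-est-coarea}, merges the good and critical regions into a single analysis, and leaves only the genuinely rank-deficient cases to be treated by the explicit coordinate-change argument above.
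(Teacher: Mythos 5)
Your skeleton—split into the region $G=\{|\nabla f|\ge L^{-\varepsilon}\}$ and its complement, slice the good region transversally to the level set, trap the critical region in an ellipse when $\det\Hess_f\ne 0$, and use the hypothesis in the affine case—is in substance the same analysis the paper carries out after reducing $f$ by a linear change of variables to the normal forms $x^2\pm y^2+r$, $px+y^2+r$, $px+r$ and splitting near/far from the origin, and your nondegenerate and affine cases do close. One repairable slip: on $G$ you may not assume $|\partial_x f|\ge L^{-\varepsilon}/\sqrt2$; a shear makes a diagonal quadratic coefficient a nonzero integer but says nothing about which partial derivative is large. You must split $G$ into $\{|\partial_x f|\ge L^{-\varepsilon}/\sqrt2\}$ and $\{|\partial_y f|\ge L^{-\varepsilon}/\sqrt2\}$ and slice in the corresponding variable (the set where the relevant partial is large meets each line in at most two intervals, on which $f$ is monotone with derivative $\gtrsim L^{-\varepsilon}$); integrality of the leading coefficient alone only gives slice width $L^{-\delta}$, which is too weak whenever $\min\{2\varepsilon,2\delta-\varepsilon,1\}>\delta$.

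The genuine gap is exactly the rank-one sub-case you flag, and neither of your proposed fixes closes it. First, ``the gradient hypothesis'' is conditioned on $\Hess_f=0$, so it is simply unavailable when $\Hess_f\ne0$ but $\det\Hess_f=0$; and even a lower bound $|\nabla f(0)|\ge L^{-\varepsilon}$ would not suffice: for $f(x,y)=(y+c/2)^2$ with $|c|\ge L^{-\varepsilon}$ (or just $f=y^2$), the set $Z^{2\delta}(f)$ is a strip of width $\sim L^{-\delta}$, containing $\sim nR\,L^{2-\delta}$ lattice points, which beats $(nR)^2L^{2-\min\{2\varepsilon,2\delta-\varepsilon,1\}}$ whenever $\varepsilon<\delta<2\varepsilon$ and $\delta<1$. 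What is actually needed is a lower bound $\gtrsim L^{-\varepsilon}$ on the linear coefficient in the kernel direction of the Hessian; this is precisely what the paper's proof builds into its degenerate normal form $px+y^2+r$ with $|p|\gtrsim L^{-\varepsilon}$, and in the only way the lemma is invoked in Proposition~\ref{prop:deco-count-cp} one has $\varepsilon=\delta$, where the target exponent is $\delta$ and the integrality-only slice width $L^{-\delta}$ already suffices, so the issue is moot there. Second, the ``cleaner unified approach'' via Lemma~\ref{lem:lattice-count-coarea} applied to $u=(f,\partial_y f)$ does not bound the right quantity: that lemma counts lattice points where \emph{both} components of $u$ are small and $|Ju|\ge L^{-\delta}$, whereas here you must count the full one-codimensional set $\{|f|\le L^{-2\delta}\}$ with no smallness of $\partial_y f$, so the coarea count misses the bulk of the strip; worse, in the rank-one case $Ju=\det(\Hess_f)\,x+(2bp-cq)$ degenerates to a constant, so $Z^\delta(Ju)$ may be all of $\mathbb{R}^2$ and the lemma returns nothing at all. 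So the unified route cannot replace the slicing argument, and the rank-deficient case—where you yourself locate the difficulty—remains open in your write-up; to close it you must either strengthen/reinterpret the hypothesis as controlling the kernel-direction linear term (as the paper's normal-form reduction implicitly does) or settle for the exponent $\min\{\delta,1\}$, which is all the downstream application requires.
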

\begin{proof}
    If $\varphi$ is an invertible linear transform of $\mathbb{R}^2$, then
    \begin{equation*}
        \card(Z^{2\delta}(f) \cap B \cap \mathbb{Z}^2_L)
        = \card(Z^\delta(f \comp \varphi) \cap B_\varphi \cap \Gamma_\varphi ),
    \end{equation*}
    where $B_\varphi = \varphi^{-1}(B)$ and $\Gamma_\varphi = \varphi^{-1}(\mathbb{Z}^d_L)$.
    Choose $\varphi$ such that $|J\varphi| \gtrsim n^{-2}$ if $\Hess_f \ne 0$ and $|J\varphi| \simeq 1$ if $\Hess_f = 0$, and which makes $g = f \comp \varphi$ fall into one of the following categories:
    \begin{enumerate}
        \item $g(x,y) = x^2 \pm y^2 + r$,
        \item $g(x,y) = px + y^2 + r$,
        \item $g(x,y) = px + r$;
    \end{enumerate}
    where $|p| \gtrsim L^{-\varepsilon}$ and $r \in \mathbb{R}$.
    Let $D_\varepsilon$ be the set of all $(x,y) \in \mathbb{R}^2$ such that $\max\{|x|,|y|\} \le 2 L^{-\varepsilon}$.
    Then we decompose
    \begin{equation}
        \label{eq:Z-set-decompose}
        Z^{2\delta}(g) \cap B_\varphi
        \subset \bigl( B_\varphi \cap D_\varepsilon \bigr) \cup
        \bigl( Z^{2\delta}(g) \cap B_\varphi \backslash D_\varepsilon \bigr).
    \end{equation}
    By the area method, we have the estimate
    \begin{equation*}
        \card (B_\varphi \cap D_\varepsilon  \cap \Gamma_\varphi)
        \lesssim n^2  L^{\max\{2-2\varepsilon,0\}} = n^2 L^{2-2\min\{\varepsilon,1\}}.
    \end{equation*}

    To estimate the second term on the right hand side of~\eqref{eq:Z-set-decompose}, we discuss the three categories.
    First assume that $g(x,y) = x^2 \pm y^2 + r$.
    Let $(x,y) \in Z^{2\delta}(g) \backslash D_\varepsilon$ and assume without losing generality that $|x| \ge 2 L^{-\varepsilon}$.
    Write $h(y) = \mp y^2 - r$.
    Then $h(y) = x^2 - g(x,y) \ge x^2 - L^{-2\delta} \ge 3 L^{-2\varepsilon}$.
    Therefore, 
    \begin{equation*}
        \begin{aligned}
            |x| & = \sqrt{h(y)} \sqrt{1 + g(x,y)/h(y)} = \sqrt{h(y)} \bigl(1 + \mathcal{O}(L^{-2\delta}/h(y))\bigr) \\
            & = \sqrt{h(y)} + \mathcal{O}(L^{-2\delta}/\sqrt{h(y)})
            = \sqrt{h(y)} + \mathcal{O}(L^{\varepsilon-2\delta}).
        \end{aligned}
    \end{equation*}
    A similar holds if $|y| \ge 2 L^{-\varepsilon}$.
    Therefore, by the area method, we have
    \begin{equation*}
        \begin{aligned}
            \card(Z_g^\delta \cap B_\varphi \backslash D_\epsilon \cap \Gamma_\varphi)
            & \lesssim \card\{(x,y) \in B_\varphi \cap \Gamma_\varphi: ||x| - h(y)| \lesssim L^{\varepsilon-2\delta}\} \\
            & \lesssim n^2 R L^{1+\max\{1-(2\delta-\varepsilon),0\}}
            \le (nR)^2 L^{2 - \min\{2\delta-\varepsilon,1\}}.
        \end{aligned}
    \end{equation*}
    Assume now that $g(x,y) = px + r$ or $g(x,y) = px + y^2 + r$ and also write $g(x,y) = px - h(y)$.
    If $|px-h(y)| \le L^{-2\delta}$, then by the hypothesis that $p \gtrsim L^{-\varepsilon}$, we have $|x-h(y)/p| \le L^{-2\delta}/|p| \lesssim L^{\varepsilon-2\delta}$.
    Therefore, by the area method, we obtain a similar estimate
    \begin{align*}
        \card(Z_g^\delta \cap B_\varphi \cap \Gamma_\varphi)
        & \le \card\{(x,y) \in B_\varphi \cap \Gamma_\varphi:|x-h(y)| \lesssim L^{-\delta}\}
        \lesssim (nR)^2 L^{2 - \min\{2\delta-\varepsilon,1\}}. \qedhere
    \end{align*}
\end{proof}

\subsection{Counting estimates for decorations}
\label{sec:cp-counting-est}

In this section we prove Proposition~\ref{prop:deco-count-cp}.

\emph{Step 1: Reduction to irreducible couples.}
By Corollary~\ref{cor:conju-node-char-deco}, we may restrict ourselves to the case where $\cp$ is irreducible.
In fact, let $\mathfrak{c} \in \mathfrak{C}^\cp_2$ and let $B_1 \subset \mathscr{D}^{\check{\cp}_{\mathfrak{c}}}$, $B_2 \subset \mathscr{D}^{\hat{\cp}_{\mathfrak{c}}}$, $Q_1 \subset \mathbb{R}^{\mathfrak{B}^{\check{\cp}_{\mathfrak{c}}}}$, and $Q_2 \subset \mathbb{R}^{\mathfrak{B}^{\hat{\cp}_{\mathfrak{c}}}}$ be bounded sets such that $\ell(Q_j) \lesssim 1$ and $B_j$ are bounded by $nR$ for $j = 1,2$, and moreover $B \subset B_1 \times B_2$ and $Q \subset Q_1 \times Q_2$.
Then
\begin{equation*}
    G^{\cp,L}_k \subset \{\bm{k} \in \mathscr{D}^\cp : \bm{k}|_{\check{\cp}_{\mathfrak{c}}} \in \mathscr{D}^{\check{\cp}_{\mathfrak{c}}}_k, \bm{k}|_{\hat{\cp}_{\mathfrak{c}}} \in \mathscr{D}^{\hat{\cp}_{\mathfrak{c}}}_{\bm{k}^\flat_{\mathfrak{c}}}\},
\end{equation*}
which yields the estimate
\begin{equation*}
    \card G^{\cp,L}_k(B,Q)
    \le \card G^{\check{\cp}_{\mathfrak{c}},L}_k(B_1,Q_1) \sup_{k' \in \mathbb{Z}^d_L} \card G^{\hat{\cp}_{\mathfrak{c}},L}_{k'}(B_2,Q_2).
\end{equation*}
Therefore, if~\eqref{eq:deco-count-cp} is obtained for all irreducible couples, then we can use mathematical induction to prove~\eqref{eq:deco-count-cp} for all couples.

\emph{Step 2: Change of variables.}
Assume that $\cp = (\tau^*,\wp)$ is irreducible and suppose that
\begin{equation*}
    Q \subset \prod_{\mathfrak{b} \in \mathfrak{B}^{\tau^+}} Q^+_{\mathfrak{b}} \times \prod_{\mathfrak{b} \in \mathfrak{B}^{\tau^-}} Q^-_{\mathfrak{b}},
    \quad
    \Lambda^{\cp,+} B \cup \Lambda^{\cp,-} B
    \subset \prod_{\mathfrak{b}\in \mathfrak{B}^{\tau^+}} B^+_{\mathfrak{b}} \times \prod_{\mathfrak{b}\in \mathfrak{B}^{\tau^-}} B^-_{\mathfrak{b}},
\end{equation*}
where $Q^\pm_{\mathfrak{b}} \subset \mathbb{R}$ and $B^\pm_{\mathfrak{b}} \subset \mathbb{R}^{2d}$ are bounded sets such that $\ell(Q^\pm_{\mathfrak{b}}) \lesssim 1$ and $B^\pm_{\mathfrak{b}}$ is bounded by $nR$.
For all $\mathfrak{b} \in \mathfrak{B}^{\cp} $, define
\begin{equation*}
    M^{\pm}_{\mathfrak{b}}
    = \{(\bm{x}^\pm_{\mathfrak{b}},\bm{y}^\pm_{\mathfrak{b}}) \in B^\pm_{\mathfrak{b}} \cap (\mathbb{Z}^d_L)^2: \gamma (\bm{x}^\pm_{\mathfrak{b}} \cdot \bm{y}^\pm_{\mathfrak{b}}) \in Q^\pm_{\mathfrak{b}}\}.
\end{equation*}
Let $G^\pm$ be the set of all $\bm{k} \in \mathscr{D}^\cp_k$ such that $\Lambda^{\cp,\pm} \bm{k} \in M^{\pm}_{\mathfrak{b}}$ for all $\mathfrak{b} \in \mathfrak{B}^{\tau^\pm}$.
By~\eqref{eq:Omega-change-var}, we have
\begin{equation*}
    G^{\cp,L}_k \subset G^+ \cap G^-.
\end{equation*}

\emph{Step 3: Regular mini couple.}
Fix any $\mathfrak{b} \in \mathfrak{B}^{\tau^\pm}$ and let $\varphi = \varphi(\omega,z) \in C_c^\infty(\mathbb{R}\times \mathbb{R}^{2d})$ be such that $\varphi \ge 0$.
For all $u \in \mathbb{R}$, let $\varphi^n_u(\omega,z) = \varphi(\omega-u,z/(nR))$ and suppose that $\varphi^n_u|_{Q^\pm_{\mathfrak{b}} \times B^\pm_{\mathfrak{b}}} \equiv 1$ for some $u \in \mathbb{R}$.
Let $\widehat{\varphi}^n_u(s,z) = \mathcal{F}_{\omega \to s} \varphi_u(\omega,z)$.
By Proposition~\ref{prop:gauss-sum-upper-bound}, for all $N > 0$ and sufficiently large $L$:
\begin{equation}
    \label{eq:mini-cp-card-est}
    \begin{aligned}
        \frac{\gamma}{L^{2d}} &\card(M^{\pm}_{\mathfrak{b}})
        \le \frac{\gamma}{L^{2d}} \sum_{z = (x,y) \in (\mathbb{Z}^{d}_L)^2} \varphi^n_u(\gamma x \cdot y, z)
        = \mathcal{S}_L(\theta_L \widehat{\varphi}^n_u)\\
        & \lesssim (nR)^{2d} (\|\widehat{\varphi}^n_u\|_{L^\infty W^{2d,1}} + L^\alpha \|\widehat{\varphi}^n_u\|_{L^1_{|\bm{s}|\ge L^\delta L^\infty}})
        \lesssim (nR)^{2d} ((nR)^{2d} + L^{\alpha-N} )
        \lesssim (nR)^{4d}.
    \end{aligned}
\end{equation}
Consequently, this also concludes the proof of~\eqref{eq:deco-count-cp} when $n = 1$:
\begin{equation}
    \label{eq:deco-count-mini-cp}
    \card G^{\cp,L}_k(B,Q) \le \card(G^+) \le \card(M^{\pm}_{\mathfrak{b}}) \lesssim (nR)^{4d} L^{2d-\alpha}.
\end{equation}

\emph{Step 4: General irreducible couple.}
Now assume that $n \ge 2$.
Fix $\mathfrak{b} \in \mathfrak{B}^{\tau^-}$.
By Lemma~\ref{lem:tree-deco-change-var}, the vectors $\bm{x}^-_{\mathfrak{b}}$ and $\bm{y}^-_{\mathfrak{b}}$ are linear combinations of the vectors $\{\bm{x}^+_{\mathfrak{n}},\bm{y}^+_{\mathfrak{n}} : \mathfrak{n} \in \mathfrak{B}^{\tau^+}\}$ with integer coefficients.
We claim that there exists $\mathfrak{n} \in \mathfrak{B}^{\tau^+}$ such that
\begin{equation}
    \label{eq:omega-l-r-linear-decomp}
    \bm{x}^-_{\mathfrak{b}} \cdot \bm{y}^-_{\mathfrak{b}}
    = a |\bm{x}^+_{\mathfrak{n}}|^2 + b |\bm{y}^+_{\mathfrak{n}}|^2 + c \bm{x}^+_{\mathfrak{n}} \cdot \bm{y}^+_{\mathfrak{n}} + p(\bm{z}') \cdot \bm{x}^+_{\mathfrak{n}} + q(\bm{z}') \cdot \bm{y}^+_{\mathfrak{n}} + u(\bm{z}') \cdot v(\bm{z}'),
\end{equation}
where $a,b,c \in \mathbb{Z}$ and $p,q,u,v$ are linear combinations with integer coefficients of the components of the vector $\bm{z}' = (\bm{x}^+_{\mathfrak{m}},\bm{y}^+_{\mathfrak{m}})_{\mathfrak{m} \ne \mathfrak{n}}$, and moreover we have either $(a,b) \ne (0,0)$ or $(p,q) \ne (0,0)$.
In fact, if this not true, then for all $\mathfrak{n} \in \mathfrak{B}^{\tau^+}$, quadratic terms such as $|\bm{x}^+_{\mathfrak{n}}|^2$, $|\bm{y}^+_{\mathfrak{n}}|^2$, $\bm{x}^+_{\mathfrak{n}} \cdot \bm{x}^+_{\mathfrak{m}}$, $\bm{y}^+_{\mathfrak{n}} \cdot \bm{y}^+_{\mathfrak{m}}$, $\bm{x}^+_{\mathfrak{n}} \cdot \bm{y}^+_{\mathfrak{m}}$, $\bm{y}^+_{\mathfrak{n}} \cdot \bm{x}^+_{\mathfrak{m}}$ where $\mathfrak{m} \ne \mathfrak{n}$ do not appear in $\bm{x}^-_{\mathfrak{b}} \cdot \bm{y}^-_{\mathfrak{b}}$, which leaves $\bm{x}^-_{\mathfrak{b}} \cdot \bm{y}^-_{\mathfrak{b}}$ as a linear combination of $\{\bm{x}^+_{\mathfrak{n}} \cdot \bm{y}^+_{\mathfrak{n}} : \mathfrak{n} \in \mathfrak{B}^{\tau^+}\}$ and contradicts Lemma~\ref{lem:reso-form-linear-indep}.
We shall now fix such $\mathfrak{n}$ as well.

Now let us omit the $+$ sign for simplicity and thus write $\bm{x}^+_{\mathfrak{m}} = \bm{x}_{\mathfrak{m}}$, $\bm{y}^+_{\mathfrak{m}} = \bm{y}_{\mathfrak{m}}$ for all $\mathfrak{m} \in \mathfrak{B}^{\tau^+}$.
Let $\delta \in (0,1/2)$, which will be determined later.
Write $p = (p_j)_{1\le j \le d}$, $q = (q_j)_{1 \le j \le d}$, $u = (u_j)_{1\le j \le d}$, and $v = (v_j)_{1\le j \le d}$, which are given by~\eqref{eq:omega-l-r-linear-decomp}.
Write $\bm{z}_{\mathfrak{n}} = (\bm{x}_{\mathfrak{n}},\bm{y}_{\mathfrak{n}})$. For all $j \in \{1,\ldots,d\}$, write $\bm{z}_{\mathfrak{n}}^j = (\bm{x}_{\mathfrak{n}}^j,\bm{y}_{\mathfrak{n}}^j)$.
Write the right hand side of~\eqref{eq:omega-l-r-linear-decomp} as
\begin{equation*}
    a |\bm{x}_{\mathfrak{n}}|^2 + b |\bm{y}_{\mathfrak{n}}|^2 + c \bm{x}_{\mathfrak{n}} \cdot \bm{y}_{\mathfrak{n}} + p(\bm{z}') \cdot \bm{x}_{\mathfrak{n}} + q(\bm{z}') \cdot \bm{y}_{\mathfrak{n}} + u(\bm{z}') \cdot v(\bm{z}')
    = \sum_{1 \le j \le d} f^j(\bm{z}',\bm{z}^j),
\end{equation*}
where the polynomials $f^j$ are given by
\begin{equation*}
    f^j(\bm{z}',\bm{z}^j)
    = a (\bm{x}_n^j)^2 + b(\bm{y}_n^j)^2 + c \bm{x}_n^j \bm{y}_n^j + p_j(\bm{z}') \bm{x}_n^j + q_j(\bm{z}') \bm{y}_n^j + u_j(\bm{z}') v_j(\bm{z}').
\end{equation*}
Let $g^j = \nabla_{\bm{z}_{\mathfrak{n}}^j} f^j \times \nabla_{\bm{z}_{\mathfrak{n}}^j} (\bm{x}_{\mathfrak{n}}^j \bm{y}_{\mathfrak{n}}^j)$, so that $g^j(\bm{z}',\cdot)$ is, for every fixed $\bm{z}'$, the Jacobian determinant of the map $(\bm{x}_{\mathfrak{n}}^j,\bm{y}_{\mathfrak{n}}^j) \mapsto (f^j(\bm{z}',\bm{z}^j),\bm{x}_{\mathfrak{n}}^j \bm{y}_{\mathfrak{n}}^j)$.
Then decompose
\begin{equation}
    \label{eq:G-card-decomp}
    G \coloneq \Lambda^{q,+} G^{\cp,L}_k(B,Q)
    = \Bigl(\bigcup_{1 \le j \le d} X^j \Bigr) \cup \Bigl( \bigcap_{1 \le j \le d} N^j \Bigr),
\end{equation}
where $X^j, N^j$ are subsets of $G$ defined by
\begin{equation*}
    X^j = \{\bm{z} \in G: |g^j(\bm{z}',\bm{z}_{\mathfrak{n}})| \ge L^{-2\delta}\}, \quad
    N^j = \{\bm{z} \in G: |g^j(\bm{z}',\bm{z}_{\mathfrak{n}})| \le L^{-2\delta}\}.
\end{equation*}

To estimate $\card(\cup_{j=1}^d X^j) \le \sum_{j=1}^d \card(X^j)$, it suffices to separately estimate $\card(X^j)$ for all $j$.
For any fixed $j$, we first count $\bm{z}'=(\bm{x}_{\mathfrak{m}},\bm{y}_{\mathfrak{m}})_{\mathfrak{m} \ne \mathfrak{n}}$ using~\eqref{eq:mini-cp-card-est}.
Next we count $\bm{z}_{\mathfrak{n}}$ by freely assigning values to $(\bm{z}_{\mathfrak{n}}^{j'})_{j'\ne j}$ and applying Lemma~\ref{lem:lattice-count-coarea} to count $\bm{z}_{\mathfrak{n}}^{j}$.
This yields the estimate
\begin{equation*}
    \card(X^j)
    \lesssim
    C^n (nR)^{4nd} L^{(n-1)(2d-\alpha)} L^{2d+2\delta-2\min\{\alpha,1\}}.
\end{equation*}

To estimate $\card(\cap_{j=1}^d N^j)$, we shall consider separately the non-degenerate case where $(a,b) \ne (0,0)$ and the degenerate case where $(a,b) = (0,0)$ (so that $(p,q) \ne (0,0)$).
In the degenerate case, we simply use~\eqref{eq:mini-cp-card-est} to count $(\bm{z}_{\mathfrak{m}})_{\mathfrak{m} \ne \mathfrak{n}}$ and use Lemma~\ref{lem:count-poly-quasi-zero} to count $\bm{z}_{\mathfrak{n}}^j$ for all $j$, yielding the estimate
\begin{equation*}
    \card\Bigl(\bigcap_{1 \le j \le d} N^j\Bigr)
    \lesssim C^n (nR)^{4nd} L^{(n-1)(2d-\alpha)} L^{(2-\delta)d}
    = C^n (nR)^{4nd} L^{n(2d-\alpha)} L^{\alpha-d\delta}.
\end{equation*}
In the degenerate case, we further decompose $N^j = D^j \cup (N^j \backslash D^j)$, where $D^j$ the set of all $\bm{z} \in N^j$ such that $|p_j(\bm{z}')| + |q_j(\bm{z}')| \le L^{-\delta}$.
Then write
\begin{equation*}
    \bigcap_{1 \le j \le d} N^j
    = \bigcup_{J \in 2^{\{1,\ldots,d\}}} Z^J, \quad
    \text{where}\ 
    Z^J = \Bigl(\bigcap_{j \in J} D^j\Bigr) \cap \Bigl(\bigcap_{j \notin J} N^j \backslash D^j\Bigr).
\end{equation*}
Fix any $J \in 2^{\{1,\ldots,d\}}$ and let $\ell = \card(J)$.
To estimate $\card(Z^J)$, we first count $(\bm{z}_{\mathfrak{b}})_{\mathfrak{b} \in \mathfrak{B}^{\tau^+} \backslash \{\mathfrak{m},\mathfrak{n}\}}$ using~\eqref{eq:mini-cp-card-est}, then count $(\bm{z}_{\mathfrak{b}})_{\mathfrak{b} \in \{\mathfrak{m},\mathfrak{n}\}}$ by writing $(\bm{x}_{\mathfrak{b}},\bm{y}_{\mathfrak{\mathfrak{b}}}) = (\bm{x}'_{\mathfrak{b}},\bm{x}''_{\mathfrak{b}},\bm{y}'_{\mathfrak{b}},\bm{y}''_{\mathfrak{b}})$, where $\bm{x}'_{\mathfrak{b}}$ and $\bm{y}'_{\mathfrak{b}}$ are respectively the coordinates of $\bm{x}_{\mathfrak{b}}$ and $\bm{x}_{\mathfrak{b}}$ in the directions of $J$, while $\bm{x}''_{\mathfrak{b}}$ and $\bm{y}''_{\mathfrak{b}}$ are respectively the coordinates of $\bm{x}_{\mathfrak{b}}$ and $\bm{x}_{\mathfrak{b}}$ in the remaining directions.
To count $(\bm{x}_{\mathbf{m}},\bm{y}_{\mathbf{m}})$, we first freely choose the $2(d-\ell)$ coordinates of $(\bm{x}''_{\mathbf{m}},\bm{y}''_{\mathbf{m}})$, then count $(\bm{x}'_{\mathbf{m}},\bm{y}'_{\mathbf{m}})$ by observing that if $j \in J$, then the point $(\bm{x}_{\mathfrak{m}}^j,\bm{y}_{\mathfrak{m}}^j)$ lies within a rectangle of scale $L^{-\delta} \times nR$.
Therefore, together with~\eqref{eq:mini-cp-card-est}, we see that the number of possible choices for $(\bm{x}_{\mathbf{m}},\bm{y}_{\mathbf{m}})$ is bounded by
\begin{equation*}
    (nR)^{2d}\min\{L^{2(d-\ell)} (L^{2-\delta})^\ell,L^{2d-\alpha}\} \lesssim (nR)^{2d} L^{\min\{2d-\ell\delta,2d-\alpha\}}.
\end{equation*}
To count $(\bm{x}_{\mathfrak{n}},\bm{y}_{\mathfrak{n}})$, we first freely choose the $2\ell$ coordinates of $(\bm{x}'_{\mathfrak{n}},\bm{y}'_{\mathfrak{n}})$ and then use Lemma~\ref{lem:count-poly-quasi-zero} to count $(\bm{x}''_{\mathfrak{n}},\bm{y}''_{\mathfrak{n}})$.
Together with~\eqref{eq:mini-cp-card-est}, this shows that the number of possible choices is bounded by
\begin{equation*}
    (nR)^{2d} \min\{L^{2\ell} (L^{2-\delta})^{d-\ell},L^{2d-\alpha}\}
    \lesssim (nR)^{2d} L^{\min\{2d-(d-\ell)\delta,2d-\alpha\}}.
\end{equation*}
Denote $\mu(\ell) = \max\{\ell \delta-\alpha,0\} + \max\{(d-\ell) \delta - \alpha,0\}$ for $\ell \in \{0,1,\ldots, d\}$.
Then, we have
\begin{equation*}
    \begin{aligned}
        \card(Z^J)
        & \lesssim C^n (nR)^{4nd} L^{(n-2)(2d-\alpha)} L^{\min\{2d-\ell\delta,2d-\alpha\}} L^{\min\{2d-(d-\ell)\delta,2d-\alpha\}} \\
        & \lesssim C^n (nR)^{4nd} L^{n(2d-\alpha)}  L^{-\mu(\ell)}
        \lesssim C^n (nR)^{4nd} L^{n(2d-\alpha)}  L^{-\min \mu}.
    \end{aligned}
\end{equation*}
This yields, in the degenerate case, the estimate
\begin{equation*}
    \card\Bigl(\bigcap_{1 \le j \le d} N^j\Bigr)
    \le \sum_{J \in 2^{\{1,\ldots,d\}}} \card(Z^J)
    \lesssim C^n (nR)^{4nd} L^{n(2d-\alpha)}  L^{-\min\mu}.
\end{equation*}

Combining the estimates above, for all $d \ge 3$ and $\delta \in (0,1/2)$, we have
\begin{equation*}
    \card(G)
    \lesssim
    C^n (nR)^{4nd} L^{n(2d-\alpha)} (L^{\alpha+2\delta-2\min\{\alpha,1\}} + L^{\alpha - d\delta} + L^{-\min\mu})
\end{equation*}
In order to obtain the smallness of the estimate, we need to prove the existence of $\delta$ such that $\alpha+2\delta-2\min\{\alpha,1\} < 0$, $\alpha-d\delta<0$, and $-\min\mu < 0$.
Our idea is to fix $d$ and $\alpha$, and obtain the range of $\delta$ by solving these three inequalities.
Note that, in order for such a $\delta$ to exist, this range must have a nonempty intersection with the interval $(0,1/2)$.

First, the inequality $\alpha - d\delta < 0$ implies $\delta > \alpha / d$.
Next, we separately treat the cases where $\alpha \in (0,1)$ and $\alpha \in [1,2)$.
If $\alpha \in (0,1)$, then the inequality $\alpha+2\delta-2\min\{\alpha,1\} < 0$ gives $\delta > \alpha/2$.
Let us further divide our discussion into the subcases where $\delta \ge \alpha$ and $\alpha/2 < \delta \le \alpha$
\begin{enumerate}
    \item Assume that $\delta \ge \alpha$. Then $\min \mu = d\delta - 2\alpha$. The inequality $-\min \mu < 0$ gives $\delta > 2\alpha/d$. Therefore, the range of $\delta$ given by all three inequalities is $\delta \ge \alpha$.
    In order for $[\alpha,+\infty) \cap (0,1/2) = \emptyset$, one needs $\alpha \in (0,1/2)$.
    \item Assume that $\alpha /2 < \delta \le \alpha$. Then $\min \mu = 2\delta - \alpha$ if $d = 3$ and $\min \mu = d\delta - 2\alpha$ if $d \ge 4$.
    \begin{enumerate}
        \item If $d = 3$, then $ -\min \mu < 0$ gives $\delta > \alpha/2$. The range given by all three inequalities is $\alpha / 2 < \delta \le \alpha$.
        In order for $[\alpha/2,\alpha) \cap (0,1/2) \ne \emptyset$, one needs $\alpha/2 < 1/2$, i.e, $\alpha \in (0,1)$.
        \item If $d \ge 4$, then $-\min \mu < 0$ gives $\delta > 2\alpha/d$. The range given by all three inequalities is still $\alpha/2 < \delta \le \alpha$.
        Therefore, one still needs $\alpha \in (0,1)$ for a nonempty intersection.
    \end{enumerate}
\end{enumerate}
If $\alpha \in [1,2)$, then $\alpha+2\delta-2\min\{\alpha,1\} < 0$ implies $\delta < 1-\alpha/2$.
Note that, by the triangular inequality
\begin{equation*}
    \mu(\ell) 
    = d\delta/2 - \alpha + (|\ell\delta-\alpha| + |(d-\ell)\delta-\alpha|)/2 \ge d\delta/2 - \alpha + |d-2\ell|\delta/2
    \ge d^*\delta/2 -\alpha.
\end{equation*}
The inequality $-\min \mu \le \alpha - d^*\delta/2 < 0$ then implies $\delta > 2\alpha/d^*$.
The range given by all three inequalities is then $2\alpha/d^*<\delta<1-\alpha/2$.
To ensure a nonempty intersection, we need $2\alpha/d^* <\delta<\min\{1-\alpha/2,1/2\} $, which gives $1 \le \alpha < \min \{2/(1+4/d^*),d^*/4\}$.
This range for $\alpha$ is nonempty only when $d \ge 5$, in which case we always have $2/(1+4/d^*) < d^*/4$.

Observing that $d^* = 4$ (and thus $2/(1+4/d^*)=1)$ when $d \in \{3,4\}$, the above analysis can be summarized as follows: if $d \ge 3$ and $0 < \alpha < 2/(1+4/d^*)$, then there exists $\delta \in (0,1/2)$ such that $\alpha+2\delta-2\min\{\alpha,1\} < 0$, $\alpha-d\delta<0$, and $-\min\mu < 0$.
This concludes the proof of Proposition~\ref{prop:deco-count-cp}.

\section{Diagrammatic expansions}

\label{sec:diagram}

The method of diagrammatic expansion was first used by Dyson~\cite{Dyson1949} to study asymptotics of scattering operators in the context of QFT and bares the name of ``perturbation method''.
In this section, we give diagrammatic expansions to solutions of WNLS and their energy spectra.

\subsection{Time ordering}

\label{sec:time-ord}

The diagrammatic expansions of solution to WNLS is obtained by iteratively applying Duhamel's principle, and relies on tree structures to encode this process so that ordering of tree nodes give that of the time variables in the iterated Duhamel integrations.
The purpose of this section is to define time orderings on trees and unions of trees and obtain a decay estimate of time ordered sets (Proposition~\ref{prop:time-ord-fourier-est}).
We have seen this decay estimate in~\cite[Proposition~2.3]{Deng2021b} and~\cite[Lemma~10.2]{Deng2021}.
However, we shall give a completely new, extremely short, and self-contained proof which exploits the time ordering structures.
Even though our result hold true for both positive and negative times, we shall only write the proof when the time is positive as the situation is similar in the negative case.

\begin{definition}
    \label{def:graph}
    Let $\mathscr{G}$ be the set of all finite graphs formed by disjoint unions of trees. If $G \in \mathscr{G}$, then $G$ canonically inherits a strict partial ordering $\prec$ from the trees that form $G$.
    We denote respectively by $\mathfrak{R}(G)$, $\mathfrak{L}(G)$ and $\mathfrak{B}(G)$ the sets of all roots, all leaves and all branching nodes in $G$.
    If $\mathfrak{l} \in \mathfrak{L}(G)$, then we define the subgraph $G^{\backslash\mathfrak{l}} \coloneqq G \backslash \{\mathfrak{l}\}$ and the quotient graph $G^{\sim\mathfrak{l}} \coloneqq G / \{\mathfrak{l} \sim \mathfrak{l}^p\}$.
    Clearly they both belong to $\mathscr{G}$ by canonically inheriting the strict partial ordering $\prec$ from $G$.
\end{definition}

\begin{definition}
    \label{def:time-ord}
    If $G \in \mathscr{G}$, then $\mathscr{O}(G)$ is the set of all $\bm{t} \in \mathbb{R}^G$ such that $\mathfrak{n} \prec \mathfrak{m}$ implies $ \bm{t}_{\mathfrak{n}} < \bm{t}_{\mathfrak{m}}$ for all $\mathfrak{n},\mathfrak{m} \in G$.
    For all $t > 0$, we let $I_t = (0,t)$ and denote $\mathscr{O}_t(G) = \mathscr{O}(G) \cap I_t^{G}$.
    We also denote by $\Theta_t[G]$ the inverse Fourier transform of $\bm{1}_{\mathscr{O}_t(G)}$.\index{Operations and transforms!Phase space transforms!Fourier transforms of Graphs $\Theta_t[G]$}

    If $\tau \in \mathscr{T}$ and $\cp = (\tau^*,\wp) \in \mathscr{K}$, then we denote $\mathscr{O}^\tau = \mathscr{O}(\mathfrak{B}^\tau)$, $\mathscr{O}^\cp = \mathscr{O}(\mathfrak{B}^\cp) = \mathscr{O}(\mathfrak{B}^{\tau_+}) \times \mathscr{O}(\mathfrak{B}^{\tau_-})$, and $\mathscr{O}^\tau_t = \mathscr{O}_t(\mathfrak{B}^\tau)$, $\mathscr{O}^\cp_t = \mathscr{O}_t(\mathfrak{B}^\cp) = \mathscr{O}_t(\mathfrak{B}^{\tau_+}) \times \mathscr{O}_t(\mathfrak{B}^{\tau_-})$.
    We also denote $\Theta^\tau_t = \Theta_t[\mathfrak{B}^\tau]$ and $\Theta^\cp_t = \Theta_t[\mathfrak{B}^\cp]$.
    Clearly $ \Theta^\cp_t = \Theta_t[\mathfrak{B}^{\tau^+}] \otimes \Theta_t[\mathfrak{B}^{\tau_-}] = \Theta^{\tau^+}_t \otimes \Theta^{\tau^-}_t $.
    We also use the convention that $\Theta^\tau_t = 1$ and $\Theta^\cp_t = 1$ when $\tau \in \mathscr{T}_0$ and $\cp \in \mathscr{K}_0$.\index{Operations and transforms!Phase space transforms!Fourier transforms of trees and couples $\Theta^\tau_t$, $\Theta^\cp_t$}
\end{definition}

\begin{lemma}
    \label{lem:time-ord-ind-diff}
    Let $G \in \mathscr{G}$ and $\mathfrak{l} \in \mathfrak{L}(G) \backslash \mathfrak{R}(G)$. Then
    \begin{equation}
        \label{eq:time-ord-ind-diff-id}
        \partial_{\bm{t}_{\mathfrak{l}}} \bm{1}_{\mathscr{O}(G)} = - \varrho_{\mathfrak{l}}^* \bm{1}_{\mathscr{O}(G^{\sim\mathfrak{l}})},
    \end{equation}
    where $\varrho_{\mathfrak{l}}^*$ is the pullback of the map $\varrho_{\mathfrak{l}} : \mathbb{R}^G \to \mathbb{R}^{G^{\sim\mathfrak{l}}}$ defined as the composition of the restriction $\mathbb{R}^G \to \Gamma^G_{\mathfrak{l}} \coloneqq \{\bm{t} \in \mathbb{R}^G : \bm{t}_{\mathfrak{l}} = \bm{t}_{\mathfrak{l}^p}\}$ and the identification $\Gamma^G_{\mathfrak{l}}  \to \mathbb{R}^{G^{\sim\mathfrak{l}}}$ canonically induced from the quotient map $G \to G^{\sim\mathfrak{l}}$.
    Consequently, if $t > 0$, then
    \begin{equation}
        \label{eq:time-ord-ind-diff-id-boundary}
        \partial_{\bm{t}_{\mathfrak{l}}} \bm{1}_{\mathscr{O}_t(G)}
        = - \varrho_{\mathfrak{l}}^* \bm{1}_{\mathscr{O}_t(G^{\sim\mathfrak{l}})}
        + \bm{1}_{\mathscr{O}_t(G^{\backslash\mathfrak{l}})} \otimes \bm{\delta}^{\mathfrak{l}}_{0},
    \end{equation}
    where $\bm{\delta}^{\mathfrak{l}}$ denotes the Dirac math for the variable $\bm{t}_{\mathfrak{l}}$.
\end{lemma}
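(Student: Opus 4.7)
The strategy is to factorize the indicator $\bm{1}_{\mathscr{O}(G)}$ according to which constraints involve the leaf variable $\bm{t}_{\mathfrak{l}}$ and then differentiate by hand, using only the elementary fact that $\partial_s H(c-s) = -\delta(s-c)$ in the sense of distributions.

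Since $\mathfrak{l}\in\mathfrak{L}(G)\setminus\mathfrak{R}(G)$ is a leaf with parent $\mathfrak{l}^p\in G$, the partial order $\prec$ restricted to constraints that actually involve $\mathfrak{l}$ reduces to the single inequality $\bm{t}_{\mathfrak{l}}<\bm{t}_{\mathfrak{l}^p}$ (all other inequalities $\bm{t}_{\mathfrak{l}}<\bm{t}_{\mathfrak{m}}$ for ancestors $\mathfrak{m}$ follow by transitivity from inequalities that do not involve $\mathfrak{l}$). I would first observe that therefore
\begin{equation*}
\bm{1}_{\mathscr{O}(G)}(\bm{t})
=\bm{1}_{\bm{t}_{\mathfrak{l}}<\bm{t}_{\mathfrak{l}^p}}\,\bm{1}_{\mathscr{O}(G^{\setminus\mathfrak{l}})}(\bm{t}_{G\setminus\{\mathfrak{l}\}}),
\end{equation*}
and differentiate in $\bm{t}_{\mathfrak{l}}$ to obtain
\begin{equation*}
\partial_{\bm{t}_{\mathfrak{l}}}\bm{1}_{\mathscr{O}(G)}
=-\bm{\delta}(\bm{t}_{\mathfrak{l}}-\bm{t}_{\mathfrak{l}^p})\,\bm{1}_{\mathscr{O}(G^{\setminus\mathfrak{l}})}(\bm{t}_{G\setminus\{\mathfrak{l}\}}).
\end{equation*}

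The next step is to identify this product of a Dirac mass and an indicator with $\varrho_{\mathfrak{l}}^*\bm{1}_{\mathscr{O}(G^{\sim\mathfrak{l}})}$. On the slice $\Gamma^G_{\mathfrak{l}}=\{\bm{t}_{\mathfrak{l}}=\bm{t}_{\mathfrak{l}^p}\}$, the ordering constraints of $G$ not involving $\mathfrak{l}$ are precisely those inherited by $G^{\setminus\mathfrak{l}}$; and pushing them down through the identification $\Gamma^G_{\mathfrak{l}}\simeq\mathbb{R}^{G^{\sim\mathfrak{l}}}$ (which merges $\mathfrak{l}$ with $\mathfrak{l}^p$) turns them into exactly the constraints of $G^{\sim\mathfrak{l}}$. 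Formally,
\begin{equation*}
\bm{1}_{\mathscr{O}(G^{\setminus\mathfrak{l}})}(\bm{t}_{G\setminus\{\mathfrak{l}\}})
=\bm{1}_{\mathscr{O}(G^{\sim\mathfrak{l}})}\bigl(\varrho_{\mathfrak{l}}(\bm{t})\bigr)
\quad\text{whenever }\bm{t}_{\mathfrak{l}}=\bm{t}_{\mathfrak{l}^p},
\end{equation*}
so multiplying by $\bm{\delta}(\bm{t}_{\mathfrak{l}}-\bm{t}_{\mathfrak{l}^p})$ gives $\varrho_{\mathfrak{l}}^*\bm{1}_{\mathscr{O}(G^{\sim\mathfrak{l}})}$ by the definition of the distributional pullback along $\varrho_{\mathfrak{l}}$. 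This proves \eqref{eq:time-ord-ind-diff-id}.

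For the time-truncated version \eqref{eq:time-ord-ind-diff-id-boundary} I would repeat the same factorization, now adding the cutoff $0<\bm{t}_{\mathfrak{l}}$ (the constraint $\bm{t}_{\mathfrak{l}}<t$ is automatic from the chain of inequalities up to a root whose time lies in $I_t$). Thus
\begin{equation*}
\bm{1}_{\mathscr{O}_t(G)}(\bm{t})
=\bm{1}_{0<\bm{t}_{\mathfrak{l}}<\bm{t}_{\mathfrak{l}^p}}\,\bm{1}_{\mathscr{O}_t(G^{\setminus\mathfrak{l}})}(\bm{t}_{G\setminus\{\mathfrak{l}\}}),
\end{equation*}
and differentiating in $\bm{t}_{\mathfrak{l}}$ yields, via $\partial_s\bm{1}_{0<s<c}=\bm{\delta}(s)-\bm{\delta}(s-c)$, both the bulk term (identified with $-\varrho_{\mathfrak{l}}^*\bm{1}_{\mathscr{O}_t(G^{\sim\mathfrak{l}})}$ exactly as above) and the boundary term $\bm{1}_{\mathscr{O}_t(G^{\setminus\mathfrak{l}})}\otimes\bm{\delta}^{\mathfrak{l}}_0$. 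The only subtlety I anticipate is bookkeeping: verifying that "the constraints of $G^{\setminus\mathfrak{l}}$ under the slice $\bm{t}_{\mathfrak{l}}=\bm{t}_{\mathfrak{l}^p}$ are exactly those of $G^{\sim\mathfrak{l}}$" is trivial once one unfolds the definition of the quotient graph $G^{\sim\mathfrak{l}}=G/\{\mathfrak{l}\sim\mathfrak{l}^p\}$ in Definition~\ref{def:graph}, so no genuine difficulty arises.
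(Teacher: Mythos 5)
Your proof is correct and follows essentially the same route as the paper: factor out the single constraint $\bm{t}_{\mathfrak{l}}<\bm{t}_{\mathfrak{l}^p}$ involving the leaf variable, differentiate the Heaviside factor, and identify the resulting surface term $\bm{\delta}(\bm{t}_{\mathfrak{l}}-\bm{t}_{\mathfrak{l}^p})\bm{1}_{\mathscr{O}(G^{\backslash\mathfrak{l}})}$ with the pullback $\varrho_{\mathfrak{l}}^*\bm{1}_{\mathscr{O}(G^{\sim\mathfrak{l}})}$. The only cosmetic difference is in the truncated case: the paper applies the Leibniz rule to $\bm{1}_{\mathscr{O}(G)}\bm{1}^{\otimes G}_{I_t}$ and discards the $\bm{\delta}^{\mathfrak{l}}_t$ term by a support argument, whereas you absorb the redundant constraint $\bm{t}_{\mathfrak{l}}<t$ into the factorization before differentiating; both yield the same bulk and boundary terms.
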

\begin{proof}
    For all $\bm{t} \in \mathbb{R}^G$ denote by $\bm{t}^{\backslash\mathfrak{l}}$ its projection in $\mathbb{R}^{G^{\backslash\mathfrak{l}}}$, then $\bm{1}_{\mathscr{O}(G)}(\bm{t}) = \bm{1}_{\bm{t}_{\mathfrak{l}}<\bm{t}_{\mathfrak{l}^p}} \bm{1}_{\mathscr{O}(G^{\backslash\mathfrak{l}})}(\bm{t}^{\backslash\mathfrak{l}})$.
    Therefore $\partial_{\bm{t}_{\mathfrak{l}}} \bm{1}_{\mathscr{O}(G)}(\bm{t}) = -\bm{\delta}(\bm{t}_{\mathfrak{l}}-\bm{t}_{\mathfrak{l}^p}) \bm{1}_{\mathscr{O}(G^{\backslash\mathfrak{l}})}(\bm{t}^{\backslash\mathfrak{l}})$ which is exactly~\eqref{eq:time-ord-ind-diff-id}.
    To prove~\eqref{eq:time-ord-ind-diff-id-boundary}, we denote $\bm{1}^{\otimes G}_{I_t}(\bm{t}) = \prod_{\mathfrak{n} \in G} \bm{1}_{I_t}(\bm{t}_{\mathfrak{n}})$ and write
    \begin{equation*}
        \partial_{\bm{t}_{\mathfrak{l}}} \bm{1}_{\mathscr{O}_t(G)}
        = \partial_{\bm{t}_{\mathfrak{l}}} \bigl( \bm{1}_{\mathscr{O}(G)} \bm{1}^{\otimes G}_{I_t}\bigr)
        =  \bm{1}^{\otimes G}_{I_t} \partial_{\bm{t}_{\mathfrak{l}}} \bm{1}_{\mathscr{O}(G)}  + \bm{1}_{\mathscr{O}(G)} \partial_{\bm{t}_{\mathfrak{l}}} \bm{1}^{\otimes G}_{I_t}.
    \end{equation*}
    Using~\eqref{eq:time-ord-ind-diff-id}, we calculate the first term as follows:
    \begin{equation*}
        \bm{1}^{\otimes G}_{I_t} \partial_{\bm{t}_{\mathfrak{l}}} \bm{1}_{\mathscr{O}(G)}
        = - \bm{1}^{\otimes G}_{I_t} \varrho_{\mathfrak{l}}^* \bm{1}_{\mathscr{O}(G^{\sim\mathfrak{l}})} 
        = -\varrho_{\mathfrak{l}}^* \bm{1}_{\mathscr{O}_t(G^{\sim\mathfrak{l}})}.
    \end{equation*}
    As for the second term, note that $\partial_{\bm{t}_{\mathfrak{l}}} \bm{1}^{\otimes G}_{I_t} = - \bm{1}^{\otimes G^{\backslash\mathfrak{l}}}_{I_t} \otimes (\bm{\delta}^{\mathfrak{l}}_t - \bm{\delta}^{\mathfrak{l}}_0)$.
    Also note that since $\mathfrak{l} \not \in \mathfrak{R}(G)$, we have $\supp \bm{1}_{\mathscr{O}(G)} (\bm{1}^{G^{\backslash\mathfrak{l}}}_{I_t} \otimes \bm{\delta}^{\mathfrak{l}}_t) = \emptyset$.
    Therefore,
    \begin{align*}
        \bm{1}_{\mathscr{O}(G)} \partial_{\bm{t}_{\mathfrak{l}}} \bm{1}^{\otimes G}_{I_t}
        & = \bm{1}_{\mathscr{O}(G)} \bigl(\bm{1}^{\otimes G^{\backslash\mathfrak{l}}}_{I_t} \otimes \bm{\delta}^{\mathfrak{l}}_0\bigr)
        = \bm{1}_{\mathscr{O}_t(G^{\backslash\mathfrak{l}})}\otimes \bm{\delta}^{\mathfrak{l}}_{0}.
        \qedhere
    \end{align*}
\end{proof}

\begin{proposition}
    \label{prop:time-ord-fourier-est}
    Let $G \in \mathscr{G}$ with $\card(G) = n \ge 1$ and denote $\mathscr{Q}(G) = \{0,1\}^{G \backslash \mathfrak{R}(G)} $.
    Every $ \mu \in \mathscr{Q}(G) $ defines a linear map on $\mathbb{R}^G$ by setting $\mu(\bm{\omega})_{\mathfrak{g}} = \bm{\omega}_{\mathfrak{g}} + \sum_{\mathfrak{n} \prec \mathfrak{g}} \mu_{\mathfrak{n}} \bm{\omega}_{\mathfrak{n}}$ for all $\mathfrak{g} \in G$.
    Then for some universal constant $C >0$, for all $t > 0$ and all $\bm{\omega} \in \mathbb{R}^G$, we have 
    \begin{equation}
        \label{eq:time-ord-fourier-est}
        |\Theta_t[G](\bm{\omega})| \lesssim C^n t^n \sup_{\mu \in \mathscr{Q}(G)} \prod_{\mathfrak{g} \in G} \frac{1}{\langle t \mu(\bm{\omega})_{\mathfrak{g}}  \rangle}.
    \end{equation}
\end{proposition}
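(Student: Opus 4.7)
My plan is to argue by strong induction on $n = \card(G)$, using the distributional identity from Lemma~\ref{lem:time-ord-ind-diff} as the sole analytic input at each step. For the base case $n=1$, the graph is a single root $\{\mathfrak{r}\}$, so $\mathscr{O}_t(G) = (0,t)$ and $\Theta_t[G](\omega) = (e^{2\pi i t\omega}-1)/(2\pi i\omega)$, which satisfies $|\Theta_t[G](\omega)| \le \min(t, (\pi|\omega|)^{-1}) \le \sqrt{2}\,t/\langle t\omega\rangle$; since $\mathscr{Q}(G)$ contains only the empty map, this matches the stated bound with an absolute constant $C$.

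For the inductive step, pick any leaf $\mathfrak{l} \in \mathfrak{L}(G)$. If $\mathfrak{l}$ is isolated (both leaf and root), then $G$ factors as $\{\mathfrak{l}\} \sqcup (G\setminus\{\mathfrak{l}\})$ and $\Theta_t[G]$ factors accordingly, reducing the claim to the base case times the inductive hypothesis on $G\setminus\{\mathfrak{l}\}$. Otherwise $\mathfrak{l}$ has a parent $\mathfrak{l}^p$, and applying the inverse Fourier transform to the identity~\eqref{eq:time-ord-ind-diff-id-boundary} (which is valid for non-root leaves) produces the algebraic recursion
\begin{equation*}
    2\pi i\,\omega_\mathfrak{l}\,\Theta_t[G](\bm{\omega}) = \Theta_t[G^{\sim\mathfrak{l}}](\sigma_\mathfrak{l}(\bm{\omega})) - \Theta_t[G^{\backslash\mathfrak{l}}](\bm{\omega}^{\backslash\mathfrak{l}}),
\end{equation*}
where $\sigma_\mathfrak{l}(\bm{\omega})$ is the frequency vector on $G^{\sim\mathfrak{l}}$ whose component at $\mathfrak{l}^p$ equals $\omega_{\mathfrak{l}^p}+\omega_\mathfrak{l}$ and that agrees with $\bm{\omega}$ elsewhere. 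Combining this with the trivial volume bound $|\Theta_t[G]| \le |\mathscr{O}_t(G)| \le t^n$ via the elementary inequality $\min(a,b/|x|) \le \sqrt{2}(a+tb)/\langle tx\rangle$ applied with $x = \omega_\mathfrak{l}$ extracts the desired factor $1/\langle t\omega_\mathfrak{l}\rangle$ corresponding to the leaf $\mathfrak{l}$. The inductive hypothesis applied separately to $\Theta_t[G^{\sim\mathfrak{l}}]$ and $\Theta_t[G^{\backslash\mathfrak{l}}]$, both of cardinality $n-1$, supplies bounds in terms of $\sup_{\mu' \in \mathscr{Q}(G^{\sim\mathfrak{l}})}$ and $\sup_{\mu'' \in \mathscr{Q}(G^{\backslash\mathfrak{l}})}$, respectively.

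It then remains to repackage these two sup-products into a single sup over $\mathscr{Q}(G)$. The natural partition $\mathscr{Q}(G) = \{\mu_\mathfrak{l}=0\} \sqcup \{\mu_\mathfrak{l}=1\}$ handles this: configurations with $\mu_\mathfrak{l} = 0$ restrict bijectively to $\mathscr{Q}(G^{\backslash\mathfrak{l}})$ and preserve $\mu(\bm{\omega})_\mathfrak{g} = \mu''(\bm{\omega}^{\backslash\mathfrak{l}})_\mathfrak{g}$ for every $\mathfrak{g} \ne \mathfrak{l}$, while configurations with $\mu_\mathfrak{l} = \mu_{\mathfrak{l}^p} = 1$ match $\mathscr{Q}(G^{\sim\mathfrak{l}})|_{\mu'_{[\mathfrak{l}^p]}=1}$ with $\mu(\bm{\omega})_\mathfrak{g} = \mu'(\sigma_\mathfrak{l}(\bm{\omega}))_\mathfrak{g}$. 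The main obstacle is the residual case $\mu'_{[\mathfrak{l}^p]} = 0$, which has no exact preimage in $\mathscr{Q}(G)|_{\mu_\mathfrak{l}=1}$: the discrepancy between $\mu'(\sigma_\mathfrak{l}(\bm{\omega}))_{\mathfrak{l}^p}$ and $\mu(\bm{\omega})_{\mathfrak{l}^p}$ is exactly $\omega_\mathfrak{l}$, localized at the junction node. I will absorb this mismatch using the triangle inequality $\langle t(A+\omega_\mathfrak{l})\rangle \le \langle tA\rangle + \langle t\omega_\mathfrak{l}\rangle \le 2\langle tA\rangle\langle t\omega_\mathfrak{l}\rangle$, which costs one extra $\langle t\omega_\mathfrak{l}\rangle$ factor; this factor is immediately cancelled by the explicit $1/\langle t\omega_\mathfrak{l}\rangle$ already supplied by the recursion step. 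Since all multiplicative losses incurred per inductive step are absolute (namely $\sqrt{2}$, $1/(2\pi)$, and the above absorption constant), they combine into a uniform per-level factor $C$, yielding the claimed geometric $C^n$ dependence and completing the induction.
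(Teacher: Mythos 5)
Your setup (reduction to trees, the base case, and the Fourier-transformed recursion $2\pi i\,\omega_{\mathfrak{l}}\Theta_t[G](\bm{\omega}) = \pm\bigl(\Theta_t[G^{\sim\mathfrak{l}}](\sigma_{\mathfrak{l}}\bm{\omega}) - \Theta_t[G^{\backslash\mathfrak{l}}](\bm{\omega}^{\backslash\mathfrak{l}})\bigr)$ obtained from~\eqref{eq:time-ord-ind-diff-id-boundary}) matches the second half of the paper's argument, but the way you assemble the decay factors has a genuine gap. Your interpolation step takes the minimum of the recursion bound with the \emph{trivial} volume bound $|\Theta_t[G]|\le t^n$, and the inequality $\min(a,b/|x|)\lesssim (a+tb)/\langle tx\rangle$ with $a=t^n$ leaves the term $t^n/\langle t\bm{\omega}_{\mathfrak{l}}\rangle$ in your final estimate. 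This term carries no decay in the remaining $n-1$ frequencies, so the resulting bound is strictly weaker than~\eqref{eq:time-ord-fourier-est}: already for a two-node chain $\mathfrak{l}\prec\mathfrak{r}$ with $\bm{\omega}_{\mathfrak{l}}=0$ and $\bm{\omega}_{\mathfrak{r}}=M$ large, your bound is of size $t^2$ while the proposition requires $t^2/\langle tM\rangle$ (and indeed $\Theta_t[G](\bm{\omega})=\int_0^t e^{2\pi i s M}s\,\diff s = \mathcal{O}(t/M)$). The paper closes exactly this hole by proving, as the \emph{first} half of its inductive step, a bound with $n-1$ decay factors (all nodes except the root) from the root-integral recursion~\eqref{eq:time-ord-fourier-induction} together with the induction hypothesis on the subtrees attached to the root and the elementary bound $s/\langle s\omega\rangle\lesssim 1/\langle\omega\rangle$; only then is the leaf integration by parts used, and its sole job is to gain the one missing factor $1/\langle t\bm{\omega}_{\mathfrak{l}}\rangle$. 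Your plan omits this complementary estimate, and the trivial bound $t^n$ cannot substitute for it.

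A second, related shortfall appears in your repackaging of the sup over $\mathscr{Q}(G^{\sim\mathfrak{l}})$ in the residual case $\mu'_{[\mathfrak{l}^p]}=0$. Absorbing the mismatch at the junction node via $\langle tY\rangle\lesssim\langle t(Y+\bm{\omega}_{\mathfrak{l}})\rangle\langle t\bm{\omega}_{\mathfrak{l}}\rangle$ costs a factor $\langle t\bm{\omega}_{\mathfrak{l}}\rangle$ that is cancelled by the \emph{only} available $1/\langle t\bm{\omega}_{\mathfrak{l}}\rangle$ coming from the recursion; what remains is a product over $G\backslash\{\mathfrak{l}\}$, i.e.\ $n-1$ factors, which exceeds $\sup_{\mu\in\mathscr{Q}(G)}\prod_{\mathfrak{g}\in G}\langle t\mu(\bm{\omega})_{\mathfrak{g}}\rangle^{-1}$ by precisely $\langle t\bm{\omega}_{\mathfrak{l}}\rangle$ (recall $\mu(\bm{\omega})_{\mathfrak{l}}=\bm{\omega}_{\mathfrak{l}}$ for every $\mu$), while choosing instead $\mu_{\mathfrak{l}}=1$ creates mismatches of size $\bm{\omega}_{\mathfrak{l}}$ at \emph{every} ancestor of $\mathfrak{l}^p$ with only one spare factor to pay for them. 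So the induction does not close as written. The fix is to follow the paper's two-estimate scheme: first establish the all-but-root bound via~\eqref{eq:time-ord-fourier-induction}, then combine it with your leaf identity (where the bookkeeping through $\mathscr{Q}(G)$, designed to absorb the shift $\bm{\omega}_{\mathfrak{l}^p}\mapsto\bm{\omega}_{\mathfrak{l}^p}+\bm{\omega}_{\mathfrak{l}}$, goes through).
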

\begin{proof}
    Clearly it suffices to prove~\eqref{eq:time-ord-fourier-est} when $\card \mathfrak{R}(G) = 1$, that is, when $G$ is a tree.
    We will prove~\eqref{eq:time-ord-fourier-est} using mathematical induction on $\card(G)$.
    Observe that $\Theta_t[G](\bm{\omega}) = t^n \Theta_1[G](\bm{\omega})$, therefore it suffices to prove~\eqref{eq:time-ord-fourier-est}, when $t = 1$.

    If $\card(G) = 1$, then $\bm{1}_{\mathscr{O}_1(G)} = \bm{1}_{I_1}$ and $\Theta_1[G](\bm{\omega}) = \frac{e^{2\pi i \bm{\omega}}-1}{2\pi i \bm{\omega}}$, which yields~\eqref{eq:time-ord-fourier-est}.
    Now assume that $\card(G) \ge 2$.
    Let $\mathfrak{r}$ be the root of $G$.
    For all $\mathfrak{g} \in G$, let $G_{\mathfrak{g}} = \{\mathfrak{n} \in G: \mathfrak{n} \preceq \mathfrak{g}\}$ be the subtree rooted at $\mathfrak{g}$ and let $m_{\mathfrak{g}} = \card(G_{\mathfrak{g}})$.
    Note that $n - 1 = \sum_{\mathfrak{g}^p = \mathfrak{r}} m_{\mathfrak{g}}$ and we have
    \begin{equation}
        \label{eq:time-ord-fourier-induction}
        \Theta_t[G](\bm{\omega}) = \int_0^t e^{2\pi i s \bm{\omega}_{\mathfrak{r}}} \prod_{\mathfrak{g}^p = \mathfrak{r}} \Theta_s[G_{\mathfrak{g}}](\bm{\omega}^{\mathfrak{g}}) \diff s,
    \end{equation}
    where $\bm{\omega}^{\mathfrak{g}}$ is the projection of $\bm{\omega}$ in $\mathbb{R}^{G^{\mathfrak{g}}}$.
    By~\eqref{eq:time-ord-fourier-induction} and the induction hypothesis, we have
    \begin{align*}
        |\Theta_1[G](\bm{\omega})|
        & \le \int_0^1 \prod_{\mathfrak{g}^p = \mathfrak{r}}  |\Theta_s[G_{\mathfrak{g}}](\bm{\omega}^{\mathfrak{g}})| \diff s
        \lesssim \int_0^1 \prod_{\mathfrak{g}^p = \mathfrak{r}}  (Cs)^{m_{\mathfrak{g}}} \sup_{\mu^{\mathfrak{g}} \in \mathscr{Q}(G_\mathfrak{g})} \prod_{\mathfrak{n} \in G_{\mathfrak{g}}} \frac{1}{\langle s \mu^\mathfrak{g}(\bm{\omega}^{\mathfrak{g}})_{\mathfrak{n}}\rangle} \diff s \\
        & \lesssim C^{n-1} \sup_{\mu \in \mathscr{Q}(G)} \sup_{s \in [0,1]} \prod_{\mathfrak{g} \in G \backslash \{\mathfrak{r}\}} \frac{s}{\langle s \mu(\bm{\omega})_{\mathfrak{g}}\rangle}
        \lesssim C^{n-1} \sup_{\mu \in \mathscr{Q}(G)}  \prod_{\mathfrak{g} \in G\backslash\{\mathfrak{r}\}} \frac{1}{\langle \mu(\bm{\omega})_{\mathfrak{g}} \rangle}. 
    \end{align*}
    Next, let $\varphi_{\bm{\omega}}(\bm{t}) = e^{2\pi i \bm{t} \cdot \bm{\omega}}$ for all $\bm{t},\bm{\omega} \in \mathbb{R}^G$.
    Choose any $\mathfrak{l} \in \mathfrak{L}(G) \backslash \mathfrak{R}(G)$.
    Let $\bm{\omega}^{\backslash\mathfrak{l}}$ be the projection of $\bm{\omega}$ in $\mathbb{R}^{G^{\backslash\mathfrak{l}}}$ and let $\bm{\omega}^{\sim\mathfrak{l}} \in \mathbb{R}^{G^{\backslash\mathfrak{l}}}$ be such that $\bm{\omega}^{\sim \mathfrak{l}}_{\mathfrak{l}^p} = \bm{\omega}_{\mathfrak{l}} + \bm{\omega}_{\mathfrak{l}^p}$ and $\bm{\omega}^{\sim \mathfrak{l}}_{\mathfrak{g}} = \bm{\omega}_{\mathfrak{g}} $ for all $\mathfrak{g} \in G \backslash \{\mathfrak{l},\mathfrak{l}^p\}$.
    As $\bm{1}_{\mathscr{O}_1(G)}$ is compactly supported, we integrate by part using~\eqref{eq:time-ord-ind-diff-id-boundary} to obtain
    \begin{align*}
        | \bm{\omega}_{\mathfrak{l}} &\Theta_1[G](\bm{\omega}) |
        = |\langle \bm{1}_{\mathscr{O}_1(G)}, \bm{\omega}_{\mathfrak{l}}\varphi_{\bm{\omega}}  \rangle_{\mathbb{R}^G}|
        \lesssim |\langle \partial_{\bm{t}_{\mathfrak{l}}}  \bm{1}_{\mathscr{O}_1(G)}, \varphi_{\bm{\omega}} \rangle_{\mathbb{R}^G}| \\
        &\lesssim |\langle \varrho_{\mathfrak{l}}^* \bm{1}_{\mathscr{O}_1(G^{\sim\mathfrak{l}})}, \varphi_{\bm{\omega}} \rangle_{\mathbb{R}^G}| + |\langle \bm{1}_{\mathscr{O}_1(G^{\backslash \mathfrak{l}})} \otimes \bm{\delta}^{\mathfrak{l}}_{0}, \varphi_{\bm{\omega}} \rangle_{\mathbb{R}^G}|
        \lesssim |\Theta[G^{\backslash\mathfrak{l}}]_1(\bm{\omega}^{\sim\mathfrak{l}})| + | \Theta[G^{\backslash\mathfrak{l}}]_1(\bm{\omega}^{\backslash\mathfrak{l}})| \\
        & \lesssim C^{n-1} \sup_{\mu \in \mathscr{Q}(G^{\backslash\mathfrak{l}})} \Bigl( \prod_{\mathfrak{g} \in G^{\backslash\mathfrak{l}}} \frac{1}{\langle \mu(\bm{\omega}^{\sim\mathfrak{l}})_{\mathfrak{g}}\rangle} + \prod_{\mathfrak{g} \in G^{\backslash\mathfrak{l}}} \frac{1}{\langle \mu(\bm{\omega}^{\backslash \mathfrak{l}})_{\mathfrak{g}} \rangle} \Bigr)
        \lesssim C^{n-1} \sup_{\mu \in \mathscr{Q}(G)}\prod_{\mathfrak{g} \in G^{\backslash\mathfrak{l}}} \frac{1}{\langle \mu(\bm{\omega})_{\mathfrak{g}} \rangle} .
    \end{align*}
    We conclude by combining the two estimates:
    \begin{equation*}
        \langle \bm{\omega}_{\mathfrak{l}} \rangle |\Theta_1[G](\bm{\omega})| \lesssim C^{n-1} \sup_{\mu \in \mathscr{Q}(G)} \Bigl( \prod_{\mathfrak{g} \in G\backslash\{\mathfrak{r}\}} \frac{1}{\langle \mu(\bm{\omega})_{\mathfrak{g}} \rangle} + \prod_{\mathfrak{g} \in G^{\backslash\mathfrak{l}}} \frac{1}{\langle \mu(\bm{\omega})_{\mathfrak{g}} \rangle}\Bigr).\qedhere
    \end{equation*}
\end{proof}

\subsection{Cauchy problem}

\label{sec:cauchy}

In this section we prove Theorem~\ref{thm:cauchy} with the exception of the phase randomization condition~\eqref{eq:cdt-phase-randomization-wp}, which will be established in \S\ref{sec:energy-spectrum}.
Due to technical nuances, we will study the periodic case $\epsilon = 0$ and the non-periodic case $\epsilon \ne 0$ separately.

\subsubsection{Periodic setting}
\label{sec:cauchy-periodic}

Recall that WNLS is equivalent to the system WNLS$'$ for $(u^n)_{n \ge 0}$.
Particularly $u^0$ satisfies NLS with null initial data, and we therefore have $u^0 = 0$.
If $n_1+n_2+n_3=n$ and $n \ge 1$, then $\max\{n_1,n_2,n_3\} \le n$ with most one number among $n_1,n_2,n_3$ is equal to $n$.
If $n \ge 1$, then $u^n$ satisfies a linear Schr\"odinger equation.
If in addition $n$ is even, then at least one element among $n_1,n_2,n_3$ is even.
We use induction to deduce that $u^{2n} = 0$ for all $n \in \mathbb{N}$. Next write
\begin{equation}
    \label{eq:cauchy-diagram-hom}
    u^{2n+1}(t,x) = \frac{1}{L^d} \sum_{k \in \mathbb{Z}^d_L} J^n_{t,k} e^{2\pi i (k \cdot x - |k|^2 t/2)}.
    \index{Functions and random variables!Wavepacket functions!Amplitude/Fourier coefficient (periodic setting) $J^n_{t,k}$}
\end{equation}
Then WNLS$'$ is equivalent to the following system for $(J^n_{t,k})_{n \ge 0, k \in \mathbb{Z}^d_L}$:
\begin{equation}
    \label{eq:WNLS-J-hom}
    i \frac{\diff}{\diff t} J^n_{t,k} = -\frac{\lambda}{L^d} \sum_{n_1+n_2+n_3=n-1} \sum_{\bm{k} \in \mathscr{D}^L_k} e^{2\pi i t \Omega(\bm{k})} J^{n_1}_{t,k_1} \odot \overline{J^{n_2}_{t,k_2}} \odot J^{n_3}_{t,k_3}.
\end{equation}
Let $\psi \in \mathscr{S}(\mathbb{R}^d)$ be the trace of $\phi$ at $x = 0$, i.e., we have $\psi(k) = \phi(0,k)$ for all $k \in \mathbb{R}^d$.
Then, using the initial condition
$J^n_{t,k} = \bm{1}_{n=1} \psi(k) \mathfrak{g}_k,$
we give explicit formulas for $J^n_{t,k}$ when $t > 0$ in the following.
The formulas when $t < 0$ can be derived similarly.

For all $\tau \in \mathscr{T}$, set $\Psi^\tau_t = \Theta^\tau_t \comp \Omega^\tau$, and set
$\psi^\tau(\bm{k}) = \prod_{\mathfrak{l} \in \mathfrak{L}^\tau} \psi^{\iota_{\mathfrak{l}}} (\bm{k}_{\mathfrak{l}})$ for all $\bm{k} \in \mathscr{D}^\tau$,
using the convention that $f^{+1} = f$ and $f^{-1} = \overline{f}$.
Next, define the discrete stochastic process $\mathfrak{g}^\tau:\mathscr{D}^{\tau,L} \to H^{\wick{2n+1}}$, where $n$ is the order of $\tau$, by setting
$\mathfrak{g}^\tau_{\bm{k}} = \bigodot_{\mathfrak{l} \in \mathfrak{L}^\tau}
\mathfrak{g}_{\bm{k}_{\mathfrak{l}}}^{\iota_{\mathfrak{l}}}.$
Finally, for all $n \in \mathbb{N}$, all $k \in \mathbb{Z}^d_L$, and all $t > 0$, define
\begin{equation*}
    J^{n,\pm}_{t,k} = \sum_{\tau \in \mathscr{T}^\pm_n} J^\tau_{t,k},
    \quad
    J^\tau_{t,k}
    = \varsigma_\tau \Bigl( \frac{\lambda}{L^d} \Bigr)^{n} \sum_{\bm{k} \in \mathscr{D}^{\tau,L}_k} \Psi^{\tau}_t(\bm{k}) \psi^\tau(\bm{k}) \mathfrak{g}^\tau_{\bm{k}}.
\end{equation*}
Clearly $J^{n,\pm}_{t,k}$ and $J^{\tau}_{t,k}$ are almost surely well-defined since $\psi \in \mathscr{S}(\mathbb{R}^d)$.
Note that $J^{n,\pm}_{t,k} = \overline{J^{n,\mp}_{t,k}}$.
We claim that $J^n_{t,k} = J^{n,+}_{t,k}$, which follows by observing that the $t$-derivative of~\eqref{eq:J-n-hom-induction} in the following lemma yields the equation~\eqref{eq:WNLS-J-hom}.

\begin{lemma}
    \label{lem:J-hom-induction}
    If $\tau \in \mathscr{T} \backslash \mathscr{T}_0$ with $\mathfrak{r}$ being its root, then for all $t > 0$ and all $k \in \mathbb{Z}^d_L$, we have
    \begin{equation}
        \label{eq:J-tree-hom-induction}
        J^\tau_{t,k} = \frac{i \iota_\tau \lambda}{L^d} \sum_{\bm{k} \in \mathscr{D}^L_k} \int_0^t e^{2\pi i s \Omega(\bm{k})} \bigodot_{1\le j \le 3} J^{\tau_{\mathfrak{r}[j]}}_{s,k_j} \diff s,
    \end{equation}
    where $\mathscr{D}^L_k = \mathscr{D}_k \cap \mathbb{Z}^{3d}_L$, the factor $\Omega(\bm{k})$ is defined by~\eqref{eq:def-reso-factor}, and $\tau_{\mathfrak{r}[j]}$ is the subtree of $\tau$ rooted at $\mathfrak{r}[j]$.
    Consequently, for all $n \ge 1$, we have the induction formula
    \begin{equation}
        \label{eq:J-n-hom-induction}
        J^{n,\pm}_{t,k} = \frac{\pm i\lambda}{L^d} 
        \sum_{n_1+n_2+n_3=n-1} \sum_{\bm{k} \in \mathscr{D}^L_k} \int_0^t e^{2\pi i s \Omega(\bm{k})} J^{n_1,\pm}_{s,k_1}  \odot J^{n_2,\mp}_{s,k_2} \odot J^{n_3,\pm}_{s,k_3} \diff s.\qedhere
    \end{equation}
\end{lemma}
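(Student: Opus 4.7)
The identity~\eqref{eq:J-tree-hom-induction} is essentially a bookkeeping statement, so my plan is to reduce everything to the corresponding factorization identity for $\Theta^\tau_t$ obtained in the course of proving Proposition~\ref{prop:time-ord-fourier-est}. First I would decompose $\tau$ at its root $\mathfrak{r}$ into the three subtrees $\tau_j = \tau_{\mathfrak{r}[j]}$ of respective orders $n_j$ with $n = n_1 + n_2 + n_3 + 1$. This yields the disjoint partitions $\mathfrak{B}^\tau = \{\mathfrak{r}\} \sqcup \bigsqcup_j \mathfrak{B}^{\tau_j}$ and $\mathfrak{L}^\tau = \bigsqcup_j \mathfrak{L}^{\tau_j}$, whence the multiplicative factorizations
\begin{equation*}
    \varsigma_\tau = i\iota_\tau \prod_{1\le j\le 3} \varsigma_{\tau_j}, \qquad
    \psi^\tau(\bm{k}) = \prod_{j}\psi^{\tau_j}(\bm{k}|_{\tau_j}), \qquad
    \mathfrak{g}^\tau_{\bm{k}} = \bigodot_{j}\mathfrak{g}^{\tau_j}_{\bm{k}|_{\tau_j}}.
\end{equation*}
For $\bm{k}\in\mathscr{D}^{\tau,L}_k$, writing $k_j = \bm{k}_{\mathfrak{r}[j]}$ and $\bm{k}_j = \bm{k}|_{\tau_j}$, the decoration constraint at $\mathfrak{r}$ reads $(k_1,k_2,k_3)\in\mathscr{D}^L_k$, and the restrictions $\bm{k}_j$ range freely over $\mathscr{D}^{\tau_j,L}_{k_j}$, so the sum $\sum_{\bm{k}\in\mathscr{D}^{\tau,L}_k}$ factorizes as $\sum_{(k_1,k_2,k_3)\in\mathscr{D}^L_k} \prod_j \sum_{\bm{k}_j\in\mathscr{D}^{\tau_j,L}_{k_j}}$.

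Next I would factorize $\Psi^\tau_t = \Theta^\tau_t\circ \Omega^\tau$. Since the partial order on $\mathfrak{B}^\tau$ has $\mathfrak{r}$ as the unique maximum and makes the three copies of $\mathfrak{B}^{\tau_j}$ mutually incomparable, any $\bm{t}\in\mathscr{O}^\tau_t$ is determined by $\bm{t}_\mathfrak{r} = s\in(0,t)$ together with independent choices $\bm{t}|_{\mathfrak{B}^{\tau_j}}\in\mathscr{O}^{\tau_j}_s$. Taking the inverse Fourier transform of the characteristic function of this set — exactly the identity~\eqref{eq:time-ord-fourier-induction} used in the proof of Proposition~\ref{prop:time-ord-fourier-est} — gives
\begin{equation*}
    \Theta^\tau_t(\bm{\omega}) = \int_0^t e^{2\pi i s\bm{\omega}_\mathfrak{r}}\prod_{j}\Theta^{\tau_j}_s(\bm{\omega}|_{\mathfrak{B}^{\tau_j}})\,\diff s.
\end{equation*}
Evaluating at $\bm{\omega} = \Omega^\tau(\bm{k})$ and noting that $\Omega^\tau_\mathfrak{b}$ only involves $\mathfrak{b}$ and its children (so $\Omega^\tau_\mathfrak{b}(\bm{k}) = \Omega^{\tau_j}_\mathfrak{b}(\bm{k}_j)$ for $\mathfrak{b}\in\mathfrak{B}^{\tau_j}$), and that at the root $\iota_\mathfrak{r}\bm{k}_\mathfrak{r} = k$ together with the sign rule $\iota_{\mathfrak{r}[j]}=\iota_\tau(-1)^{j+1}$ gives $\Omega^\tau_\mathfrak{r}(\bm{k}) = \iota_\tau\Omega(k_1,k_2,k_3)$ in the sense of~\eqref{eq:def-reso-factor}, I obtain
\begin{equation*}
    \Psi^\tau_t(\bm{k}) = \int_0^t e^{2\pi i s\,\iota_\tau\Omega(k_1,k_2,k_3)}\prod_j \Psi^{\tau_j}_s(\bm{k}_j)\,\diff s.
\end{equation*}

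Substituting all these factorizations into the defining expression for $J^\tau_{t,k}$ and grouping together the three factors that respectively assemble into $J^{\tau_j}_{s,k_j}$ (each carrying its own $\varsigma_{\tau_j}$, $(\lambda/L^d)^{n_j}$, internal sum, $\Psi^{\tau_j}_s$, $\psi^{\tau_j}$, and $\mathfrak{g}^{\tau_j}$) yields~\eqref{eq:J-tree-hom-induction}, modulo the sign absorption into the reality convention of $\Omega$. For~\eqref{eq:J-n-hom-induction}, I would sum~\eqref{eq:J-tree-hom-induction} over $\tau\in\mathscr{T}^\pm_n$: by Definition~\ref{def:tree-str}, such a $\tau$ is uniquely determined by a triple $(\tau_1,\tau_2,\tau_3)$ of signs $(\pm,\mp,\pm)$ and orders summing to $n-1$, so the sum over $\tau$ factors as $\sum_{n_1+n_2+n_3 = n-1}\prod_j\sum_{\tau_j\in\mathscr{T}^{\pm_j}_{n_j}}$, and the three inner sums reassemble $J^{n_1,\pm}_{s,k_1}\odot J^{n_2,\mp}_{s,k_2}\odot J^{n_3,\pm}_{s,k_3}$.

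There is no real obstacle here beyond careful bookkeeping; the substantive content has already been done in establishing Proposition~\ref{prop:time-ord-fourier-est}'s factorization~\eqref{eq:time-ord-fourier-induction}, and the only delicate point is tracking the $\iota_\tau$-sign in $\Omega^\tau_\mathfrak{r}$ versus the sign convention of $\Omega$ so that both the positive and negative-tree cases produce the uniform exponent $e^{2\pi is\Omega(\bm{k})}$ appearing in the statement.
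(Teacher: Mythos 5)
Your proposal is correct and follows essentially the same route as the paper's proof: decompose $\tau$ at its root, factor $\varsigma_\tau$, $\psi^\tau$, $\mathfrak{g}^\tau$ and the decoration sum over the three subtrees, and invoke the time-ordering factorization~\eqref{eq:time-ord-fourier-induction} to factor $\Psi^\tau_t$, then resum over trees using Definition~\ref{def:tree-str} for~\eqref{eq:J-n-hom-induction}. The sign point you flag is genuine but harmless: your identity $\Omega^\tau_{\mathfrak{r}}(\bm{k})=\iota_\tau\Omega(\bm{k})$ is exactly what the paper's displayed factorization of $\Psi^\tau_t$ silently absorbs (the negative-tree case is simply the complex conjugate of the positive-tree one, which is the case actually used since $J^n_{t,k}=J^{n,+}_{t,k}$), so no further argument is required.
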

\begin{proof}
    We shall only prove~\eqref{eq:J-tree-hom-induction} as~\eqref{eq:J-n-hom-induction} follows directly.
    For $1 \le j \le 3$, write $\tau_j = \tau_{\mathfrak{r}[j]}$ and let $n_j$ be the order of $\tau_j$.
    For all $\overline{\bm{k}} \in \mathscr{D}^\tau$, write $\bm{k} = (\overline{\bm{k}}_{\mathfrak{r}[1]},\overline{\bm{k}}_{\mathfrak{r}[2]},\overline{\bm{k}}_{\mathfrak{r}[3]}) \in \mathbb{R}^{3d}$ and $\bm{k}^j = \overline{\bm{k}}|_{\tau_j}$.
    Note that $\overline{\bm{k}} \in \mathscr{D}^{\tau,L}_k$ if and only if $\bm{k} \in \mathscr{D}^L_k$.
    Therefore~\eqref{eq:J-tree-hom-induction} follows from $n-1 = n_1 + n_2 + n_3$,
    \begin{equation*}
        \varsigma_\tau = i \iota_\tau \prod_{1\le j \le 3} \varsigma_{\tau_j}, \quad
        \psi^\tau(\overline{\bm{k}}) = \prod_{1 \le j \le 3} \psi^{\tau_j}(\bm{k}^j), \quad
        \mathfrak{g}^\tau_{\overline{\bm{k}}} = \bigodot_{1 \le j \le  3} \mathfrak{g}^{\tau_j}_{\bm{k}^j},
    \end{equation*}
    and the following induction formula which is a result of~\eqref{eq:time-ord-fourier-induction}:
    \begin{equation*}
        \Psi^\tau_t(\overline{\bm{k}}) = \int_0^t e^{2\pi i s\Omega(\bm{k}) } \prod_{1 \le j \le 3} \Psi^{\tau_j}_s(\bm{k}^j) \diff s.\qedhere
    \end{equation*}
\end{proof}

\subsubsection{Non-periodic setting}
\label{sec:cauchy-non-periodic}

As shown in \S\ref{sec:cauchy-periodic}, we have $u^{2n} \equiv 0$.
Then we write
\begin{equation}
    \label{eq:u-wp-expansion}
    u^{2n+1}(t,x) = \frac{1}{L^d} \sum_{k \in \mathbb{Z}^d_L} (e^{-\epsilon t k \cdot \nabla + \epsilon^2 i t \Delta / 4\pi} \mathcal{J}^n_{t,k})(\epsilon x) e^{2\pi i (k \cdot x - |k|^2 t/2)}.
    \index{Functions and random variables!Wavepacket functions!Amplitude (non-periodic setting) $\mathcal{J}^n_{t,k}$}
\end{equation}
To solve WNLS, it suffices for the Fourier transforms $(\widehat{\mathcal{J}}^n_{t,k})_{n \ge 0, k \in \mathbb{Z}^d_L}$ to solve system:
\begin{equation}
    \label{eq:WNLS-J-inhom}
    i \partial_t \widehat{\mathcal{J}}^n_{t,k}(\xi) + \frac{\lambda}{L^d} \sum_{n_1+n_2+n_3=n-1} \sum_{\bm{k} \in \mathscr{D}^L_k} \int_{\mathscr{D}_\xi} e^{2\pi i t \Omega(\bm{k}+ \epsilon\bm{\xi})} \widehat{\mathcal{J}}^{n_1}_{t,k_1}(\xi_1) \odot \overline{\widehat{\mathcal{J}}^{n_2}_{t,k_2}(\xi_2)} \odot \widehat{\mathcal{J}}^{n_3}_{t,k_3}(\xi_3) \diff \bm{\xi},
\end{equation}
with initial data $\mathcal{J}^n_{0,k}(x) = \bm{1}_{n=1}\widehat{\phi}(\xi,k)$.
We give explicit formulas for $\mathcal{J}^n_{t,k}$ when $t > 0$ in the following. The formulas when $t < 0$ can be derived similarly.

For all $\tau \in \mathscr{T}$, $\varphi \in \mathscr{S} (\mathscr{D}^\tau)$, and $\xi \in \mathbb{R}^d$, define
\begin{equation}
    \label{eq:tree-int-def}
    \int_{\mathscr{D}^\tau_\xi} \varphi(\bm{\xi}) \diff\bm{\xi}
    = \int_{(\mathbb{R}^d)^{\mathfrak{L}^\tau}} \bm{\delta}_\xi\Bigl(\iota_\tau\sum_{\mathfrak{l} \in \mathfrak{L}^\tau} \iota_{\mathfrak{l}} \bm{f}_{\mathfrak{l}}\Bigr) \varphi(\breve{\bm{f}}) \diff \bm{f},
\end{equation}
where $\breve{\bm{f}}$ is the unique element in $\mathscr{D}^\tau_\xi$ such that $\breve{\bm{f}}|_{\mathfrak{L}^\tau} = \bm{f}$.
Define $\widehat{\phi}^\tau \in \mathscr{S}(\mathscr{D}^\tau \times \mathscr{D}^\tau)$ by setting $\widehat{\phi}^\tau(\bm{k},\bm{\xi}) = \prod_{\mathfrak{l} \in \mathfrak{L}^\tau} \widehat{\phi}^{\iota_{\mathfrak{l}}}(\bm{k}_{\mathfrak{l}},\bm{\xi}_{\mathfrak{l}})$.
For all $n \in \mathbb{N}$, all $t > 0$ and all $k \in \mathbb{Z}^d_L$, define
\begin{equation*}
    \mathcal{J}^{n,\pm}_{t,k} = \sum_{\tau \in \mathscr{T}^\pm_n} \mathcal{J}^{\tau}_{t,k},
    \quad
    \widehat{\mathcal{J}}^{\tau,L}_{t,k}(\xi)=
    \varsigma_\tau \Bigl(\frac{\lambda}{L^d}\Bigr)^{n}
    \sum_{\bm{k} \in \mathscr{D}^{\tau,L}_{k}}
    \Bigl( \int_{\mathscr{D}^\tau_\xi}
    \Psi^\tau_t(\bm{k}+\epsilon\bm{\xi}) \widehat{\phi}^\tau (\bm{k},\bm{\xi}) \diff \bm{\xi} \Bigr)
    \mathfrak{g}^\tau_{\bm{k}},
\end{equation*}
where $\widehat{\mathcal{J}}^{\tau,L}_{t,k}(\xi) = \mathcal{F}_{x\to\eta}\{ \mathcal{J}^{\tau,L}_{t,k}(x) \}$.
Clearly $\mathcal{J}^{n,\pm}_{t,k}(x)$ and $\mathcal{J}^{\tau}_{t,k}(x)$ are almost surely well-defined since $\phi \in \mathscr{S}(\mathbb{R}^d \times \mathbb{R}^d)$.
Note that $\mathcal{J}^{n,\pm}_{t,k} = \overline{\mathcal{J}^{n,\mp}_{t,k}}$.
We claim that $\mathcal{J}^n_{t,k} = \mathcal{J}^{n,+}_{t,k}$, which follows by observing that the $t$-derivative of~\eqref{eq:J-n-inhomo-induction} in the following lemma yields the equation~\eqref{eq:WNLS-J-inhom}.

\begin{lemma}
    \label{lem:lem:J-inhomo-induction}
    If $\tau \in \mathscr{T} \backslash \mathscr{T}_0$ with $\mathfrak{r}$ being its root, then for all $t > 0$ and all $k \in \mathbb{Z}^d_L$, we have
    \begin{equation}
        \label{eq:J-tree-inhomo-induction}
        \widehat{\mathcal{J}}^\tau_{t,k}(\xi) = \frac{i \iota_\tau \lambda}{L^d} \sum_{k \in \mathscr{D}^L_k} \int_{\mathscr{D}_\xi} \diff \bm{\xi} \int_0^t e^{2\pi i s\Omega(\bm{k}+\epsilon\bm{\xi})} \bigodot_{1\le j \le 3} \widehat{\mathcal{J}}^{\tau_{\mathfrak{r}[j]}}_{t,k}(\xi_j) \diff s.
    \end{equation}
    Consequently, for all $n \ge 1$, we have the induction formula
    \begin{equation}
        \label{eq:J-n-inhomo-induction}
        \widehat{\mathcal{J}}^{n,\pm}_{t,k}(\xi) = \frac{\pm i\lambda}{L^d} 
        \sum_{n_1+n_2+n_3=n-1} \sum_{\bm{k} \in \mathscr{D}^L_k} \int_{\mathscr{D}_\xi} \diff \bm{\xi} \int_0^t e^{2\pi i s \Omega(\bm{k}+\epsilon\bm{\xi})} \widehat{\mathcal{J}}^{n_1,\pm}_{s,k_1}(\xi_1)  \odot \widehat{\mathcal{J}}^{n_2,\mp}_{s,k_2}(\xi_2) \odot \widehat{\mathcal{J}}^{n_3,\pm}_{s,k_3}(\xi_3) \diff s.
    \end{equation}
\end{lemma}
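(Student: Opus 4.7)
The plan is to mimic the proof of Lemma~\ref{lem:J-hom-induction}, the only genuinely new ingredient being a factorization of the constrained integral $\int_{\mathscr{D}^\tau_\xi}$ that parallels the factorization of the lattice sum $\sum_{\bar{\bm{k}}\in\mathscr{D}^{\tau,L}_k}$. First I would substitute the explicit formula for each $\widehat{\mathcal{J}}^{\tau_j}_{s,k_j}(\xi_j)$ (with $\tau_j \coloneqq \tau_{\mathfrak{r}[j]}$ of order $n_j$) into the right-hand side of~\eqref{eq:J-tree-inhomo-induction}, then reassemble the result into the defining formula for $\widehat{\mathcal{J}}^\tau_{t,k}(\xi)$; the system~\eqref{eq:J-n-inhomo-induction} will then follow by summing over $\tau\in\mathscr{T}^\pm_n$ and partitioning via the bijection of Definition~\ref{def:tree-str}.

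As in the periodic argument, the multiplicative identities $n-1 = n_1+n_2+n_3$, $\varsigma_\tau = i\iota_\tau\prod_j\varsigma_{\tau_j}$, $\widehat{\phi}^\tau(\bar{\bm{k}},\bar{\bm{\xi}}) = \prod_j \widehat{\phi}^{\tau_j}(\bm{k}^j,\bm{\xi}^j)$, and $\mathfrak{g}^\tau_{\bar{\bm{k}}} = \bigodot_j \mathfrak{g}^{\tau_j}_{\bm{k}^j}$ are immediate from $\mathfrak{B}^\tau = \{\mathfrak{r}\}\sqcup\bigsqcup_j\mathfrak{B}^{\tau_j}$ and $\mathfrak{L}^\tau = \bigsqcup_j\mathfrak{L}^{\tau_j}$, where $\bm{k}^j = \bar{\bm{k}}|_{\tau_j}$ and $\bm{\xi}^j = \bar{\bm{\xi}}|_{\tau_j}$. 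Applying~\eqref{eq:time-ord-fourier-induction} to $G = \mathfrak{B}^\tau$ and observing that $\Omega^\tau_\mathfrak{b}|_{\mathscr{D}^\tau}$ agrees with $\Omega^{\tau_j}_\mathfrak{b}|_{\mathscr{D}^{\tau_j}}$ for every $\mathfrak{b}\in\mathfrak{B}^{\tau_j}$ yields
\begin{equation*}
    \Psi^\tau_t(\bar{\bm{k}}+\epsilon\bar{\bm{\xi}})
    = \int_0^t e^{2\pi i s\,\Omega^\tau_\mathfrak{r}(\bar{\bm{k}}+\epsilon\bar{\bm{\xi}})} \prod_{j=1}^3 \Psi^{\tau_j}_s(\bm{k}^j+\epsilon\bm{\xi}^j)\diff s,
\end{equation*}
and a direct check from Definition~\ref{def:reso-function} using $\bar{\bm{k}}_\mathfrak{r} = k_1-k_2+k_3$ identifies $\Omega^\tau_\mathfrak{r}(\bar{\bm{k}}+\epsilon\bar{\bm{\xi}})$ with the root phase $\Omega(\bm{k}+\epsilon\bm{\xi})$ appearing in~\eqref{eq:J-tree-inhomo-induction}.

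The step I expect to be the main obstacle is the factorization of the constrained $\bm{\xi}$-integral. Using~\eqref{eq:tree-int-def}, the constraint at the root reads $\iota_\tau\sum_{\mathfrak{l}\in\mathfrak{L}^\tau}\iota_\mathfrak{l}\bm{f}_\mathfrak{l} = \xi$. Splitting this sum by subtree and inserting auxiliary integrations against $\bm{\delta}_{\xi_j}\bigl(\iota_{\tau_j}\sum_{\mathfrak{l}\in\mathfrak{L}^{\tau_j}}\iota_\mathfrak{l}\bm{f}_\mathfrak{l}\bigr)\diff\xi_j$ will turn each inner integral over $\bm{f}|_{\mathfrak{L}^{\tau_j}}$ into $\int_{\mathscr{D}^{\tau_j}_{\xi_j}}\diff\bm{\xi}^j$. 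Invoking $\iota_{\tau_j} = \iota_\tau(-1)^{j+1}$ from Definition~\ref{def:ter-tree-signed}, the root constraint collapses to $\xi_1-\xi_2+\xi_3 = \xi$, giving
\begin{equation*}
    \int_{\mathscr{D}^\tau_\xi}\varphi(\bar{\bm{\xi}})\diff\bar{\bm{\xi}}
    = \int_{\mathscr{D}_\xi}\diff\bm{\xi}\prod_{j=1}^3\int_{\mathscr{D}^{\tau_j}_{\xi_j}}\varphi(\bar{\bm{\xi}})\diff\bm{\xi}^j.
\end{equation*}
An entirely parallel but purely combinatorial identity delivers $\sum_{\bar{\bm{k}}\in\mathscr{D}^{\tau,L}_k} = \sum_{\bm{k}\in\mathscr{D}^L_k}\prod_j\sum_{\bm{k}^j\in\mathscr{D}^{\tau_j,L}_{k_j}}$ for the lattice sums.

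Once both factorizations are in place, substituting the formulas for $\widehat{\mathcal{J}}^{\tau_j}_{s,k_j}(\xi_j)$ into the right-hand side of~\eqref{eq:J-tree-inhomo-induction}, commuting sums and integrals, and collecting the prefactors $(i\iota_\tau\lambda/L^d)\prod_j\varsigma_{\tau_j}(\lambda/L^d)^{n_j} = \varsigma_\tau(\lambda/L^d)^n$ will telescope the expression exactly into the defining formula for $\widehat{\mathcal{J}}^\tau_{t,k}(\xi)$. The identity~\eqref{eq:J-n-inhomo-induction} then follows by summing~\eqref{eq:J-tree-inhomo-induction} over $\tau\in\mathscr{T}^\pm_n$ and regrouping by the triples $(\tau_1,\tau_2,\tau_3)$ and orders $(n_1,n_2,n_3)$ with $n_1+n_2+n_3 = n-1$, using the bijection of Definition~\ref{def:tree-str} between $\mathscr{T}^\pm_n\setminus\mathscr{T}^\pm_0$ and the compatible triples of subtrees.
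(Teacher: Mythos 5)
Your proposal is correct and follows essentially the same route as the paper: the paper's proof also reduces matters to the argument of Lemma~\ref{lem:J-hom-induction}, with the single new ingredient being precisely the factorization $\int_{\mathscr{D}^\tau_\xi} = \int_{\mathscr{D}_\xi}\prod_{1\le j\le 3}\int_{\mathscr{D}^{\tau_j}_{\xi_j}}$ obtained by inserting the subtree delta constraints, exactly as you do. The remaining multiplicative identities for $\varsigma_\tau$, $\widehat{\phi}^\tau$, $\mathfrak{g}^\tau$ and the time-ordering induction for $\Psi^\tau_t$ are used in the same way, and the passage to~\eqref{eq:J-n-inhomo-induction} by summing over trees matches the paper.
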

\begin{proof}
    The proof is similar to, and will use notations from, that of Lemma~\ref{lem:J-hom-induction}.
    In fact~\eqref{eq:J-tree-inhomo-induction} follows as in Lemma~\ref{lem:J-hom-induction} with an addition ingredient: for all $\overline{\bm{\xi}} \in \mathscr{D}^\tau$, write $ \bm{\xi} = (\xi_1,\xi_2,\xi_3) = (\overline{\bm{\xi}}_{\mathfrak{r}[1]},\overline{\bm{\xi}}_{\mathfrak{r}[2]},\overline{\bm{\xi}}_{\mathfrak{r}[3]})$ and $\bm{\xi}^j = \overline{\bm{\xi}}|_{\tau_j}$, then for all $\varphi \in \mathscr{S}(\mathscr{D}^\tau)$, we have    
    \begin{align*}
        \int_{\mathscr{D}_\xi}
        \Bigl(\prod_{1\le j \le 3} \int_{\mathscr{D}^{\tau_j}_{\xi_j}} \diff \bm{\xi}^j \Bigr) \tilde{\varphi}(\bm{\xi})
        \diff \bm{\xi}
        & = \int_{\mathscr{D}_\xi}
        \Bigl\{ \prod_{1 \le j \le 3} \int \bm{\delta}_{k_j}\Bigl(\iota_{\tau_j}\sum_{\mathfrak{l} \in \mathfrak{L}^{\tau_j}} \iota_{\mathfrak{l}} \bm{f}^j_{\mathfrak{l}}\Bigr) \diff \bm{f}^j \Bigr\} \tilde{\varphi}(\breve{\bm{f}}) \diff \bm{\xi}
        = \int_{\mathscr{D}^\tau_\xi}
        \varphi(\overline{\bm{\xi}}) \diff \overline{\bm{\xi}},
    \end{align*}
    where $\tilde{\varphi} \in \mathscr{S}(\mathscr{D}^{\tau_1}\times\mathscr{D}^{\tau_2}\times\mathscr{D}^{\tau_3})$ is such that $\tilde{\varphi}(\overline{\bm{\xi}}|_{\tau_1},\overline{\bm{\xi}}|_{\tau_2},\overline{\bm{\xi}}|_{\tau_3}) = \varphi(\overline{\bm{\xi}})$ and $\bm{f} \in (\mathbb{R}^d)^{\mathfrak{L}^\tau}$ is such that $\bm{f}|_{\mathfrak{L}^{\tau_j}} = \bm{f}^j$ for $1 \le j \le 3$.
    The formula~\eqref{eq:J-n-inhomo-induction} follows from~\eqref{eq:J-tree-inhomo-induction}.
\end{proof}

\subsection{Energy spectrum}
\label{sec:energy-spectrum}

Now we use Wick's probability theorem to give diagrammatic expansions for the energy spectrum $\mathbb{E} \mathcal{W} [\Pi_n u]$.
In our Wick renormalized setting, Wick's theorem has the form as stated in the following lemma, which follows directly from~\cite[Theorem~3.9, p.~26]{Janson1997}.

\begin{lemma}
    \label{lem:wick}
    Let $\tau^\pm \in \mathscr{T}^\pm$ and $\bm{k}^\pm \in \mathscr{D}^{\tau^\pm}$.
    If $\tau^+$ and $\tau^-$ have different orders, then $\mathbb{E}\bigl( \mathfrak{g}^{\tau^+}_{\bm{k}^+} \mathfrak{g}^{\tau^-}_{\bm{k}^-} \bigr) = 0$.
    Otherwise, let $\bm{k} \in \mathscr{D}^{\tau^*,L}$, where $\tau^* = (\tau^+,\tau^-) \in \mathscr{T}^*$, be such that $\bm{k}^\pm = \bm{k}|_{\tau^\pm}$, then
    \begin{equation}
        \label{eq:wick-identity}
        \mathbb{E}\bigl( \mathfrak{g}^{\tau^+}_{\bm{k}^+} \mathfrak{g}^{\tau^-}_{\bm{k}^-} \bigr)
        = \sum_{\wp \in \mathfrak{P}^{\tau^*}} \prod_{\mathfrak{p} = \{\mathfrak{l}^+,\mathfrak{l}^-\} \in \wp} \bm{1}_{\bm{k}_{\mathfrak{l}^+} = \bm{k}_{\mathfrak{l}^-}}
        = \sum_{\cp = (\tau^*,\wp) \in \mathscr{K}} \bm{1}_{\bm{k} \in \mathscr{D}^\cp}.
    \end{equation}
\end{lemma}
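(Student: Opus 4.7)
The plan is to recognize Lemma~\ref{lem:wick} as a direct instance of the complex Wick theorem for Wick products of Gaussians (cf.\ \cite[Thm.~3.9]{Janson1997}), rewritten in the combinatorial language of Definitions~\ref{def:tree}--\ref{def:pairing}. Two ingredients suffice: orthogonality of the Wiener--It\^{o} decomposition, which handles the case of unequal tree orders; and the normal-ordered form of Wick's theorem, which handles the equal-order case.

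A one-line induction on Definition~\ref{def:tree-str} shows $|\mathfrak{L}^\tau|=2n+1$ for $\tau\in\mathscr{T}_n$: indeed $|\mathfrak{L}^{\otimes(\tau_1,\tau_2,\tau_3)}|=|\mathfrak{L}^{\tau_1}|+|\mathfrak{L}^{\tau_2}|+|\mathfrak{L}^{\tau_3}|$ since planting attaches the three subtrees at the children of a new branching root. Consequently $\mathfrak{g}^{\tau^\pm}_{\bm{k}^\pm}$ is a Wick product of $2n^\pm+1$ elements of $H^{\wick{1}}$ and lies in $H^{\wick{2n^\pm+1}}$. Since conjugation preserves chaos levels, $\overline{\mathfrak{g}^{\tau^-}_{\bm{k}^-}}\in H^{\wick{2n^-+1}}$, and the orthogonality of the chaos decomposition $L^2(\mathfrak{X})=\bigoplus_n H^{\wick{n}}$ yields $\mathbb{E}\bigl(\mathfrak{g}^{\tau^+}_{\bm{k}^+}\mathfrak{g}^{\tau^-}_{\bm{k}^-}\bigr)=\bigl\langle \mathfrak{g}^{\tau^+}_{\bm{k}^+},\overline{\mathfrak{g}^{\tau^-}_{\bm{k}^-}}\bigr\rangle_{L^2(\mathfrak{X})}=0$ whenever $n^+\neq n^-$, settling the vanishing statement.

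When $n^+=n^-=n$, I would apply Wick's theorem for a product of two Wick products of complex Gaussians, which expresses the expectation as a sum over all bijections $\sigma:\mathfrak{L}^{\tau^+}\to\mathfrak{L}^{\tau^-}$ of $\prod_{\mathfrak{l}\in\mathfrak{L}^{\tau^+}}\mathbb{E}\bigl(\mathfrak{g}^{\iota_\mathfrak{l}}_{\bm{k}_\mathfrak{l}}\mathfrak{g}^{\iota_{\sigma(\mathfrak{l})}}_{\bm{k}_{\sigma(\mathfrak{l})}}\bigr)$. The key point is that the normal ordering discards every intra-tree contraction --- this is the Wick-renormalization philosophy already advertised in \S\ref{sec:wick-renorm}. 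Because the $(\mathfrak{g}_k)$ are circular complex Gaussians, $\mathbb{E}(\mathfrak{g}_a\mathfrak{g}_b)=\mathbb{E}(\bar{\mathfrak{g}}_a\bar{\mathfrak{g}}_b)=0$, so only bijections with $\iota_{\sigma(\mathfrak{l})}=-\iota_\mathfrak{l}$ for all $\mathfrak{l}$ contribute. Such $\sigma$ are in canonical one-to-one correspondence with the pairings $\wp\in\mathfrak{P}^{\tau^*}$ of Definition~\ref{def:pairing}, and each surviving contraction equals $\mathbb{E}(\mathfrak{g}_{\bm{k}_{\mathfrak{l}^+}}\bar{\mathfrak{g}}_{\bm{k}_{\mathfrak{l}^-}})=\bm{1}_{\bm{k}_{\mathfrak{l}^+}=\bm{k}_{\mathfrak{l}^-}}$ by independence and standardness of the Gaussians. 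This produces the first equality.

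The second equality is tautological from Definition~\ref{def:tree-deco}: a decoration $\bm{k}\in\mathscr{D}^{\tau^*,L}$ lies in $\mathscr{D}^\cp$ exactly when $\bm{k}_{\mathfrak{l}^+}=\bm{k}_{\mathfrak{l}^-}$ for every leaf pair $\mathfrak{p}=\{\mathfrak{l}^+,\mathfrak{l}^-\}\in\wp$, so $\prod_\mathfrak{p}\bm{1}_{\bm{k}_{\mathfrak{l}^+}=\bm{k}_{\mathfrak{l}^-}}=\bm{1}_{\bm{k}\in\mathscr{D}^\cp}$. There is no substantive obstacle in the argument; the only point to be careful about is invoking Wick's theorem in its Wick-product (normal-ordered) form, so that the exclusion of self-pairings matches precisely the paper's conditions $\mathfrak{p}\not\subset\mathfrak{L}^{\tau^\pm}$ and $\iota(\mathfrak{p})=\{+1,-1\}$ in Definition~\ref{def:pairing}.
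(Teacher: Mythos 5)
Your proposal is correct and matches the paper, which proves this lemma simply by citing the Wick/normal-ordering theorem for products of Wick powers (\cite[Theorem~3.9]{Janson1997}); you unpack the same ingredients explicitly (chaos orthogonality for unequal orders, cross-tree matchings surviving normal ordering, circularity killing same-sign contractions, and the tautological identification with $\mathscr{D}^\cp$). No gaps.
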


As a consequence, we have $\mathbb{E} \mathcal{W} [\Pi_n u] = \sum_{0 \le j \le n} \mathbb{E} \mathcal{W} [u^j].$

\subsubsection{Periodic setting}

\label{sec:energy-spec-periodic}

For all $\cp = (\tau^*,\wp) \in \mathscr{K}$, define $\psi^\cp \in \mathscr{S}(\mathscr{D}^\cp)$ by \begin{equation*}
    \psi^\cp(\bm{k}) = \psi^{\tau^+}(\bm{k}|_{\tau^+}) \psi^{\tau^-}(\bm{k}|_{\tau^-})
    = \prod_{\mathfrak{p} \in \wp} |\psi(\bm{k}^\flat_{\mathfrak{p}})|^2,
\end{equation*}
and set $\Psi^\cp_t = \Theta^\cp \comp \Omega^\cp$.
For all $n \in \mathbb{N}$, all $k \in \mathbb{Z}^d_L$, and all $t > 0$, we define
\begin{equation}
    \label{eq:def-N-hom}
    N^n_{t,k} = \sum_{\cp \in \mathscr{K}_n} N^q_{t,k},
    \quad
    N^\cp_{t,k} = \varsigma_\cp \Bigl(\frac{\lambda}{L^d}\Bigr)^{2n} \sum_{\bm{k} \in \mathscr{D}^{\cp,L}_k} \Psi^\cp_t(\bm{k}) \psi^\cp(\bm{k}).
    \index{Functions and random variables!Energy spectra!Energy spectra (periodic setting) $N^n_{t,k}$, $N^\cp_{t,k}$}
\end{equation}

\begin{remark}
    \label{rmk:def-N-hom-smooth-k}
    We can extend $k \mapsto N^\cp_{t,k}$ to a function on $\mathbb{R}^d$.
    In fact, for all $k \in \mathbb{R}^d$, we define
    \begin{equation*}
        N^\cp_{t,k} = \varsigma_\cp \Bigl(\frac{\lambda}{L^d}\Bigr)^{2n} \sum_{\bm{k} \in \mathscr{D}^{\cp,L}_0} \Psi^\cp_t(\bm{k}) \psi^\cp(\bm{k}+k).
    \end{equation*}
    This coincides with the definition~\eqref{eq:def-N-hom} when $k \in \mathbb{Z}^d_L$ because $\Psi^\cp_t(\cdot + k) = \Psi^\cp_t(\cdot)$.
\end{remark}

\begin{proposition}
    \label{prop:J-N-id}
    For all $n,n' \in \mathbb{N}$, all $k,k' \in \mathbb{Z}^d_L$, and all $t > 0$, we have
    \begin{equation}
        \label{eq:J-N-id}
        \mathbb{E}\bigl( J^n_{t,k} \overline{J^{n'}_{t,k'}} \bigr) = \bm{1}_{(n,k) = (n',k')} N^{n}_{t,k}.
    \end{equation}
    Particularly, the phase randomization condition~\eqref{eq:cdt-phase-randomization-wp} holds.
\end{proposition}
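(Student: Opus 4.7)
The plan is to unfold the two solution expansions, apply Wick's theorem~\eqref{eq:wick-identity} to the Gaussian expectation, and then repackage the result as a sum over couples.

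First I would write $J^n_{t,k} = J^{n,+}_{t,k} = \sum_{\tau^+ \in \mathscr{T}^+_n} J^{\tau^+}_{t,k}$ and, using the remark $J^{n,\pm}_{t,k} = \overline{J^{n,\mp}_{t,k}}$, express $\overline{J^{n'}_{t,k'}} = J^{n',-}_{t,k'} = \sum_{\tau^- \in \mathscr{T}^-_{n'}} J^{\tau^-}_{t,k'}$. Expanding the product and pulling the expectation inside, the only random factor is $\mathfrak{g}^{\tau^+}_{\bm{k}^+}\mathfrak{g}^{\tau^-}_{\bm{k}^-}$, to which Lemma~\ref{lem:wick} applies: the expectation vanishes unless $\tau^+$ and $\tau^-$ have the same order, so only the diagonal $n = n'$ survives, and in that case equals $\sum_{\wp \in \mathfrak{P}^{\tau^*}} \bm{1}_{\bm{k} \in \mathscr{D}^\cp}$ where $\tau^* = (\tau^+,\tau^-)$ and $\cp = (\tau^*,\wp)$.

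Next I would argue that the remaining sum automatically forces $k = k'$, giving the Kronecker delta. Using Lemma~\ref{lem:deco-formula} at the two roots,
\begin{equation*}
    \bm{\zeta}_{\mathfrak{r}^{\tau^+}} = \sum_{\mathfrak{l} \in \mathfrak{L}^{\tau^+}} \iota_{\mathfrak{l}} \bm{\zeta}_{\mathfrak{l}}, \qquad
    -\bm{\zeta}_{\mathfrak{r}^{\tau^-}} = \sum_{\mathfrak{l} \in \mathfrak{L}^{\tau^-}} \iota_{\mathfrak{l}} \bm{\zeta}_{\mathfrak{l}},
\end{equation*}
so that $\bm{\zeta}_{\mathfrak{r}^{\tau^+}} - \bm{\zeta}_{\mathfrak{r}^{\tau^-}} = \sum_{\mathfrak{p}=\{\mathfrak{l}^+,\mathfrak{l}^-\}\in \wp}(\iota_{\mathfrak{l}^+} + \iota_{\mathfrak{l}^-})\bm{\zeta}^\flat_{\mathfrak{p}} = 0$ on $\mathscr{D}^\cp$, by the opposite-sign constraint on leaf pairs in Definition~\ref{def:pairing}. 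Hence the combined support condition $\bm{k}|_{\tau^\pm} \in \mathscr{D}^{\tau^\pm,L}_{k_\pm}$ plus $\bm{k} \in \mathscr{D}^\cp$ is empty unless $k = k'$, in which case it becomes exactly $\bm{k} \in \mathscr{D}^{\cp,L}_k$.

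Finally I would check the three multiplicative factorizations on $\mathscr{D}^\cp$: $\varsigma_{\tau^*} = \varsigma_{\tau^+}\varsigma_{\tau^-} = \varsigma_\cp$ by Definition~\ref{def:cp}; $\Psi^{\tau^+}_t(\bm{k}|_{\tau^+})\Psi^{\tau^-}_t(\bm{k}|_{\tau^-}) = \Psi^\cp_t(\bm{k})$ since $\Theta^\cp_t = \Theta^{\tau^+}_t \otimes \Theta^{\tau^-}_t$ and $\Omega^\cp|_{\tau^\pm} = \Omega^{\tau^\pm}$; and $\psi^{\tau^+}(\bm{k}|_{\tau^+})\psi^{\tau^-}(\bm{k}|_{\tau^-}) = \prod_{\mathfrak{p} \in \wp} |\psi(\bm{k}^\flat_{\mathfrak{p}})|^2 = \psi^\cp(\bm{k})$, using the identification $\bm{k}_{\mathfrak{l}^+} = \bm{k}_{\mathfrak{l}^-}$ on each pair and the opposite signs. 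Reindexing $\sum_{\tau^* \in \mathscr{T}^*_n}\sum_{\wp \in \mathfrak{P}^{\tau^*}}$ as $\sum_{\cp \in \mathscr{K}_n}$ then yields exactly $N^n_{t,k}$ as in~\eqref{eq:def-N-hom}, establishing~\eqref{eq:J-N-id}. The phase randomization condition~\eqref{eq:cdt-phase-randomization-wp} follows because the relation $J^n_{t,k} \sim A^n_k(t)$ (via~\eqref{eq:cauchy-diagram-hom} and the ansatz~\eqref{eq:ansatz-wp} with $\epsilon=0$) transports the Kronecker delta $\bm{1}_{(n,k)=(n',k')}$ to the $A^n_k$ side. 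The only mildly delicate step is the cancellation forcing $k = k'$; everything else is bookkeeping.
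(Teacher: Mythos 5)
Your proposal is correct and follows essentially the same route as the paper's proof: expand both solutions over signed trees, apply Wick's theorem (Lemma~\ref{lem:wick}) to kill the off-diagonal $n \ne n'$ terms and produce the sum over couples, invoke Lemma~\ref{lem:deco-formula} to force $k = k'$ on $\mathscr{D}^\cp$, and conclude via the factorizations of $\varsigma$, $\Psi_t$, and $\psi$ over the two trees. Your explicit root-decoration computation showing $\bm{\zeta}_{\mathfrak{r}^{\tau^+}} = \bm{\zeta}_{\mathfrak{r}^{\tau^-}}$ is just a spelled-out version of the step the paper cites Lemma~\ref{lem:deco-formula} for, so there is no substantive difference.
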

\begin{proof}
    If $n \ne n'$, then $\mathbb{E}\bigl( J^n_{t,k} \overline{J^{n'}_{t,k'}} \bigr) = 0$ by Lemma~\ref{lem:wick}.
    If $n=n'$ and let $\tau^* = (\tau^+,\tau^-) \in \mathscr{T}^*$.
    Then Lemma~\ref{lem:wick} implies that
    \begin{align*}
        \mathbb{E}\bigl( J^n_{t,k} \overline{J^{n}_{t,k'}} \bigr)
        & = \varsigma_\cp \Bigl(\frac{\lambda}{L^d}\Bigr)^{2n} \sum_{\bm{k}^+ \in \mathscr{D}^{\tau^+}_k} \sum_{\bm{k}^- \in \mathscr{D}^{\tau^+}_{k'}} \Psi^{\tau^+}_t(\bm{k}^+) \Psi^{\tau^-}_t(\bm{k}^-) \psi^{\tau^+}(\bm{k}^+) \psi^{\tau^-}(\bm{k}^-) \mathbb{E} \bigl( \mathfrak{g}^{\tau^+}_{\bm{k}^+} \mathfrak{g}^{\tau^-}_{\mathfrak{k}^-} \bigr) \\
        & = \varsigma_\cp \Bigl(\frac{\lambda}{L^d}\Bigr)^{2n} \sum_{\bm{k}\in \mathscr{D}^{\tau^*,L}_{k,k'}} \sum_{\cp = (\tau^*,\wp) \in \mathscr{K}}
        \Psi^{\tau^+}_t(\bm{k}|_{\tau^+}) \Psi^{\tau^-}_t(\bm{k}|_{\tau^-}) \psi^{\tau^+}(\bm{k}|_{\tau^+}) \psi^{\tau^-}(\bm{k}|_{\tau^-}) \bm{1}_{\bm{k} \in \mathscr{D}^\cp}.
    \end{align*}
    By Lemma~\ref{lem:deco-formula}, if $k \ne k'$, then $\mathscr{D}^{\tau^*,L}_{k,k'} \cap \mathscr{D}^\cp = \emptyset$ and therefore $\mathbb{E}\bigl( J^n_{t,k} \overline{J^{n}_{t,k'}} \bigr) = 0$.
    If $k = k'$, then~\eqref{eq:J-N-id} follows by observing that for all $\bm{k} \in \mathscr{D}^{\cp,L}$, we have
    \begin{equation*}
        \Psi^\cp_t(\bm{k}) = \Psi^{\tau^+}_t(\bm{k}|_{\tau^+}) \Psi^{\tau^-}_t(\bm{k}|_{\tau^-}), \quad
        \psi^\cp(\bm{k}) = \psi^{\tau^+}(\bm{k}|_{\tau^+}) \psi^{\tau^-}(\bm{k}|_{\tau^-}).\qedhere
    \end{equation*}
\end{proof}

\begin{proposition}
    \label{prop:E-spec-N-id}
    Let the solution $u$ to WNLS be given as in \S\ref{sec:cauchy-periodic}, then for all $n \in \mathbb{N}$, we have
    \begin{equation}
        \label{eq:E-spec-N-id}
        \mathbb{E} \mathcal{W}[u^{2n+1}(t)](x,\xi)
        = \frac{1}{L^d} \sum_{k \in \mathbb{Z}^d_L}  N^n_{t,k} \bm{\delta}_k(\xi).
    \end{equation}
\end{proposition}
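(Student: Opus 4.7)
The plan is to compute $\mathbb{E}\mathcal{W}[u^{2n+1}(t)](x,\xi)$ directly by plugging in the Fourier expansion~\eqref{eq:cauchy-diagram-hom}, isolating the spatial correlation, and invoking Proposition~\ref{prop:J-N-id} to collapse the double sum to a single one. Concretely, I would start from the definition of the Wigner distribution recalled in \S\ref{sec:effective-dynamics}, substitute~\eqref{eq:cauchy-diagram-hom} for both $u^{2n+1}(t,x+y/2)$ and $\overline{u^{2n+1}(t,x-y/2)}$, and expand the product as a double sum in $k,k' \in \mathbb{Z}^d_L$ with coefficients $J^n_{t,k}\overline{J^n_{t,k'}}$ and phase factors
\begin{equation*}
    e^{2\pi i (k-k')\cdot x} \, e^{2\pi i (k+k')\cdot y/2} \, e^{-\pi i (|k|^2-|k'|^2)t}.
\end{equation*}

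Next I would pass the expectation inside the double sum and apply~\eqref{eq:J-N-id}, which forces $k'=k$: the three phase factors then collapse to $e^{2\pi i k \cdot y}$, since the $x$- and $t$-dependent pieces disappear. This yields
\begin{equation*}
    \mathbb{E}\bigl\{u^{2n+1}(t,x+y/2)\,\overline{u^{2n+1}(t,x-y/2)}\bigr\}
    \;=\; \frac{c_L}{L^{2d}}\sum_{k\in\mathbb{Z}^d_L} N^n_{t,k}\, e^{2\pi i k\cdot y},
\end{equation*}
a function of $y$ alone (confirming $x$-homogeneity in the periodic case). Finally, taking the Fourier transform in $y$ identifies $\int e^{-2\pi i y\cdot \xi}e^{2\pi i k \cdot y}\diff y = \bm{\delta}_k(\xi)$ and produces the claimed expansion~\eqref{eq:E-spec-N-id}, up to the overall $L$-prefactor dictated by the convention for $\mathcal{W}$ on $L\mathbb{Z}^d$-periodic functions adopted in \S\ref{sec:energy-spec-non-periodic}.

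The calculation is essentially algebraic once Proposition~\ref{prop:J-N-id} is in hand: the main structural content is already encoded in the orthogonality~\eqref{eq:J-N-id} of the chaos coefficients across distinct wavenumbers, which itself rests on Lemma~\ref{lem:wick} and the fact that no admissible pairing can connect $\mathfrak{L}^{\tau^+}$ to a shifted root when $k\ne k'$ (Lemma~\ref{lem:deco-formula}). The only point requiring a little care is bookkeeping of the normalizing powers of $L^d$ coming from~\eqref{eq:cauchy-diagram-hom} versus those in the stated prefactor, and verifying that the convention for $\mathcal{W}$ in the periodic setting (to be pinned down in \S\ref{sec:energy-spec-non-periodic}) matches the claimed $L^{-d}$ scaling; no genuine difficulty arises beyond this accounting.
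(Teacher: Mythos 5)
Your proposal is correct and follows essentially the same route as the paper: the paper's proof likewise uses Proposition~\ref{prop:J-N-id} to discard the off-diagonal $k\ne k'$ terms and then identifies $\mathcal{W}[e^{2\pi i k\cdot x}](x,\xi)=\bm{\delta}_k(\xi)$, which is exactly your phase computation followed by the Fourier transform in $y$. Concerning the prefactor you flag: the expansion~\eqref{eq:cauchy-diagram-hom} should be read with the normalization $L^{-d/2}$ (consistent with~\eqref{eq:ansatz-wp} and needed for~\eqref{eq:WNLS-J-hom} to follow from WNLS$'$), after which the $L^{-d}$ in~\eqref{eq:E-spec-N-id} comes out exactly.
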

\begin{proof}
    By Proposition~\ref{prop:J-N-id}, we have
    \begin{equation*}
        \mathbb{E} \mathcal{W}[u^{2n+1}(t)]
        = \frac{1}{L^d} \sum_{k \in \mathbb{Z}^d_L}  \mathbb{E}\mathcal{W}[J^{n}_{t,k} e^{2\pi i k\cdot x - |k|^2t /2}]
        = \frac{1}{L^d} \sum_{k \in \mathbb{Z}^d_L}  N^n_{t,k} \mathcal{W}[e^{2\pi i k\cdot x}].
    \end{equation*}
    The identity~\eqref{eq:E-spec-N-id} follows from $\mathcal{W}[e^{2\pi i k\cdot x}](x,\zeta)
    = \bm{\delta}_k(\zeta)$.
\end{proof}

\subsubsection{Non-periodic setting}

\label{sec:energy-spec-non-periodic}

Recall that, the Wigner transform of $f,g \in L^2(\mathbb{R}^d)$ is defined by
\begin{equation*}
    \mathcal{W}[f,g] (x,\xi)
    = \int e^{-2\pi i y \cdot \xi} f(x+y/2) \overline{g(x-y/2)} \diff y
    = \int e^{2\pi i x \cdot \eta} \widehat{f}(\xi+\eta/2) \overline{\widehat{g}(\xi-\eta/2)} \diff \eta,
\end{equation*}
and we also write $\mathcal{W}[f] = \mathcal{W}[f,f]$ for simplicity.
Let us denote
\begin{equation*}
    \mathcal{V}[\phi](k,x,\zeta) = \mathcal{W}[\phi(\cdot,k)](x,\zeta), \quad
    \widehat{\mathcal{V}}[\phi](k,\eta,\zeta) = \widehat{\mathcal{W}}[\phi(\cdot,k)](\eta,\xi).
\end{equation*}
For all $\cp = (\tau^*,\wp) \in \mathscr{K}$, define $\widehat{\mathcal{V}}^\cp[\phi] \in \mathscr{D}(\mathscr{D}^\cp \times \mathscr{C}^\cp \times \mathscr{D}^\cp)$ by 
\begin{equation*}
    \widehat{\mathcal{V}}^\cp[\phi](\bm{k},\bm{\eta},\bm{\zeta})
    = \prod_{\mathfrak{p} \in \wp} \widehat{\mathcal{V}}(\bm{k}^\flat_{\mathfrak{p}},\bm{\eta}^\flat_{\mathfrak{p}},\bm{\zeta}^\flat_{\mathfrak{p}})
    = \prod_{\mathfrak{p} \in \wp} \widehat{\phi}(\bm{k}^\flat_{\mathfrak{p}},\bm{\zeta}^\flat_{\mathfrak{p}}+\bm{\eta}^\flat_{\mathfrak{p}}/2) \overline{\widehat{\phi}(\bm{k}^\flat_{\mathfrak{p}},\bm{\zeta}^\flat_{\mathfrak{p}}-\bm{\eta}^\flat_{\mathfrak{p}}/2)}.
\end{equation*}
For all $\zeta,\eta \in \mathbb{R}^d$, and all $\varphi \in \mathscr{S}(\mathscr{D}^\cp)$, $\psi \in \mathscr{S}(\mathscr{C}^\cp)$, define
    \begin{align*}
        \int_{\mathscr{D}^\cp_\zeta} \varphi(\bm{\zeta}) \diff \bm{\zeta}
        = \int_{(\mathbb{R}^d)^{\wp}} \bm{\delta}_\zeta\Bigl(\sum_{\mathfrak{p} \in \wp} \iota_{\mathfrak{p}} \bm{f}_{\mathfrak{p}} \Bigr)\varphi(\breve{\bm{f}}) \diff \bm{f}, \quad
        \int_{\mathscr{C}^\cp_\eta} \psi(\bm{\eta}) \diff \bm{\eta}
        = \int_{(\mathbb{R}^d)^{\wp}} \bm{\delta}_\eta\Bigl(\sum_{\mathfrak{p} \in \wp} \bm{g}_{\mathfrak{p}} \Bigr) \varphi(\tilde{\bm{g}}) \diff \bm{g},
    \end{align*}
where $\iota_{\mathfrak{p}} \coloneqq \iota_{\mathfrak{l}}$ with $\tau^+ \cap \mathfrak{p} = \{\mathfrak{l}\}$, and $\breve{\bm{f}}$ and $\tilde{\bm{g}}$ are respectively unique elements in $\mathscr{D}^\cp$ and $\mathscr{C}^\cp$ such that $\breve{\bm{f}}^\flat = \bm{f}$ and $ \tilde{\bm{g}}^\flat = \bm{g}$.
For all $n \ge 0$, $k \in \mathbb{R}^d$ (following Remark~\ref{rmk:def-N-hom-smooth-k}) and $t > 0$, define
\begin{equation*}
    \mathcal{E}^n_{t,k}(x,\zeta) = \sum_{\cp \in \mathscr{K}_n} \mathcal{E}^\cp_{t,k}(x,\zeta),
    \quad
    \mathcal{N}^n_{t,k}(x) = \sum_{\cp \in \mathscr{K}_n} \mathcal{N}^\cp_{t,k}(x)
    \index{Functions and random variables!Energy spectra!Energy spectra (non-homogeneous setting) $\mathcal{N}^n_{t,k}$, $\mathcal{N}^\cp_{t,k}$, $\mathcal{E}^n_{t,k}$, $\mathcal{E}^\cp_{t,k}$}
\end{equation*}
as follows:
the Fourier transform $ \widehat{\mathcal{E}}^\cp_{t,k}(\eta,\zeta) = \mathcal{F}_{x \to \eta} \{ \mathcal{E}^\cp_{t,k}(x,\zeta)\}$ is given by
\begin{equation*}
    \widehat{\mathcal{E}}^\cp_{t,k}(\eta,\zeta)
    = \varsigma_\cp \Bigl(\frac{\lambda}{L^d}\Bigr)^{2n} \sum_{k \in \mathscr{D}^{\cp,L}_k} \int_{\mathscr{C}^\cp_\eta} \int_{\mathscr{D}^\cp_\zeta} \Psi^{\tau^*}_t\bigl(\bm{k}+\epsilon(\bm{\zeta}+\iota\bm{\eta}/2)\bigr) \widehat{\mathcal{V}}^\cp[\phi](\bm{k},\bm{\zeta},\bm{\eta}) \diff \bm{\eta} \diff \bm{\zeta},
\end{equation*}
where $\Psi^{\tau^*}_t = \Theta^\cp_t \comp \Omega^{\tau^*}$, and $\mathcal{N}^\cp_{t,k}(x)$ is given by the integration
\begin{equation*}
    \mathcal{N}^\cp_{t,k}(x) = \int \mathcal{E}^\cp_{t,k}(x,\zeta) \diff \zeta.
\end{equation*}
As in Remark~\ref{rmk:def-N-hom-smooth-k}, we shall also extend the definitions of $\mathcal{E}^\cp_{t,k}$ and $\mathcal{N}^\cp_{t,k}$, and thus the definitions of $\mathcal{E}^n_{t,k}$ and $\mathcal{N}^n_{t,k}$, to all $k \in \mathbb{R}^d$.

\begin{proposition}
    \label{prop:J-E-id}
    For all $n,n' \in \mathbb{N}$, all $k,k' \in \mathbb{Z}^d_L$, and all $t > 0$, we have
    \begin{equation}
        \label{eq:J-E-id}
        \mathbb{E}\mathcal{W}[\mathcal{J}^n_{t,k}, \mathcal{J}^{n'}_{t,k'}] = \bm{1}_{(n,k) = (n',k')} \mathcal{E}^{n}_{t,k}.
    \end{equation}
    Particularly, the phase randomization condition~\eqref{eq:cdt-phase-randomization-wp} holds.
\end{proposition}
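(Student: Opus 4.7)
Starting from the Wigner representation
\begin{equation*}
\mathbb{E}\mathcal{W}[\mathcal{J}^n_{t,k},\mathcal{J}^{n'}_{t,k'}](x,\zeta) = \int e^{2\pi i x\cdot\eta}\,\mathbb{E}\bigl[\widehat{\mathcal{J}}^n_{t,k}(\zeta+\eta/2)\,\overline{\widehat{\mathcal{J}}^{n'}_{t,k'}(\zeta-\eta/2)}\bigr]\diff\eta,
\end{equation*}
the first move is to rewrite the complex conjugate via the identity $\overline{\widehat{\mathcal{J}}^{\tau}_{t,k'}(\xi)}=\widehat{\mathcal{J}}^{\overline\tau}_{t,k'}(\xi)$ valid for any $\tau\in\mathscr{T}^+$, where $\overline\tau\in\mathscr{T}^-$ denotes $\tau$ with all signs flipped. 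This identity follows from four compatibility checks: $\overline{\Theta^\tau_t(\bm\omega)}=\Theta^\tau_t(-\bm\omega)$ (since $\bm{1}_{\mathscr{O}^\tau_t}$ is real) combined with $\Omega^{\overline\tau}=-\Omega^\tau$; $\overline{\widehat\phi^\tau}=\widehat\phi^{\overline\tau}$; $\overline{\varsigma_\tau}=\varsigma_{\overline\tau}$; and the fact that the integration domains $\mathscr{D}^{\tau,L}_{k'}$ and $\mathscr{D}^\tau_\xi$ are insensitive to the global sign flip. The expectation then reorganizes as
\begin{equation*}
\sum_{(\tau^+,\tau^-)\in\mathscr{T}^+_n\times\mathscr{T}^-_{n'}}\mathbb{E}\bigl[\widehat{\mathcal{J}}^{\tau^+}_{t,k}(\zeta+\eta/2)\,\widehat{\mathcal{J}}^{\tau^-}_{t,k'}(\zeta-\eta/2)\bigr].
\end{equation*}

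Applying Wick's theorem (Lemma~\ref{lem:wick}) to $\mathbb{E}[\mathfrak{g}^{\tau^+}_{\bm{k}^+}\mathfrak{g}^{\tau^-}_{\bm{k}^-}]$ kills the sum unless $n=n'$, in which case it produces a sum over pairings $\wp\in\mathfrak{P}^{\tau^*}$---that is, over couples $\cp=(\tau^*,\wp)\in\mathscr{K}_n$---with the wavenumber decoration forced into $\bm{k}\in\mathscr{D}^{\cp,L}$. Evaluating Lemma~\ref{lem:deco-formula} at the two roots yields $k=\sum_{\mathfrak{p}\in\wp}\bm{k}^\flat_{\mathfrak{p}}=k'$, so the expectation vanishes whenever $(n,k)\ne(n',k')$, establishing the phase randomization condition~\eqref{eq:cdt-phase-randomization-wp}.

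When $n=n'$ and $k=k'$, the remaining task is a change of variables on the subordinate momenta. At each pair $\mathfrak{p}=\{\mathfrak{l}^+,\mathfrak{l}^-\}$ set
\begin{equation*}
\bm\zeta^\flat_{\mathfrak{p}}=\tfrac{1}{2}\bigl(\bm\xi^+_{\mathfrak{l}^+}+\bm\xi^-_{\mathfrak{l}^-}\bigr),\qquad \bm\eta^\flat_{\mathfrak{p}}=\iota_{\mathfrak{p}}\bigl(\bm\xi^+_{\mathfrak{l}^+}-\bm\xi^-_{\mathfrak{l}^-}\bigr),
\end{equation*}
which inverts to $\bm\xi^\pm_{\mathfrak{l}^\pm}=\bm\zeta^\flat_{\mathfrak{p}}+\iota_{\mathfrak{l}^\pm}\bm\eta^\flat_{\mathfrak{p}}/2$, with Jacobian $1$ in absolute value per pair. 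By Corollary~\ref{cor:conju-node-char-deco}, $\bm\zeta^\flat$ and $\bm\eta^\flat$ lift uniquely to $\bm\zeta\in\mathscr{D}^\cp$ and $\bm\eta\in\mathscr{C}^\cp$, and a short induction using the branching rules~\eqref{eq:tree-deco-def} shows that the leaf-level identity $\bm\xi^\pm_{\mathfrak{n}}=\bm\zeta_{\mathfrak{n}}+\iota_{\mathfrak{n}}\bm\eta_{\mathfrak{n}}/2$ propagates to every node of $\cp$. The root constraints $\bm\xi^\pm\in\mathscr{D}^{\tau^\pm}_{\zeta\pm\eta/2}$ then convert into $\sum_{\mathfrak{p}}\iota_{\mathfrak{p}}\bm\zeta^\flat_{\mathfrak{p}}=\zeta$ and $\sum_{\mathfrak{p}}\bm\eta^\flat_{\mathfrak{p}}=\eta$, which are exactly the defining conditions of $\mathscr{D}^\cp_\zeta$ and $\mathscr{C}^\cp_\eta$. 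A short case check (on the two possible values of $\iota_{\mathfrak{p}}$) shows the paired leaf factors assemble into $\widehat{\mathcal{V}}^\cp[\phi](\bm{k},\bm\zeta,\bm\eta)$; the two phase factors combine via $\Theta^\cp_t=\Theta^{\tau^+}_t\otimes\Theta^{\tau^-}_t$ into $\Psi^{\tau^*}_t(\bm{k}+\epsilon(\bm\zeta+\iota\bm\eta/2))$; and $\varsigma_{\tau^+}\varsigma_{\tau^-}=\varsigma_\cp$. Recognizing the result as $\widehat{\mathcal{E}}^\cp_{t,k}(\eta,\zeta)$ and performing the outer Fourier inversion $\eta\to x$ while summing over $\cp\in\mathscr{K}_n$ produces $\mathcal{E}^n_{t,k}(x,\zeta)$, as claimed.

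The hard part will be entirely bookkeeping: verifying that the leaf-level prescription $\bm\xi^\pm_{\mathfrak{n}}=\bm\zeta_{\mathfrak{n}}+\iota_{\mathfrak{n}}\bm\eta_{\mathfrak{n}}/2$ extends consistently to every internal node of $\cp$, so that the arguments of $\Psi^{\tau^\pm}_t$ at branching nodes collapse cleanly into the single expression $\bm{k}+\epsilon(\bm\zeta+\iota\bm\eta/2)$ used in the definition of $\widehat{\mathcal{E}}^\cp$. This reduces to the observation that both sides obey the same linear branching relation~\eqref{eq:tree-deco-def}, so the leaf-level equality propagates upward unambiguously. Schwartz regularity of $\phi$ justifies all Fubini interchanges.
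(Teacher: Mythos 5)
Your argument is correct and follows essentially the same route as the paper: Wick's theorem (Lemma~\ref{lem:wick}) reduces the expectation to a sum over couples, equality of the two root decorations forces $(n,k)=(n',k')$, and the measure-preserving change of variables $(\bm{\eta},\bm{\zeta})\mapsto\bm{\xi}=\bm{\zeta}+\iota\bm{\eta}/2$ of Lemma~\ref{lem:cp-deco-wigner} collapses the product of tree integrals into $\widehat{\mathcal{E}}^\cp_{t,k}(\eta,\zeta)$, with your explicit conjugation-to-sign-flip check simply making explicit the paper's remark that $\mathcal{J}^{n,\pm}_{t,k}=\overline{\mathcal{J}^{n,\mp}_{t,k}}$. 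The only nitpicks are sign-convention slips --- the root identity is $k=\sum_{\mathfrak{p}\in\wp}\iota_{\mathfrak{p}}\bm{k}^\flat_{\mathfrak{p}}$ rather than $\sum_{\mathfrak{p}}\bm{k}^\flat_{\mathfrak{p}}$, and your forward definition $\bm{\eta}^\flat_{\mathfrak{p}}=\iota_{\mathfrak{p}}(\bm{\xi}^+_{\mathfrak{l}^+}-\bm{\xi}^-_{\mathfrak{l}^-})$ is consistent with your stated inverse only when $\iota_{\mathfrak{p}}=+1$ --- neither of which affects the conclusion.
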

\begin{proof}
    Similarly as for Proposition~\ref{prop:J-N-id}, it suffices to establish~\eqref{eq:J-E-id} when $(n,k) = (n',k')$:
    \begin{align*}
        \mathbb{E}\widehat{\mathcal{W}}&[\mathcal{J}^{n}_{t,k} , \mathcal{J}^{n}_{t,k} ](\eta,\zeta)
        = \sum_{\tau^* \in \mathscr{T}^*_n} \mathbb{E}\bigl( \mathcal{J}^{\tau^+}_{t,k}(\zeta+\eta/2) \mathcal{J}^{\tau^-}_{t,k}(\zeta-\eta/2) \bigr) \\
        & = \sum_{\cp = (\tau^*,\wp) \in \mathscr{K}_n}
        \varsigma_{\cp} \Bigl(\frac{\lambda}{L^d}\Bigr)^{2n}
        \sum_{\bm{k} \in \mathscr{D}^{\cp}_k}
        \prod_{\sigma \in \{+,-\}} \int_{\mathscr{D}^{\tau^\sigma}_{\zeta+\sigma\eta/2}} 
        \Psi^{\tau^\sigma}_{t} (\bm{k}|_{\tau^\sigma} + \epsilon \bm{\xi}^\sigma)
        \widehat{\phi}^{\tau^\sigma}(\bm{k}|_{\tau^\sigma}+\bm{\xi}^\sigma)
        \diff \bm{\xi}^\sigma.
    \end{align*}
    To conclude, we use the Following Lemma~\ref{lem:cp-deco-wigner}.
    In fact, the Jacobian of the map $\mathscr{C}^\cp \times \mathscr{D}^\cp \ni (\bm{\eta},\bm{\zeta}) \mapsto \bm{\xi} \in \mathscr{D}^{\tau^*}$ is equal to one, and we have
    \begin{align*}
        \prod_{\sigma \in \{+,-\}} \Psi^{\tau^\sigma}_{t} (\bm{k}|_{\tau^\sigma} + \epsilon \bm{\xi}|_{\tau^\sigma})
        & =\Psi^\cp_t\bigl(\bm{k}+\epsilon(\bm{\zeta} + \iota\bm{\eta}/2)\bigr), \\
        \prod_{\sigma \in \{+,-\}} \widehat{\phi}^{\tau^\sigma}(\bm{k}|_{\tau^\sigma}+\bm{\xi}|_{\tau^\sigma})
        & =  \widehat{\mathcal{V}}^\cp[\phi](\bm{k},\bm{\eta},\bm{\zeta}).\qedhere
    \end{align*}
\end{proof}

\begin{lemma}
    \label{lem:cp-deco-wigner}
    For all $\cp = (\tau^*,\wp) \in \mathscr{K}$, we have the linear space isomorphism
    \begin{equation}
        \label{eq:Wigner-change-var}
        \mathscr{C}^\cp \times \mathscr{D}^\cp
        \simeq \mathscr{D}^{\tau^*}\quad
        (\bm{\eta},\bm{\zeta})
        \mapsto \bm{\xi} \mapsto \bm{\zeta} + \iota \bm{\eta}/2.
    \end{equation}
    Moreover~\eqref{eq:Wigner-change-var} induces for all $\eta,\zeta \in \mathbb{R}^d$ a bijection
    $\mathscr{C}^\cp_\eta \times \mathscr{D}^\cp_\zeta \to \mathscr{D}^{\tau^*}_{\zeta+\eta/2,\zeta-\eta/2}.$
\end{lemma}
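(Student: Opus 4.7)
The plan is to give the inverse of the candidate map explicitly and verify well-definedness on both sides, relying on the characterization that a $\mathscr{D}$- or $\mathscr{C}$-decoration on a tree is determined by its values on the leaves (Lemma~\ref{lem:deco-formula}). Given $(\bm{\eta},\bm{\zeta}) \in \mathscr{C}^\cp \times \mathscr{D}^\cp$, I would define $\bm{\xi}: \tau^+ \cup \tau^- \to \mathbb{R}^d$ by $\bm{\xi}_{\mathfrak{n}} = \bm{\zeta}_{\mathfrak{n}} + \iota_{\mathfrak{n}} \bm{\eta}_{\mathfrak{n}}/2$, and verify $\bm{\xi} \in \mathscr{D}^{\tau^*}$ by checking at each $\mathfrak{b} \in \mathfrak{B}^{\tau^*}$ that, using $\iota_{\mathfrak{n}}^2 = 1$ and the defining relations~\eqref{eq:tree-deco-def},
\begin{equation*}
    \iota_{\mathfrak{b}} \bm{\xi}_{\mathfrak{b}}
    = \iota_{\mathfrak{b}} \bm{\zeta}_{\mathfrak{b}} + \tfrac{1}{2}\bm{\eta}_{\mathfrak{b}}
    = \sum_{\mathfrak{n}^p=\mathfrak{b}} \iota_{\mathfrak{n}} \bm{\zeta}_{\mathfrak{n}} + \tfrac{1}{2}\sum_{\mathfrak{n}^p=\mathfrak{b}} \bm{\eta}_{\mathfrak{n}}
    = \sum_{\mathfrak{n}^p=\mathfrak{b}} \iota_{\mathfrak{n}}\bigl(\bm{\zeta}_{\mathfrak{n}} + \iota_{\mathfrak{n}}\bm{\eta}_{\mathfrak{n}}/2\bigr)
    = \sum_{\mathfrak{n}^p=\mathfrak{b}} \iota_{\mathfrak{n}} \bm{\xi}_{\mathfrak{n}}.
\end{equation*}

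Next I would construct the inverse. Given $\bm{\xi} \in \mathscr{D}^{\tau^*}$, for each $\mathfrak{p} = \{\mathfrak{l}^+,\mathfrak{l}^-\} \in \wp$ (with signs $\iota_{\mathfrak{l}^+} = -\iota_{\mathfrak{l}^-}$), set
\begin{equation*}
    \bm{\zeta}^\flat_{\mathfrak{p}} = \frac{\bm{\xi}_{\mathfrak{l}^+} + \bm{\xi}_{\mathfrak{l}^-}}{2},\qquad
    \bm{\eta}^\flat_{\mathfrak{p}} = \iota_{\mathfrak{l}^+}\bigl(\bm{\xi}_{\mathfrak{l}^+} - \bm{\xi}_{\mathfrak{l}^-}\bigr).
\end{equation*}
By Lemma~\ref{lem:deco-formula}, these leaf values extend uniquely to $\bm{\zeta} \in \mathscr{D}^\cp$ and $\bm{\eta} \in \mathscr{C}^\cp$ (the pairing conditions $\bm{\zeta}_{\mathfrak{l}^+} = \bm{\zeta}_{\mathfrak{l}^-}$ and $\bm{\eta}_{\mathfrak{l}^+} = \bm{\eta}_{\mathfrak{l}^-}$ hold by construction). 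A direct computation at the leaves shows $\bm{\xi}_{\mathfrak{l}^\sigma} = \bm{\zeta}_{\mathfrak{l}^\sigma} + \iota_{\mathfrak{l}^\sigma}\bm{\eta}_{\mathfrak{l}^\sigma}/2$ for $\sigma \in \{+,-\}$, and since both sides lie in $\mathscr{D}^{\tau^*}$ (with matching leaf values), they agree everywhere by Lemma~\ref{lem:deco-formula}. This proves~\eqref{eq:Wigner-change-var}.

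For the second assertion, I would simply evaluate at the roots: if $(\bm{\eta},\bm{\zeta}) \in \mathscr{C}^\cp_\eta \times \mathscr{D}^\cp_\zeta$, then since $\iota_{\mathfrak{r}^{\tau^\pm}} = \pm 1$,
\begin{equation*}
    \bm{\xi}_{\mathfrak{r}^{\tau^\pm}}
    = \bm{\zeta}_{\mathfrak{r}^{\tau^\pm}} + \iota_{\mathfrak{r}^{\tau^\pm}}\bm{\eta}_{\mathfrak{r}^{\tau^\pm}}/2
    = \zeta \pm \eta/2,
\end{equation*}
so $\bm{\xi} \in \mathscr{D}^{\tau^*}_{\zeta+\eta/2,\zeta-\eta/2}$. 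Conversely, the formulas for $\bm{\zeta}^\flat$ and $\bm{\eta}^\flat$ together with the $\mathscr{D}$-decoration identity $\iota_{\mathfrak{r}^{\tau^\sigma}}\bm{\zeta}_{\mathfrak{r}^{\tau^\sigma}} = \sum_{\mathfrak{l} \in \mathfrak{L}^{\tau^\sigma}} \iota_{\mathfrak{l}}\bm{\zeta}_{\mathfrak{l}}$ recover $\bm{\zeta}_{\mathfrak{r}^{\tau^\pm}} = \zeta$ and $\bm{\eta}_{\mathfrak{r}^{\tau^\pm}} = \eta$ from $\bm{\xi}_{\mathfrak{r}^{\tau^\pm}} = \zeta \pm \eta/2$, so the restriction remains a bijection. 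There is no real obstacle here; the argument is a linear-algebraic unpacking, with the only subtle point being to check the decoration constraints are preserved under the shearing $\bm{\xi} = \bm{\zeta} + \iota\bm{\eta}/2$, which works precisely because the $\iota_{\mathfrak{n}}$-factor reconciles the two different branching rules~\eqref{eq:tree-deco-def}.
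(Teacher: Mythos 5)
Your proof is correct and follows essentially the same route as the paper: you construct the inverse by taking the half-sum and (signed) difference of paired leaf values, extend to the whole couple via the leaf-determination property of Lemma~\ref{lem:deco-formula}, and obtain the restricted bijection by evaluating at the roots. The only difference is cosmetic---you verify explicitly at each branching node that $\bm{\zeta}+\iota\bm{\eta}/2$ satisfies the $\mathscr{D}$-decoration relation, a step the paper dismisses as ``clearly a linear space homomorphism.''
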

\begin{proof}
    Clearly~\eqref{eq:Wigner-change-var} is a linear space homomorphism.
    To show that it is also an isomorphism, it suffices to construct its inverse.
    Let $\bm{\xi} \in \mathscr{D}^{\tau^*}$ and let $\bm{f},\bm{g} : \wp \to \mathbb{R}^d$ be defined by
    \begin{equation*}
        \bm{f}_{\mathfrak{p}} = (\bm{\xi}_{\mathfrak{l}^+} + \bm{\xi}_{\mathfrak{l}^-}) / 2, \quad
        \bm{g}_{\mathfrak{p}} = \bm{\xi}_{\mathfrak{l}^+} - \bm{\xi}_{\mathfrak{l}^-}, \quad
        \forall \mathfrak{p} = \{\mathfrak{l}^+,\mathfrak{l}^-\} \in \wp.
    \end{equation*}
    Let $\bm{\eta} \in \mathscr{C}^\cp$ and $\bm{\zeta} \in \mathscr{D}^\cp$ be such that $\bm{\eta}^\flat = \bm{g}$ and $\bm{\zeta}^\flat = \bm{f}$.
    Then the relation $\bm{\xi} = \bm{\zeta} + \iota \bm{\eta}/2$ hold on $\mathfrak{L}^\cp$ and therefore, by Lemma~\ref{lem:deco-formula}, also holds on $\cp$.
    If $\bm{\zeta}_{\mathfrak{r}^{\tau^\pm}} = \zeta$ and $\bm{\eta}_{\mathfrak{r}^{\tau^\pm}} = \eta $, then
    \begin{equation*}
        \bm{\xi}_{\mathfrak{r}^{\tau^\pm}} = \bm{\zeta}_{\mathfrak{r}^{\tau^\pm}} \pm \bm{\eta}_{\mathfrak{r}^{\tau^\pm}}/2 = \zeta \pm \eta/2.
    \end{equation*}
    Conversely, if $ \bm{\xi}_{\mathfrak{r}^{\tau^\pm}} = \zeta \pm \eta/2 $, then
    \begin{equation*}
        \bm{\zeta}_{\mathfrak{r}^{\tau^\pm}} = (\bm{\xi}_{\mathfrak{r}^{\tau^+}} + \bm{\xi}_{\mathfrak{r}^{\tau^-}}) / 2 = \zeta, \quad
        \bm{\eta}_{\mathfrak{r}^{\tau^\pm}} = \bm{\xi}_{\mathfrak{r}^{\tau^+}} - \bm{\xi}_{\mathfrak{r}^{\tau^-}} = \eta.\qedhere
    \end{equation*}
\end{proof}

\begin{proposition}
    \label{prop:E-spec-E-id}
    Let the solution $u$ to WNLS be given as in \S\ref{sec:cauchy-non-periodic}, then for all $n \in \mathbb{N}$, we have
    \begin{equation}
        \label{eq:E-spec-E-id}
        \mathbb{E}\mathcal{W}\bigl[u^{2n+1}(t)\bigr](x/\epsilon,\xi)
        = \frac{1}{\epsilon^d L^d} \sum_{k \in \mathbb{Z}^d_L} \mathcal{E}^n_{t,k}\Bigl(x - \epsilon t \xi, \frac{\xi-k}{\epsilon}\Bigr).
    \end{equation}
    If in addition $\supp \widehat{\phi} \subset \{|\zeta| \le R\} \times \mathbb{R}^d$ for some $R > 0$, then for all $t > 0$ and $x \in \mathbb{R}^d$, we have
    \begin{equation}
        \label{eq:E-spec-supp-cdt}
        \supp \mathbb{E} \mathcal{W}\bigl[u^{2n+1}(t)\bigr](x,\cdot) \subset \{\xi \in \mathbb{R}^d : \mathrm{dist}(\xi,\mathbb{Z}^d_L) \le (2n+1)R\epsilon\}.
    \end{equation}
    Consequently, when $(2n+1) R\epsilon < 1/(2L)$, for all $k \in \mathbb{Z}^d_L$, we have
    \begin{equation}
        \label{eq:E-spec-xi-int}
        L^d \int_{|\xi-k|\le (2n+1)R\epsilon} \mathbb{E}\bigl[u^{2n+1}(t)\bigr](x/\epsilon,\xi) \diff \xi
        = \int \mathcal{E}^n_{t,k}(x-\epsilon t(k+\epsilon \zeta),\zeta) \diff \zeta.
    \end{equation}
\end{proposition}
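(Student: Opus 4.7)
The plan is to expand $u^{2n+1}$ according to the ansatz \eqref{eq:u-wp-expansion}, writing it as a sum over $k\in\mathbb{Z}^d_L$ of wavepackets $w_k(t,x) = F_k(t,x)\,e^{2\pi i (k\cdot x - |k|^2 t/2)}$, where $F_k(t,x)=G_k(t,\epsilon x)$ and $G_k(t,\cdot) = e^{-\epsilon t k\cdot\nabla + i\epsilon^2 t\Delta/(4\pi)}\mathcal{J}^n_{t,k}$. By the sesquilinearity of the Wigner distribution, $\mathbb{E}\mathcal{W}[u^{2n+1}(t)]$ decomposes into a double sum over $(k,k')$ of expected cross-Wigners $\mathbb{E}\mathcal{W}[w_k,w_{k'}]$. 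Since $G_k$ and $F_k$ are obtained from $\mathcal{J}^n_{t,k}$ by deterministic linear operations, the vanishing Wigner statement in \eqref{eq:J-E-id} combined with Fourier inversion gives $\mathbb{E}[\mathcal{J}^n_{t,k}(y_1)\overline{\mathcal{J}^n_{t,k'}(y_2)}]=0$ for $k\ne k'$ and arbitrary $y_1,y_2$, so the off-diagonal terms vanish in expectation and only the diagonal contributions survive.

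For each diagonal term, I would compute $\mathcal{W}[w_k(t)]$ at $(x/\epsilon,\xi)$ by successively applying four standard covariance rules of the Wigner distribution: the modulation $f\mapsto f\,e^{2\pi i k\cdot x}$ shifts the momentum slot by $k$; the dilation identity $\mathcal{W}[G(\epsilon\,\cdot)](x,\xi) = \epsilon^{-d}\mathcal{W}[G](\epsilon x,\xi/\epsilon)$ handles the $\epsilon$-rescaling; the translation $h\mapsto h(\cdot-v)$ shifts the position slot by $v$; and the free Schr\"odinger evolution $e^{is\Delta/(4\pi)}$ acts on the Wigner level by the classical transport $(x,\xi)\mapsto(x-s\xi,\xi)$. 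Composing these in the right order produces the identity $\mathcal{W}[w_k(t)](x/\epsilon,\xi) = \epsilon^{-d}\,\mathcal{W}[\mathcal{J}^n_{t,k}]\bigl(x-\epsilon t\xi, (\xi-k)/\epsilon\bigr)$, in which the group-velocity shift $\epsilon t k$ and the dispersive shift $\epsilon^2 t\cdot(\xi-k)/\epsilon$ telescope cleanly into the Lagrangian shift $\epsilon t\xi$. Taking expectation, invoking $\mathbb{E}\mathcal{W}[\mathcal{J}^n_{t,k}]=\mathcal{E}^n_{t,k}$ from \eqref{eq:J-E-id}, and summing over $k$ with the normalization of \eqref{eq:u-wp-expansion} yields \eqref{eq:E-spec-E-id}.

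To establish the support condition \eqref{eq:E-spec-supp-cdt}, I would first show by induction on the tree order, using the defining formula for $\widehat{\mathcal{J}}^\tau_{t,k}$ as an integral over $\mathscr{D}^\tau_\xi$ against $\widehat{\phi}^\tau$, that the hypothesis $\supp\widehat{\phi}\subset\{|\zeta|\le R\}\times\mathbb{R}^d$ propagates to $\supp\widehat{\mathcal{J}}^n_{t,k}\subset\{|\xi|\le (2n+1)R\}$; here the factor $2n+1$ is the number of leaves of a ternary tree of order $n$, and the constraint $\iota_\tau\bm{\xi}_{\mathfrak{r}^\tau}=\sum_{\mathfrak{l}\in\mathfrak{L}^\tau}\iota_{\mathfrak{l}}\bm{\xi}_{\mathfrak{l}}$ built into $\mathscr{D}^\tau_\xi$ bounds the root value by $(2n+1)R$. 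The dilation by $\epsilon$ and the modulation by $k$ then yield $\supp\widehat{w_k}(t,\cdot)\subset\{|\xi-k|\le(2n+1)R\epsilon\}$, and the frequential representation of the Wigner distribution transfers this into $\supp\mathcal{W}[w_k](x,\cdot)\subset\{|\xi-k|\le(2n+1)R\epsilon\}$, giving \eqref{eq:E-spec-supp-cdt}.

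Finally, \eqref{eq:E-spec-xi-int} follows from \eqref{eq:E-spec-E-id} by a disjointness argument: the hypothesis $(2n+1)R\epsilon<1/(2L)$ makes the radius-$(2n+1)R\epsilon$ balls around distinct points of $\mathbb{Z}^d_L$ (mesh $1/L$) disjoint, so integrating \eqref{eq:E-spec-E-id} over the ball around $k$ picks out only the summand with $k'=k$; the change of variable $\zeta=(\xi-k)/\epsilon$ then reproduces the desired integrand, and the integration domain $|\zeta|\le(2n+1)R$ can be extended to all of $\mathbb{R}^d$ because $\mathcal{E}^n_{t,k}(\cdot,\zeta)$ vanishes for $|\zeta|>(2n+1)R$ by the same support analysis applied to $\mathbb{E}\mathcal{W}[\mathcal{J}^n_{t,k}]$. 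The main technical delicacy in the whole argument is the bookkeeping in the second paragraph: keeping track of how the group-velocity transport and the subordinate dispersion combine under the $\epsilon$-rescaling to reproduce precisely the Lagrangian shift $x-\epsilon t\xi$ in the position slot of the final formula.
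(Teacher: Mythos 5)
Your argument is correct and follows essentially the same route as the paper: the off-diagonal terms are removed via Proposition~\ref{prop:J-E-id}, and your composition of Wigner covariance rules (modulation, dilation, translation, free transport) is exactly the computation the paper performs on the frequential side, where the phases $e^{-\pi i t|\xi\pm\eta/2|^2}$ combine into $e^{-2\pi i t\,\xi\cdot\eta}$ and the inverse Fourier transform in $\eta$ produces the Lagrangian shift $x-\epsilon t\xi$. Your support argument via the leaf-sum constraint is precisely the paper's appeal to Lemma~\ref{lem:deco-formula}, and the disjointness argument for~\eqref{eq:E-spec-xi-int} matches the paper's (tersely stated) deduction.
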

\begin{proof}
    Note that $A^{2n+1}_{k}(t,\cdot) = e^{-\epsilon t k \cdot \nabla + \epsilon^2 i t \Delta / 4\pi} \mathcal{J}^n_{t,k}$.
    A direct computation shows that
    \begin{equation*}
        \mathcal{F}_{x\to\xi}\bigl\{ A^{2n+1}_k(t,\epsilon x)e^{2\pi i k \cdot x - |k|^2t/2} \bigr\}
        = \frac{e^{-\pi i t|\xi|^2}}{\epsilon^d} \widehat{\mathcal{J}}^n_{t,k}\Bigl(\frac{\xi-k}{\epsilon}\Bigr).
    \end{equation*}
    Therefore, by Proposition~\ref{prop:J-E-id}, we have
    \begin{align*}
        \mathbb{E}\widehat{\mathcal{W}}&\bigl[u^{2n+1}(t)\bigr](\eta,\xi)
        = \frac{1}{L^d} \sum_{k \in \mathbb{Z}^d_L} \mathbb{E}\widehat{\mathcal{W}}\bigl[A^{2n+1}_k(t,\epsilon x)e^{2\pi i k \cdot x - |k|^2t/2}\bigr](\eta,\xi) \\
        & = \frac{1}{\epsilon^{2d} L^d} \sum_{k \in \mathbb{Z}^d_L} e^{-2\pi i t \xi \cdot \eta} \mathbb{E}\widehat{\mathcal{W}}\bigl[\mathcal{J}^n_{t,k}\bigr]\Bigl(\frac{\eta}{\epsilon},\frac{\xi-k}{\epsilon}\Bigr)
        = \frac{1}{\epsilon^{2d} L^d} \sum_{k \in \mathbb{Z}^d_L} e^{-2\pi i t \xi \cdot \eta} \widehat{\mathcal{E}}^n_{t,k}\Bigl(\frac{\eta}{\epsilon},\frac{\xi-k}{\epsilon}\Bigr).
    \end{align*}
    We conclude~\eqref{eq:E-spec-E-id} by taking the inverse Fourier transform of both sides.
    The support condition~\eqref{eq:E-spec-supp-cdt} follows directly from Lemma~\ref{lem:deco-formula} and readily implies the integral identity~\eqref{eq:E-spec-xi-int}.
\end{proof}

\section{Effective dynamics: homogeneous setting}

\label{sec:hom}

In this section we prove Theorem~\ref{thm:hom-inhom} in the homogeneous setting.
Recall that this corresponds to the periodic case with the scaling laws $\lambda^{-2} = L^\alpha$ where $0<\alpha<2/(1+4/d^*)$ and $\beta = \infty$.

\subsection{General couple}

We first give an upper bound of $N^\cp_{t,k}$, defined by~\eqref{eq:def-N-hom}, for all $\cp \in \mathscr{K}$, which shows that couples which are not regular have negligible contributions to the effective dynamics.

\begin{lemma}
    \label{lem:I-op-est}
    Let $\cp \in \mathscr{K}_n$, and let $V \in C^\infty(\mathbb{R}^{\mathfrak{B}^\cp} \times \mathscr{D}^\cp)$ be such that $\supp V \subset \mathbb{R}^{\mathfrak{B}^\cp} \times B$ where $B$ is bounded by $nR$ for some $R \ge 1$. For all $t > 0$ and $k \in \mathbb{R}^d$ set
    \begin{equation*}
        \mathcal{I}^\cp_{t,k} V = \Bigl(\frac{\lambda}{L^d}\Bigr)^{2n} \sum_{k \in \mathscr{D}^{\cp,L}_k} \int_{\mathscr{O}^\cp_t} e^{2\pi i \bm{t} \cdot \Omega^\cp(\bm{k})} V(\lambda^2 \bm{t},\bm{k}) \diff \bm{t}.
        \index{Operations and transforms!Oscillatory sums and integrals!Energy spectrum operator on couples $\mathcal{I}^\cp_{t,k}$}
    \end{equation*}
    For all $m \in \mathbb{N}^d$, and uniformly for all $(t,k) \in [0,1] \times \mathbb{R}^d$, we have the estimate
    \begin{equation}
        \label{eq:reso-int-upper-bound}
        |\mathcal{I}^\cp_{t/\lambda^2,k} V| \lesssim C^n t^{2n} (nR)^{4nd} \ln(1+nR^2L^\alpha)^{2n} L^{-\nu \ind(\cp)} \|V\|_{L^\infty(\mathscr{O}^\cp_t,L^\infty)}.
    \end{equation}
\end{lemma}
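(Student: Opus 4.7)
The plan is to combine the lattice-counting estimate of Proposition~\ref{prop:deco-count-cp} with the time-ordering Fourier bound of Proposition~\ref{prop:time-ord-fourier-est} through a dyadic decomposition on the size of $\gamma \Omega^\cp(\bm{k})$, where $\gamma = L^\alpha$.

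The first step is to rescale the time variables via $\bm{s} = \lambda^2 \bm{t}$, which rewrites
\begin{equation*}
    \mathcal{I}^\cp_{t/\lambda^2,k} V
    = \frac{L^{\alpha n}}{L^{2nd}} \sum_{\bm{k} \in \mathscr{D}^{\cp,L}_k} F(\bm{k}), \qquad F(\bm{k}) = \int_{\mathscr{O}^\cp_t} e^{2\pi i \bm{s} \cdot \gamma\Omega^\cp(\bm{k})} V(\bm{s},\bm{k}) \diff \bm{s},
\end{equation*}
so the oscillating phase rescales to $\gamma\Omega^\cp(\bm{k})$ while $\bm{s}$ still ranges over the bounded simplex $\mathscr{O}^\cp_t$.

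The key pointwise bound on $F(\bm{k})$ will be
\begin{equation*}
    |F(\bm{k})| \lesssim C^n t^{2n} \|V\|_{L^\infty} \sup_{\mu \in \mathscr{Q}(\mathfrak{B}^\cp)} \prod_{\mathfrak{b} \in \mathfrak{B}^\cp} \langle t\mu(\gamma\Omega^\cp(\bm{k}))_{\mathfrak{b}}\rangle^{-1},
\end{equation*}
which extends Proposition~\ref{prop:time-ord-fourier-est} (the case $V \equiv 1$) to an $L^\infty$-density $V$.  This is the technical core of the argument: the iterated integration-by-parts scheme of Proposition~\ref{prop:time-ord-fourier-est} must be adapted so that at each step the derivatives of $V$ that would naively arise are either rerouted into boundary terms or absorbed via the Fourier-inversion identity $\bm{1}_{\mathscr{O}^\cp_t} = \mathcal{F}\Theta^\cp_t$, so that the quantitative dependence on $V$ is only through $\|V\|_{L^\infty}$.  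The smoothness of $V$ is used only qualitatively (to justify the Fourier manipulations).

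The final step is a dyadic decomposition of the $\bm{k}$-sum by integer levels $r_\mathfrak{b} \ge 0$ with $\langle t\mu(\gamma\Omega^\cp(\bm{k}))_\mathfrak{b}\rangle \sim 2^{r_\mathfrak{b}}$.  Since $\bm{k} \in B$ enforces $|\gamma\Omega^\cp_\mathfrak{b}(\bm{k})| \lesssim (nR)^2 L^\alpha$, each $r_\mathfrak{b}$ ranges over $[0, R_{\max}]$ with $R_{\max} \lesssim \ln(1+nR^2L^\alpha)$.  Covering the associated $\gamma\Omega^\cp$-region by unit cubes and applying Proposition~\ref{prop:deco-count-cp} bounds the count at each dyadic level; pairing with the oscillation weight $\prod_\mathfrak{b} 2^{-r_\mathfrak{b}}$, summing over $r$, and using the cancellation $L^{\alpha n - 2nd}\cdot L^{n(2d-\alpha)} = 1$ produces the factor $\ln(1+nR^2L^\alpha)^{2n}$ from the harmonic-type sums, yielding the claimed bound.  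The main obstacle is the pointwise estimate on $|F(\bm{k})|$; once that is in place, the rest is dyadic bookkeeping.
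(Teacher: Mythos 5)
Your overall architecture --- rescale time so the phase becomes $\gamma\Omega^\cp(\bm{k})$ with $\gamma=L^\alpha$, exploit the $\Theta^\cp_t$-type decay, decompose the lattice sum into cells according to the size of the phase, count each cell with Proposition~\ref{prop:deco-count-cp}, and sum a harmonic series over $\lesssim nR^2L^\alpha$ levels to produce $\ln(1+nR^2L^\alpha)^{2n}$, using the exact cancellation $L^{\alpha n-2nd}\cdot L^{n(2d-\alpha)}=1$ --- is the same skeleton as the paper's proof (the paper uses unit cubes $Q_{\bm z}$ instead of dyadic shells, an immaterial difference). The genuine gap is your ``key pointwise bound''
\begin{equation*}
    |F(\bm{k})| \lesssim C^n t^{2n}\,\|V\|_{L^\infty}\sup_{\mu\in\mathscr{Q}(\mathfrak{B}^\cp)}\prod_{\mathfrak{b}\in\mathfrak{B}^\cp}\bigl\langle t\,\mu(\gamma\Omega^\cp(\bm{k}))_{\mathfrak{b}}\bigr\rangle^{-1},
\end{equation*}
which cannot hold with the dependence on $V$ only through $\|V\|_{L^\infty}$, no matter how the integrations by parts are arranged. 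Take $V(\bm{s},\bm{k})=e^{-2\pi i\gamma\,\bm{s}\cdot\Omega^\cp(\bm{k})}\chi(\bm{k})$ with $\chi$ smooth and supported in $B$: this $V$ is smooth and has $\|V\|_{L^\infty}=\|\chi\|_{L^\infty}$, yet the modulation cancels the phase exactly, so $F(\bm{k})=|\mathscr{O}^\cp_t|\,\chi(\bm{k})$ exhibits no decay in $\gamma\Omega^\cp(\bm{k})$ whatsoever. Decay of an oscillatory integral in the phase parameter necessarily costs quantitative control of the oscillation of $V$ in $\bm{t}$, so ``rerouting'' or ``absorbing'' the derivatives cannot leave only the sup norm behind; and since in the later applications $V$ does carry $L$-dependent modulations (e.g.\ $V=\mathcal{T}^\cp_{x,\zeta}\phi$), one cannot simply declare the smoothness qualitative.

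The paper avoids asking for pointwise decay at the frequency $\gamma\Omega^\cp(\bm{k})$ altogether. After the same time rescaling, it expands $\bm{1}_{\mathscr{O}^\cp_t}V$ through its temporal Fourier transform $\tilde{V}_t(\bm{\omega},\bm{k})$, which gives
\begin{equation*}
    \mathcal{I}^\cp_{t/\lambda^2,k}V
    =\frac{1}{(\lambda L^d)^{2n}}\sum_{\bm{k}\in\mathscr{D}^{\cp,L}_k}\int \Theta^\cp_t(\bm{\varrho})\,\tilde{V}_t(\bm{\omega},\bm{k})\diff\bm{\omega},
    \qquad \bm{\varrho}=\lambda^{-2}\Omega^\cp(\bm{k})-\bm{\omega},
\end{equation*}
so that Proposition~\ref{prop:time-ord-fourier-est} is applied to $\Theta^\cp_t$ alone and the decay is in the \emph{shifted} frequency $\bm{\varrho}$. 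The cube decomposition and the counting estimate of Proposition~\ref{prop:deco-count-cp} are then performed for each fixed $\bm{\omega}$ (the count is uniform in the shift, and $|\Omega^\cp(\bm{k})|\lesssim nR^2$ on $B$ still limits the number of cells, producing the logarithm), and only afterwards does one integrate in $\bm{\omega}$, so the dependence on $V$ enters through an integrated-in-$\bm{\omega}$ norm of $\tilde{V}_t$ rather than through a pointwise-in-phase bound. To repair your argument you should adopt this Fourier-in-time decomposition (or, equivalently, restate your pointwise bound at shifted frequencies and carry the $\bm{\omega}$-integral through the dyadic bookkeeping); as written, the step you yourself identify as the technical core is the one that fails.
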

\begin{proof}
    Because the norm on the right hand side of~\eqref{eq:reso-int-upper-bound} is invariant under translation in $\bm{k}$, we may therefore assume that $k=0$ without losing generality.
    Let $\tilde{\mathcal{F}}$ denote the inverse temporal Fourier transform, i.e., with the variable $\bm{t}$, and denote $\tilde{V}_t(\bm{\omega},\bm{k}) = \tilde{\mathcal{F}} (\bm{1}_{\mathscr{O}^\cp_t} V)(\bm{\omega},\bm{k})$.
    Performing the change of variable $\bm{t} \to \bm{t}/\lambda^2$, observing that
    \begin{equation}
        \label{eq:time-ord-scaling-id}
        \bm{1}_{\mathscr{O}^\cp_{t /\lambda^2}}(\bm{t}) = \bm{1}_{\mathscr{O}^\cp_{t}}(\lambda^2\bm{t}),
    \end{equation}
    and setting $\bm{\varrho} = \lambda^{-2} \Omega^\cp(\bm{k}) -\bm{\omega}$,  we then obtain
    \begin{align*}
        \mathcal{I}^\cp_{t/\lambda^2,k} V
        & = \frac{1}{(\lambda L^d)^{2n}} \sum_{\bm{k} \in \mathscr{D}^{\cp,L}_0} \int_{\mathscr{O}^\cp_t} e^{2\pi i \lambda^{-2} \bm{t} \cdot \Omega^\cp(\bm{k})} \Bigl( \int e^{-2\pi i \bm{t} \cdot \bm{\omega}} \tilde{V}_t(\bm{\omega},\bm{k}) \diff \bm{\omega} \Bigr) \diff \bm{t} \\
        & = \frac{1}{(\lambda L^d)^{2n}} \sum_{\bm{k} \in \mathscr{D}^{\cp,L}_0} \int \Theta^\cp_t(\bm{\varrho}) \tilde{V}_t(\bm{\omega},\bm{k}) \diff \bm{\omega}.
    \end{align*}
    Because $t/\langle t \omega \rangle \lesssim 1/\langle \omega \rangle$ when $0 \le t \le 1$, by Proposition~\ref{prop:time-ord-fourier-est}, we have
    \begin{equation*}
        |\mathcal{I}^\cp_{t/\lambda^2,k} V|
        \lesssim \frac{C^n}{(\lambda L^d)^{2n}} \sup_{\mu \in \mathscr{O}(\mathfrak{B}^\cp)} \int \sum_{\bm{k} \in \mathscr{D}^{\cp,L}_0} \Bigl( \prod_{\mathfrak{b} \in \mathfrak{B}^\cp} \frac{1}{\langle \mu(\bm{\varrho})_{\mathfrak{b}} \rangle} \Bigr) |\tilde{V}_t(\bm{\omega},\bm{k})| \diff \bm{\omega}.
    \end{equation*}
    For all $\bm{z} \in \mathbb{Z}^{\mathfrak{B}^\cp}$, let $Q_{\bm{z}}$ be the cube centered at $\bm{z}$ with edge length equal to one and with faces parallel to coordinate planes.
    For all $\mu \in \mathscr{O}(\mathfrak{B}^\cp)$, all $\bm{\omega} \in \mathbb{R}^{\mathfrak{B}^\cp}$ and all $\bm{z} \in \mathbb{Z}^{\mathfrak{B}^\cp}$ let
    \begin{equation*}
        G^\mu_{\bm{\omega},\bm{z}} = \{\bm{k} \in \mathscr{D}^{\cp,L}_0 \cap B: \mu(\bm{\varrho}) \in Q_{\bm{z}}\}.
    \end{equation*}
    For any $\bm{\omega}$, we have the decomposition:
    \begin{equation*}
        \mathscr{D}^{\cp,L}_0 \subset \bigcup_{\bm{z} \in \mathbb{R}^{\mathfrak{B}^\cp}} (\mathscr{D}^{\cp,L}_0 \cap G^\mu_{\bm{\omega},\bm{z}}).
    \end{equation*}
    Uniformly in $\bm{\omega}$ and $\bm{z}$, we have the estimate
    \begin{equation*}
        \sum_{k \in \mathscr{D}^{\cp,L}_0 \cap G^\mu_{\bm{\omega},\bm{z}}} \Bigl( \prod_{\mathfrak{b} \in \mathfrak{B}^\cp} \frac{1}{\langle \mu(\bm{\varrho})_{\mathfrak{b}} \rangle} \Bigr) |\tilde{V}_t(\bm{\omega},\bm{k})|
        \lesssim C^n \card( G_{\bm{\omega},\bm{z}} )
        \Bigl( \prod_{\mathfrak{b} \in \mathfrak{B}^\cp} \frac{1}{\langle \bm{z}\rangle_{\mathfrak{b}}} \Bigr) \|\tilde{V}_t(\bm{\omega},\cdot)\|_{L^\infty}.
    \end{equation*}
    Note that if $\bm{k} \in B$, then $|\Omega^\cp(\bm{k})| \lesssim nR^2$, so the set $U_{\bm{\omega}} = \{\bm{z} \in \mathbb{Z}^d : G_{\bm{\omega},\bm{z}} \ne \emptyset\}$ has a diameter $\lesssim  n R^2 \lambda^{-2} = n R^2 L^\alpha$ for all $\bm{\omega}$.
    We therefore have
    \begin{align*}
        \sum_{\bm{k} \in \mathscr{D}^{\cp,L}_0} \Bigl( \prod_{\mathfrak{b} \in \mathfrak{B}^\cp} \frac{1}{\langle \mu(\bm{\varrho})_{\mathfrak{b}} \rangle} \Bigr) |\tilde{V}_t(\bm{\omega},\bm{k})|
        & \lesssim C^n \sum_{\bm{z} \in U_{\bm{\omega}}} \card(G_{\bm{\omega},\bm{z}}) \Bigl( \prod_{\mathfrak{b} \in \mathfrak{B}^\cp} \frac{1}{\langle \bm{z}\rangle_{\mathfrak{b}}} \Bigr) \|\tilde{V}_t(\bm{\omega},\cdot)\|_{L^\infty} \\
        & \lesssim C^n \ln(1+nR^2L^\alpha)^{2n} \sup_{\bm{z} \in \mathbb{Z}^{\mathfrak{B}^\cp}} \card(G_{\bm{\omega},\bm{z}}) \|\tilde{V}_t(\bm{\omega},\cdot)\|_{L^\infty}
    \end{align*}
    To conclude, we first use Proposition~\ref{prop:deco-count-cp} to bound
    \begin{equation*}
        \card(G_{\bm{\omega},\bm{z}}) 
        \lesssim C^n (nR)^{4nd} (\lambda^2 L^{2d})^n L^{-\nu \ind(\cp)}, 
    \end{equation*}
    then we integrate in $\bm{\omega}$ and use the estimate
    \begin{equation*}
        \|\tilde{V}_t\|_{L^1 L^\infty}
        = \|\Theta^\cp_t * \tilde{V}_t\|_{L^1 L^\infty}
        \lesssim \|\Theta^\cp_t\|_{L^1} \|\tilde{V}_t\|_{L^\infty L^\infty}
        \lesssim \|\bm{1}_{\mathcal{O}_t} V\|_{L^1 L^\infty}
        \lesssim t^{2n} \|V\|_{L^\infty(\mathscr{O}^\cp_t,L^\infty)}.\qedhere
    \end{equation*}
\end{proof}

\begin{proposition}
    \label{prop:E-spec-general-upper-bound-hom}
    For all $\cp \in \mathscr{K}$, all $t > 0$ and all $k \in \mathbb{Z}^d_L$, we have
    \begin{equation}
        \label{eq:N-I-id-hom}
        N^\cp_{t,k} = \varsigma_\cp \mathcal{I}^\cp_{t,k} \psi^\cp.\qedhere
    \end{equation}
    Consequently, supposing that $\widehat{\psi} \in C_c^\infty(\mathbb{R}^d)$ with $\supp \psi$ bounded by $R \ge 1$, if $\cp \in \mathscr{K}^\reg_n$ with $n \ge 0$, then for all $m \in \mathbb{N}^d$ and uniformly for all $(t,k) \in [0,t] \times \mathbb{R}^d$, we have
    \begin{equation*}
        |k^m N^{\cp}_{t/\lambda^2,k}| \lesssim
        C^n t^{2n} (nR)^{4nd} \ln(1+nR^2L^\alpha)^{2n} L^{-\nu \ind(\cp)} n^{|m|} \||\psi|^2\|_{L^\infty_{|m|/2}}^{4n+2}.
    \end{equation*}
    Particularly, if $\cp \notin \mathscr{K}^\reg$, then
    \begin{equation}
        \label{eq:cp-non-reg-est-hom}
        \lim_{L \to \infty} \sup_{(t,k) \in [0,1] \times \mathbb{R}^d} k^m N^\cp_{t/\lambda^2,k} = 0.
    \end{equation}
\end{proposition}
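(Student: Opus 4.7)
The first identity is a direct unpacking of definitions: by Definition~\ref{def:time-ord} the propagator factor $\Psi^\cp_t(\bm{k}) = \Theta^\cp_t(\Omega^\cp(\bm{k})) = \int_{\mathscr{O}^\cp_t} e^{2\pi i \bm{t}\cdot \Omega^\cp(\bm{k})} \diff \bm{t}$, and substituting this into \eqref{eq:def-N-hom} while noting that $\psi^\cp$ has no $\bm{t}$-dependence immediately recovers the definition of $\mathcal{I}^\cp_{t,k}\psi^\cp$ up to the global sign factor $\varsigma_\cp$.

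For the weighted sup bound, I would apply Lemma~\ref{lem:I-op-est} to the function $V(\bm{t},\bm{k}) = k^m \psi^\cp(\bm{k})$, where inside the sum $k$ is the constant $\bm{k}_{\mathfrak{r}^{\tau^\pm}}$ imposed by the constraint $\bm{k}\in\mathscr{D}^{\cp,L}_k$. The support hypothesis of the lemma is met because the compact support of $\psi$ forces $|\bm{k}^\flat_\mathfrak{p}|\le R$ for every $\mathfrak{p}\in\wp$, and then Lemma~\ref{lem:deco-formula} expresses each $\bm{k}_\mathfrak{n}$ as a signed sum of at most $|\wp|=2n+1$ of the $\bm{k}^\flat_\mathfrak{p}$, so that $\supp V \subset \mathbb{R}^{\mathfrak{B}^\cp}\times B$ with $B$ bounded by $(2n+1)R \lesssim nR$. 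For the $L^\infty$ norm, the same formula applied at the root gives $|k|\le\sum_\mathfrak{p}|\bm{k}^\flat_\mathfrak{p}|$, whence by convexity $|k|^{|m|}\lesssim n^{|m|}\max_\mathfrak{p}|\bm{k}^\flat_\mathfrak{p}|^{|m|}$; absorbing the weight into a single factor of $|\psi|^2$ via the weighted norm $\||\psi|^2\|_{L^\infty_{|m|/2}}$ and bounding the remaining leaf factors crudely by the same norm yields the bound $\|V\|_{L^\infty(\mathscr{O}^\cp_t,L^\infty)} \lesssim n^{|m|}\||\psi|^2\|_{L^\infty_{|m|/2}}^{4n+2}$, with an exponent that overcounts by treating each of the $4n+2$ individual $\psi$ or $\bar\psi$ factors in $\psi^\cp$ separately. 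Plugging into \eqref{eq:reso-int-upper-bound} produces the stated estimate.

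The vanishing \eqref{eq:cp-non-reg-est-hom} is then immediate: when $\cp\notin\mathscr{K}^\reg$ one has $\ind(\cp)\ge 1$ by Definition~\ref{def:cp-irreducible}, so the decay factor $L^{-\nu\ind(\cp)}\le L^{-\nu}$ beats the only $L$-dependent quantity in the prefactor, the polylogarithmic $\ln(1+nR^2 L^\alpha)^{2n}$; all other factors depend only on the fixed data $(n,m,R,t)$, and the product tends to $0$ uniformly over $(t,k)\in[0,1]\times\mathbb{R}^d$ since the effective support of $\psi^\cp$ keeps $k$ in a bounded set independent of $L$. The genuine obstacle in the whole proposition is hidden inside Lemma~\ref{lem:I-op-est}, which itself rests on the lattice counting estimate of Proposition~\ref{prop:deco-count-cp} (the real analytic input, supplying the decisive $L^{-\nu\ind(\cp)}$ gain) and the oscillatory Fourier bound of Proposition~\ref{prop:time-ord-fourier-est}; once those are available, the present proposition reduces to bookkeeping that translates between decoration constraints on $\cp$ and the quantities appearing in the bound.
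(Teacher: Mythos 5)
Your proposal is correct and follows essentially the same route as the paper: the identity \eqref{eq:N-I-id-hom} is read off from \eqref{eq:def-N-hom} since $\Psi^\cp_t=\Theta^\cp_t\comp\Omega^\cp$, the weighted bound comes from applying Lemma~\ref{lem:I-op-est} to $V=\bm{k}_\cp^m\psi^\cp$ with support bounded by $(2n+1)R$ and $\|\bm{k}_\cp^m\psi^\cp\|_{L^\infty}\lesssim n^{|m|}\||\psi|^2\|_{L^\infty_{|m|/2}}^{4n+2}$, and the vanishing for non-regular couples follows because $\ind(\cp)\ge 1$ makes $L^{-\nu\ind(\cp)}$ dominate the polylogarithmic factor. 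The only genuine content is indeed in Lemma~\ref{lem:I-op-est} (via Propositions~\ref{prop:deco-count-cp} and~\ref{prop:time-ord-fourier-est}), exactly as you identify.
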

\begin{proof}
    The identity follows from~\eqref{eq:def-N-hom}.
    Write $\cp = (\tau^*,\wp)$.
    For all $\bm{k} \in \mathscr{D}^\cp$, denote $\bm{k}_\cp = \bm{k}_{\mathfrak{r}^{\tau^\pm}}$ for simplicity.
    Note that $k^m N^{\cp}_{t,k} = \varsigma_\cp \mathcal{I}^\cp_{t,k} (\bm{k}_\cp^m \psi^\cp)$ and $\supp \psi^\cp$ is bounded by $(2n+1) R$.
    Therefore, by Lemma~\ref{lem:I-op-est}, we have
    \begin{equation*}
        |k^m N^{\cp}_{t/\lambda^2,k}| \lesssim 
        C^n t^{2n} (nR)^{4nd} \ln(1+nR^2L^\alpha)^{2n} L^{-\nu \ind(\cp)} \|\bm{k}_\cp^m \psi^\cp\|_{L^\infty}.
    \end{equation*}
    We conclude with the estimate
    \begin{equation*}
        \|\bm{k}_\cp^m \psi^\cp\|_{L^\infty} 
        \lesssim n^{|m|} \||\psi|^2\|_{L^\infty_{|m|}}^{2n+1}
        \lesssim n^{|m|} \||\psi|^2\|_{L^\infty_{|m|/2}}^{4n+2}
        \qedhere
    \end{equation*}
\end{proof}

\subsection{Regular couples}

\label{sec:reg-cp-hom}

Now we give finer estimates and asymptotic analysis for $N^\cp_{t,k}$ when $\cp \in \mathscr{K}^\reg$, using the structure of regular couples and results from \S\ref{sec:oscillatory}.

\subsubsection{Algebraic structures}

Let $\cp \in \mathscr{K}^\reg$ and let $r \in \mathscr{I}_\cp$. Note that $r \in \mathscr{K}^\reg_1$, so we can write $\mathfrak{B}^r = \{\mathfrak{b}^+,\mathfrak{b}^-\}$ with $\iota_{\mathfrak{b}^\pm} = \pm 1$.
By Lemma~\ref{lem:deco-formula} and Definition~\ref{def:reso-function}, we ave $\Omega^\cp_{\mathfrak{b}^+} + \Omega^\cp_{\mathfrak{b}^-} = 0$.
Therefore, we define the bilinear map $\mho^\cp = (\mho^\cp_r)_{r \in \mathscr{I}_\cp}: \mathscr{D}^\cp \times \mathscr{D}^\cp \to \mathbb{R}^{\mathscr{I}_\cp}$ by setting
$\mho^\cp_r = \pm \Omega^\cp_{\mathfrak{b}^\pm},$
\index{Functions and random variables!Resonance functions!Resonance factors on regular couples $\mho^\cp_r$}
where $\mathfrak{b}^\pm \in \mathfrak{B}^r \cap \iota^{-1}(\pm 1).$
Next, define the linear isomorphism $\kappa_\cp : \mathbb{R}^{\mathscr{I}_\cp} \times \mathbb{R}^{\mathscr{I}_\cp} \to \mathbb{R}^{\mathfrak{B}^\cp}$ by setting
\begin{equation*}
    \kappa_\cp(\bm{\mu},\bm{s})_{\mathfrak{b}} = \bm{\mu}_{r} + \iota_{\mathfrak{\mathfrak{b}}} \bm{s}_{r}/2
\end{equation*}
if $\mathfrak{b} \in \mathfrak{B}^r$ and $r \in \mathscr{I}_\cp$.
Let $\kappa_\cp^*$ denote the pullback induced by $\kappa_\cp$.
For all $t > 0 $, define the linear integral operator $\Phi^\cp_t:L^\infty(\mathbb{R}^{\mathfrak{B}^\cp} \times (\mathbb{R}^{2d})^{\mathfrak{B}^\cp}) \to L^\infty(\mathbb{R}^{\mathscr{I}_\cp} \times (\mathbb{R}^{2d})^{\mathfrak{B}^\cp})$ by
\begin{equation*}
    \Phi^{\cp}_{t}U(\bm{s},\bm{z})
    = \int_{\mathbb{R}^{\mathscr{I}_\cp}} \kappa_\cp^* ( \bm{1}_{\mathscr{O}^\cp_{t}} U)(\bm{\mu},\bm{s},\bm{z}) \diff \bm{\mu}.
\end{equation*}
Define $\varpi^\cp = (\varpi^\cp_{r})_{r \in \mathscr{I}_\cp}$ by setting $\varpi^\cp_{r}(\bm{z}) = \bm{x}_{r} \cdot \bm{y}_{r}$ if $\bm{z} = (\bm{x}_{r},\bm{y}_{r})_{r \in \mathscr{I}_\cp}$.
Suppose that $\cp \in \mathscr{K}^\reg_n$ with $n \in \mathbb{N}$.
As in \S\ref{sec:oscillatory}, for all $L \in (0,\infty]$, define the linear functional $\mathcal{S}^\cp_L$\index{Operations and transforms!Oscillatory sums and integrals!Oscillatory sum/integral operator on couples $\mathcal{S}^\cp_L$} on $C_c^\infty(\mathbb{R}^{\mathscr{I}_\cp} \times (\mathbb{R}^{2d})^{\mathscr{I}_\cp})$ by
\begin{equation*}
    \mathcal{S}^\cp_L(\Phi)
    = \begin{cases}
        \displaystyle
        \frac{1}{L^{2nd}} \sum_{\bm{z} \in (\mathbb{Z}^{2d}_L)^{\mathscr{I}_\cp}} \int_{\mathbb{R}^{\mathscr{I}_\cp}} e^{2\pi i \bm{s} \cdot \varpi^\cp(\bm{z})} \Phi(\bm{s},\bm{z}) \diff \bm{s}, & L \in (0,\infty);\\
        \displaystyle
        \int_{(\mathbb{R}^{2d})^{\mathscr{I}_\cp}}\int_{\mathbb{R}^{\mathscr{I}_\cp}} e^{2\pi i \bm{s} \cdot \varpi^\cp(\bm{z})} \Phi(\bm{s},\bm{z}) \diff \bm{s} \diff \bm{z}, & L = \infty.
    \end{cases}
\end{equation*}

\begin{lemma}
    \label{lem:I-S-op-id}
    If $\cp \in \mathscr{K}^\reg$, then for all $t > 0$, all $k \in \mathbb{R}^d$ and all $V \in C^\infty(\mathbb{R}^{\mathfrak{B}^\cp}\times \mathscr{D}^\cp)$ such that $V(\bm{t},\cdot) \in C_c^\infty(\mathscr{D}^\tau)$ for all $\bm{t} \in \mathbb{R}^{\mathfrak{B}^\cp}$, we have
    \begin{equation}
        \label{eq:I-S-op-rel-id}
        \mathcal{I}^\cp_{t,k} V = \mathcal{S}^\cp_L\bigl(\theta_L \Phi^\cp_t \bigl(V|_{\mathscr{D}^\cp_k} \comp (\Lambda^\cp_k)^{-1}\bigr)\bigr).
    \end{equation}
    Particularly, observing that $\iota_\cp = 1$ when $\cp \in \mathscr{K}^\reg$, we have
    \begin{equation}
        \label{eq:N-S-op-id-hom}
        N^\cp_{t,k} = \mathcal{S}^\cp_L\bigl(\theta_L \Phi^\cp_t \bigl(\psi^\cp|_{\mathscr{D}^\cp_k} \comp (\Lambda^\cp_k)^{-1}\bigr)\bigr).
    \end{equation}
\end{lemma}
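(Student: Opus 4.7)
The identity~\eqref{eq:I-S-op-rel-id} is an exact algebraic reformulation obtained by two changes of variables. In the spatial sum, I would use the isomorphism $\Lambda^\cp_k : \mathscr{D}^\cp_k \to (\mathbb{R}^{2d})^{\mathscr{I}_\cp}$ from Lemma~\ref{lem:tree-deco-change-var} to replace $\bm{k}$ by $\bm{z} = \Lambda^\cp_k(\bm{k})$; since $\Lambda^\cp_k$ has integer-matrix coefficients (it is built from \eqref{eq:deco-change-var-formula}), it induces a bijection $\mathscr{D}^{\cp,L}_k \leftrightarrow (\mathbb{Z}^{2d}_L)^{\mathscr{I}_\cp}$, and $V|_{\mathscr{D}^\cp_k}$ pulls back to exactly the integrand appearing on the RHS. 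In the time integral, I would change variables via $\bm{t} = \kappa_\cp(\bm{\mu},\bm{s})$; since each of the $n$ irreducible blocks $(\bm{\mu}_r,\bm{s}_r) \mapsto (\bm{\mu}_r \pm \bm{s}_r/2)$ has determinant $-1$, the Jacobian is $1$, and the ordering constraint $\bm{t} \in \mathscr{O}^\cp_t$ becomes the indicator built into $\Phi^\cp_t$, whose $\bm{\mu}$-integration is then exactly the definition of the operator.

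The key algebraic step is the phase identity $\bm{t} \cdot \Omega^\cp(\bm{k}) = \bm{s} \cdot \varpi^\cp(\bm{z})$. Since $\cp$ is regular, each $r \in \mathscr{I}_\cp$ corresponds to a conjugate pair $\mathfrak{B}^r = \{\mathfrak{b}^+_r,\mathfrak{b}^-_r\} \in \mathfrak{C}^\cp_2$ (Remark~\ref{rmk:I-q-ordering}); because $\mathfrak{b}^+_r \sim \mathfrak{b}^-_r$ and $\iota_{\mathfrak{b}^+_r} = -\iota_{\mathfrak{b}^-_r}$, Definition~\ref{def:reso-function} and Corollary~\ref{cor:conju-node-char-deco} together give $\Omega^\cp_{\mathfrak{b}^+_r} + \Omega^\cp_{\mathfrak{b}^-_r} = 0$, so both equal $\pm \mho^\cp_r$. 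Combining this with $\bm{t}_{\mathfrak{b}^+_r} - \bm{t}_{\mathfrak{b}^-_r} = \bm{s}_r$ from the definition of $\kappa_\cp$ yields
\[
    \bm{t} \cdot \Omega^\cp(\bm{k}) = \sum_{r \in \mathscr{I}_\cp} \bm{s}_r\, \mho^\cp_r(\bm{k}).
\]
The factorization $\Lambda^\cp = \Lambda^{\cp,+} \comp \vartheta^{\cp,+}$ identifies $\bm{z}_r = (\bm{x}_{\mathfrak{b}^+_r}, \bm{y}_{\mathfrak{b}^+_r})$, and \eqref{eq:Omega-change-var} then gives $\mho^\cp_r(\bm{k}) = \bm{x}_{\mathfrak{b}^+_r} \cdot \bm{y}_{\mathfrak{b}^+_r} = \varpi^\cp_r(\bm{z})$, completing the phase identity.

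The remaining work is bookkeeping of the scalings: the prefactor $(\lambda/L^d)^{2n}$ on the LHS matches $L^{-2nd}$ from the definition of $\mathcal{S}^\cp_L$ multiplied by the Jacobian $\gamma^n$ produced when the dilation $\bm{s} \mapsto \bm{s}/\gamma$ implicit in $\theta_L$ is undone inside the $\bm{s}$-integration; the identity~\eqref{eq:time-ord-scaling-id} reconciles the time-ordered domains at scales $t$ and $t/\lambda^2$. This careful tracking of scaling factors is the main obstacle — conceptually the result is a relabelling, but one must verify that the rescaled oscillation $e^{2\pi i \gamma \bm{s} \cdot \varpi^\cp(\bm{z})}$ on the RHS correctly matches $e^{2\pi i \bm{t} \cdot \Omega^\cp(\bm{k})}$ on the LHS under the consistent change $\bm{t} = \lambda^2 \kappa_\cp(\bm{\mu},\bm{s})$. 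Finally, \eqref{eq:N-S-op-id-hom} follows from \eqref{eq:I-S-op-rel-id} applied to the $\bm{t}$-independent $V = \psi^\cp$, together with \eqref{eq:N-I-id-hom} and the identity $\varsigma_\cp = 1$ for regular couples, which is straightforward induction via Lemma~\ref{lem:reg-cp-str}: for $\cp \in \mathscr{K}_1$ one has $\varsigma_\cp = i \cdot (-i) = 1$, and for $\cp = \otimes^\sigma(\cp_1,\cp_2,\cp_3)$ the two new roots with opposite signs contribute $(i\sigma)(-i\sigma) = 1$.
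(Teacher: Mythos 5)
Your route is the paper's own: substitute $\bm{t}=\kappa_\cp(\bm{\mu},\bm{s})$ (unit Jacobian blockwise), use $\Omega^\cp_{\mathfrak{b}^+_r}+\Omega^\cp_{\mathfrak{b}^-_r}=0$ to reduce the phase to $\bm{s}\cdot\mho^\cp(\bm{k})$, pass to $\bm{z}=\Lambda^\cp_k\bm{k}$ so that $\mho^\cp(\bm{k})=\varpi^\cp(\bm{z})$ via~\eqref{eq:Omega-change-var}, and integrate out $\bm{\mu}$ to produce $\Phi^\cp_t$; this is exactly the computation in the paper, and your verification of $\varsigma_\cp=1$ is a correct supplement to what the paper merely asserts.

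Two corrections, both local. First, the scaling step you single out as the main obstacle is written with the dilation in the wrong direction. The identity, as proved and as used later (e.g.\ in Proposition~\ref{prop:E-spec-reg-asym-hom}), is at kinetic time, i.e.\ for $\mathcal{I}^\cp_{t/\lambda^2,k}V$, and the consistent substitution is $\lambda^2\bm{t}=\kappa_\cp(\bm{\mu},\bm{s})$, equivalently $\bm{t}=\gamma\,\kappa_\cp(\bm{\mu},\bm{s})$ with $\gamma=\lambda^{-2}=L^\alpha$, not $\bm{t}=\lambda^2\kappa_\cp(\bm{\mu},\bm{s})$. With this choice, $\bm{1}_{\mathscr{O}^\cp_{t/\lambda^2}}(\bm{t})=\kappa_\cp^*\bm{1}_{\mathscr{O}^\cp_t}(\bm{\mu},\bm{s})$ by~\eqref{eq:time-ord-scaling-id}, $V(\lambda^2\bm{t},\bm{k})=\kappa_\cp^*V(\bm{\mu},\bm{s},\bm{k})$, and the phase becomes $\gamma\,\bm{s}\cdot\varpi^\cp(\bm{z})$, which is what undoing $\theta_L$ (i.e.\ $\bm{s}\mapsto\gamma\bm{s}$ in $\mathcal{S}^\cp_L$) produces on the right-hand side. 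Accordingly the prefactors match as $(\lambda/L^d)^{2n}\cdot\gamma^{2n}=L^{-2nd}\,\gamma^{n}$, the $\gamma^{2n}$ being the Jacobian of the time dilation in both $\bm{\mu}$ and $\bm{s}$; your count $(\lambda/L^d)^{2n}=L^{-2nd}\,\gamma^{n}$ omits it and is off by $\gamma^{2n}$, and with the substitution as you wrote it the phases do not match. Second, conjugacy of $\mathfrak{b}^+_r$ and $\mathfrak{b}^-_r$ alone does not give $\Omega^\cp_{\mathfrak{b}^+_r}+\Omega^\cp_{\mathfrak{b}^-_r}=0$: Definition~\ref{def:reso-function} also involves the children, so you need that the corresponding children of $\mathfrak{b}^+_r$ and $\mathfrak{b}^-_r$ carry equal decorations with opposite signs, which follows from Lemma~\ref{lem:deco-formula} (and is the identity the paper records when defining $\mho^\cp$, so it may simply be cited).
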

\begin{proof}
    Suppose that $\cp \in \mathscr{K}^\reg_n$ with $n \in \mathbb{N}$.
    By Lemma~\ref{lem:tree-deco-change-var}, the identities~\eqref{eq:Omega-change-var} and~\eqref{eq:time-ord-scaling-id}, we have the following direct computation which establishes~\eqref{eq:I-S-op-rel-id} using the scaling law~$\lambda^{-2} = L^\alpha$:
    \begin{align*}
        \mathcal{I}^\cp_{t/\lambda^2,k} V
        & = \Bigl(\frac{\lambda}{L^d}\Bigr)^{2n} \sum_{\bm{k} \in \mathscr{D}^\cp_k}
        \iint e^{2\pi i \bm{s} \cdot \mho^\cp(\bm{k})}  \kappa_\cp^* (\bm{1}_{\mathscr{O}^\cp_t} V)(\lambda^2 \bm{\mu},\lambda^2\bm{s},\bm{k}) \diff \bm{\mu} \diff \bm{s} \\
        & = \frac{1}{L^{2nd}} \sum_{\bm{z} \in (\mathbb{Z}^{2d}_L)^{\mathscr{I}_\cp}}
        \iint e^{2\pi i \bm{s} \cdot \varpi^\cp(\bm{z})} \kappa_\cp^* (\bm{1}_{\mathscr{O}^\cp_t} V)(\bm{\mu},\lambda^2\bm{s},(\Lambda^\cp_k)^{-1}\bm{z}) \diff \bm{\mu} \diff \bm{s} \\
        & = \frac{1}{L^{2nd}} \sum_{\bm{z} \in (\mathbb{Z}^{2d}_L)^{\mathscr{I}_\cp}} \int e^{2\pi i \bm{s} \cdot \varpi^\cp(\bm{z})} \Phi^\cp_t V(\lambda^2\bm{s},(\Lambda^\cp_k)^{-1}\bm{z}) \diff \bm{s}.\qedhere
    \end{align*}
\end{proof}

Because of the identity~\eqref{eq:N-S-op-id-hom}, we can use Proposition~\ref{prop:int-approx} to study the limit $\lim_{L \to \infty} N^\cp_{t/\lambda^2,k}$ when $\cp \in \mathscr{K}^\reg$.
For all $n \ge 0$, all $t > 0$ and all $k \in \mathbb{R}^d$, define
    \begin{equation}
        \label{eq:def-N-infty-hom}
        N^{n,\infty}_{t,k} = \sum_{\cp \in \mathscr{K}^\reg_n} N^{\cp,\infty}_{t,k}, 
        \quad \text{where}\ 
        N^{\cp,\infty}_{t,k}
        =  \mathcal{S}^\cp_\infty\bigl(\theta_\infty \Phi^\cp_t \bigl(\psi^\cp|_{\mathscr{D}^\cp_k} \comp (\Lambda^\cp_k)^{-1}\bigr)\bigr).
    \end{equation}

\begin{lemma}
    \label{lem:N-infty-expression}
    For all $\cp \in \mathscr{K}^\reg$, we have the identity:
    \begin{equation*}
        N^{\cp,\infty}_{t,k} = \Theta_t[\mathscr{I}_\cp](0) \int_{\mathscr{D}^\cp_k} \bm{\delta}(\mho^\cp(\bm{k})) \psi^\cp(\bm{k}) \diff \bm{k}.
    \end{equation*}
    Particularly, if $\cp \in \mathscr{K}_0$, then $N^{\cp,\infty}_{t,k} = |\psi(k)|^2$.
\end{lemma}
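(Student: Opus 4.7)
The plan is to unfold the definition of $N^{\cp,\infty}_{t,k}$ step by step. First I would evaluate $\theta_\infty \Phi^\cp_t V$ for a time-independent integrand $V$. Since $\Phi^\cp_t V(\bm{s},\bm{z}) = \int \kappa_\cp^*(\bm{1}_{\mathscr{O}^\cp_t} V)(\bm{\mu},\bm{s},\bm{z})\diff\bm{\mu}$ and $\kappa_\cp(\bm{\mu},0)_{\mathfrak{b}} = \bm{\mu}_r$ for $\mathfrak{b} \in \mathfrak{B}^r$, setting $\bm{s}=0$ forces the two conjugate branching nodes of each irreducible subcouple $r$ to share the common time $\bm{\mu}_r$. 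Hence
\begin{equation*}
    \theta_\infty \Phi^\cp_t V(\bm{z}) = V(\bm{z}) \int_{\mathbb{R}^{\mathscr{I}_\cp}} \bm{1}_{\mathscr{O}^\cp_t}\bigl(\kappa_\cp(\bm{\mu},0)\bigr) \diff\bm{\mu}
\end{equation*}
when $V$ does not depend on $\bm{t}$, which is our case since $\psi^\cp|_{\mathscr{D}^\cp_k}\comp(\Lambda^\cp_k)^{-1}$ is time-independent.

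The main obstacle, and the structural content of the lemma, is to identify this $\bm{\mu}$-integral with $\Theta_t[\mathscr{I}_\cp](0)$. For this I would use Lemma~\ref{lem:reg-cp-str} and Remark~\ref{rmk:I-q-ordering}: under the regularity assumption, the trees $\tau^+$ and $\tau^-$ have matching branching structures induced from $\mathscr{I}_\cp$, so the map $\mathfrak{B}^{\tau^\pm} \to \mathscr{I}_\cp$ that sends a branching node to its irreducible component is order-preserving. Consequently $\bm{t}=\kappa_\cp(\bm{\mu},0)$ lies in $\mathscr{O}^\cp_t = \mathscr{O}_t(\mathfrak{B}^{\tau^+}) \times \mathscr{O}_t(\mathfrak{B}^{\tau^-})$ if and only if $\bm{\mu} \in \mathscr{O}_t(\mathscr{I}_\cp)$. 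Both descending chains of order conditions coincide because on each irreducible component the two conjugate branching nodes carry the same time. Therefore
\begin{equation*}
    \int_{\mathbb{R}^{\mathscr{I}_\cp}}\bm{1}_{\mathscr{O}^\cp_t}(\kappa_\cp(\bm{\mu},0))\diff\bm{\mu} = \bigl|\mathscr{O}_t(\mathscr{I}_\cp)\bigr| = \Theta_t[\mathscr{I}_\cp](0).
\end{equation*}

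Having established $\theta_\infty \Phi^\cp_t V = \Theta_t[\mathscr{I}_\cp](0)\,V$, the remainder is a direct computation. Apply $\mathcal{S}^\cp_\infty$ and integrate out the $\bm{s}$-variable to produce a Dirac mass:
\begin{equation*}
    \mathcal{S}^\cp_\infty(\theta_\infty \Phi^\cp_t V) = \Theta_t[\mathscr{I}_\cp](0) \int_{(\mathbb{R}^{2d})^{\mathscr{I}_\cp}} \bm{\delta}(\varpi^\cp(\bm{z})) V(\bm{z}) \diff\bm{z}.
\end{equation*}
Finally I would change variables via the isomorphism $\Lambda^\cp_k : \mathscr{D}^\cp_k \to (\mathbb{R}^{2d})^{\mathscr{I}_\cp}$ of Lemma~\ref{lem:tree-deco-change-var}. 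The change of variables has unit Jacobian (it is a unimodular integer linear map given by nodewise differences), and by~\eqref{eq:Omega-change-var} together with the identification $\vartheta^{\cp,+}$ which picks the positive branching node in each $r \in \mathscr{I}_\cp$, we have $\varpi^\cp \comp \Lambda^\cp_k = \mho^\cp$. Substituting $V = \psi^\cp|_{\mathscr{D}^\cp_k} \comp (\Lambda^\cp_k)^{-1}$ yields the claimed formula. The degenerate case $\cp\in\mathscr{K}_0$ follows from the convention $\Theta_t[\emptyset](0)=1$ and the fact that $\mathscr{D}^\cp_k$ reduces to a single point on which $\psi^\cp = |\psi(k)|^2$.
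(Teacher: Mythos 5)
Your argument is correct and is essentially the paper's own proof: the trace at $\bm{s}=0$ turns the $\bm{\mu}$-integral into $|\mathscr{O}_t(\mathscr{I}_\cp)| = \Theta_t[\mathscr{I}_\cp](0)$ via the identity $\kappa_\cp^* \bm{1}_{\mathscr{O}^\cp_t}(\bm{\mu},0) = \bm{1}_{\mathscr{O}_t(\mathscr{I}_\cp)}(\bm{\mu})$ (which the paper simply asserts and you justify through Lemma~\ref{lem:reg-cp-str} and Remark~\ref{rmk:I-q-ordering}), the $\bm{s}$-integration produces $\bm{\delta}(\varpi^\cp(\bm{z}))$, and the unit-Jacobian change of variables $\Lambda^\cp_k$ converts $\varpi^\cp$ into $\mho^\cp$, exactly as in the paper. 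One cosmetic point: $\vartheta^{\cp,+}$ selects the branching node of $r$ lying in the tree $\tau^+$, which need not be the positive-sign node, but this is immaterial since $\Omega^\cp_{\mathfrak{b}^+} = -\Omega^\cp_{\mathfrak{b}^-}$, so either node of $\mathfrak{B}^r$ yields $\mho^\cp_r$ under~\eqref{eq:Omega-change-var}.
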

\begin{proof}
    Note that $\kappa_\cp^* \bm{1}_{\mathscr{O}^\cp_{t}}(\bm{\mu},0) = \bm{1}_{\mathscr{O}_t(\mathscr{I}_\cp)}(\bm{\mu})$.
    We therefore have:
    \begin{align*}
        N^{\cp,\infty}_{t,k}
         & =  \int_{(\mathbb{R}^{2d})^{\mathscr{I}_\cp}} \Bigl( \int_{\mathbb{R}^{\mathscr{I}_\cp}} e^{2\pi i \bm{s} \cdot \varpi^\cp(\bm{z})} \diff \bm{s} \Bigr) \Bigl( \int_{\mathbb{R}^{\mathscr{I}_\cp}} \kappa_\cp^* \bm{1}_{\mathscr{O}^\cp_{t}}(\bm{\mu},0) \diff \bm{\mu} \Bigr) \psi^\cp((\Lambda^\cp_k)^{-1}\bm{z})\diff \bm{z} \\
         & =  \int_{\mathscr{O}_t(\mathscr{I}_\cp)} \diff \bm{\mu} \int_{(\mathbb{R}^{2d})^{\mathscr{I}_\cp}} \bm{\delta}(\varpi^\cp(\bm{z})) \psi^\cp((\Lambda^\cp_k)^{-1}\bm{z})\diff \bm{z}.
    \end{align*}
    We conclude using Definition~\ref{def:time-ord} and the fact that the Jacobian of $ \Lambda^\cp_k \comp (\pi^\cp|_\wp)^*|_{\pi^\cp_*\mathscr{D}^\cp_k|_{\wp}}$ is equal to one, which follows from the definition of $\Lambda^\cp_k$ in Lemma~\ref{lem:tree-deco-change-var}.
\end{proof}

\begin{lemma}
    \label{lem:N-infty-hom-induction}
    If $\cp = \otimes^\sigma (q_1,q_2,q_3) \in \mathscr{K}^\reg$ where $\sigma \in \{+,-\}$ and $q_1,q_2,q_3 \in \mathscr{K}^\reg$, then for all $t > 0$ and all $k \in \mathbb{R}^d$, we have the identity
    \begin{equation}
        \label{eq:N-infty-hom-cp-induction}
        N^{\cp,\infty}_{t,k} = \int_0^t \int_{\mathscr{D}_k} \bm{\delta}(\Omega(\bm{k})) \prod_{1 \le j \le 3} N^{\cp_j,\infty}_{s,\bm{k}_j} \diff \bm{k} \diff s.
    \end{equation}
    Consequently, for all $n \ge 1$,  we have the induction formula
    \begin{equation}
        \label{eq:N-infty-hom-n-induction}
        N^{n,\infty}_{t,k} = 2\sum_{n_1+n_2+n_3=n-1} \int_0^t \int_{\mathscr{D}_k} \bm{\delta}(\Omega(\bm{k})) \prod_{1 \le j \le 3} N^{n_j,\infty}_{s,\bm{k}_j} \diff \bm{k} \diff s.
    \end{equation}
\end{lemma}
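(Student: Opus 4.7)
The plan is to start from the closed-form expression provided by Lemma~\ref{lem:N-infty-expression}, namely
\begin{equation*}
    N^{\cp,\infty}_{t,k} = \Theta_t[\mathscr{I}_\cp](0) \int_{\mathscr{D}^\cp_k} \bm{\delta}(\mho^\cp(\bm{k})) \psi^\cp(\bm{k}) \diff \bm{k},
\end{equation*}
and show that when $\cp = \otimes^\sigma(\cp_1,\cp_2,\cp_3)$, every factor on the right decouples along the three irreducible sub-couples, leaving exactly one extra integration variable (the root time $s$) and one extra resonance constraint ($\Omega(\bm{k})=0$).

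First I would decompose $\mathscr{I}_\cp$. By Remark~\ref{rmk:I-q-ordering}, when $\cp = \otimes^\sigma(\cp_1,\cp_2,\cp_3)$, $\mathscr{I}_\cp$ is a rooted tree whose root is $\cp$ itself and whose three subtrees rooted at the children of the root are $\mathscr{I}_{\cp_1},\mathscr{I}_{\cp_2},\mathscr{I}_{\cp_3}$. Consequently $\mathscr{O}_t(\mathscr{I}_\cp) = \{(s,\bm{\mu}_1,\bm{\mu}_2,\bm{\mu}_3):\ 0<s<t,\ \bm{\mu}_j\in \mathscr{O}_s(\mathscr{I}_{\cp_j})\}$ and
\begin{equation*}
    \Theta_t[\mathscr{I}_\cp](0) = |\mathscr{O}_t(\mathscr{I}_\cp)| = \int_0^t \prod_{1\le j \le 3} \Theta_s[\mathscr{I}_{\cp_j}](0) \diff s.
\end{equation*}
Next I would decompose the decoration integral using Lemma~\ref{lem:cp-attaching} (applied three times at the root-level conjugate classes $\mathfrak{c}^\sigma_j$): every $\bm{k}\in \mathscr{D}^\cp_k$ is uniquely specified by a top-level tuple $(\bm{k}_1,\bm{k}_2,\bm{k}_3)\in \mathscr{D}_k$ (values at the children of $\mathfrak{r}^{\tau^+}$, which by conjugacy also determine the children of $\mathfrak{r}^{\tau^-}$) together with a decoration $\bm{k}^{(j)}\in \mathscr{D}^{\cp_j}_{\bm{k}_j}$ on each sub-couple. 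The Jacobian-one property from Lemma~\ref{lem:tree-deco-change-var} ensures that the measure $\diff\bm{k}$ factorizes as $\diff(\bm{k}_1,\bm{k}_2,\bm{k}_3)\prod_j \diff \bm{k}^{(j)}$. Under this decomposition, $\mho^\cp$ splits into the value at the root mini-couple, which is exactly $\Omega(\bm{k}_1,\bm{k}_2,\bm{k}_3)$ by Definition~\ref{def:reso-function}, and the sub-resonances $\mho^{\cp_j}(\bm{k}^{(j)})$; meanwhile $\psi^\cp(\bm{k}) = \prod_j \psi^{\cp_j}(\bm{k}^{(j)})$ because the leaf pairings of $\cp$ are the disjoint union of those of the $\cp_j$. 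Combining the three factorizations and re-invoking Lemma~\ref{lem:N-infty-expression} inside each $j$-factor yields~\eqref{eq:N-infty-hom-cp-induction}.

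The second identity~\eqref{eq:N-infty-hom-n-induction} then follows by summing~\eqref{eq:N-infty-hom-cp-induction} over $\cp\in\mathscr{K}^\reg_n$. By Lemma~\ref{lem:reg-cp-str}, each such $\cp$ is of the form $\otimes^\sigma(\cp_1,\cp_2,\cp_3)$ with $\sigma\in\{+,-\}$ and $(\cp_1,\cp_2,\cp_3)\in \mathscr{K}^\reg_{n_1}\times\mathscr{K}^\reg_{n_2}\times\mathscr{K}^\reg_{n_3}$, $n_1+n_2+n_3=n-1$, so the sum over $\cp$ becomes a sum over $(n_1,n_2,n_3)$, the three sub-couples, and $\sigma$. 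Since the right-hand side of~\eqref{eq:N-infty-hom-cp-induction} is independent of $\sigma$, summation over $\sigma\in\{+,-\}$ produces the prefactor $2$, and then the inner triple sum collapses into the product $\prod_j N^{n_j,\infty}_{s,\bm{k}_j}$ via the definition $N^{n_j,\infty}_{s,\bm{k}_j} = \sum_{\cp_j\in\mathscr{K}^\reg_{n_j}} N^{\cp_j,\infty}_{s,\bm{k}_j}$.

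The main obstacle I anticipate is keeping the $\sigma$-dependent identification of conjugate classes straight in step two: one must verify that the root-level resonance produced by the decomposition really is the symmetric $\Omega(\bm{k}_1,\bm{k}_2,\bm{k}_3)$ in both cases $\sigma=\pm$ (so that both choices contribute the same quantity and the factor $2$ is legitimate), and that the canonical measure one obtains on $\mathscr{D}^\cp_k$ after the splitting agrees pointwise with the one implicitly used in Lemma~\ref{lem:N-infty-expression}. Once this bookkeeping is done, the remainder is a direct recombination of the three factorizations.
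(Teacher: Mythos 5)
Your proposal is correct and follows essentially the same route as the paper: both start from the closed-form expression of Lemma~\ref{lem:N-infty-expression}, factor the time factor $\Theta_t[\mathscr{I}_\cp](0)$ through the tree structure of $\mathscr{I}_\cp$ (the paper invokes~\eqref{eq:time-ord-fourier-induction}, which is exactly your $\int_0^t\prod_j\Theta_s[\mathscr{I}_{\cp_j}](0)\diff s$ identity), split the decoration integral and $\psi^\cp$ along the three sub-couples with the root constraint $\bm{\delta}(\Omega(\bm{k}))$, and obtain~\eqref{eq:N-infty-hom-n-induction} from Lemma~\ref{lem:reg-cp-str} with the factor $2$ coming from the dichotomy $\sigma\in\{+,-\}$. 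Your closing bookkeeping concerns (symmetry in $\sigma$ and the Jacobian-one measure identification) are exactly the points the paper disposes of via Lemmas~\ref{lem:reg-cp-str} and~\ref{lem:tree-deco-change-var}, so no gap remains.
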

\begin{proof}
    By Lemma~\ref{lem:N-infty-expression}, we have
    \begin{equation*}
        N^{\cp,\infty}_{t,k}
        = \Theta_t[\mathscr{I}_\cp](0)
        \int_{\mathscr{D}_k} \bm{\delta}(\Omega(\bm{k})) \Bigl( \prod_{1 \le j \le 3} \int_{\mathscr{D}^{\cp_j}_{k_j}} \bm{\delta}(\mho^{\cp_j}(\bm{k}^j)) \diff \bm{k}^j \Bigr) \psi^\cp(\overline{\bm{k}}) \diff \bm{k},
    \end{equation*}
    where $\overline{\bm{k}}$ is the unique element in $\mathscr{D}^\cp$ such that $\overline{\bm{k}}|_{\cp^j} = \bm{k}^j$ for $1 \le j \le 3$.
    We conclude~\eqref{eq:N-infty-hom-cp-induction} with~\eqref{eq:time-ord-fourier-induction} and the identity $\psi^\cp(\overline{\bm{k}}) = \prod_{1 \le j \le 3} \psi^{\cp_j}(\bm{k}^j).$
    The identity~\eqref{eq:N-infty-hom-n-induction} follows from~\eqref{eq:N-infty-hom-cp-induction} and Lemma~\ref{lem:reg-cp-str}.
    In fact, the factor $2$ in~\eqref{eq:N-infty-hom-n-induction} is due to the dichotomy of $\sigma \in \{+,-\}$.
\end{proof}

\subsubsection{Asymptotic estimates}

Now we establish the limit
$\lim_{L \to \infty} N^\cp_{t/\lambda^2,k} = N^{\cp,\infty}_{t,k}.$

\begin{lemma}
    \label{lem:trans-time-int-est}
    If $\cp \in \mathscr{K}^\reg_n$ with $n \ge 1$, then for all $t > 0$, all $U \in C_c^\infty(\mathbb{R}^{\mathfrak{B}^\cp} \times (\mathbb{R}^{2d})^{\mathfrak{B}^\cp})$ any any weighted Sobolev norm $B$ on $(\mathbb{R}^{2d})^{\mathfrak{B}^\cp}$, we have
    \begin{equation*}
        \|\mathcal{F} \Phi^\cp_t U\|_{W^{1,\infty} B} \lesssim (n 2^{n-1} t^{n-1} + t^n) \|\mathcal{F} U\|_{W^{1,\infty} B}.
    \end{equation*}
\end{lemma}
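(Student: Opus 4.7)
The plan is to reduce to a pointwise-in-$\bm{\xi}$ estimate and then analyze the integral representation of $\Phi^\cp_t U$ using the linear structure of $\kappa_\cp$. I fix $\bm{\xi}$ and set $V(\bm{t}) = \mathcal{F}_{\bm{z}} U(\bm{t}, \bm{\xi})$, $F(\bm{s}) = \mathcal{F}_{\bm{z}} \Phi^\cp_t U(\bm{s}, \bm{\xi})$. Since the weighted Sobolev norm $B$ acts only in $\bm{\xi}$ and commutes with $\kappa_\cp^*$ and the $\bm{\mu}$-integration, it suffices to establish the estimate
\[
\|F\|_{W^{1,\infty}(\mathbb{R}^{\mathscr{I}_\cp})} \lesssim (n 2^{n-1} t^{n-1} + t^n) \|V\|_{W^{1,\infty}(\mathbb{R}^{\mathfrak{B}^\cp})}
\]
uniformly in $\bm{\xi}$. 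Since each $2 \times 2$ block of $\kappa_\cp$ has determinant $-1$, we may write
\[
F(\bm{s}) = \int \bm{1}_{\mathscr{O}^\cp_t}(\kappa_\cp(\bm{\mu}, \bm{s})) V(\kappa_\cp(\bm{\mu}, \bm{s})) \diff \bm{\mu}.
\]

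First I would obtain the $L^\infty$ bound $\|F\|_{L^\infty} \le t^n \|V\|_{L^\infty}$ by observing that $\kappa_\cp(\bm{\mu}, \bm{s}) \in \mathscr{O}^\cp_t$ forces $\bm{\mu}_r = (\bm{t}_{\mathfrak{b}^+_r} + \bm{t}_{\mathfrak{b}^-_r})/2 \in (0,t)$ for each $r \in \mathscr{I}_\cp$, so the $\bm{\mu}$-integration domain is contained in $[0,t]^{\mathscr{I}_\cp}$ of volume $t^n$. This gives the $t^n$ contribution to the right-hand side.

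Next, for each $r \in \mathscr{I}_\cp$, I apply the chain rule
\[
\partial_{\bm{s}_r}[\phi \comp \kappa_\cp] = \tfrac{1}{2}\bigl(\partial_{\bm{t}_{\mathfrak{b}^+_r}} \phi - \partial_{\bm{t}_{\mathfrak{b}^-_r}} \phi\bigr) \comp \kappa_\cp
\]
together with the Leibniz rule to split $\partial_{\bm{s}_r} F = \mathrm{Int}_r + \mathrm{Bdy}_r$, where $\mathrm{Int}_r$ corresponds to the derivative falling on $V$ and $\mathrm{Bdy}_r$ to the derivative falling on $\bm{1}_{\mathscr{O}^\cp_t}$. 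The interior piece is controlled by the same volume bound, $|\mathrm{Int}_r| \le t^n \|V\|_{W^{1,\infty}}$. For the boundary piece, each distribution $\partial_{\bm{t}_{\mathfrak{b}}}\bm{1}_{\mathscr{O}^\cp_t}$ is, by Lemma 5.1 (for minimal nodes) and direct computation (for parent/child constraints at other nodes), a finite sum of codim-$1$ boundary deltas supported where $\bm{t}_\mathfrak{b}$ equals $0$, $t$, $\bm{t}_{\mathfrak{b}^p}$, or a child time $\bm{t}_\mathfrak{c}$; each such term contributes an $(n-1)$-dimensional integral over the corresponding face of the slice, bounded by $t^{n-1}\|V\|_{L^\infty}$. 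The factor $2^{n-1}$ arises combinatorially by iteratively applying the dichotomous boundary split of Lemma 5.1 along the tree structure on $\mathscr{I}_\cp$ given in Remark 2.8: propagating the boundary reduction down the $n-1$ non-root irreducible subcouples yields at most $2^{n-1}$ surviving pieces, so that $|\mathrm{Bdy}_r| \lesssim 2^{n-1} t^{n-1} \|V\|_{L^\infty}$. Combining the $L^\infty$ bound on $F$ with the per-$r$ bound $\|\partial_{\bm{s}_r} F\|_{L^\infty} \lesssim (t^n + 2^{n-1} t^{n-1}) \|V\|_{W^{1,\infty}}$ and summing over the $n$ elements of $\mathscr{I}_\cp$ yields the claim.

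The main obstacle is the combinatorial bookkeeping that produces the $2^{n-1}$ constant in the boundary estimate; tracking it cleanly will likely require a formal induction on the tree $\mathscr{I}_\cp$ from Remark 2.8, using the factorization $\cp = \otimes^\sigma(\cp_1,\cp_2,\cp_3)$ of Lemma 2.10 at each inductive step, with the base case $n=1$ treated by direct computation giving the $(1 + t)$ bound that matches $n2^{n-1}t^{n-1} + t^n$.
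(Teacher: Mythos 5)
Your argument is correct in substance and rests on the same two ingredients as the paper's proof---a volume bound of order $t^n$ for the undifferentiated term and a bound of order $t^{n-1}$ for the variation in $\bm{s}$---but you implement the second ingredient by a different mechanism. The paper never differentiates the indicator: it bounds the finite difference $\|\kappa_\cp^*\bm{1}_{\mathscr{O}^\cp_t}(\cdot,\bm{s})-\kappa_\cp^*\bm{1}_{\mathscr{O}^\cp_t}(\cdot,\bm{s}')\|_{L^1_{\bm{\mu}}}$ by the measure of the symmetric difference of the product boxes $\prod_{r\in\mathscr{I}_\cp} I(t,\bm{s}_r)$ with $I(t,s)=(-s/2,t-s/2)\cap(s/2,t+s/2)$, which yields a Lipschitz constant $\lesssim n2^{n-1}t^{n-1}$ in two lines, and then concludes by a Minkowski-type inequality; you instead compute $\partial_{\bm{s}_r}$ distributionally and account for the codimension-one faces via Lemma~\ref{lem:time-ord-ind-diff}. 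Your route works, and it is in one respect more explicit: it sees the order constraints linking nodes of \emph{different} irreducible subcouples, which are genuine faces of the $\bm{\mu}$-domain even though they are not faces of the product box used in the paper.

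Two points should be tightened. First, your bookkeeping for the factor $2^{n-1}$ is not the right picture: a single $\bm{s}_r$-derivative only hits the constraints involving $\bm{t}_{\mathfrak{b}^+}$ or $\bm{t}_{\mathfrak{b}^-}$ for $\mathfrak{B}^r=\{\mathfrak{b}^+,\mathfrak{b}^-\}$, namely the endpoint constraints and the parent/child order relations of these two nodes, so there are only $O(1)$ boundary faces per direction and no iterative ``propagation down the tree of $\mathscr{I}_\cp$'' is needed; each face contributes at most $t^{n-1}\|V\|_{L^\infty}$, so the boundary term is $\lesssim t^{n-1}\|V\|_{L^\infty}$ with a universal constant, which is stronger than the $2^{n-1}t^{n-1}$ you assert and a fortiori gives the stated bound---the induction you propose at the end is unnecessary. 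Second, you should record the non-degeneracy that makes each boundary delta integrate out exactly one $\bm{\mu}$-variable with unit Jacobian: since the two nodes of an irreducible subcouple of a regular couple lie in opposite trees, they carry no mutual order constraint, so every face is of the form $\bm{\mu}_r\pm\bm{s}_r/2\in\{0,t\}$ or $\bm{\mu}_r-\bm{\mu}_{r'}=c(\bm{s})$ with $r'\ne r$, never a relation in $\bm{s}$ alone (which would produce a $\bm{\delta}$ in $\bm{s}_r$ and ruin the $L^\infty$ bound). Finally, when the weighted norm $B$ involves $\bm{\xi}$-derivatives, precede your pointwise-in-$\bm{\xi}$ reduction by the (trivial) remark that $\partial_{\bm{\xi}}$ commutes with $\Phi^\cp_t$, so the scalar estimate can be applied to $\partial_{\bm{\xi}}^\gamma\mathcal{F}U$.
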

\begin{proof}
    For all $s \in \mathbb{R}$, set $I(t,s) = (-s/2,t-s/2) \cap (s/2,t+s/2)$.
    For all $\bm{s} \in \mathbb{R}^{\mathfrak{B}^\cp}$, we have
    \begin{equation*}
        \supp \kappa^* \bm{1}_{\mathscr{O}^\cp_t}(\cdot,\bm{s}) \subset \prod_{r \in \mathscr{I}_\cp} I(t,\bm{s}_{r}).
    \end{equation*}
    Therefore, for all $\bm{s},\bm{s}' \in \mathbb{R}^{\mathfrak{B}^\cp}$, we have
    \begin{align*}
        \|\kappa^* \bm{1}_{\mathscr{O}^\cp_t}(\cdot,\bm{s}) & - \kappa^* \bm{1}_{\mathscr{O}^\cp_t}(\cdot,\bm{s}')\|_{L^1}
        \lesssim \Bigl| \Bigl\{ \prod_{r \in \mathscr{I}_\cp} I(t,\bm{s}_{r}) \Bigr\} \Delta \Bigl\{ \prod_{r \in \mathscr{I}_\cp} I(t,\bm{s}'_{r})  \Bigr\} \Bigr| \\
        & \lesssim \sum_{r \in \mathscr{I}_\cp} |I_r(t,\bm{s}_{r}) \Delta I_r(t,\bm{s}_{r}')| \prod_{h \in \mathscr{I}_\cp \backslash \{r\}} |I(t,\bm{s}_{r}) \cup I(t,\bm{s}_{r}')|
        \lesssim 2^{n-1} t^{n-1} \sum_{r \in \mathscr{I}_\cp}|\bm{s}_{r} - \bm{s}_{r}'|,
    \end{align*}
    where $\Delta$ denotes the symmetric difference.
    Therefore,
    \begin{equation*}
        \|\mathcal{F} \Phi^\cp_t U\|_{W^{1,\infty} B}
        \lesssim \|\kappa_\cp^* \bm{1}_{\mathscr{O}^\cp_t}\|_{L^1 W^{1,\infty}} \|\mathcal{F} U\|_{W^{1,\infty} B}
        \lesssim  (n 2^{n-1} t^{n-1} + t^n) \|\mathcal{F} U\|_{W^{1,\infty} B}. \qedhere
    \end{equation*}
\end{proof}

\begin{lemma}
    \label{lem:leaf-data-fourier-est}
    Let $\cp \in \mathscr{K}^\reg_n$ with $n \ge 1$.
    Let $m \in \mathbb{N}^d$ and let $\ell \in \mathbb{N}$.
    Then uniformly for all $k \in \mathbb{R}^d$,
    \begin{equation*}
        \bigl\|\mathcal{F}\bigl((\bm{k}_\cp^\gamma\psi^\cp)|_{\mathscr{D}^\cp_k} \comp (\Lambda^\cp_k)^{-1}\bigr)\bigr\|_{L^1_\ell} \lesssim C^n n^{2\ell+|m|} \|\widehat{\psi}\|_{W^{|m|,1}_{\ell}}^{4n+2}.
    \end{equation*}
\end{lemma}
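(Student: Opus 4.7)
The plan is to decouple the product $\psi^\cp = \prod_{\mathfrak{p}\in\wp}|\psi(\bm{k}^\flat_\mathfrak{p})|^2$ via Fourier inversion on each pair-factor. Writing $g = |\psi|^2$ and using the inversion formula $g(w) = \int \widehat{g}(\xi) e^{2\pi i w\cdot\xi}\diff\xi$, we obtain
\begin{equation*}
\psi^\cp(\bm{k}) = \int_{(\mathbb{R}^d)^\wp} \prod_{\mathfrak{p}\in\wp}\widehat{g}(\bm{\xi}_\mathfrak{p})\cdot \exp\Bigl(2\pi i\sum_{\mathfrak{p}\in\wp}\bm{\xi}_\mathfrak{p}\cdot\bm{k}^\flat_\mathfrak{p}\Bigr)\diff\bm{\xi}.
\end{equation*}
After the change of variables $\bm{k} = (\Lambda^\cp_k)^{-1}\bm{z}$ on $\mathscr{D}^\cp_k$, formula~\eqref{eq:deco-change-var-formula} from Lemma~\ref{lem:tree-deco-change-var} writes each $\bm{k}^\flat_\mathfrak{p}$ as an affine function of $\bm{z}\in(\mathbb{R}^{2d})^{\mathscr{I}_\cp}$ whose linear part has integer entries in $\{-1,0,1\}$, so the phase becomes linear in $\bm{z}$.

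Taking the Fourier transform in $\bm{z}$ then produces $\bm{\delta}$-distributions enforcing $M(\bm{\xi}) = -\bm{\eta}$, where $M:(\mathbb{R}^d)^\wp\to(\mathbb{R}^{2d})^{\mathscr{I}_\cp}$ is the associated integer linear map. A dimension count with $|\wp| = 2n+1$ and $|\mathscr{I}_\cp| = n$ shows that $M$ is surjective with $d$-dimensional kernel, and the tree structure underlying $\Lambda^\cp$ provides a unimodular section: traversing $\mathscr{I}_\cp$ from the root outward via Lemma~\ref{lem:reg-cp-str} and identifying at each $r\in\mathscr{I}_\cp$ two distinguished descendant leaf pairs whose $\bm{\xi}$-values determine $\bm{x}_r$ and $\bm{y}_r$ (modulo previously determined terms) yields a triangular system whose diagonal entries are all $\pm 1$. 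Exactly one pair $\mathfrak{p}_0\in\wp$ is left free, furnishing the remaining $d$-dimensional fiber.

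Changing variables through this unimodular section gives
\begin{equation*}
\|\mathcal{F}[\psi^\cp|_{\mathscr{D}^\cp_k}\comp(\Lambda^\cp_k)^{-1}]\|_{L^1_\ell} \lesssim \int_{(\mathbb{R}^d)^\wp} \langle M(\bm{\xi})\rangle^\ell \prod_{\mathfrak{p}\in\wp}|\widehat{g}(\bm{\xi}_\mathfrak{p})|\diff\bm{\xi}.
\end{equation*}
The estimate $|M(\bm{\xi})|\lesssim n|\bm{\xi}|$ together with $\langle\bm{\xi}\rangle\le\prod_\mathfrak{p}\langle\bm{\xi}_\mathfrak{p}\rangle$ yields $\langle M(\bm{\xi})\rangle^\ell \le C^\ell n^\ell\prod_\mathfrak{p}\langle\bm{\xi}_\mathfrak{p}\rangle^\ell$, and combined with Young's inequality $\|\widehat{|\psi|^2}\|_{L^1_\ell}\le C^\ell\|\widehat{\psi}\|_{L^1_\ell}^2$ this produces a prefactor of $C^n n^\ell\|\widehat{\psi}\|_{L^1_\ell}^{4n+2}$. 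The additional multiplicative weight $\bm{k}_\cp^m = k^m$ on $\mathscr{D}^\cp_k$ is expanded using $k = \sum_{\mathfrak{p}\in\wp}\iota_\mathfrak{p}\bm{k}^\flat_\mathfrak{p}$ from Lemma~\ref{lem:deco-formula}; the multinomial expansion gives at most $n^{|m|}$ monomials in the pair values, each of which under Fourier inversion becomes a derivative of $\widehat{g}$ controlled by $\|\widehat{\psi}\|_{W^{|m|,1}_\ell}$, with an extra weight factor $n^\ell$ absorbed into the final polynomial growth $n^{2\ell+|m|}$.

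The principal obstacle is the unimodularity claim for the section of $M$. This is a combinatorial rather than analytic fact, proved by induction on $n$ using the factorization of regular couples into irreducible subcouples given by Lemma~\ref{lem:reg-cp-str}, and exploiting at each step the absence of self-pairings that defines a couple (together with Lemma~\ref{lem:reso-form-linear-indep}, which plays the dual role of ensuring the phase has no redundant directions).
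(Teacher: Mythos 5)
Your argument is correct in substance and lands on the right bound, but it executes the reduction differently from the paper, and the comparison is instructive. The paper's proof never touches delta functions or determinants: it extends $\Lambda^\cp_k$ to the isomorphism $\overline{\Lambda}^\cp:\mathscr{D}^\cp\to\mathbb{R}^d\times(\mathbb{R}^{2d})^{\mathscr{I}_\cp}$, bounds the slice norm $\sup_k\|\mathcal{F}_X f(k,\cdot)\|_{L^1_\ell}$ by the joint norm $\|\mathcal{F}_Y f\|_{L^1_\ell}$, and then invokes Lemma~\ref{lem:fourier-isom-L1}, which says that the weighted $L^1$ norm of a Fourier transform is \emph{exactly} invariant under precomposition with a linear isomorphism because the two determinant factors cancel; the only quantitative input is the operator norm $\|\overline{\Lambda}_\cp\|\lesssim n$, which distorts the weight by $n^\ell$ at each of the two changes of variables, after which the product structure over $\wp$, the multinomial expansion of $\bm{k}_\cp^m$, and Young's inequality for $\mathcal{F}|\psi|^2=\widehat{\psi}*\widehat{\overline{\psi}}$ give $C^n n^{2\ell+|m|}\|\widehat{\psi}\|_{W^{|m|,1}_\ell}^{4n+2}$. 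You instead Fourier-invert each pair factor, push the (affine, $\pm1$-coefficient) phase through $(\Lambda^\cp_k)^{-1}$, and read off $\mathcal{F}$ as a $\bm{\delta}$-constrained integral; that is fine, and your weight bound $\langle M(\bm{\xi})\rangle^\ell\lesssim (Cn)^\ell\prod_{\mathfrak{p}}\langle\bm{\xi}_{\mathfrak{p}}\rangle^\ell$ and the treatment of $k^m=\bigl(\sum_{\mathfrak{p}}\iota_{\mathfrak{p}}\bm{k}^\flat_{\mathfrak{p}}\bigr)^m$ via integration by parts onto $\widehat{|\psi|^2}$ reproduce (indeed slightly improve) the stated powers of $n$. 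The one point to flag is that what you call the ``principal obstacle''---unimodularity of a section of $M$---is a detour: passing the absolute value inside and integrating the $\bm{\delta}$ in the $\bm{\eta}$ variable by Fubini--Tonelli yields $\int\langle M(\bm{\xi})\rangle^\ell\prod_{\mathfrak{p}}|\widehat{g}(\bm{\xi}_{\mathfrak{p}})|\diff\bm{\xi}$ with no Jacobian ever appearing (only surjectivity of $M$, i.e.\ injectivity of the linear part of $(\Lambda^\cp_k)^{-1}$, which is exactly the exactness statement of Lemma~\ref{lem:tree-deco-change-var}, is relevant); so the inductive unimodularity claim you leave as a sketch, while true (the paper uses the corresponding unit-Jacobian fact in the proof of Lemma~\ref{lem:N-infty-expression}), is not needed here, and its incompleteness does not damage your proof.
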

\begin{proof}
    Let $X = (\mathbb{R}^{2d})^{\mathfrak{B}^\cp}$ and let $Y = \mathbb{R}^d \times X$.
    By Lemma~\ref{lem:tree-deco-change-var}, we have a canonical isomorphism $ \overline{\Lambda}^\cp : \mathscr{D}^\tau \to Y $, $\bm{k} \mapsto (k,\Lambda^\cp\bm{k})$, where $k = \bm{k}_{\mathfrak{r}^{\tau^\pm}}$ if $\cp = (\tau^*,\wp)$.
    Let $\overline{\Lambda}_\cp = ((\overline{\Lambda}^\cp)^\dagger)^{-1} = ((\overline{\Lambda}^\cp)^{-1})^{\dagger}$, where $\dagger$ gives the adjoint operator. By Lemma~\ref{lem:tree-deco-change-var}, we have the norm estimate
    $\|\overline{\Lambda}_\cp\| \lesssim \|(\overline{\Lambda}^\cp)^{-1}\| \lesssim n \|\overline{\Lambda}^\cp\| \lesssim n.$
    Let $f = (\bm{k}_\cp^m \psi^\cp) \comp (\overline{\Lambda}^\cp)^{-1}$, i.e., $f(k,\bm{z}) = (\bm{k}_\cp^m \psi^\cp)|_{\mathscr{D}^\cp_k} \bigl( (\Lambda^\cp_k)^{-1} \bm{z} \bigr)$.
    By Lemma~\ref{lem:fourier-isom-L1} below, \eqref{eq:deco-change-var-formula}, and Young's convolution inequality, we have
    \begin{align*}
        \sup_{k \in \mathbb{R}^d} \|\mathcal{F}_Xf(k,\cdot)\|_{L^1_\ell(X)}
        & \lesssim \|\overline{\mathcal{F}}_Y f\|_{L^1_\ell(Y)}
        \lesssim n^\ell \| \mathcal{F}_\cp (\bm{k}_\cp^m\psi^\cp)\|_{L^1_\ell(\mathscr{D}^\cp)}
        \lesssim n^{2\ell} \|\mathcal{F}_{\wp} \pi^\cp_*(\bm{k}_\cp^m\psi^\cp)|_{(\mathbb{R}^d)^{\wp}}\|_{L^1_\ell} \\
        & \lesssim n^{2\ell+|m|} \|\mathcal{F}|\psi|^2\|_{W^{|m|,1}_\ell}^{2n+1}
        \lesssim C^n n^{2\ell+|m|} \|\widehat{\psi}\|_{W^{|m|,1}_{\ell}}^{4n+2}. \qedhere
    \end{align*}
\end{proof}

\begin{lemma}
    \label{lem:fourier-isom-L1}
    Let $X$ and $Y$ be subspaces of finite dimensional Euclidean spaces.
    Let $\mathcal{F}_X$ and $\mathcal{F}_Y$ be Fourier transforms on $X$ and $Y$ respectively.
    If $\Lambda : Y \to X$ is a linear space isomorphism and $\Lambda^\dagger : X \to Y $ is its adjoint operator, then for all $f \in C_c^\infty(Y)$ and $\varphi \in L^\infty_{\loc}(Y)$, we have
    \begin{equation*}
        \|(\varphi \comp \Lambda^\dagger) \mathcal{F}_X(f\comp \Lambda^{-1})\|_{L^1_X}
        = \|\varphi \mathcal{F}_Y f\|_{L^1_Y}.
    \end{equation*}
\end{lemma}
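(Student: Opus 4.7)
The plan is to reduce the identity to two routine changes of variables, one for the Fourier transform side and one for the $L^1$ norm side, which together cause the Jacobian factors to cancel exactly.

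First I would compute $\mathcal{F}_X(f \comp \Lambda^{-1})$ explicitly. Substituting $y = \Lambda^{-1} x$ in the defining integral and noting that $(\Lambda y) \cdot \xi = y \cdot (\Lambda^\dagger \xi)$, one obtains the identity
\begin{equation*}
    \mathcal{F}_X(f \comp \Lambda^{-1})(\xi) = |\det \Lambda| \, (\mathcal{F}_Y f)(\Lambda^\dagger \xi),
\end{equation*}
so that $\mathcal{F}_X(f \comp \Lambda^{-1}) = |\det \Lambda| \, (\mathcal{F}_Y f) \comp \Lambda^\dagger$. Plugging this into the left hand side of the claimed identity, the factor $(\varphi \comp \Lambda^\dagger)$ combines with $(\mathcal{F}_Y f) \comp \Lambda^\dagger$ to give $(\varphi \cdot \mathcal{F}_Y f) \comp \Lambda^\dagger$.

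Second, I would perform the change of variables $\eta = \Lambda^\dagger \xi$ in the $L^1_X$ integral. Since $\Lambda^\dagger$ is a linear isomorphism from $X$ to $Y$ with $|\det \Lambda^\dagger| = |\det \Lambda|$, we have $d\xi = |\det \Lambda|^{-1} d\eta$, and the surviving integral is exactly $\int_Y |\varphi(\eta)| \, |\mathcal{F}_Y f(\eta)| \, d\eta = \|\varphi \, \mathcal{F}_Y f\|_{L^1_Y}$. The two Jacobian factors $|\det \Lambda|$ and $|\det \Lambda|^{-1}$ cancel, yielding the desired equality.

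There is essentially no obstacle here; the only mild subtlety is interpreting $X$ and $Y$ as abstract finite-dimensional Euclidean subspaces (rather than coordinate spaces), for which the Fourier transform, adjoint, and Lebesgue measure are all defined intrinsically via the inner products, and for which $|\det \Lambda| = |\det \Lambda^\dagger|$ holds with respect to any orthonormal bases on $X$ and $Y$. Once this convention is fixed, the computation is purely mechanical.
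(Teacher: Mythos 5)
Your proof is correct and is essentially identical to the paper's own argument: both compute $\mathcal{F}_X(f\comp\Lambda^{-1}) = |\det\Lambda|\,(\mathcal{F}_Y f)\comp\Lambda^\dagger$ by substituting $y=\Lambda^{-1}x$ and using $\langle\zeta,\Lambda y\rangle_X=\langle\Lambda^\dagger\zeta,y\rangle_Y$, then change variables $\eta=\Lambda^\dagger\zeta$ in the $L^1_X$ integral so that the Jacobian factors $|\det\Lambda|$ and $|\det\Lambda^\dagger|^{-1}$ cancel. No gaps; your remark on the intrinsic (basis-free) interpretation of the measures and determinants is a fine clarification but not a departure from the paper.
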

\begin{proof}
    Let $\langle \cdot,\cdot\rangle$ denote the inner product in respective spaces.
    We have
    \begin{align*}
        \mathcal{F}_X(f \comp \Lambda^{-1})(\zeta)
        & = \int_X e^{-2\pi i \langle \zeta, x\rangle_X} f(\Lambda^{-1} x) \diff x
        = |\det(\Lambda)| \int_Y e^{-2\pi i \langle \zeta, \Lambda y\rangle_X} f(y) \diff y \\
        & = |\det(\Lambda)| \int_Y e^{-2\pi i \langle \Lambda^\dagger \zeta, y\rangle_Y} f(y) \diff y
        = |\det(\Lambda)| (\mathcal{F}_Y f)(\Lambda^\dagger \zeta).
    \end{align*}
    Therefore, using the change of variable $\eta = \Lambda^\dagger \zeta$ and $|\det(\Lambda^\dagger)| = |\det(\Lambda)|$, we have
    \begin{align*}
        \|(\varphi \comp \Lambda^\dagger) \mathcal{F}_X(f &\comp \Lambda^{-1})\|_{L^1_X}
        = |\det(\Lambda^\dagger)|^{-1} |\det(\Lambda)|
        \int_Y |\varphi(\eta)\mathcal{F}_Y f(\eta)| \diff \eta
        = \|\varphi \mathcal{F}_Y f\|_{L^1_Y}.
        \qedhere
    \end{align*}
\end{proof}

\begin{proposition}
    \label{prop:E-spec-reg-asym-hom}
    Let $d \ge 3$, $n \ge 1$.
    Let $\supp \psi$ be bounded by $R \ge 1$, let $\cp \in \mathscr{K}^\reg_n$, and let $m \in \mathbb{N}^d$.
    There exists $C >0$ and $\nu > 0$ such that uniformly for all $(t,k) \in [0,1] \times \mathbb{R}^d$, we have
    \begin{equation*}
        |k^m(N^\cp_{t/\lambda^2,k} - N^{\cp,\infty}_{t,k})|
        \lesssim C^n n^{d+2} (nR)^{4nd} (n 2^{n-1} t^{n-1} + t^n) L^{-\nu} \|\widehat{\psi}\|_{L^1_1}^{4n+2}.
    \end{equation*}
    Consequently, there holds the limit:
    \begin{equation*}
        \lim_{L \to \infty} \sup_{(t,k) \in [0,1] \times \mathbb{R}^d} |k^m(N^\cp_{t/\lambda^2,k} - N^{\cp,\infty}_{t,k})| = 0.
    \end{equation*}
\end{proposition}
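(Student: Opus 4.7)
The plan is to reduce the claim to a direct application of Proposition~\ref{prop:int-approx}. By Lemma~\ref{lem:I-S-op-id} we have the identity
$$ k^m N^\cp_{t/\lambda^2,k} = \mathcal{S}^\cp_L\bigl(\theta_L \Phi\bigr), \qquad \Phi = \Phi^\cp_t\bigl((\bm{k}_\cp^m \psi^\cp)|_{\mathscr{D}^\cp_k} \comp (\Lambda^\cp_k)^{-1}\bigr), $$
and by unraveling the definition~\eqref{eq:def-N-infty-hom} together with the fact that $\Phi(\bm{s},\bm{z})$ factors as $F_t(\bm{s}) U(\bm{z})$ with $F_t(\bm{s}) = \int \bm{1}_{\mathscr{O}^\cp_t}(\kappa_\cp(\bm{\mu},\bm{s}))\,d\bm{\mu}$ and $U(\bm{z}) = (\bm{k}_\cp^m\psi^\cp)|_{\mathscr{D}^\cp_k}((\Lambda^\cp_k)^{-1}\bm{z})$, a direct computation (the $\bm{s}$-integral produces $\bm{\delta}(\varpi^\cp(\bm{z}))$ from $\theta_\infty F_t$) shows that
$$ k^m N^{\cp,\infty}_{t,k} = \mathcal{S}^\cp_\infty\bigl(\theta_\infty \Phi\bigr). $$
Thus the difference $k^m(N^\cp_{t/\lambda^2,k} - N^{\cp,\infty}_{t,k})$ is exactly of the form controlled by Proposition~\ref{prop:int-approx}, provided we can verify its hypotheses with the correct constants.

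For the support hypothesis, since $\supp \psi \subset \{|k| \le R\}$, every leaf decoration of any $\bm{k}\in \supp \psi^\cp$ lies in a ball of radius $R$, so each edge-variable $\bm{z}_r \in \mathbb{R}^{2d}$ (a difference of sibling leaf values via Lemma~\ref{lem:tree-deco-change-var}) stays in a set of diameter $\lesssim nR$; hence $\supp U$ is contained in a set $B \subset (\mathbb{R}^{2d})^{\mathscr{I}_\cp}$ with $\ell(B) \lesssim nR$. For the Sobolev/weighted $L^1$ hypothesis, because $\Phi = F_t \otimes U$ we have $\widehat{\Phi}(\bm{s},\bm{\xi}) = F_t(\bm{s})\,\widehat{U}(\bm{\xi})$, so that $\|\widehat{\Phi}\|_{W^{1,\infty} L^1_1} = \|F_t\|_{W^{1,\infty}}\|\widehat{U}\|_{L^1_1}$. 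The first factor is bounded by $n 2^{n-1} t^{n-1} + t^n$ by the argument of Lemma~\ref{lem:trans-time-int-est} applied in the trivial case $U\equiv 1$, while the second is estimated by Lemma~\ref{lem:leaf-data-fourier-est} (with $\ell=1$) by $C^n n^{2+|m|}\|\widehat{\psi}\|_{W^{|m|,1}_1}^{4n+2}$.

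Assembling these pieces, Proposition~\ref{prop:int-approx} yields
$$ |k^m(N^\cp_{t/\lambda^2,k} - N^{\cp,\infty}_{t,k})| \lesssim C^n n^d (nR)^{4nd} L^{-\nu}\,\|\widehat{\Phi}\|_{W^{1,\infty} L^1_1} \lesssim C^n n^{d+2}(nR)^{4nd}(n 2^{n-1} t^{n-1}+t^n)L^{-\nu}\|\widehat{\psi}\|_{L^1_1}^{4n+2}, $$
uniformly in $(t,k)\in[0,1]\times\mathbb{R}^d$, which is the desired bound (the factor $n^{|m|}$ coming from the $k^m$ decoration being absorbed into $n^{d+2}$ for $|m|$ fixed, or equivalently replacing $\|\widehat{\psi}\|_{L^1_1}$ by $\|\widehat{\psi}\|_{W^{|m|,1}_1}$ when $m\ne 0$). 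The pointwise convergence then follows by letting $L\to\infty$. The main technical obstacle I anticipate is the bookkeeping in verifying the second hypothesis: one must confirm that the \emph{factored} structure $\Phi = F_t\otimes U$ really does reduce the joint $W^{1,\infty}L^1_1$ norm to the product of two independently controlled quantities, and that the change of variables $\Lambda^\cp_k$ does not destroy the compact support or the weighted $L^1$ control (this is precisely the content of Lemmas~\ref{lem:fourier-isom-L1} and~\ref{lem:leaf-data-fourier-est}).
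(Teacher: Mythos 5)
Your proposal is correct and follows essentially the same route as the paper: the identity~\eqref{eq:N-S-op-id-hom} together with~\eqref{eq:def-N-infty-hom} reduces the difference to the setting of Proposition~\ref{prop:int-approx}, with the support of $\psi^\cp$ bounded by $nR$ and the norm $\|\widehat{\Phi}\|_{W^{1,\infty}L^1_1}$ controlled exactly as you do via Lemmas~\ref{lem:trans-time-int-est} and~\ref{lem:leaf-data-fourier-est}. Your explicit use of the tensor structure $\Phi = F_t \otimes U$ is just the time-independent special case of Lemma~\ref{lem:trans-time-int-est}, so the two arguments coincide, including your remark that for $m \neq 0$ the norm $\|\widehat{\psi}\|_{L^1_1}$ should be read as $\|\widehat{\psi}\|_{W^{|m|,1}_1}$.
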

\begin{proof}
    By~\eqref{eq:N-S-op-id-hom}, \eqref{eq:def-N-infty-hom}, Proposition~\ref{prop:int-approx}, and the fact that $\supp \psi^\cp$ is bounded by $nR$, we have
    \begin{equation*}
        |k^m(N^\cp_{t/\lambda^2,k} - N^{\cp,\infty}_{t,k})|
        \lesssim C^n n^d (nR)^{4nd} L^{-\nu} \bigl\|\mathcal{F} \Phi^\cp_t \bigl( (\bm{k}_\cp^m \psi^\cp)|_{\mathscr{D}^\cp_k} \comp (\Lambda^\cp_k)^{-1} \bigr) \bigr\|_{W^{1,\infty} L^1_1}.
    \end{equation*}
    We conclude with Lemmas~\ref{lem:trans-time-int-est} and~\ref{lem:leaf-data-fourier-est}.
\end{proof}

\begin{proposition}
    \label{prop:N-infty-hom-est}
    Let $\cp \in \mathscr{K}^\reg_n$ with $n \ge 1$, then for all $m,\ell \in \mathbb{N}^d$ and uniformly for all $(t,k) \in [0,1] \times \mathbb{R}^d$, we have the estimate for some universal constant $C > 0$,
    \begin{equation*}
        |k^m \partial_k^\ell N^{\cp,\infty}_{t,k}| \lesssim C^n t^n n^{|m|} \|\psi\|^{4n+2}_{W^{|\ell|+d,\infty}_{|m|+d/2}}.
    \end{equation*}
\end{proposition}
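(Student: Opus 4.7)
The plan is to prove the bound by induction on $n \ge 0$ using the recursive identity~\eqref{eq:N-infty-hom-cp-induction} of Lemma~\ref{lem:N-infty-hom-induction}. For the base case $n = 0$ (where $\cp$ is the trivial couple), Lemma~\ref{lem:N-infty-expression} gives $N^{\cp,\infty}_{t,k} = |\psi(k)|^2$; the Leibniz rule and distributing the weight $\langle k\rangle^{|m|}$ equally between the two factors of $\psi$ yields
\begin{equation*}
    |k^m \partial_k^\ell |\psi(k)|^2| \lesssim \|\psi\|_{W^{|\ell|,\infty}_{|m|/2}}^2 \le \|\psi\|_{W^{|\ell|+d,\infty}_{|m|+d/2}}^2,
\end{equation*}
establishing the base case. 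For the inductive step, by Lemma~\ref{lem:reg-cp-str} write $\cp = \otimes^\sigma(\cp_1,\cp_2,\cp_3)$ with $\cp_j \in \mathscr{K}^\reg_{n_j}$ and $n_1+n_2+n_3+1=n$; Lemma~\ref{lem:N-infty-hom-induction} then gives
\begin{equation*}
    N^{\cp,\infty}_{t,k} = \int_0^t T_k\bigl(N^{\cp_1,\infty}_s,N^{\cp_2,\infty}_s,N^{\cp_3,\infty}_s\bigr)\diff s,\quad T_k(f_1,f_2,f_3) \coloneqq \int_{\mathscr{D}_k} \bm{\delta}(\Omega(\bm{k}))\prod_{j=1}^3 f_j(k_j)\diff \bm{k}.
\end{equation*}

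The next step is to exploit two algebraic identities for $T_k$. After the change of variables $a=k_1-k$, $c=k_3-k$ (so $k_2 = k+a+c$), a direct computation shows $\Omega(\bm{k}) = a\cdot c$, independent of $k$, whence
\begin{equation*}
    T_k(f_1,f_2,f_3) = \iint_{\mathbb{R}^{2d}} \bm{\delta}(a\cdot c)\, f_1(k+a) f_2(k+a+c) f_3(k+c) \diff a \diff c.
\end{equation*}
From this form, derivatives in $k$ commute through $T_k$ as
\begin{equation*}
    \partial_k^\ell T_k(f_1,f_2,f_3) = \sum_{\ell_1+\ell_2+\ell_3=\ell}\binom{\ell}{\ell_1,\ell_2,\ell_3}\, T_k(\partial^{\ell_1}f_1,\partial^{\ell_2}f_2,\partial^{\ell_3}f_3),
\end{equation*}
and using $k=k_1-k_2+k_3$ on $\mathscr{D}_k$ combined with the multinomial theorem,
\begin{equation*}
    k^m T_k(f_1,f_2,f_3) = \sum_{m_1+m_2+m_3=m}\binom{m}{m_1,m_2,m_3}(-1)^{|m_2|}\, T_k\bigl(k_1^{m_1}f_1,k_2^{m_2}f_2,k_3^{m_3}f_3\bigr).
\end{equation*}
Combining these reductions, $k^m \partial_k^\ell N^{\cp,\infty}_{t,k}$ becomes a bounded sum of $T_k$-integrals of $k_j^{m_j}\partial^{\ell_j}N^{\cp_j,\infty}_s$. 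Applying the inductive hypothesis to each factor (with $|m_j|$ augmented by $\lceil d/2\rceil$ to extract the decay $\langle k_j\rangle^{-d/2}$ needed below), the weighted trilinear bound, and integration in $s\in[0,t]$ then closes the induction. The time integration produces the factor $t^n$; the multinomial coefficients and the sum over splittings concatenate over $n$ levels to the claimed $C^n n^{|m|}$; the norm powers add up correctly as $(4n_1+2)+(4n_2+2)+(4n_3+2)=4n+2$.

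The principal analytic obstacle is the weighted trilinear estimate
\begin{equation*}
    \|T_k(g_1,g_2,g_3)\|_{L^\infty_k} \lesssim \prod_{j=1}^3 \|g_j\|_{L^\infty_{d/2}},
\end{equation*}
equivalent to the kinetic integral bound $\sup_{k\in\mathbb{R}^d}\int_{\mathscr{D}_k}\bm{\delta}(\Omega(\bm{k}))\prod_{j=1}^3 \langle k_j\rangle^{-d/2}\diff\bm{k}<\infty$ in dimension $d\ge 3$. A direct proof uses the $(a,c)$ change of variables together with the factorization $\bm{\delta}(a\cdot c)=|a|^{-1}\bm{\delta}(\hat{a}\cdot c)$ for $a\ne 0$, reducing to integration of the decay factors over the affine subspace $\{c\in\mathbb{R}^d:\hat{a}\cdot c = 0\}$ of codimension $1$; integrability in the remaining $2d-1$ free variables together with uniform control of the $|a|^{-1}$ singularity near $a=0$ both require precisely the hypothesis $d\ge 3$. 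The extra $d$ derivatives in the $\psi$-norm of the statement absorb the slack introduced by the weight shift $|m_j|\mapsto|m_j|+\lceil d/2\rceil$ and the derivative redistribution across the three factors at each of the $n$ inductive levels.
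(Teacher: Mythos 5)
Your inductive scheme founders on the key trilinear estimate. In your own variables $a=k_1-k$, $c=k_3-k$ (so $k_2=k+a+c$ and $\Omega(\bm{k})=a\cdot c$), consider the region $|a|\sim|c|\sim R\gg\langle k\rangle$: on the support of $\bm{\delta}(a\cdot c)$ one has $|a+c|=(|a|^2+|c|^2)^{1/2}\sim R$, so all three weights $\langle k_j\rangle^{-d/2}$ are $\sim R^{-d/2}$, while $\iint_{|a|\sim|c|\sim R}\bm{\delta}(a\cdot c)\diff a\diff c\sim R^{d}\cdot R^{-1}\cdot R^{d-1}=R^{2d-2}$. Each dyadic scale therefore contributes $\sim R^{d/2-2}$ to $\sup_k\int_{\mathscr{D}_k}\bm{\delta}(\Omega(\bm{k}))\prod_j\langle k_j\rangle^{-d/2}\diff\bm{k}$, which converges only for $d=3$, diverges logarithmically for $d=4$, and diverges polynomially for $d\ge5$. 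Since the proposition must hold for every $d\ge3$ (it is what allows the series in \S\ref{sec:WK-hom} to be summed for Theorem~\ref{thm:hom-inhom}), the claimed bound $\|T_k(g_1,g_2,g_3)\|_{L^\infty_k}\lesssim\prod_j\|g_j\|_{L^\infty_{d/2}}$ is simply false in the range you need; your justification (integrability in $2d-1$ variables plus control of $|a|^{-1}$ near $a=0$) overlooks exactly this large-$|a|\sim|c|$ regime.

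Even in $d=3$ the induction does not close to the stated norm. Your $T_k$ bound carries no weight on the output while consuming $\langle k_j\rangle^{-d/2}$ on each input, so at every level you must invoke the inductive hypothesis with moments augmented by $\lceil d/2\rceil$; since the multinomial splitting allows $m_j=m$, the weight index of the $\psi$-norm produced by the step is about $|m|+d$, not $|m|+d/2$, and iterating over a couple whose depth is comparable to $n$ makes the weight index grow linearly in $n$. The ``extra $d$ derivatives'' in $W^{|\ell|+d,\infty}_{|m|+d/2}$ cannot absorb this: they raise the differentiability index, not the weight. A repairable inductive route would use a weight-balanced bound with the \emph{same} weight $s>d-1$ on inputs and output, discarding the largest factor via $k=k_1-k_2+k_3$, but that is a different estimate from the one you state. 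The paper's proof avoids the issue entirely: starting from Lemma~\ref{lem:N-infty-expression} it estimates the full $n$-fold resonant integral in one stroke, inserting a partition of unity $(\chi_\gamma)$ in the pair variables, applying the oscillatory-integral bound of Proposition~\ref{prop:gauss-int-upper-bound} on each unit block (which costs a fixed norm of $|\psi|^2$ per pair, independent of the depth of the couple), and then summing over blocks using the weights; this is what yields the uniform $C^nt^n\|\psi\|^{4n+2}$ structure with a single fixed norm.
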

\begin{proof}
    Let $(\chi_z)_{z \in \mathbb{Z}^d}$ be a partition of unity of $\mathbb{R}^d$ such that $\chi_z(\cdot) = \chi(\cdot+z)$ for some $\chi \in C_c^\infty(\mathbb{R}^d)$ and for all $z \in \mathbb{Z}^d$.
    For all $\gamma \in (\mathbb{Z}^d)^{\wp}$ and $\bm{k} \in \mathscr{D}^\cp$, set $\chi_\gamma(\bm{k}) = \prod_{\mathfrak{p} \in \wp} \chi_{\gamma_{\mathfrak{p}}}(\bm{k}^\flat_{\mathfrak{p}})$.
    Then $\sum_{\gamma \in (\mathbb{Z}^d)^\wp} \chi_\gamma = 1$.
    Let $\varrho_k$ be the translation operator on $C^\infty(\mathscr{D}^\cp)$ defined by $\varrho_k f(\cdot) = f(\cdot + k)$.
    Denote $\mathcal{D}_k^\ell = \varrho_{k}^{-1} \partial_k^\ell \varrho_k$.
    By Lemma~\ref{lem:N-infty-expression}, we have the decomposition
    \begin{equation*}
        \partial_k^\ell N^{\cp,\infty}_{t,k}
        = \sum_{\gamma \in (\mathbb{Z}^d)^{\wp}} I_\gamma(t,k),
        \quad\text{where}\ 
        I_\gamma(t,k) = \Theta_t[\mathscr{I}_\cp](0) \int_{\mathscr{D}^\cp_k} \bm{\delta}(\mho^\cp(\bm{k})) (\chi_\gamma \mathcal{D}_k^\ell \psi^\cp)(\bm{k}) \diff \bm{k}.
    \end{equation*}
    For each term in the summation, we use Proposition~\ref{prop:gauss-int-upper-bound} and~\eqref{eq:def-N-infty-hom} for the estimate
    \begin{equation*}
        |k^m I_\gamma(t,k)|
        \lesssim C^n t^n \bigl\|\mathcal{F}_{\wp} \bigl\{   \pi^\cp_*(\chi_\gamma \bm{k}_\cp^m \mathcal{D}_k^\ell \psi^\cp)|_{(\mathbb{R}^d)^{\wp}} \bigr\}\bigr\|_{L^1}
        \lesssim \tilde{C}^n t^n n^{|m|} \prod_{\mathfrak{p} \in \wp} \|\mathcal{F}(\chi_{\gamma_{\mathfrak{p}}}|\psi|^2)\|_{W_\ell^{|m|,1}}.
    \end{equation*}
    By Cauchy's inequality and Plancherel's theorem, we have
    \begin{align*}
        \|\mathcal{F}(\chi_{\gamma_{\mathfrak{p}}}|\psi|^2)\|_{W^{|m|,1}_\ell}
        \lesssim \|\mathcal{F}(\chi_{\gamma_{\mathfrak{p}}}|\psi|^2)\|_{W^{|m|,2}_{|\ell|+d}}
        \lesssim \|\chi_{\gamma_{\mathfrak{p}}}|\psi|^2\|_{W^{|\ell|+d,2}_{|m|}}
        \lesssim \|\chi_{\gamma_{\mathfrak{p}}}\|_{W^{|\ell|+d,2}_{-d}} \|\psi\|_{W^{d,\infty}_{|m|+d/2}}^2.
    \end{align*}
    Summing up in $\gamma$, we conclude with the estimate:
    \begin{equation*}
        |k^m \partial_k^\ell N^{q,\infty}_{t,k}|
        \lesssim \Bigl(\sum_{z \in \mathbb{Z}^d} \|\chi_{z}\|_{W^{|\ell|+d,2}_{-d}} \Bigr)^{2n+1} \|\psi\|_{W^{|\ell|+d,\infty}_{|m|+d/2}}^{4n+2}
        \lesssim \Bigl( \sum_{z \in \mathbb{Z}^d} \langle \bm{z} \rangle^{-d} \Bigr)^{2n+1} \|\psi\|_{W^{|\ell|+d,\infty}_{|m|+d/2}}^{4n+2}.\qedhere
    \end{equation*}
\end{proof}

\subsection{Wave kinetic equation}

\label{sec:WK-hom}

Now we derive the wave kinetic equation WK and prove the convergence results in Theorem~\ref{thm:hom-inhom} when $\beta=\infty$.
By Proposition~\ref{prop:N-infty-hom-est} and Lemma~\ref{lem:tree-regcp-counting}, if $t \in [0,T]$ and $T$ is sufficiently small, then for all $k \in \mathbb{R}^d$, we have a absolutely convergent sum
\begin{equation*}
    N_k(t) = \sum_{n \ge 0} N^{n,\infty}_{t,k}.
\end{equation*}
We will prove that $(N_k)_{k \in \mathbb{R}^d}$ solves the wave kinetic equation WK with initial data
    $N_k(0) = |\psi(k)|^2,$
by establishing the following Duhamel's formula~\eqref{eq:N-duhamel-hom}.
Once it is proved, the estimate~\eqref{eq:A-W-convergence} follows as a consequence of Proposition~\ref{prop:N-infty-hom-est} and~\eqref{eq:cauchy-diagram-hom}.

\begin{proposition}
    \label{prop:N-duhamel-hom}
    If $T$ is sufficiently small, then for all $ (t,k) \in [0,T] \times \mathbb{R}^d$, we have
    \begin{equation}
        \label{eq:N-duhamel-hom}
        N_k(t) = |\psi(k)|^2 + 2 \int_0^t \int_{\mathscr{D}_k} \bm{\delta}(\Omega(\bm{k})) \prod_{1\le j \le 3} N_{k_j}(s) \diff \bm{k} \diff s.
    \end{equation}
\end{proposition}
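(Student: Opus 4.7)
The plan is to deduce Duhamel's formula \eqref{eq:N-duhamel-hom} by summing the couple-wise induction formula \eqref{eq:N-infty-hom-n-induction} from Lemma~\ref{lem:N-infty-hom-induction} over the chaos order $n$, and interchanging the summation with the integration via absolute convergence. The main analytical work is to ensure that this interchange is justified and that the resulting sums assembling $N_{k_j}(s)$ converge uniformly on a small time interval.

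First, I would establish absolute convergence of $N_k(t)$ and of the nonlinear functional on the right hand side of \eqref{eq:N-duhamel-hom}. By Proposition~\ref{prop:N-infty-hom-est} (taking $m=\ell=0$) together with the counting bound $\card(\mathscr{K}^\reg_n) \lesssim (27/2)^n$ from Lemma~\ref{lem:tree-regcp-counting}, there exists a universal constant $C_0 > 0$ depending only on the norm $\|\psi\|_{W^{d,\infty}_{d/2}}$ such that for all $(t,k) \in [0,1] \times \mathbb{R}^d$,
\begin{equation*}
    |N^{n,\infty}_{t,k}| \le \sum_{\cp \in \mathscr{K}^\reg_n} |N^{\cp,\infty}_{t,k}| \lesssim (C_0 t)^n \|\psi\|_{W^{d,\infty}_{d/2}}^{4n+2}.
\end{equation*}
Choosing $T > 0$ so small that $C_0 T \|\psi\|_{W^{d,\infty}_{d/2}}^{4} < 1/2$, the series defining $N_k(t)$ converges absolutely and uniformly on $[0,T] \times \mathbb{R}^d$, with $\sup_{(t,k) \in [0,T] \times \mathbb{R}^d} |N_k(t)| \lesssim \|\psi\|_{W^{d,\infty}_{d/2}}^2$.

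Next, I would write
\begin{equation*}
    N_k(t) = N^{0,\infty}_{t,k} + \sum_{n \ge 1} N^{n,\infty}_{t,k} = |\psi(k)|^2 + \sum_{n \ge 1} N^{n,\infty}_{t,k},
\end{equation*}
using Lemma~\ref{lem:N-infty-expression} for the trivial couple $\cp \in \mathscr{K}_0$. Applying \eqref{eq:N-infty-hom-n-induction} and reindexing the summation over triples $(n_1,n_2,n_3) \in \mathbb{N}^3$, we obtain formally
\begin{equation*}
    \sum_{n \ge 1} N^{n,\infty}_{t,k} = 2 \sum_{(n_1,n_2,n_3) \in \mathbb{N}^3} \int_0^t \int_{\mathscr{D}_k} \bm{\delta}(\Omega(\bm{k})) \prod_{1 \le j \le 3} N^{n_j,\infty}_{s,k_j} \diff \bm{k} \diff s.
\end{equation*}
The key step is to interchange the triple sum with the spatial integration and then factor the product, which would produce
\begin{equation*}
    2 \int_0^t \int_{\mathscr{D}_k} \bm{\delta}(\Omega(\bm{k})) \prod_{1 \le j \le 3} \Bigl( \sum_{n_j \ge 0} N^{n_j,\infty}_{s,k_j}\Bigr) \diff \bm{k} \diff s = 2 \int_0^t \int_{\mathscr{D}_k} \bm{\delta}(\Omega(\bm{k})) \prod_{1 \le j \le 3} N_{k_j}(s) \diff \bm{k} \diff s,
\end{equation*}
which is exactly \eqref{eq:N-duhamel-hom}.

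The main obstacle is justifying this interchange, since the singular measure $\bm{\delta}(\Omega(\bm{k})) \diff \bm{k}$ on $\mathscr{D}_k \subset \mathbb{R}^{3d}$ is not of total finite mass. To handle this, I would rely on Proposition~\ref{prop:N-infty-hom-est} with suitable weight parameters so that each $k \mapsto N^{n_j,\infty}_{s,k}$ is controlled by a fixed Schwartz-type majorant (e.g.\ via the weight $\langle k \rangle^{-|m|}$ with $|m|$ large enough that the restricted measure on the resonant manifold integrates finitely). Concretely, choosing $|m| > 2d$, the inequality $|N^{n_j,\infty}_{s,k_j}| \lesssim (C_0 s)^{n_j} \|\psi\|^{4n_j+2}_{W^{d,\infty}_{|m|+d/2}} \langle k_j \rangle^{-|m|}$ provides an integrable dominant $\prod_j \langle k_j\rangle^{-|m|}$ against the resonant measure (the finiteness of this integral is the standard bound on the wave kinetic collision kernel and is implicit in the proof of Proposition~\ref{prop:N-infty-hom-est}). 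With this pointwise dominant and geometric decay in $n_j$, dominated convergence and Fubini legitimize both the exchange of the triple summation with the space-time integration and the factorization of the product, completing the proof.
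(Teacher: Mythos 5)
Your proposal is correct and follows essentially the same route as the paper: sum the inductive formula \eqref{eq:N-infty-hom-n-induction} over the chaos order and pass to the limit using the absolute convergence furnished by Proposition~\ref{prop:N-infty-hom-est} and Lemma~\ref{lem:tree-regcp-counting}. The only difference is that you spell out the dominated-convergence/Fubini justification (via the weighted bounds $|k^m N^{\cp,\infty}_{t,k}|$) that the paper leaves implicit in the phrase ``absolute convergence of the series''.
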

\begin{proof}
    The formula \eqref{eq:N-infty-hom-n-induction} implies that
    \begin{equation*}
        \sum_{0 \le \ell \le n} N^{\ell,\infty}_{t,k} = |\psi(k)|^2 + 2 \sum_{1 \le \ell \le n} \sum_{n_1+n_2+n_3=\ell-1}\int_0^t \int_{\mathscr{D}_k} \bm{\delta}(\Omega(\bm{k})) \prod_{1 \le j \le 3} N^{n_j,\infty}_{s,k_j} \diff \bm{k} \diff s.
    \end{equation*}
    By Proposition~\ref{prop:N-infty-hom-est}, we have the absolute convergence of the series, which allows us to pass to the limit $n \to \infty$ and obtain the formula~\eqref{eq:N-duhamel-hom}.
\end{proof}

    Next, let $\chi \in \mathscr{C}_c^\infty(\mathbb{R}^d)$.
    By Proposition~\ref{prop:E-spec-N-id}, for all $n \in \mathbb{N}$, all $t \in [0,T]$, and all $x \in \mathbb{R}^d$, write
    \begin{equation}
        \label{eq:E-spec-decomp-hom}
        \int \chi(\xi) \mathbb{E}\mathcal{W}\Bigl[u^{2n+1}\Bigl(\frac{t}{\lambda^2}\Bigr)\Bigr](x,\xi) \diff \xi
        = \frac{1}{L^d} \sum_{k \in \mathbb{Z}^d_L} \chi(k) N^n_{t/\lambda^2,k}
        = \frac{1}{L^d} \sum_{k \in \mathbb{Z}^d_L} \chi(k) N^{n,\infty}_{t,k} + \mathcal{R}_L(t)
    \end{equation}
    where the remainder $\mathcal{R}_L(t)$ has the exact expression
    \begin{equation*}
        \mathcal{R}_L(t) = \frac{1}{L^d} \sum_{k \in \mathbb{Z}^d_L} \chi(k) \Bigl( \sum_{\cp \in \mathscr{K}_n \backslash \mathscr{K}^\reg_n} N^\cp_{t/\lambda^2,k} + \frac{1}{L^d} \sum_{\cp \in \mathscr{K}^\reg_n}(N^\cp_{t/\lambda^2,k} - N^{\cp,\infty}_{t,k}) \Bigr).
    \end{equation*}
    By Propositions~\ref{prop:E-spec-general-upper-bound-hom} and~\ref{prop:E-spec-reg-asym-hom}, we have $\lim_{L \to \infty} \mathcal{R}_L(t) = 0$, uniformly for all $t \in [0,T]$.
    Furthermore, using Proposition~\ref{prop:N-infty-hom-est}, we can pass to the limit $L \to \infty$ for the Riemann sum on the right hand side of~\eqref{eq:E-spec-decomp-hom} to converge to an integral, which yields
    \begin{equation*}
        \lim_{L \to \infty} \int \chi(\xi) \mathbb{E}\mathcal{W}\Bigl[u^{2n+1}\Bigl(\frac{t}{\lambda^2}\Bigr)\Bigr](x,\xi) \diff \xi = \int \chi(k) N^{n,\infty}_{t,k} \diff k.
    \end{equation*}
    Finally, we conclude~\eqref{eq:E-spec-wigner-lim} by summing up in~$n$.

\section{Effective dynamics: inhomogeneous and semi-homogeneous setting}

\label{sec:inhom}

In this section we prove Theorems~\ref{thm:hom-inhom} and~\ref{thm:inhom-2nd} in the inhomogeneous and semi-homogeneous setting.
Recall that we have $\lambda^{-2} = L^\alpha$ and $\epsilon^{-1} = L^\beta$ where $0 < \alpha < 2/(1+4/d^*)$ and $ \alpha \le \beta < \infty$.

\subsection{General couples}

In the inhomogeneous setting, the identity~\eqref{eq:N-I-id-hom} needs a fine tuning via an operator that describes the transport of wavepackets.
Let $\cp = (\tau^*,\wp) \in \mathscr{K}$.
For all $x,\zeta \in \mathbb{R}^d$ we define $\mathcal{T}^\cp_{x,\zeta} \phi = \mathcal{F}^{-1}_{\eta\to x} \widehat{\mathcal{T}}^\cp_{\eta,\zeta} \phi$ as a function on $\mathbb{R}^{\mathfrak{B}^\cp} \times \mathscr{D}^\cp$ by setting
\begin{equation}
    \label{eq:T-op-def}
    \widehat{\mathcal{T}}^{\cp}_{\eta,\zeta} \phi(\bm{t},\bm{k})
    = \iint_{\mathscr{C}^\cp_\eta \times \mathscr{D}^{\cp}_{\zeta}}
    e^{2\pi i \epsilon \lambda^{-2} \bm{t} \cdot (2\Omega^{\tau^*}(\bm{k};\bm{\zeta}+\iota\bm{\eta}/2) + \epsilon \Omega^{\tau^*}(\bm{\zeta}+\iota\bm{\eta}/2))}
    \widehat{\mathcal{W}}^\cp[\phi](\bm{k},\bm{\eta},\bm{\zeta}) \diff \bm{\zeta} \diff \bm{\eta}.
    \index{Operations and transforms!Phase space transforms!Transport operator on couples $\mathcal{T}^\cp$}
\end{equation}
\begin{proposition}
    \label{prop:E-upper-bound-inhom}    
    Then for all $t > 0$ and all $k \in \mathbb{R}^d$, we have the identity
    \begin{equation}
        \label{eq:E-spec-I-T-id}
        \mathcal{E}^\cp_{t,k}(x,\zeta) = \varsigma_\cp \mathcal{I}^\cp_{t,k} \mathcal{T}^{\cp}_{x,\zeta} \phi.
    \end{equation}
    Suppose that $q \in \mathscr{K}_n$ with $n \ge 1$.
    Let $\supp \widehat{\phi} $ be bounded by $R \ge 1$ and let $m \in \mathbb{N}^d$.
    Then uniformly in $(t,k) \in [0,1] \times \mathbb{R}^d$, we have the estimates
    \begin{align}
        \label{eq:E-upper-bound-inhom}
        \|k^m \mathcal{E}^\cp_{t/\lambda^2,k}\|_{L^\infty} & \lesssim C^n t^{2n} n^{|m|} (nR)^{4nd}\ln(1+nR^2L^\alpha) L^{-\nu\ind(\cp)} \|\widehat{\phi}\|_{W^{d,1}L^\infty_{|m|}}^{4n+2} \\
        \label{eq:N-upper-bound-inhom}
        \|k^m \mathcal{N}^\cp_{t/\lambda^2,k}\|_{L^\infty} & \lesssim C^n t^{2n} n^{|m|} (nR)^{4nd}\ln(1+nR^2L^\alpha) L^{-\nu\ind(\cp)} \|\widehat{\phi}\|_{L^1L^\infty_{|m|}}^{4n+2}.
    \end{align}
    Particularly, if $q \notin \mathscr{K}^\reg$, then we have
    \begin{equation*}
        \lim_{L \to \infty} \sup_{(t,k) \in [0,1] \times \mathbb{R}^d} \|k^m \mathcal{E}^\cp_{t/\lambda^2,k}\|_{L^\infty} 
        = \lim_{L \to \infty} \sup_{(t,k) \in [0,1] \times \mathbb{R}^d} \|k^m \mathcal{N}^\cp_{t/\lambda^2,k}\|_{L^\infty} = 0.
    \end{equation*}
\end{proposition}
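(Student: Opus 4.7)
The plan is to establish \eqref{eq:E-spec-I-T-id} by a direct unfolding of definitions, and then derive the two upper bounds as consequences of Lemma~\ref{lem:I-op-est} applied to $V=\mathcal{T}^{\cp}_{x,\zeta}\phi$. The final claim on non-regular couples will then be immediate from the factor $L^{-\nu\ind(\cp)}$.

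For the identity, I would start from the definition of $\widehat{\mathcal{E}}^\cp_{t,k}(\eta,\zeta)$ given in \S\ref{sec:energy-spec-non-periodic} and rewrite $\Psi^{\tau^*}_t = \Theta^\cp_t\circ\Omega^{\tau^*}$ as the Fourier inverse of $\bm{1}_{\mathscr{O}^\cp_t}$ evaluated on the resonance factor. By the bilinearity and symmetry of $\Omega^{\tau^*}$,
\begin{equation*}
\Omega^{\tau^*}\bigl(\bm{k}+\epsilon(\bm{\zeta}+\iota\bm{\eta}/2)\bigr)
= \Omega^\cp(\bm{k}) + 2\epsilon\,\Omega^{\tau^*}(\bm{k};\bm{\zeta}+\iota\bm{\eta}/2) + \epsilon^2\Omega^{\tau^*}(\bm{\zeta}+\iota\bm{\eta}/2).
\end{equation*}
After rescaling $\bm{t}\mapsto\lambda^2\bm{t}$, the first term produces the oscillation $e^{2\pi i\bm{t}\cdot\Omega^\cp(\bm{k})}$ appearing in the definition of $\mathcal{I}^\cp_{t,k}$, while the remaining two terms carry the factor $\epsilon\lambda^{-2}$ and, together with the $\eta$-Fourier inversion and the integration of $\widehat{\mathcal{V}}^\cp[\phi]$ over $\mathscr{C}^\cp_\eta\times\mathscr{D}^\cp_\zeta$, reproduce $\mathcal{T}^\cp_{x,\zeta}\phi$ as defined in \eqref{eq:T-op-def}. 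Combining these pieces gives \eqref{eq:E-spec-I-T-id}.

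To prove \eqref{eq:E-upper-bound-inhom} I would apply Lemma~\ref{lem:I-op-est} to the identity just established. The support hypothesis of that lemma is satisfied because each $\bm{k}^\flat_\mathfrak{p}$ lies in the momentum support of $\widehat\phi$, so the effective decoration set is bounded by $nR$. The main new input is the uniform bound
\begin{equation*}
\sup_{(x,\zeta,\bm{s},\bm{k})} \bigl|\mathcal{T}^\cp_{x,\zeta}\phi(\bm{s},\bm{k})\bigr|
\lesssim \int_{\mathbb{R}^d}\iint_{\mathscr{C}^\cp_\eta\times\mathscr{D}^\cp_\zeta}
\bigl|\widehat{\mathcal{V}}^\cp[\phi](\bm{k},\bm{\eta},\bm{\zeta})\bigr|\diff\bm{\zeta}\diff\bm{\eta}\diff\eta,
\end{equation*}
obtained by placing absolute values inside the inverse Fourier transform in $\eta$. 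Since $\widehat{\mathcal{V}}^\cp[\phi]$ factorizes over $\mathfrak{p}\in\wp$ as products of $\widehat\phi(\bm{k}^\flat_\mathfrak{p},\bm{\zeta}^\flat_\mathfrak{p}\pm\bm{\eta}^\flat_\mathfrak{p}/2)$, the triple integral decouples across leaf pairs after invoking Lemmas~\ref{lem:deco-formula} and~\ref{lem:cp-deco-wigner}. Bounding $|\bm{k}_\cp^m|\lesssim n^{|m|}\prod_\mathfrak{p}\langle\bm{k}^\flat_\mathfrak{p}\rangle^{|m|}$ and absorbing the extra $\eta$-integration against the $W^{d,1}$ regularity in the first argument of $\widehat\phi$ produces the product $\|\widehat\phi\|_{W^{d,1}L^\infty_{|m|}}^{4n+2}$; reinserting into Lemma~\ref{lem:I-op-est} gives \eqref{eq:E-upper-bound-inhom}. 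The estimate \eqref{eq:N-upper-bound-inhom} follows by the same scheme after exchanging the $\zeta$-integration with the $x$-Fourier inversion, since integrating $\mathcal{E}^\cp_{t,k}(x,\zeta)$ in $\zeta$ forces the $\bm{\eta}^\flat$ variables to collapse and eliminates the need for Sobolev regularity in the spatial-frequency variable, leaving only the weaker $L^1L^\infty_{|m|}$-type factors.

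The consequence for $\cp\notin\mathscr{K}^\reg$ is then immediate: $\ind(\cp)\geq 1$ gives $L^{-\nu\ind(\cp)}\to 0$, and this decay dominates the polylogarithmic factor $\ln(1+nR^2L^\alpha)^{2n}$ as $L\to\infty$, with the bounded constants from $\widehat\phi$ and the fixed $n$ held uniform in $(t,k)\in[0,1]\times\mathbb{R}^d$. The main technical obstacle will be the bookkeeping in the second step: identifying the minimal Sobolev index (here $d$) needed to absorb the $\eta$-integration while keeping the decoupling across leaf pairs clean, and tracking the scaling $nR$ of the support set $B$ of $\mathcal{T}^\cp_{x,\zeta}\phi$ through the full estimate so that the exponent $4nd$ on $(nR)$ in Lemma~\ref{lem:I-op-est} is matched exactly by what comes out of $\widehat{\mathcal{V}}^\cp[\phi]$.
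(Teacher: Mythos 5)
Your proposal follows the paper's proof essentially verbatim: the identity \eqref{eq:E-spec-I-T-id} from the quadratic expansion of $\Omega^{\tau^*}(\bm{k}+\epsilon\bm{\xi})$, then Lemma~\ref{lem:I-op-est} applied with $V=\mathcal{T}^\cp_{x,\zeta}\phi$ (support bounded by $nR$), a factorized-over-$\wp$ bound on $\mathcal{T}^\cp_{x,\zeta}\phi$ obtained by putting absolute values inside the $\eta$-inversion, integration in $\zeta$ for \eqref{eq:N-upper-bound-inhom}, and the factor $L^{-\nu\ind(\cp)}$ for non-regular couples. The one bookkeeping point you flagged is resolved differently in the paper than you guess: the index $d$ in $\|\widehat{\phi}\|_{W^{d,1}L^\infty_{|m|}}$ does not come from absorbing the $\eta$-integration (that integration is handled by an $L^1$--$L^\infty$ splitting of the two $\widehat{\phi}$ factors per pair), but from a Sobolev embedding in the parameter $\zeta$ --- Lemma~\ref{lem:T-op-upper-bound} controls $\int_\zeta\sup_x\|\bm{k}_\cp^m\partial_\zeta^\ell\mathcal{T}^\cp_{x,\zeta}\phi\|_{L^\infty}\diff\zeta$ with $|\ell|\le d$ derivatives, and Lemma~\ref{lem:I-op-est} requires a supremum over $\zeta$ --- while \eqref{eq:N-upper-bound-inhom} needs only the $L^1$-in-$\zeta$ bound (the case $\ell=0$) because $\mathcal{N}^\cp_{t,k}$ is the $\zeta$-integral of $\mathcal{E}^\cp_{t,k}$, not because the $\bm{\eta}^\flat$ variables collapse; your estimates still close (with the compact support of $\widehat{\phi}$ all the norms you invoke are controlled by the stated ones), but the mechanism should be attributed as above.
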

\begin{proof}
    Set $\bm{\xi} = \bm{\zeta}+\iota\bm{\eta}/2$.
    Then~\eqref{eq:E-spec-I-T-id} is a direct consequence of the quadratic expansion:
    \begin{equation*}
        \Omega^{\tau^*}(\bm{k}+\epsilon\bm{\xi})
        = \Omega^\cp(\bm{k}) + 2 \epsilon \Omega^{\tau^*}(\bm{k};\bm{\xi}) + \epsilon^2 \Omega^{\tau^*}(\bm{\xi}).
    \end{equation*}
    The proof for the estimate~\eqref{eq:E-upper-bound-inhom} is similar to that of Proposition~\ref{prop:E-spec-general-upper-bound-hom}.
    In fact, by Lemma~\ref{lem:I-op-est},
    \begin{equation*}
        |k^m \mathcal{E}^\cp_{t/\lambda^2,k}(x,\zeta)|
        \lesssim C^n t^{2n} (nR)^{4nd}\ln(1+nR^2L^\alpha) L^{-\nu\ind(\cp)} \|\bm{k}_\cp^m \mathcal{T}^\cp_{x,\zeta}\phi\|_{L^\infty L^\infty}.
    \end{equation*}
    We conclude with Lemma~\ref{lem:T-op-upper-bound} below and the Sobolev embedding theorem.
    The identity~\eqref{eq:N-upper-bound-inhom} follows by Lemma~\ref{lem:T-op-upper-bound} and integrating the above estimate directly in $\zeta$.
\end{proof}

\begin{lemma}
    \label{lem:T-op-upper-bound}
    If $\cp \in \mathscr{K}_n$ with $n \ge 0$, then for all $m,\ell \in \mathbb{N}^d$, we have
    \begin{equation*}
        \sup_{x,\zeta \in \mathbb{R}^d} \int \| \bm{k}_\cp^m \partial_\zeta^\ell \mathcal{T}^\cp_{x,\zeta} \phi \|_{L^\infty} \diff \zeta \lesssim
        C^n n^{|m|} \prod_{\mathfrak{p} \in \wp} \|\widehat{\phi}\|_{W^{|\ell|,1}L^\infty_{|m|}}^{4n+2}.
    \end{equation*}
\end{lemma}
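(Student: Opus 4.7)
The plan is to unwind the definitions and estimate directly. Parameterizing $\mathscr{D}^\cp_\zeta$ and $\mathscr{C}^\cp_\eta$ via $\bm{f},\bm{g}:\wp\to\mathbb{R}^d$ as in \S\ref{sec:energy-spec-non-periodic}, and then inverting the Fourier transform $\eta\mapsto x$ in~\eqref{eq:T-op-def}, one obtains
\begin{equation*}
    \mathcal{T}^\cp_{x,\zeta}\phi(\bm{t},\bm{k}) = \iint e^{2\pi i x\cdot\sum_{\mathfrak{p}}\bm{g}_\mathfrak{p}}\, e^{i\Psi(\bm{t},\bm{k},\bm{f},\bm{g})}\,\bm{\delta}_\zeta\Bigl(\sum_{\mathfrak{p}}\iota_\mathfrak{p}\bm{f}_\mathfrak{p}\Bigr)\Phi(\bm{k},\bm{f},\bm{g})\diff\bm{f}\diff\bm{g},
\end{equation*}
where $\Phi(\bm{k},\bm{f},\bm{g}):=\prod_{\mathfrak{p}\in\wp}\widehat{\phi}(\bm{k}^\flat_\mathfrak{p},\bm{f}_\mathfrak{p}+\bm{g}_\mathfrak{p}/2)\overline{\widehat{\phi}(\bm{k}^\flat_\mathfrak{p},\bm{f}_\mathfrak{p}-\bm{g}_\mathfrak{p}/2)}$ and $\Psi$ is the real phase arising from the exponent in~\eqref{eq:T-op-def}. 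Since both exponentials have modulus one, they drop out of the absolute value, so that the suprema over $\bm{t}$ and $x$ can be taken trivially.

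To extract $\bm{k}_\cp^m=\bm{k}_{\mathfrak{r}^{\tau^\pm}}^m$, Lemma~\ref{lem:deco-formula} writes $\bm{k}_{\mathfrak{r}^{\tau^\pm}}$ as a signed sum of $\{\bm{k}^\flat_\mathfrak{p}\}_{\mathfrak{p}\in\wp}$, giving $|\bm{k}_\cp^m|\lesssim n^{|m|}\sum_{|\alpha|=|m|}\prod_{\mathfrak{p}}|\bm{k}^\flat_\mathfrak{p}|^{\alpha_\mathfrak{p}}$ with $O(C^n)$ multi-indices. To handle $\partial_\zeta^\ell$, I would pick an arbitrary pair $\mathfrak{p}_0\in\wp$ and use the delta constraint to solve $\bm{f}_{\mathfrak{p}_0}$ as an affine function of $\zeta$ and the remaining $\bm{f}_\mathfrak{p}$; this localizes the $\zeta$-dependence to the two $\widehat{\phi}$-factors indexed by $\mathfrak{p}_0$, and Leibniz produces at most $2^{|\ell|}$ terms of the shape $\partial^{\beta^+}\widehat{\phi}\cdot\overline{\partial^{\beta^-}\widehat{\phi}}$ at $\mathfrak{p}_0$ with $|\beta^\pm|\le|\ell|$.

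Integrating in $\zeta$ then removes the delta constraint, freeing the $\bm{f}$-integration on $(\mathbb{R}^d)^\wp$. The Wigner change of variables $\bm{\xi}^\pm_\mathfrak{p}=\bm{f}_\mathfrak{p}\pm\bm{g}_\mathfrak{p}/2$, which has unit Jacobian by Lemma~\ref{lem:cp-deco-wigner}, fully decouples the integrand across pairs and yields an expression of the form
\begin{equation*}
    \sum_{\alpha,\beta^\pm}c_{\alpha,\beta^\pm}\prod_{\mathfrak{p}\in\wp}\iint|\bm{k}^\flat_\mathfrak{p}|^{\alpha_\mathfrak{p}}\bigl|\partial^{\beta^+_\mathfrak{p}}\widehat{\phi}(\bm{k}^\flat_\mathfrak{p},\bm{\xi}^+)\bigr|\,\bigl|\partial^{\beta^-_\mathfrak{p}}\widehat{\phi}(\bm{k}^\flat_\mathfrak{p},\bm{\xi}^-)\bigr|\diff\bm{\xi}^+\diff\bm{\xi}^-.
\end{equation*}
Taking the $L^\infty$ in $\bm{k}$ pair by pair, each factor is bounded by $\|\widehat{\phi}\|_{W^{|\ell|,1}L^\infty_{|m|}}^2$, where the weight $\langle\bm{k}^\flat_\mathfrak{p}\rangle^{|m|}$ absorbs $|\bm{k}^\flat_\mathfrak{p}|^{\alpha_\mathfrak{p}}$ since $|\alpha_\mathfrak{p}|\le|m|$. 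Because $|\wp|=2n+1$, the product over pairs produces the $4n+2$-power of the norm, and the $O(C^n)$ combinatorial sums from the expansions of $|\bm{k}_\cp^m|$ and of $\partial_\zeta^\ell$ collect into the prefactor $C^n n^{|m|}$.

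The main obstacle is the bookkeeping of how $\bm{k}_\cp^m$ and $\partial_\zeta^\ell$ interact with the Wigner substitution so as to preserve the product-over-pairs structure that yields the $4n+2$ exponent. Once the modulus-one phases are discarded and $\partial_\zeta^\ell$ is localized to a single pair $\mathfrak{p}_0$, the remainder of the proof is a direct product estimate pair by pair.
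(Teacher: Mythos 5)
There is a genuine gap in your treatment of $\partial_\zeta^\ell$. You discard the unimodular phase factors (``they drop out of the absolute value'') \emph{before} differentiating in $\zeta$, and then claim that solving the delta constraint for a single $\bm{f}_{\mathfrak{p}_0}$ ``localizes the $\zeta$-dependence to the two $\widehat{\phi}$-factors indexed by $\mathfrak{p}_0$.'' This is not correct: the order of operations matters, since one must compute $\partial_\zeta^\ell \mathcal{T}^\cp_{x,\zeta}\phi$ first and only then take absolute values, and in your parametrization the phase in~\eqref{eq:T-op-def} does depend on $\zeta$. Indeed, substituting $\bm{f}_{\mathfrak{p}_0} = \iota_{\mathfrak{p}_0}\bigl(\zeta - \sum_{\mathfrak{p}\ne\mathfrak{p}_0}\iota_{\mathfrak{p}}\bm{f}_{\mathfrak{p}}\bigr)$ changes the decoration $\bm{\zeta}$ along the ancestors of the two leaves of $\mathfrak{p}_0$ (Lemma~\ref{lem:deco-formula}), and $\Omega^{\tau^*}(\bm{k};\cdot)$ does not vanish on such increments. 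Hence $\partial_\zeta^\ell$ hits the exponential and produces factors of size $\epsilon\lambda^{-2}|\bm{t}|\,(|\bm{k}|+|\bm{\zeta}|+|\bm{\eta}|)$, which are unbounded in $\bm{t}$; the claimed bound, which is an $L^\infty$ bound in $(\bm{t},\bm{k})$ with no such loss, does not follow along this route.

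The way to make the argument work --- and this is what the paper's one-line proof does via the conjugated derivative $\mathcal{D}_\zeta^\ell = \varrho_\zeta^{-1}\partial_\zeta^\ell\varrho_\zeta$ --- is to realize the $\zeta$-dependence as the translation $\mathscr{D}^\cp_\zeta = \zeta + \mathscr{D}^\cp_0$ by the \emph{constant} decoration, i.e., to shift all $\bm{f}_{\mathfrak{p}}$ simultaneously by $\zeta$ rather than solving for one of them. The resonance forms annihilate constant decorations: since $\bm{k}$ and $\bm{\zeta}+\iota\bm{\eta}/2$ are $\mathscr{D}$-decorations, the relations $\iota_{\mathfrak{b}}\bm{k}_{\mathfrak{b}} = \sum_{\mathfrak{n}^p=\mathfrak{b}}\iota_{\mathfrak{n}}\bm{k}_{\mathfrak{n}}$ give $\Omega^{\tau^*}(\bm{k};\zeta_{\mathrm{const}}) = 0$ and likewise the quadratic term is unchanged, so the phase in~\eqref{eq:T-op-def} is invariant under this shift. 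Consequently $\partial_\zeta^\ell$ passes through the phase and falls only on $\widehat{\mathcal{V}}^\cp[\phi]$, where Leibniz distributes the $|\ell|$ derivatives over all $2n+1$ pairs (at most $(2n+1)^{|\ell|}\lesssim C^n$ terms), each pair being absorbed by $\|\widehat{\phi}\|_{W^{|\ell|,1}L^\infty_{|m|}}^2$ exactly as in your final product estimate. Your handling of $\bm{k}_\cp^m$ and of the $\eta$-integration is fine; only the localization of the $\zeta$-derivative to a single pair, and the premature removal of the phase, need to be replaced by this translation-invariance argument.
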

\begin{proof}
    The proof is similar to that of Proposition~\ref{prop:N-infty-hom-est}:
    \begin{align*}
        \iint | \bm{k}_\cp^m \partial_\zeta^\ell \widehat{\mathcal{T}}^\cp_{\eta,\zeta} \phi (\bm{t},\bm{k}) | \diff \eta\diff \zeta 
        & \le \int \Bigl( \iint_{\mathscr{C}^\cp_\eta \times \mathscr{D}^{\cp}_\zeta}
        \bigl| \bm{k}_\cp^m  \mathcal{D}_\zeta^\ell \widehat{\mathcal{V}}^\cp[\phi](\bm{k},\bm{\eta},\bm{\zeta}) \bigr| \diff \bm{\zeta} \diff \bm{\eta} \Bigr) \diff \eta \diff \zeta \\
        & \lesssim C^n n^{|m|} \|\widehat{\mathcal{V}}[\phi]\|_{L^\infty_{|m|}L^1W^{|\ell|,1}}^{2n+1}.
        \lesssim \tilde{C}^n n^{|m|} \prod_{\mathfrak{p} \in \wp} \|\widehat{\phi}\|_{W^{|\ell|,1}L^\infty_{|m|}}^{4n+2}.
        \qedhere
    \end{align*}
\end{proof}

\subsection{Regular couples}

Now we study the asymptotic behaviors of the energy spectra $\mathcal{E}^\cp_{t,k}$ and $\mathcal{N}^\cp_{t,k}$, defined in \S\ref{sec:energy-spec-non-periodic}, when $\cp \in \mathscr{K}^\reg$.
The analysis is similar to that of the homogeneous setting \S\ref{sec:reg-cp-hom}, but we need to take into account of the estimates for the operator $\mathcal{T}^\cp_{x,\zeta}$.

\subsubsection{Algebraic structures}

Let us first give the limit without proof:
\begin{equation*}
    \mathcal{T}^{\cp,\infty}_{x,\zeta} \phi = \lim_{L \to \infty} \mathcal{T}^\cp_{x,\zeta} \phi,
\end{equation*}
which clearly depends on the scaling parameters $\alpha$ and $\beta$.
Precisely, if $\cp = (\tau^*,\wp) \in \mathscr{K}$, then for all $x,\zeta \in \mathbb{R}^d$, all $\bm{t} \in \mathbb{R}^{\mathfrak{B}^\cp}$, and all $\bm{k} \in \mathscr{D}^\cp$, we define $\mathcal{T}^{\cp,\infty}_{x,\zeta} = \mathcal{F}^{-1}_{\eta\to x} \widehat{\mathcal{T}}^{\cp,\infty}_{\eta,\zeta}$ by setting
\begin{equation}
    \label{eq:T-op-infty-def}
    \widehat{\mathcal{T}}^{\cp,\infty}_{\eta,\zeta} \phi(\bm{t},\bm{k})
    =  \iint_{\mathscr{C}^\cp_\eta \times \mathscr{D}^{\cp}_{\zeta}} e^{\bm{1}_{\alpha=\beta} 4\pi i \bm{t} \cdot \Omega^{\tau^*}(\bm{k};\bm{\zeta}+\iota\bm{\eta}/2)}
    \widehat{\mathcal{V}}^\cp(\bm{k},\bm{\eta},\bm{\zeta}) \diff \bm{\eta} \diff \bm{\zeta}.
\end{equation}
We shall give an explicit expression of $\kappa_\cp^* \mathcal{T}^{\cp,\infty}_{\eta,\zeta} \phi(\bm{\mu},\bm{s},\bm{k})$.
Note that, by Lemma~\ref{lem:conju-node-char}, if $\mathfrak{m} \sim \mathfrak{n}$, then $\Omega^{\tau^*}_{\mathfrak{m}}(\bm{k},\iota\bm{\eta}) = \Omega^{\tau^*}_{\mathfrak{n}}(\bm{k},\iota\bm{\eta})$ for all $\bm{k} \in \mathscr{D}^\cp$ and all $\bm{\eta} \in \mathscr{C}^\cp$.
Particularly, if $\cp \in \mathscr{K}^\reg$, then we can define $\Upsilon^\cp = (\Upsilon^\cp_r)_{r \in \mathscr{I}_\cp} : \mathscr{D}^\cp \times \mathscr{C}^\cp \to \mathbb{R}^{\mathscr{I}_\cp}$ by setting
$\Upsilon^\cp_r(\cdot,\cdot) = \Omega^{\tau^*}_{\mathfrak{b}^\pm}(\cdot,\iota \cdot)$,
\index{Functions and random variables!Resonance functions!Transport factor $\Upsilon^\cp_r$}
where $\mathfrak{B}^r = \{\mathfrak{b}^+,\mathfrak{b}^-\}$.
Note that by~\eqref{eq:T-op-infty-def} and the definitions of $\mho^\cp$ and $\Upsilon^\cp$, if $\bm{t} = \kappa_\cp(\bm{\mu},\bm{s})$, then
\begin{align*}
    \bm{t} \cdot \Omega^{\tau^*}(\bm{k},\bm{\zeta}+\iota\bm{\zeta}/2)
    & = \bm{t} \cdot (\Omega^\cp(\bm{k},\bm{\zeta}) + \Omega^{\tau^*}(\bm{k},\iota\bm{\eta})/2) = \bm{s} \cdot \mho^\cp(\bm{k},\bm{\zeta}) /2 + \bm{\mu} \cdot \Upsilon^\cp(\bm{k},\bm{\eta})/2,
\end{align*}
which yields the identity
\begin{equation}
    \label{eq:T-op-phase-decomp-id}
    \kappa_\cp^* \widehat{\mathcal{T}}^{\cp,\infty}_{\eta,\zeta} \phi(\bm{\mu},\bm{s},\bm{k})
    = \iint_{\mathscr{C}^\cp_\eta \times \mathscr{D}^\cp_\zeta} e^{\bm{1}_{\alpha=\beta}2\pi i (\bm{s} \cdot \mho^\cp(\bm{k},\bm{\zeta}) + \bm{\mu} \cdot \Upsilon^\cp(\bm{k},\bm{\eta}))} \widehat{\mathcal{V}}^\cp(\bm{k},\bm{\eta},\bm{\zeta}) \diff \bm{\eta} \diff \bm{\zeta}
\end{equation}
Using Lemma~\ref{lem:conju-node-char}, we can give an explicit expression to $\Upsilon^\cp$, which reveals the transport structure of the phase factor $\bm{\mu} \cdot \Upsilon^\cp(\bm{k},\bm{\eta})$ in~\eqref{eq:T-op-phase-decomp-id}.

\begin{lemma}
    \label{lem:T-op-transport-id}
    Let $\cp \in \mathscr{K}^\reg$.
    For all $\bm{\mu} \in \mathbb{R}^{\mathscr{I}_\cp}$, all $\bm{k} \in \mathscr{D}^\cp$, and all $\bm{\eta} \in \mathscr{C}^\cp$, we have
    \begin{equation}
        \label{eq:transport-str}
        \bm{\mu} \cdot \Upsilon^\cp(\bm{k},\bm{\eta})
        = \sum_{\mathfrak{p} \in \wp} Z^\cp_{\mathfrak{p}}(\bm{\mu},\bm{k}) \cdot \bm{\eta}^\flat_{\mathfrak{p}},
        \quad \text{where}\
        Z^\cp_{\mathfrak{p}}(\bm{\mu},\bm{k}) = \frac{1}{2} \sum_{r \in \mathscr{I}_\cp} \bm{\mu}_{r} \sum_{\mathfrak{c} \in \wp(r)} \bm{1}_{\mathfrak{p} \preceq \mathfrak{c}} (\bm{k}^\flat_{\mathfrak{B}^r} - \bm{k}^\flat_{\mathfrak{c}}).
    \end{equation}
    In~\eqref{eq:transport-str}, the strict partial order on $\mathfrak{C}^\cp_2$ is defined as in Remark~\ref{rmk:I-q-ordering} and $\wp(r)$ is the pairing that defines~$r$.
    Consequently, for all $x,\zeta \in \mathbb{R}^d$ we have the identities
    \begin{align}
        \label{eq:T-op-transport-id}
        \kappa_\cp^* \mathcal{T}^{\cp,\infty}_{x,\zeta} \phi(\bm{\mu},0,\bm{k})
        = \int_{\mathscr{D}^\cp_\zeta} & \prod_{\mathfrak{p}\in\wp} \mathcal{V}[\phi](\bm{k}^\flat_{\mathfrak{p}},x + \bm{1}_{\alpha=\beta} Z^\cp_{\mathfrak{p}}(\bm{\mu},\bm{k}),\bm{\zeta}^\flat_{\mathfrak{p}}) \diff \bm{\zeta}; \\
        \label{eq:T-op-transport-id-int}
        \int \kappa_\cp^* \mathcal{T}^{\cp,\infty}_{x,\zeta} \phi(\bm{\mu},0,\bm{k}) \diff \zeta
        = & \prod_{\mathfrak{p}\in\wp} |\phi(x + \bm{1}_{\alpha=\beta} Z^\cp_{\mathfrak{p}}(\bm{\mu},\bm{k}),\bm{k}^\flat_{\mathfrak{p}})|^2.
    \end{align}
\end{lemma}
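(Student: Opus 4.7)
The plan is to unpack the definition of $\Upsilon^\cp$ by a short algebraic computation, regroup the result by the leaf pairs of $\cp$, and then perform the $\bm{\eta}$-integration to turn phases into translations.

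The first step is a clean reformulation of $\Omega^{\tau^*}_\mathfrak{b}(\bm{k},\iota\bm{\eta})$. Because $\iota_\mathfrak{n}^2=1$, Definition~\ref{def:reso-function} gives $2\Omega^{\tau^*}_\mathfrak{b}(\bm{k},\iota\bm{\eta}) = \bm{k}_\mathfrak{b}\cdot\bm{\eta}_\mathfrak{b} - \sum_{\mathfrak{n}^p=\mathfrak{b}}\bm{k}_\mathfrak{n}\cdot\bm{\eta}_\mathfrak{n}$. Substituting the unsigned $\mathscr{C}$-sum relation $\bm{\eta}_\mathfrak{b}=\sum_{\mathfrak{n}^p=\mathfrak{b}}\bm{\eta}_\mathfrak{n}$ from Definition~\ref{def:tree-deco} yields the telescoping identity
\begin{equation*}
    2\Omega^{\tau^*}_\mathfrak{b}(\bm{k},\iota\bm{\eta}) = \sum_{\mathfrak{n}^p=\mathfrak{b}}(\bm{k}_\mathfrak{b}-\bm{k}_\mathfrak{n})\cdot\bm{\eta}_\mathfrak{n},
\end{equation*}
visibly independent of $\mathfrak{b}\in\{\mathfrak{b}^+_r,\mathfrak{b}^-_r\}$, which reconfirms well-posedness of $\Upsilon^\cp_r$. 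Since $r\in\mathscr{I}_\cp\subset\mathscr{K}_1$ by regularity (Lemma~\ref{lem:reg-cp-str}), each of the three children of $\mathfrak{b}^+_r$ lies in exactly one pair $\mathfrak{c}\in\wp(r)$, and Corollary~\ref{cor:conju-node-char-deco} identifies $\bm{k}_\mathfrak{n}=\bm{k}^\flat_\mathfrak{c}$, $\bm{\eta}_\mathfrak{n}=\bm{\eta}^\flat_\mathfrak{c}$, $\bm{k}_{\mathfrak{b}^+_r}=\bm{k}^\flat_{\mathfrak{B}^r}$, whence $\Upsilon^\cp_r=\tfrac{1}{2}\sum_{\mathfrak{c}\in\wp(r)}(\bm{k}^\flat_{\mathfrak{B}^r}-\bm{k}^\flat_\mathfrak{c})\cdot\bm{\eta}^\flat_\mathfrak{c}$.

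The second step descends from conjugate classes of $\cp$ to the actual leaf pairs $\wp$. For $\mathfrak{c}=\{\mathfrak{m}^+,\mathfrak{m}^-\}$, applying Lemma~\ref{lem:deco-formula} at $\mathfrak{m}^+$ gives $\bm{\eta}^\flat_\mathfrak{c}=\sum_{\mathfrak{p}=\{\mathfrak{l}^+,\mathfrak{l}^-\}\in\wp}(\bm{1}_{\mathfrak{m}^+\succeq\mathfrak{l}^+}+\bm{1}_{\mathfrak{m}^+\succeq\mathfrak{l}^-})\bm{\eta}^\flat_\mathfrak{p}$; the second indicator vanishes because $\mathfrak{m}^+\in\tau^+$ and $\mathfrak{l}^-\in\tau^-$, and the first equals $\bm{1}_{\mathfrak{p}\preceq\mathfrak{c}}$ under the partial order of Remark~\ref{rmk:I-q-ordering}, giving the clean identity $\bm{\eta}^\flat_\mathfrak{c}=\sum_{\mathfrak{p}\preceq\mathfrak{c}}\bm{\eta}^\flat_\mathfrak{p}$. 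Multiplying by $\bm{\mu}_r$, summing over $r\in\mathscr{I}_\cp$, and exchanging the order of summation yields $\bm{\mu}\cdot\Upsilon^\cp(\bm{k},\bm{\eta})=\sum_{\mathfrak{p}\in\wp}Z^\cp_\mathfrak{p}(\bm{\mu},\bm{k})\cdot\bm{\eta}^\flat_\mathfrak{p}$ with $Z^\cp_\mathfrak{p}$ exactly as in~\eqref{eq:transport-str}.

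Identity~\eqref{eq:T-op-transport-id} then follows by setting $\bm{s}=0$ in~\eqref{eq:T-op-phase-decomp-id}, inserting the formula just derived, and applying $\mathcal{F}^{-1}_{\eta\to x}$: the composite measure $\int\diff\eta\int_{\mathscr{C}^\cp_\eta}\diff\bm{\eta}$ collapses to $\int\prod_\mathfrak{p}\diff\bm{\eta}^\flat_\mathfrak{p}$ (with $\eta=\sum_\mathfrak{p}\bm{\eta}^\flat_\mathfrak{p}$), the phase factors distribute as $\prod_\mathfrak{p}e^{2\pi i(x+\bm{1}_{\alpha=\beta}Z^\cp_\mathfrak{p})\cdot\bm{\eta}^\flat_\mathfrak{p}}$, and the factorization $\widehat{\mathcal{V}}^\cp=\prod_\mathfrak{p}\widehat{\mathcal{V}}$ lets each $\bm{\eta}^\flat_\mathfrak{p}$-integration perform one Fourier inversion, producing the shifted Wigner transforms $\mathcal{V}[\phi](\bm{k}^\flat_\mathfrak{p},x+\bm{1}_{\alpha=\beta}Z^\cp_\mathfrak{p},\bm{\zeta}^\flat_\mathfrak{p})$. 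Integrating~\eqref{eq:T-op-transport-id} in $\zeta$ then decouples factor-by-factor via $\int\mathcal{W}[f](x,\zeta)\diff\zeta=|f(x)|^2$, yielding~\eqref{eq:T-op-transport-id-int}. The main obstacle will be the combinatorial bookkeeping across several coexisting partial orders (on $\cp$, on $\mathfrak{C}^\cp_2$, and on $\mathscr{I}_\cp$) and the consistent push-forward of decorations through $\pi^\cp$; once that notational scaffolding is in place, the algebra itself is purely linear and essentially forced.
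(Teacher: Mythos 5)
Your argument is correct and follows essentially the same route as the paper's proof: the same telescoping expansion of $\Upsilon^\cp_r$ into $\tfrac12\sum_{\mathfrak{c}\in\wp(r)}(\bm{k}^\flat_{\mathfrak{B}^r}-\bm{k}^\flat_{\mathfrak{c}})\cdot\bm{\eta}^\flat_{\mathfrak{c}}$, the same use of Lemma~\ref{lem:deco-formula} to write $\bm{\eta}^\flat_{\mathfrak{c}}=\sum_{\mathfrak{p}\preceq\mathfrak{c}}\bm{\eta}^\flat_{\mathfrak{p}}$ and exchange sums, and the same inverse Fourier transform of~\eqref{eq:T-op-phase-decomp-id} at $\bm{s}=0$ followed by integration in $\zeta$. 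One small notational caveat: in $\mathfrak{p}=\{\mathfrak{l}^+,\mathfrak{l}^-\}$ and $\mathfrak{c}=\{\mathfrak{m}^+,\mathfrak{m}^-\}$ the superscripts record signs rather than tree membership, so the precise justification is that the two nodes of each pair lie in different trees, hence at most one of $\bm{1}_{\mathfrak{m}^+\succeq\mathfrak{l}^+}$, $\bm{1}_{\mathfrak{m}^+\succeq\mathfrak{l}^-}$ is nonzero and conjugacy of $\mathfrak{m}^\pm$ forces their sum to equal $\bm{1}_{\mathfrak{p}\preceq\mathfrak{c}}$ --- which is exactly the identity you use, so the argument stands.
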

\begin{proof}
    In fact, by Lemmas~\ref{lem:conju-node-char} and~\ref{lem:deco-formula}, we have $\bm{\eta}^\flat_{\mathfrak{B}^r} = \sum_{\mathfrak{c} \in \wp(r)} \bm{\eta}^\flat_{\mathfrak{c}} $ and
    \begin{align*}
        2 \Upsilon^\cp_r(\bm{k},\bm{\eta})
        & = \bm{k}^\flat_{\mathfrak{B}^r} \cdot \bm{\eta}^\flat_{\mathfrak{B}^r} - \sum_{\mathfrak{c} \in \wp(r)} \bm{k}^\flat_{\mathfrak{c}} \cdot \bm{\eta}^\flat_{\mathfrak{c}}
        = \sum_{\mathfrak{c} \in \wp(r)} (\bm{k}^\flat_{\mathfrak{B}^r} - \bm{k}^\flat_{\mathfrak{c}}) \cdot \bm{\eta}^\flat_{\mathfrak{c}}\\
        & = \sum_{\mathfrak{c} \in \wp(r)} (\bm{k}^\flat_{\mathfrak{B}^r} - \bm{k}^\flat_{\mathfrak{c}}) \cdot \Bigl( \sum_{\mathfrak{p} \in \wp} \bm{1}_{\mathfrak{p}\preceq \mathfrak{c}} \bm{\eta}^\flat_{\mathfrak{p}} \Bigr)
        = \sum_{\mathfrak{p} \in \wp} \Bigl( \sum_{\mathfrak{c} \in \wp(r)} \bm{1}_{\mathfrak{p} \preceq \mathfrak{c}} (\bm{k}^\flat_{\mathfrak{B}^r} - \bm{k}^\flat_{\mathfrak{c}}) \Bigr) \cdot \bm{\eta}^\flat_{\mathfrak{p}}.
    \end{align*}
    We obtain~\eqref{eq:transport-str} by summing up in $r \in \mathscr{I}_\cp$.
    To prove~\eqref{eq:T-op-transport-id}, we use~\eqref{eq:transport-str} and Lemma~\ref{lem:deco-formula}.
    Taking the inverse Fourier transform on both sides of~\eqref{eq:T-op-phase-decomp-id}, we have
    \begin{align*}
        \kappa_\cp^* \mathcal{T}^{\cp,\infty}_{x,\zeta} \phi(\bm{\mu},0,\bm{k})
        & = \int e^{2\pi i x \cdot \eta} \Bigl(
        \iint_{\mathscr{C}^\cp_\eta \times \mathscr{D}^\cp_\zeta}
        \prod_{\mathfrak{p} \in \wp} e^{\bm{1}_{\alpha=\beta} 2\pi i Z^\cp_{\mathfrak{p}}(\bm{\mu},\bm{k}) \cdot \bm{\eta}^\flat_{\mathfrak{p}}} \widehat{\mathcal{V}}[\phi](\bm{k}^\flat_{\mathfrak{p}},\bm{\eta}^\flat_{\mathfrak{p}},\bm{\zeta}^\flat_{\mathfrak{p}}) \diff \bm{\eta} \diff \bm{\zeta} \Bigr) \diff \eta \\
        & = \int_{\mathscr{D}^\cp_\eta} \Bigl( \prod_{\mathfrak{p} \in \wp} \int e^{2\pi i (x + \bm{1}_{\alpha=\beta} Z^\cp_{\mathfrak{p}}(\bm{\mu},\bm{k})) \cdot \bm{\zeta}^\flat_{\mathfrak{p}}} \widehat{\mathcal{V}}[\phi](\bm{k}^\flat_{\mathfrak{p}},\bm{\eta}^\flat_{\mathfrak{p}},\bm{\zeta}^\flat_{\mathfrak{p}}) \diff \bm{\zeta}^\flat_{\mathfrak{p}} \Bigr) \diff \bm{\zeta},
    \end{align*}
    which proves~\eqref{eq:T-op-transport-id}.
    The identity~\eqref{eq:T-op-transport-id-int} follows by integrating in~$\zeta$.
\end{proof}

By Lemma~\ref{lem:I-S-op-id}, for all $\cp \in \mathscr{K}^\reg$, all $t > 0$ and all $k \in \mathbb{R}^d$, we have
\begin{equation}
    \label{eq:def-E-reg-inhom}
    \mathcal{E}^\cp_{t,k}(x,\zeta) = \mathcal{S}^\cp_L\bigl(\theta_L \Phi^\cp_t \bigl(\mathcal{T}^\cp_{x,\zeta}\phi|_{\mathscr{D}^\cp_k} \comp (\Lambda^\cp_k)^{-1}\bigr)\bigr).
\end{equation}
It suggests that, for all $n \ge 0$, we define $\mathcal{E}^{n,\infty}_{t,k} = \mathcal{E}^{n,\infty}_{t,k}(x,\zeta)$ by setting
\begin{equation}
    \label{eq:def-E-infty-inhom}
    \mathcal{E}^{n,\infty}_{t,k} = \sum_{\cp \in \mathscr{K}^\reg_n} \mathcal{E}^{\cp,\infty}_{t,k}, 
    \quad \text{where}\
    \mathcal{E}^{\cp,\infty}_{t,k}(x,\zeta)
    = \mathcal{S}^\cp_\infty\bigl(\theta_\infty \Phi^\cp_t \bigl(\mathcal{T}^{\cp,\infty}_{x,\zeta}\phi|_{\mathscr{D}^\cp_k} \comp (\Lambda^\cp_k)^{-1}\bigr)\bigr).
\end{equation}
Then we define $\mathcal{N}^{n,\infty}_{t,k} = \mathcal{N}^{n,\infty}_{t,k}(x)$ by integrating $\mathcal{E}^{n,\infty}_{t,k}$ in $\zeta$:
\begin{equation}
    \label{eq:def-N-infty-inhom}
    \mathcal{N}^{n,\infty}_{t,k} = \sum_{\cp \in \mathscr{K}^\reg_n} \mathcal{N}^{\cp,\infty}_{t,k},
    \quad \text{where}\
    \mathcal{N}^{\cp,\infty}_{t,k}(x) = \int \mathcal{E}^{\cp,\infty}_{t,k}(x,\zeta) \diff \zeta.
\end{equation}
By a direct computation using~\eqref{eq:def-E-infty-inhom}, \eqref{eq:def-N-infty-inhom}, and Lemma~\ref{lem:T-op-transport-id}, we have expressions for $\mathcal{E}^{\cp,\infty}_{t,k}$ and $\mathcal{N}^{\cp,\infty}_{t,k}$ which are similar to that of $N^{\cp,\infty}_{t,k}$ given in Lemma~\ref{lem:N-infty-expression}, particularly when $\alpha < \beta$, in which case $\kappa_\cp^* \mathcal{T}^{\cp,\infty}_{x,\zeta} \phi(\bm{\mu},0,\bm{k})$ is independent of the variable~$\bm{\mu}$.

\begin{lemma}
    \label{lem:E-N-infty-expression-compact}
    Let $\cp \in \mathscr{K}^\reg$.
    For all $t > 0$, $x,\zeta \in \mathbb{R}^d$, and all $\bm{k} \in \mathscr{D}^\cp$, let
    \begin{equation}
        \label{eq:def-U-V-op}
        \mathcal{U}^\cp_{t,x,\zeta} \phi(\bm{k}) = \int_{\mathscr{O}_t(\mathscr{I}_\cp)} \kappa_\cp^* \mathcal{T}^{\cp,\infty}_{x,\zeta} \phi(\bm{\mu},0,\bm{k}) \diff \bm{\mu}.
    \end{equation}
    Then for all $k \in \mathbb{R}^d$, we have the identities:
    \begin{equation*}
        \mathcal{E}^{\cp,\infty}_{t,k}(x,\zeta)
        = \int_{\mathscr{D}^\cp_k} \bm{\delta}(\mho^\cp(\bm{k})) \mathcal{U}^\cp_{t,x,\zeta} \phi(\bm{k}) \diff \bm{k}.
    \end{equation*}
\end{lemma}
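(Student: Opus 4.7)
The plan is to unravel~\eqref{eq:def-E-infty-inhom} directly, following the template of the proof of Lemma~\ref{lem:N-infty-expression}. The only genuinely new feature relative to the homogeneous case is the presence of the external parameters $x,\zeta$ inside the integrand $\mathcal{T}^{\cp,\infty}_{x,\zeta}\phi$, but these are inert to all the manipulations performed on the $(\bm{s},\bm{z},\bm{\mu})$ variables, so they can essentially be carried along as passive labels.

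First I would use $\theta_\infty\Phi(\bm{z})=\Phi(0,\bm{z})$ together with the observation already exploited in Lemma~\ref{lem:N-infty-expression}, namely $\kappa_\cp^*\bm{1}_{\mathscr{O}^\cp_t}(\bm{\mu},0)=\bm{1}_{\mathscr{O}_t(\mathscr{I}_\cp)}(\bm{\mu})$, to rewrite
\begin{equation*}
    \theta_\infty\Phi^\cp_tU(\bm{z})=\int_{\mathscr{O}_t(\mathscr{I}_\cp)}\kappa_\cp^*U(\bm{\mu},0,\bm{z})\diff\bm{\mu}.
\end{equation*}
Inserting this into the definition of $\mathcal{S}^\cp_\infty$ with $U=\mathcal{T}^{\cp,\infty}_{x,\zeta}\phi|_{\mathscr{D}^\cp_k}\comp(\Lambda^\cp_k)^{-1}$, and interpreting the $\bm{s}$-integral $\int e^{2\pi i\bm{s}\cdot\varpi^\cp(\bm{z})}\diff\bm{s}=\bm{\delta}(\varpi^\cp(\bm{z}))$ as a Dirac mass, one obtains
\begin{equation*}
    \mathcal{E}^{\cp,\infty}_{t,k}(x,\zeta)=\int_{(\mathbb{R}^{2d})^{\mathscr{I}_\cp}}\bm{\delta}(\varpi^\cp(\bm{z}))\int_{\mathscr{O}_t(\mathscr{I}_\cp)}\kappa_\cp^*\mathcal{T}^{\cp,\infty}_{x,\zeta}\phi\bigl(\bm{\mu},0,(\Lambda^\cp_k)^{-1}\bm{z}\bigr)\diff\bm{\mu}\diff\bm{z}.
\end{equation*}

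The final step is the change of variables $\bm{z}\mapsto\bm{k}=(\Lambda^\cp_k)^{-1}\bm{z}\in\mathscr{D}^\cp_k$. Its Jacobian is one, as already noted in the proof of Lemma~\ref{lem:N-infty-expression} (which follows from the explicit formula in Lemma~\ref{lem:tree-deco-change-var}), and the phase intertwines as $\varpi^\cp(\Lambda^\cp\bm{k})=\mho^\cp(\bm{k})$, which is a direct consequence of~\eqref{eq:Omega-change-var} combined with the definition $\mho^\cp_r=\pm\Omega^\cp_{\mathfrak{b}^\pm}$. Recognising the remaining inner $\bm{\mu}$-integral as $\mathcal{U}^\cp_{t,x,\zeta}\phi(\bm{k})$ by~\eqref{eq:def-U-V-op} then yields the claimed identity. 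There is no substantive obstacle here: the lemma is a bookkeeping identity that threads the various definitions of $\Phi^\cp_t$, $\mathcal{S}^\cp_\infty$, $\kappa_\cp$, and $\Lambda^\cp$ back to the original formula, and the argument mirrors the homogeneous case line by line with $\psi^\cp(\bm{k})$ replaced by $\mathcal{U}^\cp_{t,x,\zeta}\phi(\bm{k})$.
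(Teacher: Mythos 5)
Your argument is correct and is essentially the computation the paper has in mind: it remarks just before the lemma that the identity follows ``by a direct computation'' mirroring Lemma~\ref{lem:N-infty-expression}, which is precisely your unraveling of \eqref{eq:def-E-infty-inhom} via $\theta_\infty\Phi^\cp_t$, the identity $\kappa_\cp^*\bm{1}_{\mathscr{O}^\cp_t}(\bm{\mu},0)=\bm{1}_{\mathscr{O}_t(\mathscr{I}_\cp)}(\bm{\mu})$, the Dirac mass from the $\bm{s}$-integral, and the unit-Jacobian change of variables with $\varpi^\cp\comp\Lambda^\cp_k=\mho^\cp$. The only addition worth noting is that, unlike the homogeneous case, the integrand itself retains $\bm{\mu}$-dependence through $\kappa_\cp^*\mathcal{T}^{\cp,\infty}_{x,\zeta}\phi(\bm{\mu},0,\cdot)$, which you handle correctly.
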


Consequently, we have inductive formulas similar to the ones stated in Lemma~\ref{lem:N-infty-hom-induction}.

\begin{lemma}
    \label{lem:E-N-infty-induction}
    If $\cp = \otimes^\sigma(q_1,q_2,q_3) \in \mathscr{K}^\reg$ where $\sigma \in \{+,-\}$ and $q_1,q_2,q_3 \in \mathscr{K}^\reg$ , then for all $t > 0$ and all $k,x,\zeta \in \mathbb{R}^d$, we have the identity
    \begin{equation*}
        \mathcal{E}^{\cp,\infty}_{t,k}(x,\zeta)
        = \int_0^t \int_{\mathscr{D}_k} \bm{\delta}(\Omega(\bm{k})) \Bigl( \int_{\mathscr{D}_\zeta} \prod_{1 \le j \le 3} \mathcal{E}^{\cp_j,\infty}_{s,k_j}(x+s (k-k_j) \bm{1}_{\alpha=\beta},\bm{\zeta}_j) \diff\bm{\zeta} \Bigr) \diff \bm{k} \diff s.
    \end{equation*}
    Consequently, for all $n \ge 1$, there holds the inductive formula:
    \begin{equation*}
        \mathcal{E}^{n,\infty}_{t,k}(x,\zeta)
        = 2 \sum_{n_1+n_2+n_3=n-1} \int_0^t \int_{\mathscr{D}_k} \bm{\delta}(\Omega(\bm{k})) \Bigl( \int_{\mathscr{D}_\zeta} \prod_{1 \le j \le 3} \mathcal{E}^{n_j,\infty}_{s,k_j}(x+s (k-k_j) \bm{1}_{\alpha=\beta},\bm{\zeta}_j) \diff\bm{\zeta} \Bigr) \diff \bm{k} \diff s.
    \end{equation*}
    Integrating in $\zeta$ then yields the identity
    \begin{equation*}
        \mathcal{N}^{\cp,\infty}_{t,k}(x)
        = \int_0^t \int_{\mathscr{D}_k} \bm{\delta}(\Omega(\bm{k})) \prod_{1 \le j \le 3} \mathcal{N}^{\cp_j,\infty}_{s,k_j}(x+s (k-k_j) \bm{1}_{\alpha=\beta}) \diff \bm{k} \diff s,
    \end{equation*}
    as well as the inductive formula:
    \begin{equation*}
        \mathcal{N}^{n,\infty}_{t,k}(x)
        = 2 \sum_{n_1+n_2+n_3=n-1}
        \int_0^t \int_{\mathscr{D}_k} \bm{\delta}(\Omega(\bm{k})) \prod_{1 \le j \le 3} \mathcal{N}^{n_j,\infty}_{s,k_j}(x+s (k-k_j) \bm{1}_{\alpha=\beta}) \diff \bm{k} \diff s.
    \end{equation*}
\end{lemma}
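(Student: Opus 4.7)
The plan is to mimic the proof of Lemma~\ref{lem:N-infty-hom-induction}, with the only additional ingredient being the analysis of the transport factors in $\mathcal{U}^\cp$. Starting from the compact expression of Lemma~\ref{lem:E-N-infty-expression-compact}, I would factor the relevant integrations using the structure $\cp = \otimes^\sigma(\cp_1,\cp_2,\cp_3)$ provided by Lemma~\ref{lem:reg-cp-str}. Specifically: a decoration $\bm{k} \in \mathscr{D}^\cp_k$ is equivalent to a choice of $(k_1,k_2,k_3) \in \mathscr{D}_k$ together with $\bm{k}|_{\cp_j} \in \mathscr{D}^{\cp_j}_{k_j}$; the set $\mathscr{I}_\cp$ decomposes as $\{r\} \sqcup \bigsqcup_j \mathscr{I}_{\cp_j}$, and by Definition~\ref{def:time-ord} the time ordering factorizes as $\mathscr{O}_t(\mathscr{I}_\cp) = \{(s,\bm{\mu}') : s \in (0,t),\ \bm{\mu}'|_{\mathscr{I}_{\cp_j}} \in \mathscr{O}_s(\mathscr{I}_{\cp_j})\}$; the constraint $\mho^\cp(\bm{k})=0$ splits as $\Omega(k_1,k_2,k_3)=0$ at $r$ and $\mho^{\cp_j}(\bm{k}|_{\cp_j})=0$ on each $\mathscr{I}_{\cp_j}$; and similarly $\mathscr{D}^\cp_\zeta$ factors via $(\zeta_1,\zeta_2,\zeta_3) \in \mathscr{D}_\zeta$ together with $\bm{\zeta}|_{\cp_j} \in \mathscr{D}^{\cp_j}_{\zeta_j}$, so that $\widehat{\mathcal{V}}^\cp[\phi]$ becomes $\prod_j \widehat{\mathcal{V}}^{\cp_j}[\phi]$.

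The crux is to show that the transport operator factors accordingly:
\begin{equation*}
    \mathcal{U}^\cp_{t,x,\zeta}\phi(\bm{k})
    = \int_0^t \int_{\mathscr{D}_\zeta} \prod_{j=1}^3
    \mathcal{U}^{\cp_j}_{s,\, x + s(k-k_j)\bm{1}_{\alpha=\beta},\, \zeta_j}\phi(\bm{k}|_{\cp_j}) \,\diff \bm{\zeta}\, \diff s.
\end{equation*}
Here the explicit formula~\eqref{eq:transport-str} for $Z^\cp_\mathfrak{p}$ is essential. For $\mathfrak{p} \in \wp(\cp_j)$ and $r' \in \mathscr{I}_{\cp_{j'}}$ with $j' \ne j$, the indicator $\bm{1}_{\mathfrak{p} \preceq \mathfrak{c}}$ vanishes for every $\mathfrak{c} \in \wp(r')$, so such $r'$ contribute nothing; for $r' \in \mathscr{I}_{\cp_j}$ the pairings $\wp(r')$ agree with their interpretation inside $\cp_j$, and their cumulative contribution reassembles exactly $Z^{\cp_j}_\mathfrak{p}(\bm{\mu}|_{\mathscr{I}_{\cp_j}}, \bm{k}|_{\cp_j})$. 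The remaining piece comes from the root irreducible $r$: using the explicit description of $\wp(r) = \{\mathfrak{c}^\sigma_1,\mathfrak{c}^\sigma_2,\mathfrak{c}^\sigma_3\}$ from Lemma~\ref{lem:reg-cp-str} and the partial order of Remark~\ref{rmk:I-q-ordering}, I would verify that exactly one $\mathfrak{c} \in \wp(r)$ lies above $\mathfrak{p}$, that $\bm{k}^\flat_{\mathfrak{c}} = k_j$ by Corollary~\ref{cor:conju-node-char-deco}, and that $\bm{k}^\flat_{\mathfrak{B}^r} = k$; this isolates the root transport shift as $s(k-k_j)$.

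Once the factorization of $\mathcal{U}^\cp$ is in hand, plugging it into Lemma~\ref{lem:E-N-infty-expression-compact} and recognizing the inner $\mathscr{D}^{\cp_j}_{k_j}$-integrations as $\mathcal{E}^{\cp_j,\infty}_{s,k_j}(x + s(k-k_j)\bm{1}_{\alpha=\beta},\zeta_j)$ yields the stated identity for $\mathcal{E}^{\cp,\infty}_{t,k}$. The inductive formula for $\mathcal{E}^{n,\infty}$ follows by summing over $\cp \in \mathscr{K}^\reg_n$, the factor of $2$ arising from the choice $\sigma \in \{+,-\}$ in Lemma~\ref{lem:reg-cp-str}, exactly as in the proof of Lemma~\ref{lem:N-infty-hom-induction}. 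Finally, the statements for $\mathcal{N}$ follow by integrating in $\zeta$; alternatively, one can apply the same argument directly starting from~\eqref{eq:T-op-transport-id-int} instead of~\eqref{eq:T-op-transport-id}. The main obstacle is precisely the combinatorial bookkeeping described in the preceding paragraph: one must match the sign conventions in the pairings $\mathfrak{c}^\sigma_j$ with the conjugation identifications in $\mathscr{D}^\cp$ in order to correctly identify both the multiplier $s = \bm{\mu}_r$ and the displacement $k-k_j$ produced by the root contribution to $Z^\cp_\mathfrak{p}$.
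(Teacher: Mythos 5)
Your proposal follows essentially the same route as the paper's proof: starting from Lemma~\ref{lem:E-N-infty-expression-compact}, reducing the claim to the factorization identity~\eqref{eq:U-op-induction} for $\mathcal{U}^\cp_{t,x,\zeta}$, and verifying it through the decomposition $Z^\cp_{\mathfrak{p}}(\bm{\mu},\overline{\bm{k}}) = s(k-k_j)\bm{1}_{\alpha=\beta}\text{-shift} + Z^{\cp_j}_{\mathfrak{p}}(\bm{\mu}^j,\bm{k}^j)$ for $\mathfrak{p} \in \wp(\cp_j)$, with the factor $2$ coming from the dichotomy $\sigma \in \{+,-\}$ and the $\mathcal{N}$-statements obtained by integrating in $\zeta$. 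The only difference is that you spell out the combinatorial verification of the root contribution (via Lemma~\ref{lem:reg-cp-str} and Remark~\ref{rmk:I-q-ordering}) in more detail than the paper, which simply records the resulting identity.
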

\begin{proof}
    We shall only prove the formula for $\mathcal{E}^{\cp,\infty}_{t,k}$ as the others are direct consequences.
    By Lemma~\ref{lem:E-N-infty-expression-compact},
    \begin{equation*}
        \mathcal{E}^{\cp,\infty}_{t,k}(x,\zeta)
        = \int_{\mathscr{D}_k} \bm{\delta}(\Omega(\bm{k}))
        \Bigl( \prod_{1\le j \le 3} \int_{\mathscr{D}^{\cp_j}_{k_j}} \bm{\delta}(\mho^{\cp_j}(\bm{k}^j)) \diff \bm{k}^j \Bigr) \mathcal{U}^{\cp}_{t,x,\zeta}\phi(\overline{\bm{k}}) \diff \bm{k},
    \end{equation*}
    where $\overline{\bm{k}}$ is the unique element in $\mathscr{D}^\cp$ such that $\overline{\bm{k}}|_{\cp_j} = \bm{k}^j$ for $1\le j\le 3$.
    Therefore, it remains to establish the identity that, if $\bm{k}^j \in \mathscr{D}^{\cp_j}_{k_j}$, then
    \begin{equation}
        \label{eq:U-op-induction}
        \mathcal{U}^{\cp}_{t,x,\zeta}\phi(\overline{\bm{k}})
        = \int_0^t \int_{\mathscr{D}_\zeta} \prod_{1\le j\le 3} \mathcal{U}^{\cp_j}_{s,x+s(k-k_j)\bm{1}_{\alpha=\beta},\zeta_j} \diff \bm{\zeta} \diff s.
    \end{equation}
    In fact, by~\eqref{eq:T-op-transport-id} and~\eqref{eq:def-U-V-op}, we have
    \begin{equation*}
        \mathcal{U}^{\cp}_{t,x,\zeta}\phi(\overline{\bm{k}})
        = \int_{\mathscr{O}_t(\mathscr{I}_\cp)} \Bigl(\int_{\mathscr{D}^\cp_\zeta} \prod_{\mathfrak{p}\in\wp} \mathcal{W}[\phi(\cdot,\overline{\bm{k}}^\flat_{\mathfrak{p}})](x + \bm{1}_{\alpha=\beta} Z^\cp_{\mathfrak{p}}(\bm{\mu},\overline{\bm{k}}),\overline{\bm{\zeta}}_{\mathfrak{p}}) \diff \overline{\bm{\zeta}} \Bigr) \diff \bm{\mu}.
    \end{equation*}
    Next write $\bm{\mu} = (s,\bm{\mu}^1,\bm{\mu}^2,\bm{\mu}^3)$ with $\bm{\mu}^j = \bm{\mu}|_{\mathscr{I}_{\cp_j}}$.
    Then $\mathscr{O}_t(\mathscr{I}_\cp)$ is the set of all $\bm{\mu} \in \mathbb{R}^{\mathscr{I}_\cp}$ such that $0 < s < t$ and $\bm{\mu}^j \in \mathscr{O}_s(\mathscr{I}_{\cp_j})$ for $1\le j \le 3$.
    Write $\bm{k}^j = \overline{\bm{k}}|_{\cp_j}$ and $\bm{\zeta}^j = \overline{\bm{\zeta}}|_{\cp_j}$.
    Observe that if $\mathfrak{p} \in \wp(\cp_j) \subset \wp$, then we have
    \begin{equation*}
        Z^\cp_{\mathfrak{p}}(\bm{\mu},\overline{\bm{k}}) = s(k-k_j) + Z^{\cp_j}_{\mathfrak{p}}(\bm{\mu}^j,\bm{k}^j).
    \end{equation*}
    We therefore conclude~\eqref{eq:U-op-induction} with the identity
    \begin{align*}
        \prod_{\mathfrak{p}\in\wp} \mathcal{W}&[\phi(\cdot,\overline{\bm{k}}^\flat_{\mathfrak{p}})](x + \bm{1}_{\alpha=\beta} Z^\cp_{\mathfrak{p}}(\bm{\mu},\overline{\bm{k}}),\bm{\zeta}_{\mathfrak{p}}) \\
        & = \prod_{1\le j \le 3} \prod_{\mathfrak{p}\in\wp(\cp_j)} \mathcal{W}[\phi(\cdot,(\bm{k}^j)^\flat_{\mathfrak{p}})](x + \bm{1}_{\alpha=\beta} s(k-k_j) + \bm{1}_{\alpha=\beta} Z^\cp_{\mathfrak{p}}(\bm{\mu}^j,\bm{k}^j),\bm{\zeta}^j_{\mathfrak{p}}).
        \qedhere
    \end{align*}
\end{proof}

\subsubsection{Asymptotic estimates}

Now we establish the limits
\begin{equation*}
    \lim_{L \to \infty} \mathcal{E}^\cp_{t/\lambda^2,k} = \mathcal{E}^{\cp,\infty}_{t,k}, \quad
    \lim_{L \to \infty} \mathcal{N}^\cp_{t/\lambda^2,k} = \mathcal{N}^{\cp,\infty}_{t,k}.
\end{equation*}

\begin{proposition}
    \label{prop:E-approx-infty}
    Let $\cp \in \mathscr{K}^\reg_n$ with $n \in \mathbb{N}$ and let $m \in \mathbb{N}^d$.
    Then there exists $\nu > 0$ which is independent of $n$ such that uniformly for all $t > 0$ and all $x,k \in \mathbb{R}^d$, we have the estimate
    \begin{equation*}
        \sup_{x,k \in \mathbb{R}^d} \int |k^m \partial_\zeta^\ell (\mathcal{E}^\cp_{t/\lambda^2,k} - \mathcal{E}^{\cp,\infty}_{t,k})(x,\zeta)| \diff \zeta
        \lesssim C^n n^{d+4} (nR)^{4nd} t^{n-1} L^{-\nu} \|\mathcal{F}\widehat{\phi}\|_{W^{|\ell|,1}_2 W^{2,1}_1}^{4n+2}.
    \end{equation*}
    Consequently, there hold the limits:
    \begin{equation*}
        \lim_{L \to \infty} \sup_{(t,k) \in [0,1] \times \mathbb{R}^d} \|k^m(\mathcal{E}^\cp_{t/\lambda^2,k} - \mathcal{E}^{\cp,\infty}_{t,k})\|_{L^\infty}
        = \lim_{L \to \infty} \sup_{(t,k) \in [0,1] \times \mathbb{R}^d} \|k^m(\mathcal{N}^\cp_{t/\lambda^2,k} - \mathcal{N}^{\cp,\infty}_{t,k})\|_{L^\infty}
        = 0.
    \end{equation*}
\end{proposition}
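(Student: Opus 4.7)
The proof will follow the same scheme as Proposition~\ref{prop:E-spec-reg-asym-hom} in the homogeneous setting, but now must also track the transport operator $\mathcal{T}^\cp_{x,\zeta}$ and its limit $\mathcal{T}^{\cp,\infty}_{x,\zeta}$. Define $V^L_{x,\zeta,k} = (\bm{k}_\cp^m \partial_\zeta^\ell \mathcal{T}^\cp_{x,\zeta}\phi)|_{\mathscr{D}^\cp_k} \circ (\Lambda^\cp_k)^{-1}$ and $V^\infty_{x,\zeta,k}$ analogously with $\mathcal{T}^{\cp,\infty}$. Using~\eqref{eq:def-E-reg-inhom} and~\eqref{eq:def-E-infty-inhom}, split
\begin{equation*}
    k^m \partial_\zeta^\ell (\mathcal{E}^\cp_{t/\lambda^2,k} - \mathcal{E}^{\cp,\infty}_{t,k})(x,\zeta) = A_1(x,\zeta) + A_2(x,\zeta),
\end{equation*}
with $A_1 = (\mathcal{S}^\cp_L\theta_L - \mathcal{S}^\cp_\infty\theta_\infty)(\Phi^\cp_t V^L_{x,\zeta,k})$ isolating the lattice-vs-continuum discrepancy for a fixed integrand, and $A_2 = \mathcal{S}^\cp_\infty \theta_\infty \bigl(\Phi^\cp_t(V^L_{x,\zeta,k} - V^\infty_{x,\zeta,k})\bigr)$ capturing the error from the replacement of transport operators.

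For $A_1$, the plan is to invoke Proposition~\ref{prop:int-approx} applied to $\Phi = \Phi^\cp_t V^L_{x,\zeta,k}$: this supplies the $L^{-\nu}$ gain and reduces matters to a uniform-in-$L$ bound on $\|\mathcal{F} \Phi\|_{W^{1,\infty}_{\bm{s}} L^1_1(\bm{\xi})}$. The $\Phi^\cp_t$ piece contributes the time-ordering characteristic controlled by Lemma~\ref{lem:trans-time-int-est}, while the Fourier content of $V^L_{x,\zeta,k}$ in its momentum argument is bounded by an analog of Lemma~\ref{lem:leaf-data-fourier-est} applied to the Wigner-type observable $\widehat{\mathcal{V}}^\cp[\phi]$ in place of $\psi^\cp$. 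The $\mathcal{T}^\cp$ phase has unit modulus; its $\bm{s}$-derivative pulls down at most $\epsilon\lambda^{-2} = L^{\alpha-\beta} \le 1$ times polynomial factors in $\bm{k}$ and $\bm{\xi}$, absorbed by the compact momentum support of $\widehat{\phi}$. This is what produces the norm $\|\mathcal{F}\widehat{\phi}\|_{W^{|\ell|,1}_2 W^{2,1}_1}$ appearing in the final bound.

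For $A_2$, note $\theta_\infty \Phi^\cp_t U(\bm{z}) = \int_{\mathscr{O}_t(\mathscr{I}_\cp)} U(\kappa_\cp(\bm{\mu},0),\bm{z}) \diff \bm{\mu}$. Using the decomposition preceding~\eqref{eq:T-op-phase-decomp-id}, at $\bm{s} = 0$ we have $\bm{t} \cdot \Omega^{\tau^*}(\bm{k}; \bm{\zeta}+\iota\bm{\eta}/2) = \tfrac{1}{2}\bm{\mu} \cdot \Upsilon^\cp(\bm{k},\bm{\eta})$, so that the difference $V^L - V^\infty$ restricted to $\bm{t}=\kappa_\cp(\bm{\mu},0)$ carries a factor
\begin{equation*}
    e^{2\pi i[(\epsilon\lambda^{-2}-\bm{1}_{\alpha=\beta})\bm{\mu}\cdot\Upsilon^\cp(\bm{k},\bm{\eta}) + \epsilon^2\lambda^{-2}\kappa_\cp(\bm{\mu},0)\cdot\Omega^{\tau^*}(\bm{\zeta}+\iota\bm{\eta}/2)]} - 1.
\end{equation*}
When $\alpha=\beta$ the first bracket vanishes, leaving only a quadratic-in-$\epsilon$ residue of size $\epsilon|\bm{\mu}||\bm{\xi}|^2 \lesssim L^{-\beta}t$; when $\alpha<\beta$ both contributions are $O(L^{\alpha-\beta})$. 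Applying Proposition~\ref{prop:gauss-int-upper-bound} to the resulting continuous oscillatory integral $\mathcal{S}^\cp_\infty$ then yields a uniform gain of $L^{-\min(\beta,\beta-\alpha)}$.

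The main obstacle is the $W^{1,\infty}_{\bm{s}}$-control required by Proposition~\ref{prop:int-approx}: differentiation of $\mathcal{T}^\cp$ in $\bm{s}$ could in principle generate unbounded prefactors, and one must carefully verify that each derivative distributes either onto the time-ordering characteristic (handled by Lemma~\ref{lem:trans-time-int-est}) or onto the phase, where it contributes $\epsilon\lambda^{-2}\le 1$ times momentum moments absorbable into the Schwartz decay of $\widehat{\phi}$. Tracking the powers of $n$ (reflecting the $|\mathfrak{B}^\cp|=2n$ possible differentiations and the $n^{d+4}$ combinatorial factor) is where the bookkeeping is delicate. Once the integral estimate in $\zeta$ is established, the $L^\infty$ convergence of $\mathcal{N}^\cp - \mathcal{N}^{\cp,\infty}$ follows immediately by taking $\ell=0$ and integrating, while the $L^\infty$ convergence of $\mathcal{E}^\cp - \mathcal{E}^{\cp,\infty}$ is obtained by choosing $|\ell|$ large enough and invoking the Sobolev embedding $W^{|\ell|,1}(\mathbb{R}^d)\hookrightarrow L^\infty(\mathbb{R}^d)$ in the $\zeta$ variable.
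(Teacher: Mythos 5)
Your proposal is correct and follows essentially the same route as the paper: the identical splitting of $\mathcal{E}^\cp_{t/\lambda^2,k}-\mathcal{E}^{\cp,\infty}_{t,k}$ into the Riemann-sum-versus-integral error, treated by Proposition~\ref{prop:int-approx} with a uniform $W^{1,\infty}L^1_1$ bound on the Fourier transform of $\Phi^\cp_t\bigl(\partial_\zeta^\ell\mathcal{T}^\cp_{x,\zeta}\phi\bigr)$, and the transport-operator replacement error, treated by Proposition~\ref{prop:gauss-int-upper-bound} together with the phase-difference bound (of size $\epsilon|\bm{\xi}|^2$ when $\alpha=\beta$ and $\epsilon\lambda^{-2}(|\bm{k}|^2+|\bm{\xi}|^2)$ when $\alpha<\beta$). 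The paper packages exactly these two Fourier estimates as Lemma~\ref{lem:T-op-est-Fourier} and likewise concludes by integrating in $\zeta$ for $\mathcal{N}^\cp$ and by Sobolev embedding in $\zeta$ for $\mathcal{E}^\cp$.
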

\begin{proof}
    By~\eqref{eq:def-E-reg-inhom} and~\eqref{eq:def-E-infty-inhom}, we write
    \begin{align*}
        \partial_\zeta^\ell & (\mathcal{E}^\cp_{t/\lambda^2,k} - \mathcal{E}^{\cp,\infty}_{t,k})
        = \mathcal{S}^\cp_L\bigl(\theta_L \Phi^\cp_t \bigl(\partial_\zeta^\ell \mathcal{T}^\cp_{x,\zeta}\phi|_{\mathscr{D}^\cp_k} \comp (\Lambda^\cp_k)^{-1}\bigr)\bigr) - \mathcal{S}^\cp_\infty\bigl(\theta_\infty \Phi^\cp_t \bigl(\partial_\zeta^\ell \mathcal{T}^{\cp,\infty}_{x,\zeta}\phi|_{\mathscr{D}^\cp_k} \comp (\Lambda^\cp_k)^{-1}\bigr)\bigr) \\
        & = (\mathcal{S}^\cp_L-\mathcal{S}^\cp_\infty)\bigl(\theta_L \Phi^\cp_t \bigl(\partial_\zeta^\ell \mathcal{T}^\cp_{x,\zeta}\phi|_{\mathscr{D}^\cp_k} \comp (\Lambda^\cp_k)^{-1}\bigr)\bigr)
        + \mathcal{S}^\cp_\infty\bigl(\theta_\infty \Phi^\cp_t \bigl(\partial_\zeta^\ell (\mathcal{T}^{\cp}_{x,\zeta}-\mathcal{T}^{\cp,\infty}_{x,\zeta})\phi|_{\mathscr{D}^\cp_k} \comp (\Lambda^\cp_k)^{-1}\bigr)\bigr).
    \end{align*}
    Let us denote the two terms on the right hand side respectively by $\mathcal{R}_1$ and $\mathcal{R}_2$, and estimate them separately.
    By Propositions~\ref{prop:int-approx} and~\ref{prop:gauss-int-upper-bound}, we respectively have
    \begin{align*}
        |k^m \mathcal{R}_1(t,k,x,\zeta)|
        & \lesssim C^n n^d (nR)^{4nd} L^{-\nu} \|\mathcal{F} \Phi^\cp_t \bigl(\partial_\zeta^\ell \mathcal{T}^\cp_{x,\zeta} (\bm{k}_\cp^m\phi)|_{\mathscr{D}^\cp_k} \comp (\Lambda^\cp_k)^{-1}\bigr)\|_{W^{1,\infty} L^1_1}, \\
        |k^m\mathcal{R}_t(t,k,x,\zeta)|
        & \lesssim C^n (nR)^{2nd} \|\mathcal{F} \Phi^\cp_t \bigl(\partial_\zeta^\ell (\mathcal{T}^{\cp}_{x,\zeta}-\mathcal{T}^{\cp,\infty}_{x,\zeta}) (\bm{k}_\cp^m \phi)|_{\mathscr{D}^\cp_k} \comp (\Lambda^\cp_k)^{-1}\bigr)\|_{L^\infty L^1}.
    \end{align*}
    We conclude with Lemma~\ref{lem:T-op-est-Fourier} below and the Sobolev embedding theorem.
\end{proof}

\begin{lemma}
    \label{lem:T-op-est-Fourier}
    Let $\cp \in \mathscr{K}^\reg_n$ with $n \ge 1$ and let $m \in \mathbb{N}^d$.
    Then there exists $\nu > 0$ such that uniformly for all $t > 0$ and all $x,k \in \mathbb{R}^d$, there following estimates hold:
    \begin{align*}
        \int\bigl\|\mathcal{F} \Phi^\cp_t \bigl(\partial_\zeta^\ell \mathcal{T}^\cp_{x,\zeta} (\bm{k}_\cp^m \phi)|_{\mathscr{D}^\cp_k} \comp (\Lambda^\cp_k)^{-1}\bigr)\bigr\|_{W^{1,\infty} L^1_1} \diff \zeta
        & \lesssim C^n n^{4+|m|} t^{n-1} \|\mathcal{F}\widehat{\phi}\|_{W^{|\ell|,1}W^{|m|,1}_1}^{4n+2}; \\
        \int \|\mathcal{F} \Phi^\cp_t \bigl(\partial_\zeta^\ell (\mathcal{T}^{\cp}_{x,\zeta}-\mathcal{T}^{\cp,\infty}_{x,\zeta})(\bm{k}_\cp^m \phi)|_{\mathscr{D}^\cp_k} \comp (\Lambda^\cp_k)^{-1}\bigr)\|_{L^\infty L^1} \diff \zeta & \lesssim C^n n^{4+|m|} t^{n-1} L^{-\nu} \|\mathcal{F}\widehat{\phi}\|_{W^{|\ell|,1}_2 W^{|m|,1}_1}^{4n+2}.
    \end{align*}
\end{lemma}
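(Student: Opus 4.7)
Both estimates will follow the template of Lemma~\ref{lem:leaf-data-fourier-est}, modified to incorporate the Wigner-pair structure and the $\bm{t}$-dependent transport phase of $\mathcal{T}^\cp_{x,\zeta}\phi$. I first apply Lemma~\ref{lem:trans-time-int-est}---with $B=L^1_1$ for the first estimate and $B=L^1$ for the second---to strip $\Phi^\cp_t$ at the cost of $n2^{n-1}t^{n-1}+t^n\lesssim C^n t^{n-1}$ for $t\le 1$, reducing the task to a bound on the Fourier transform of the inner object. Although the $\bm{t}$-Fourier of $\mathcal{T}^\cp_{x,\zeta}\phi(\bm{t},\bm{k})$ is singular as a distribution in its dual variable $\bm{\omega}$ (the $\bm{t}$-dependence is a single linear phase), this causes no problem: it enters only through convolution with the smooth kernel $\mathcal{F}(\kappa_\cp^*\bm{1}_{\mathscr{O}^\cp_t})$ whose $L^1 W^{1,\infty}$ bound is already computed in the proof of Lemma~\ref{lem:trans-time-int-est}.

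For the first estimate, I would express $\mathcal{T}^\cp_{x,\zeta}\phi$ via Lemma~\ref{lem:cp-deco-wigner} to pass from $(\bm{\eta},\bm{\zeta})$ to $\bm{\xi}\in\mathscr{D}^{\tau^*}$, compose with $(\Lambda^\cp_k)^{-1}$ (operator norm $\lesssim n$ by Lemma~\ref{lem:tree-deco-change-var}), and apply Lemma~\ref{lem:fourier-isom-L1} to convert the $L^1$-norm in $\bm{z}$ into a weighted $L^1$-norm on the Fourier side of $\bm{\xi}$. The product factorisation $\widehat{\mathcal{V}}^\cp[\phi]=\prod_{\mathfrak{p}\in\wp}\widehat{\mathcal{V}}[\phi](\bm{k}^\flat_\mathfrak{p},\cdot,\cdot)$ then splits the estimate pair by pair, and Fubini absorbs the $\diff\zeta$-integral into an $L^1_\zeta$-level norm on $\widehat{\mathcal{V}}[\phi]$. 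Distributing $\partial_\zeta^\ell$ and the weight $\bm{k}_\cp^m$ across the $2n+1$ pairs and splitting each Wigner factor into two copies of $\widehat{\phi}$ yields the product $\|\mathcal{F}\widehat{\phi}\|_{W^{|\ell|,1}W^{|m|,1}_1}^{4n+2}$ together with the polynomial prefactor $n^{|m|+4}$.

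For the second estimate, the gain $L^{-\nu}$ will come from the pointwise phase difference: separating the cases $\alpha=\beta$ (where the $\epsilon\Omega^{\tau^*}(\bm{\xi})$ correction alone supplies $\epsilon=L^{-\beta}$) and $\alpha<\beta$ (where $\epsilon\lambda^{-2}=L^{\alpha-\beta}$ and the switch $\bm{1}_{\alpha=\beta}$ simultaneously vanish), one has
$$\bigl|e^{2\pi i\epsilon\lambda^{-2}\bm{t}\cdot(2\Omega^{\tau^*}(\bm{k};\bm{\xi})+\epsilon\Omega^{\tau^*}(\bm{\xi}))}-e^{\bm{1}_{\alpha=\beta}4\pi i\bm{t}\cdot\Omega^{\tau^*}(\bm{k};\bm{\xi})}\bigr|\lesssim L^{-\nu}|\bm{t}|(|\bm{\xi}|^2+|\bm{k}||\bm{\xi}|),$$
with $\nu=\min\{\beta,\beta-\alpha\}>0$. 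Feeding this bound into the scheme above produces the $L^{-\nu}$ factor and an extra $|\bm{\xi}|^2$ weight on the data---which is precisely the origin of the subscript $2$ on $\|\mathcal{F}\widehat{\phi}\|_{W^{|\ell|,1}_2 W^{|m|,1}_1}^{4n+2}$; the $|\bm{k}||\bm{\xi}|$ term is absorbed into the $\bm{k}_\cp^m$ prefactor. The principal obstacle will be the bookkeeping of three intertwined coordinate changes---$\bm{t}\leftrightarrow(\bm{\mu},\bm{s})$ via $\kappa_\cp$, $(\bm{\eta},\bm{\zeta})\leftrightarrow\bm{\xi}$ via Lemma~\ref{lem:cp-deco-wigner}, and $\bm{k}\leftrightarrow\bm{z}$ via $\Lambda^\cp_k$---so that the final norm factorises cleanly across the pairing $\wp$ while the polynomial-in-$n$ losses remain at $n^{|m|+4}$.
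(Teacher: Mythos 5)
Your overall route coincides with the paper's: strip $\Phi^\cp_t$ via Lemma~\ref{lem:trans-time-int-est}, then run the pair-by-pair Fourier-side bookkeeping of Lemma~\ref{lem:leaf-data-fourier-est} (change of variables $\Lambda^\cp_k$, Lemma~\ref{lem:fourier-isom-L1}, factorization over $\wp$), and for the second bound use the pointwise difference of the two oscillatory kernels split according to $\alpha=\beta$ (gain $\epsilon=L^{-\beta}$, loss $|\bm{\xi}|^2$) versus $\alpha<\beta$ (gain $\epsilon\lambda^{-2}=L^{\alpha-\beta}$); this is exactly the paper's argument for the second estimate, including the origin of the weight subscript $2$.

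The gap is in the first estimate, in how you dispose of the $\bm{t}$-dependence of $\mathcal{T}^\cp_{x,\zeta}\phi$. Lemma~\ref{lem:trans-time-int-est} does not remove it: its right-hand side is $\|\mathcal{F}U\|_{W^{1,\infty}B}$, so after stripping $\Phi^\cp_t$ you must still control the $W^{1,\infty}$-in-$\bm{t}$ norm of the inner object, because the $\bm{s}$-derivative required by the target norm $W^{1,\infty}L^1_1$ falls not only on $\kappa_\cp^*\bm{1}_{\mathscr{O}^\cp_t}$ (your ``smooth kernel'') but also on $V\comp\kappa_\cp$ itself. Your remark that the distributional $\bm{t}$-Fourier transform of the linear phase ``causes no problem'' therefore addresses a non-issue and skips the real one: differentiating the transport phase in $\bm{t}$ brings down the factor $\epsilon\lambda^{-2}\bigl(2\Omega^{\tau^*}(\bm{k};\bm{\zeta}+\iota\bm{\eta}/2)+\epsilon\Omega^{\tau^*}(\bm{\zeta}+\iota\bm{\eta}/2)\bigr)$, and it is precisely the standing hypothesis $\alpha\le\beta$, i.e.\ $\epsilon\lambda^{-2}\le 1$, that makes this $\mathcal{O}(|\bm{k}|^2+|\bm{\zeta}|^2+|\bm{\eta}|^2)$ and hence absorbable, after expanding along the pairing, into the weighted norms $\|\mathcal{F}\widehat{\phi}\|_{W^{|\ell|,1}W^{|m|,1}_1}$ (this is also part of why the inner norms carry weight index $1$, not only the $L^1_1$ demanded by Proposition~\ref{prop:int-approx}). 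As written, your first-estimate argument never invokes $\alpha\le\beta$ and would not produce this factor; once you add this derivative bound of the phase, the rest of your scheme goes through as in the paper.
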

\begin{proof}
    Note that, since $\alpha \le \beta$, the $\bm{t}$-derivative of the oscillatory kernel in the definition~\eqref{eq:T-op-def} produces the factor $\epsilon \lambda^{-2} (2\Omega^{\tau^*}(\bm{k};\bm{\zeta}+\iota\bm{\eta}/2) + \epsilon \Omega^{\tau^*}(\bm{\zeta}+\iota\bm{\eta}/2)) = \mathcal{O}(|\bm{k}|^2 + |\bm{\zeta}|^2 + |\bm{\eta}|^2)$.
    Therefore, by Lemma~\ref{lem:trans-time-int-est} and the proof of Lemma~\ref{lem:leaf-data-fourier-est}, using the notations therein, we have
    \begin{align*}
        \int \bigl\|\mathcal{F} \Phi^\cp_t \bigl(\partial_\zeta^\ell \mathcal{T}^\cp_{x,\zeta} & (\bm{k}_\cp^m\phi)|_{\mathscr{D}^\cp_k} \comp (\Lambda^\cp_k)^{-1}\bigr)\bigr\|_{W^{1,\infty} L^1_1} \diff \zeta \\
        & \lesssim 2^n n^2 t^{n-1} \iint \bigl\| \mathcal{F}_{\wp} \pi^\cp_* \partial_\zeta^\ell \widehat{\mathcal{T}}^\cp_{\eta,\zeta} (\bm{k}_\cp^m\phi)|_{(\mathbb{R}^d)^\wp}\bigr\|_{W^{1,\infty} L^1_1} \diff \eta \diff \zeta \\
        & \lesssim 2^n n^{4+|m|} t^{n-1} \|\mathcal{F}\widehat{\mathcal{V}}\|_{W^{|m|,1}_1 L^1 W^{|\ell|,1}}^{2n+1}
        \lesssim C^n n^{4+|m|} t^{n-1} \|\mathcal{F}\widehat{\phi}\|_{W^{|\ell|,1}W^{|m|,1}_1}^{4n+2},
    \end{align*}
    which yields the first estimate.
    The proof for the second estimate is similar.
    In fact, it suffices to use the following estimate of the difference of the oscillatory integral kernels:
    \begin{equation*}
        \bigl| e^{2\pi i \epsilon \lambda^{-2} \bm{t} \cdot (2\Omega^{\tau^*}(\bm{k};\bm{\xi}) + \epsilon \Omega^{\tau^*}(\bm{\xi}))} - e^{\bm{1}_{\alpha=\beta} 4\pi i \bm{t} \cdot \Omega^{\tau^*}(\bm{k};\bm{\xi})} \bigr|
        \lesssim 
        \begin{cases}
            \epsilon |\bm{\xi}|^2, & \alpha = \beta
            ;\\
            \epsilon \lambda^{-2} (|\bm{k}|^2 + |\bm{\xi}|^2), & \alpha < \beta.
        \end{cases}\qedhere
    \end{equation*}
\end{proof}

\begin{proposition}
    \label{prop:E-infty-upper-bound}
    Let $\cp \in \mathscr{K}^\reg_n$ with $n \ge 1$, then for all $m =(m_1,m_2), \ell = (\ell_1,\ell_2) \in \mathbb{N}^d \times \mathbb{N}^d$, and uniformly for all $(t,k,x,\zeta) \in [0,1] \times \mathbb{R}^{3d}$, we have the estimate
    \begin{equation}
        \label{eq:E-infty-upper-bound}
        \int |k^{m_1} \zeta^{m_2} \partial_k^{\ell_1} \partial_\zeta^{\ell_2} \mathcal{E}^{\cp,\infty}_{t,k}(x,\zeta)| \diff \zeta \lesssim \tilde{C}^n t^n \langle t \rangle^{|\ell|} n^{|m|+|\ell|} \|\widehat{\phi}\|_{W^{|\ell_2|,1}_{|m_2|+|\ell_1|}W^{|\ell_1|+d,\infty}_{|m_1|+d/2}}^{4n+2}.
    \end{equation}
\end{proposition}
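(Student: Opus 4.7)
The plan is to adapt the strategy of Proposition~\ref{prop:N-infty-hom-est}, starting from the explicit representation furnished by Lemmas~\ref{lem:E-N-infty-expression-compact} and~\ref{lem:T-op-transport-id}:
\begin{equation*}
\mathcal{E}^{\cp,\infty}_{t,k}(x,\zeta) = \int_{\mathscr{D}^\cp_k} \bm{\delta}(\mho^\cp(\bm{k})) \int_{\mathscr{O}_t(\mathscr{I}_\cp)} \int_{\mathscr{D}^\cp_\zeta} \prod_{\mathfrak{p}\in\wp} \mathcal{V}[\phi]\bigl(\bm{k}^\flat_{\mathfrak{p}}, x+\bm{1}_{\alpha=\beta}Z^\cp_{\mathfrak{p}}(\bm{\mu},\bm{k}),\bm{\zeta}^\flat_{\mathfrak{p}}\bigr) \diff\bm{\zeta} \diff\bm{\mu} \diff\bm{k}.
\end{equation*}
For the $\zeta$-derivatives I parameterize the constraint $\sum_\mathfrak{p}\iota_\mathfrak{p}\bm{\zeta}^\flat_\mathfrak{p}=\zeta$ defining $\mathscr{D}^\cp_\zeta$ by fixing a pair $\mathfrak{p}_0\in\wp$ and letting $\bm{\zeta}^\flat_{\mathfrak{p}_0}$ depend on the others, so that $\partial_\zeta=\iota_{\mathfrak{p}_0}\partial_{\bm{\zeta}^\flat_{\mathfrak{p}_0}}$ and $\partial_\zeta^{\ell_2}$ produces $|\ell_2|$ derivatives in the third argument of the single factor $\mathcal{V}[\phi](\bm{k}^\flat_{\mathfrak{p}_0},\cdot,\bm{\zeta}^\flat_{\mathfrak{p}_0})$. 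For the $k$-derivatives, the change of variables $\Lambda^\cp$ of Lemma~\ref{lem:tree-deco-change-var} makes $k$ enter only as a uniform translation of $\bm{k}$, leaving $\bm{\delta}(\mho^\cp(\bm{k}))$ invariant; Leibniz then distributes $\partial_k^{\ell_1}$ among the $\wp$-factors and, within each factor, between the first argument $\bm{k}^\flat_\mathfrak{p}$ and the middle argument via the transport coefficient $Z^\cp_\mathfrak{p}(\bm{\mu},\bm{k})$ which, by Lemma~\ref{lem:T-op-transport-id}, is linear in $\bm{k}$ with coefficients bounded by $\|\bm{\mu}\|_\infty\le t$. Each derivative passing through $Z^\cp_\mathfrak{p}$ thus contributes at most $\mathcal{O}(n\langle t\rangle)$, for an aggregate bound $\lesssim n^{|\ell_1|}\langle t\rangle^{|\ell_1|}$.

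Next I integrate in $\zeta$ and apply a partition of unity. Adjoining $\bm{\zeta}^\flat_{\mathfrak{p}_0}$ to the free variables relaxes the constraint fully, so that the $\zeta$-integral becomes an integral over $\bm{\zeta}^\flat\in(\mathbb{R}^d)^\wp$ with $\zeta=\sum_\mathfrak{p}\iota_\mathfrak{p}\bm{\zeta}^\flat_\mathfrak{p}$; the triangle inequality $|\zeta|^{|m_2|}\lesssim n^{|m_2|}\sum_\mathfrak{p}|\bm{\zeta}^\flat_\mathfrak{p}|^{|m_2|}$ contributes a weight $|m_2|$ in the third argument of one $\mathcal{V}[\phi]$ factor. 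I then introduce the partition of unity $\chi_\gamma(\bm{k})=\prod_\mathfrak{p}\chi_{\gamma_\mathfrak{p}}(\bm{k}^\flat_\mathfrak{p})$ on $\bm{k}^\flat$-space exactly as in Proposition~\ref{prop:N-infty-hom-est}, and apply Proposition~\ref{prop:gauss-int-upper-bound} to bound each piece. The $\bm{\mu}$-integral over the simplex $\mathscr{O}_t(\mathscr{I}_\cp)$, of volume $t^n/n!$, supplies the $t^n$ factor, and Cauchy--Schwarz together with Plancherel convert each Fourier $L^1$-norm into a Sobolev norm with $d$ extra derivatives and $d/2$ extra weight on $\bm{k}^\flat_\mathfrak{p}$. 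The $\gamma$-summation converges by $\sum_{z\in\mathbb{Z}^d}\langle z\rangle^{-d}<\infty$.

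The final step is the conversion of the resulting $\mathcal{V}[\phi]$-norms to $\widehat{\phi}$-norms via the Fourier identity $\widehat{\mathcal{V}}[\phi](k,\eta,\zeta)=\widehat{\phi}(\zeta+\eta/2,k)\overline{\widehat{\phi}(\zeta-\eta/2,k)}$: weights and derivatives in the third argument $\zeta$ of $\mathcal{V}[\phi]$ become $\xi$-weights and $\xi$-derivatives on $\widehat{\phi}$, while derivatives in the middle argument $x$ become $\eta$-weights, hence $\xi$-weights. The main technical obstacle lies precisely in this bookkeeping: the $+|\ell_1|$ appearing in the $\xi$-weight of the first factor $W^{|\ell_2|,1}_{|m_2|+|\ell_1|}$ is subtle and comes exactly from those $\partial_k^{\ell_1}$ derivatives that pass through the transport argument $x+\bm{1}_{\alpha=\beta}Z^\cp_\mathfrak{p}$, each of which acts on the middle argument of $\mathcal{V}[\phi]$ and so, after Fourier transform, contributes one unit of $\xi$-weight on $\widehat{\phi}$. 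The $|\ell_1|+d$ derivatives and $|m_1|+d/2$ weight in the second factor $W^{|\ell_1|+d,\infty}_{|m_1|+d/2}$ then arise as in Proposition~\ref{prop:N-infty-hom-est}: $|\ell_1|$ derivatives from $\partial_k^{\ell_1}$ acting on $\bm{k}^\flat_\mathfrak{p}$, plus the $d$ derivatives and $d/2$ weight from Sobolev embedding and the partition-of-unity summability.
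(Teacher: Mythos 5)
Your proposal is correct and follows essentially the same route as the paper's proof: the representation from Lemmas~\ref{lem:E-N-infty-expression-compact} and~\ref{lem:T-op-transport-id}, the partition of unity $(\chi_\gamma)$ in the $\bm{k}^\flat$-variables, Proposition~\ref{prop:gauss-int-upper-bound} on each piece, the volume of $\mathscr{O}_t(\mathscr{I}_\cp)$ for the $t^n$ factor, and Cauchy--Schwarz/Plancherel together with $\widehat{\mathcal{V}}[\phi](k,\eta,\zeta)=\widehat{\phi}(\zeta+\eta/2,k)\overline{\widehat{\phi}(\zeta-\eta/2,k)}$ to land on the stated norm, with summability in $\gamma$ via $\sum_{z}\langle z\rangle^{-d}$. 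The only divergence is bookkeeping for the transport factor: the paper works on the Fourier side in $x$ and rewrites $\bm{\eta}^\flat\cdot Z^\cp(\bm{\mu},\bm{k})=\bm{k}^\flat\cdot X^\cp(\bm{\mu},\bm{\eta})$ as a frequency shift (whence $\langle t\rangle^{|\ell|}$ and the $+|\ell_1|$ weight), whereas you differentiate the transport argument directly in physical space -- a harmless, in fact conservative, variant (note $Z^\cp_{\mathfrak{p}}(\bm{\mu},\cdot)$ is built from differences $\bm{k}^\flat_{\mathfrak{B}^r}-\bm{k}^\flat_{\mathfrak{c}}$ and is thus annihilated by the uniform shift by $k$), which still yields the claimed estimate.
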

\begin{proof}
    Adopting the notations in the proof of Proposition~\ref{prop:N-infty-hom-est}, we let $(\chi_\gamma)_{\gamma \in (\mathbb{Z}^d)^\wp}$ be the partition of unity of $(\mathbb{R}^d)^\wp$, and let $\varrho^1_k$ and $\varrho^2_\zeta$ denote respectively the translation operator with respect to the variables $\bm{k}$ and $\bm{\zeta}$, in the direction of $k$ and $\zeta$.
    The we define the differential operators $\mathcal{D}_k^{\ell_1} = (\varrho^1_k)^{-1} \partial_k^{\ell_1} \varrho^1_k $ and $\mathcal{D}_\zeta^{\ell_2} = (\varrho^2_\zeta)^{-1} \partial_k^{\ell_2} \varrho^2_\zeta $.
    Decompose
    \begin{equation*}
        \partial_k^{\ell_1} \partial_\zeta^{\ell_2} \mathcal{E}^{\cp,\infty}_{t,k}(x,\zeta)
        = \sum_{\gamma \in (\mathbb{Z}^d)^\wp} I_\gamma(t,k,x,\zeta),
    \end{equation*}
    where, by Lemma~\ref{lem:E-N-infty-expression-compact}, we have
    \begin{equation*}
        I_\gamma(t,k,x,\zeta) = \int_{\mathscr{D}^\cp} \bm{\delta}(\mho^\cp(\bm{k})) \mathcal{D}_k^{\ell_1} \mathcal{D}_\zeta^{\ell_2} \chi_\gamma \mathcal{U}^\cp_{t,x,\zeta} \phi(\bm{k}) \diff \bm{k}.
    \end{equation*}
    Let $\widehat{I}_\gamma(t,k,\eta,\zeta) = \mathcal{F}_{x\to\eta} I_\gamma(t,k,x,\zeta)$ and let $\widehat{\mathcal{U}}^\cp_{t,\eta,\zeta} = \mathcal{F}_{x\to\eta} \mathcal{U}^\cp_{t,x,\zeta}$.
    By~\eqref{eq:T-op-transport-id}, we have
    \begin{equation*}
        (\chi_\gamma \widehat{\mathcal{U}}^\cp_{t,\eta,\zeta}\phi) (\bm{k})
        = \int_{\mathscr{O}_t(\mathscr{I}_\cp)} \Bigl( \iint_{\mathscr{C}^\cp_\eta \times \mathscr{D}^\cp_\zeta} e^{\bm{1}_{\alpha=\beta}2\pi i \bm{\eta}^\flat|_\wp \cdot Z^\cp(\bm{\mu},\bm{k})} \prod_{\mathfrak{p} \in \wp} \widehat{\mathcal{V}}[\chi_{\gamma_{\mathfrak{p}}}\phi](\bm{k}^\flat_\mathfrak{p},\bm{\eta}^\flat_{\mathfrak{p}},\bm{\zeta}^\flat_{\mathfrak{p}}) \diff \bm{\eta} \diff \bm{\zeta} \Bigr) \diff \bm{\mu}.
    \end{equation*}
    Note that $Z^\cp(\bm{\mu},\bm{k})$ is linear in $\bm{k}$, therefore, we can write $\bm{\eta}^\flat|_\wp \cdot Z^\cp(\bm{\mu},\bm{k}) = \bm{k}^\flat|_\wp \cdot X^\cp(\bm{\mu},\bm{\eta})$ for some bilinear map~$V$. Therefore we have the identity
    \begin{align*}
        \mathcal{F}_\wp \pi_\cp^* & (\chi_\gamma \widehat{\mathcal{U}}^\cp_{t,\eta,\zeta}\phi)|_{(\mathbb{R}^d)^\wp}(\widehat{\bm{k}}+X^\cp(\bm{\mu},\bm{\eta})) \\
        & = \int_{\mathscr{O}_t(\mathscr{I}_\cp)} \Bigl\{ \iint_{\mathscr{C}^\cp_\eta \times \mathscr{D}^\cp_\zeta}  \mathcal{F}_\wp \Bigl( \prod_{\mathfrak{p} \in \wp}  \widehat{\mathcal{V}}[\chi_{\gamma_{\mathfrak{p}}}\phi](\bm{k}^\flat_\mathfrak{p},\bm{\eta}^\flat_{\mathfrak{p}},\bm{\zeta}^\flat_{\mathfrak{p}}) \Bigr)\bigl(\widehat{\bm{k}}\bigr) \diff \bm{\eta}\diff\bm{\zeta} \Bigr\} \diff \bm{\mu}.
    \end{align*}
    Following the proof of Proposition~\ref{prop:N-infty-hom-est}, using Proposition~\ref{prop:gauss-int-upper-bound}, we obtain
    \begin{align*}
        \int & |k^{m_1} \zeta^{m_2} I_\gamma(t,k,x,\zeta)| \diff \zeta
        \le \iint |k^{m_1} \zeta^{m_2} \widehat{I}_\gamma(t,k,\eta,\zeta)| \diff \zeta \diff \eta \\
        & \lesssim C^n \iint \bigl\| \mathcal{F}_\wp\bigl\{ \zeta^{m_2}  \pi_\cp^* (\bm{k}_\cp^{m_1} \mathcal{D}_k^{\ell_1} \mathcal{D}_{\zeta}^{\ell_2} \chi_\gamma \widehat{\mathcal{U}}^\cp_{t,\eta,\zeta}\phi)|_{(\mathbb{R}^d)^\wp} \bigr\} \bigr\|_{L^1} \diff \eta \diff \zeta \\
        & \lesssim C^n t^n \iint_{((\mathbb{R}^d)^\wp)^2} \Bigl\| \prod_{\mathfrak{p} \in \wp} |\widehat{\bm{z}}_{\mathfrak{p}}|^{|\ell_1|}  \mathcal{F}_\wp\Bigl\{ \bm{k}_\cp^{m_1} \bm{\zeta}_\cp^{m_2} \mathcal{D}_{\zeta}^{\ell_2} \prod_{\mathfrak{p} \in \wp} \widehat{\mathcal{V}}[\chi_{\gamma_{\mathfrak{p}}}\phi](\widehat{\bm{k}}_{\mathfrak{p}},\bm{\eta}^\flat_{\mathfrak{p}},\bm{\zeta}^\flat_{\mathfrak{p}}) \Bigr\} \Bigr\|_{L^1} \diff \bm{\eta}^\flat \diff \bm{\zeta}^\flat \\
        & \lesssim (C')^n t^n \langle t \rangle^{|\ell|} n^{|m|+|\ell|} \prod_{\mathfrak{p} \in \wp} \|\mathcal{F}\widehat{\mathcal{V}}[\chi_{\gamma_{\mathfrak{p}}}\phi]\|_{W^{|m_1|,1}_{|\ell_1|} L^1_{|\ell_1|} W^{|\ell_2|,1}_{|m_2|}} \\
        & \lesssim(C'')^n t^n \langle t \rangle^{|\ell|} n^{|m|+|\ell|} \Bigl( \prod_{\mathfrak{p} \in \wp} \|\chi_{\gamma_{\mathfrak{p}}}\|_{W^{|\ell_1|+d,2}_{-d}} \Bigr) \|\widehat{\phi}\|_{W^{|\ell_2|,1}_{|m_2|+|\ell_1|}W^{|\ell_1|+d,\infty}_{|m_1|+d/2}}^{4n+2},
    \end{align*}
    where $\widehat{\bm{z}}_{\mathfrak{p}} =\widehat{\bm{k}}^\flat_{\mathfrak{p}} - X^\cp_{\mathfrak{p}}(\bm{\mu},\bm{\eta})$
    We conclude~\eqref{eq:E-infty-upper-bound} by summing up in $\gamma$.
\end{proof}

\subsection{Wave kinetic equations}

Now we derive the wave kinetic equation WK when $\beta < \infty$ as well as the second microlocal version WK-2 and prove the convergence results in Theorems~\ref{thm:hom-inhom} and~\ref{thm:inhom-2nd}.

\subsubsection{Derivation of the wave kinetic equations}

By Proposition~\ref{prop:E-infty-upper-bound} and Lemma~\ref{lem:tree-regcp-counting}, if $t \in [0,T]$ where $T > 0$ is sufficiently small, then for all $k,x,\zeta \in \mathbb{R}^d$, we have finite sums
\begin{equation*}
    W_{k}(t,x) = \sum_{n \ge 0} \mathcal{N}^{n,\infty}_{t,k}(x-\bm{1}_{\alpha=\beta}tk),
    \quad
    E_{k,\zeta}(t,x) = \sum_{n \ge 0} \mathcal{E}^{n,\infty}_{t,k}(x-\bm{1}_{\alpha=\beta}tk,\zeta).
\end{equation*}
We will show that $(W_k)_{k \in \mathbb{R}^d}$ and $(E_{k,\zeta})_{k,\zeta \in \mathbb{R}^d}$ respectively solves the wave kinetic equations WK and WK-2, with initial data $W_k(0,x) = |\phi(x,k)|^2$ and $E_{k,\zeta}(0,x) = \mathcal{W}[\phi(\cdot,k)](x,\zeta)$, by establishing the Duhamel's formulas stated in Proposition~\ref{prop:WK-duhamel-inhom}.
Once it is proven, the estimates~\eqref{eq:A-W-convergence} and~\eqref{eq:A-E-convergence} follow from Proposition~\ref{prop:E-infty-upper-bound} and~\eqref{eq:u-wp-expansion}.
In fact, the proof of~\eqref{eq:u-wp-expansion} yields the identity
\begin{equation*}
    \mathbb{E}\mathcal{W}[A^{2n+1}_k(t)]\Bigl(x,\frac{\xi-k}{\epsilon}\Bigr) = \mathcal{E}^n_{t,k}\Bigl(x-\epsilon t \xi,\frac{\xi-k}{\epsilon}\Bigr).
\end{equation*}

\begin{proposition}
    \label{prop:WK-duhamel-inhom}
    If $T > 0$ is sufficiently small, then for all $t \in [0,T]$ and $k,x,\zeta \in \mathbb{R}^d$, we have
    \begin{align*}
        W_k(t,x) & = |\phi(x,k)|^2 + 2 \int_0^t \int_{\mathscr{D}_k} \bm{\delta}(\Omega(\bm{k})) \prod_{1 \le j \le 3} W_{k_j}(s,x-\bm{1}_{\alpha=\beta}(t-s)k) \diff \bm{k} \diff s,\\
        E_{k,\zeta}(t,x) & = \mathcal{V}[\phi](k,x,\zeta) + 2 \int_0^t \iint_{\mathscr{D}_k\times \mathscr{D}_\zeta} \bm{\delta}(\Omega(\bm{k})) \prod_{1 \le j \le 3} E_{k_j,\zeta_j}(s,x-\bm{1}_{\alpha=\beta}(t-s)k) \diff \bm{k} \diff \bm{\zeta} \diff s.
    \end{align*}
\end{proposition}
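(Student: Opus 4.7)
The plan is to obtain both Duhamel identities by summing the inductive formulas of Lemma~\ref{lem:E-N-infty-induction} over the chaos order $n$, with the key algebraic observation being that the shift $s(k-k_j)$ arising inside the recursion combines with the ambient characteristic shift $-\bm{1}_{\alpha=\beta}tk$ built into the definitions of $W_k$ and $E_{k,\zeta}$ via the identity $\bm{1}_{\alpha=\beta}\bigl(s(k-k_j) - tk\bigr) = -\bm{1}_{\alpha=\beta}(t-s)k - \bm{1}_{\alpha=\beta}sk_j$. The first factor is the Duhamel shift along the characteristic through $(t,x)$ at speed $k$; the second is exactly the shift that defines $W_{k_j}(s,\cdot)$ and $E_{k_j,\zeta_j}(s,\cdot)$.

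First I would isolate the $n=0$ contribution. For the trivial couple $\cp \in \mathscr{K}_0$, we have $\mathfrak{B}^\cp = \mathscr{I}_\cp = \emptyset$ and $\wp$ reduces to the single pair joining the two roots, so Lemma~\ref{lem:E-N-infty-expression-compact} together with identity~\eqref{eq:T-op-transport-id} gives $\mathcal{E}^{0,\infty}_{t,k}(x,\zeta) = \mathcal{V}[\phi](k,x,\zeta)$ and $\mathcal{N}^{0,\infty}_{t,k}(x) = |\phi(x,k)|^2$, both independent of $t$. These, evaluated at the shifted argument $x-\bm{1}_{\alpha=\beta}tk$, provide the forcing terms in the two Duhamel identities.

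For $n \ge 1$, substituting $x \mapsto x - \bm{1}_{\alpha=\beta}tk$ into the inductive formula of Lemma~\ref{lem:E-N-infty-induction} and using the algebraic identity above yields
\begin{equation*}
    \mathcal{N}^{n,\infty}_{t,k}(x-\bm{1}_{\alpha=\beta}tk)
    = 2\!\!\!\sum_{n_1+n_2+n_3=n-1}\!\!\!\int_0^t\!\!\int_{\mathscr{D}_k}\!\bm{\delta}(\Omega(\bm{k}))\prod_{j=1}^{3}\mathcal{N}^{n_j,\infty}_{s,k_j}\bigl((x-\bm{1}_{\alpha=\beta}(t-s)k) - \bm{1}_{\alpha=\beta}sk_j\bigr)\diff \bm{k} \diff s,
\end{equation*}
where each factor in the product is precisely the $n_j$-th summand of the series defining $W_{k_j}$ at $(s, x - \bm{1}_{\alpha=\beta}(t-s)k)$. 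Summing over $n \ge 1$, reindexing the Cauchy sum as a free triple sum over $n_1,n_2,n_3 \ge 0$, and interchanging the summation with the time and $\bm{k}$-integrals, the product of series factorizes as $\prod_{j=1}^{3} W_{k_j}\bigl(s, x - \bm{1}_{\alpha=\beta}(t-s)k\bigr)$, giving the first identity. The $E_{k,\zeta}$ identity follows identically, with the additional $\mathscr{D}_\zeta$-convolution supplied directly by the corresponding line of Lemma~\ref{lem:E-N-infty-induction}.

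The main obstacle is justifying Fubini and the factorization of the triple Cauchy series. This requires absolute convergence uniformly in the parameters, which I would extract from Proposition~\ref{prop:E-infty-upper-bound}: the bound $\sup_{x,k}\int|\mathcal{E}^{\cp,\infty}_{t,k}(x,\zeta)|\,\diff \zeta \lesssim \tilde{C}^n t^n$ for each $\cp \in \mathscr{K}^\reg_n$, combined with the cardinality estimate $\card \mathscr{K}^\reg_n \lesssim C^n$ of Lemma~\ref{lem:tree-regcp-counting}, yields a summable geometric series of cubic products (and analogously for $\mathcal{N}^{n,\infty}_{t,k}$ in $L^\infty$). Taking $T > 0$ small enough that this geometric series converges with margin---and invoking the weighted $L^1$ estimates of Proposition~\ref{prop:E-infty-upper-bound} to control the triple product against the codimension-one measure $\bm{\delta}(\Omega(\bm{k}))$ on $\mathscr{D}_k$---legitimizes every interchange on $[0,T]$ and completes the derivation.
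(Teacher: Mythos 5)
Your proposal is correct and follows essentially the paper's own route: the paper proves this proposition exactly as Proposition~\ref{prop:N-duhamel-hom}, by summing the inductive formulas of Lemma~\ref{lem:E-N-infty-induction} over the chaos order and passing to the limit $n\to\infty$ using the absolute convergence supplied by Proposition~\ref{prop:E-infty-upper-bound} together with the counting bound of Lemma~\ref{lem:tree-regcp-counting}. Your explicit characteristic-shift bookkeeping, $\bm{1}_{\alpha=\beta}(s(k-k_j)-tk)=-\bm{1}_{\alpha=\beta}(t-s)k-\bm{1}_{\alpha=\beta}sk_j$, is precisely the detail the paper leaves implicit; note that it produces the source terms evaluated at $x-\bm{1}_{\alpha=\beta}tk$ (consistent with the definitions of $W_k$ and $E_{k,\zeta}$), which is what the same method yields in the paper as well.
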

\begin{proof}
    The proof is the same as that of Proposition~\ref{prop:N-duhamel-hom}.
    In fact, by Proposition~\ref{prop:E-infty-upper-bound}, we can pass $n \to \infty$ in the inductive formulas in Lemma~\ref{lem:E-N-infty-induction} and obtain these Duhamel's formulas.
\end{proof}

\subsubsection{Microlocalization}

Let $\chi \in C_c^\infty(\mathbb{R}^{2d})$.
By~\eqref{eq:E-spec-E-id}, we write
\begin{align*}
    \iint \chi(x,\xi) & \mathbb{E} \mathcal{W}\bigl[ u^{2n+1}\bigl(t/\lambda^2\bigr)\bigr] \bigl(x/\epsilon,\xi\bigr) \diff x \diff \xi \\
    & = \frac{1}{L^d} \sum_{k \in \mathbb{Z}^d_L} \iint \chi\Bigl(x + \frac{\epsilon}{\lambda^2} t(k+\epsilon\zeta),k+\epsilon\zeta\Bigr) \mathcal{E}^n_{t/\lambda^2,k}(x,\zeta) \diff x \diff\zeta \\
    & = \frac{1}{L^d} \sum_{k \in \mathbb{Z}^d_L} \iint \chi(x +  tk \bm{1}_{\alpha=\beta},k) \mathcal{E}^{n,\infty}_{t,k}(x,\zeta) \diff x \diff \zeta + \frac{1}{L^d} \sum_{k \in \mathbb{Z}^d} \sum_{1\le j \le 3} \mathcal{R}^j_L(t,k).
\end{align*}
The remainders have the explicit expressions, where we write $\xi = k + \epsilon\zeta$ for short:
\begin{align*}
    \mathcal{R}^1_L(t,k)
    & =  \sum_{\cp \in \mathscr{K}_n \backslash \mathscr{K}^\reg_n}  \iint \chi\Bigl(x + \frac{\epsilon}{\lambda^2} t\xi,\xi\Bigr) \mathcal{E}^{\cp}_{t/\lambda^2,k}(x,\zeta) \diff x \diff \zeta,  \\
    \mathcal{R}^2_L(t,k) & = \sum_{\cp \in \mathscr{K}^\reg_n}  \iint \chi\Bigl(x + \frac{\epsilon}{\lambda^2} t\xi,\xi\Bigr) \bigl( \mathcal{E}^{\cp}_{t/\lambda^2,k}(x,\zeta) - \mathcal{E}^{\cp,\infty}_{t,k}(x,\zeta) \bigr)\diff x \diff \zeta, \\
    \mathcal{R}^3_L(t,k) 
    &= \sum_{\cp \in \mathscr{K}^\reg_n}  \iint \Bigl(\chi\Bigl(x + \frac{\epsilon}{\lambda^2} t\xi,\xi\Bigr) - \chi(x + \bm{1}_{\alpha=\beta} tk,k)\Bigr)\mathcal{E}^{\cp,\infty}_{t,k}(x,\zeta)\diff x \diff \zeta.
\end{align*}
By Propositions~\ref{prop:E-upper-bound-inhom}, \ref{prop:E-approx-infty}, and~\ref{prop:E-infty-upper-bound}, we have
\begin{equation*}
    \lim_{L \to \infty} \sup_{t \in [0,T]} \frac{1}{L^d} \sum_{k \in \mathbb{Z}^d} \sum_{1\le j \le 3} |\mathcal{R}^j_L(t,k)| = 0.
\end{equation*}
Finally by Proposition~\ref{prop:E-infty-upper-bound}, we can let $L \to \infty$ in the first term and make it converge to an integral, which establishes the limit~\eqref{eq:A-W-convergence}.

\subsubsection{Second microlocalization}

Let $\chi \in C_c^\infty(\mathbb{R}^{3d})$, for all $(t,k) \in \mathbb{R} \times \mathbb{Z}^d_L$, set:
\begin{equation*}
    \mathcal{G}^n_k(t) = \epsilon^d \iint \chi\Bigl(\epsilon x,\xi,\frac{\xi-k}{\epsilon}\Bigr) \mathcal{E}^n_{t/\lambda^2,k}(t/\lambda^2,x,\xi) \diff x \diff \xi.
\end{equation*}
By~\eqref{eq:E-spec-E-id}, when $\beta > 1$ and if $L > 0$ is sufficiently large, then for all $k \in \mathbb{Z}^d_L$, we have
\begin{align*}
    L^d \mathcal{G}^n_k(t) 
    & = \iint \chi\Bigl(x + \frac{\epsilon}{\lambda^2} t(k+\epsilon\zeta),k+\epsilon\zeta,\zeta\Bigr) \mathcal{E}^n_{t/\lambda^2,k}(x,\zeta) \diff x \diff\zeta \\
    & = \iint \chi(x +  tk \bm{1}_{\alpha=\beta},k,\zeta) \mathcal{E}^{n,\infty}_{t,k}(x,\zeta) \diff x \diff \zeta + \sum_{1\le j \le 3} \mathcal{R}^j_L(t,k).
\end{align*}
The remainders have the explicit expressions, where we write $\xi = k + \epsilon\zeta$ for short:
\begin{align*}
    \mathcal{R}^1_L(t,k)
    & =  \sum_{\cp \in \mathscr{K}_n \backslash \mathscr{K}^\reg_n}  \iint \chi\Bigl(x + \frac{\epsilon}{\lambda^2} t\xi,\xi,\zeta\Bigr) \mathcal{E}^{\cp}_{t/\lambda^2,k}(x,\zeta) \diff x \diff \zeta,  \\
    \mathcal{R}^2_L(t,k) & = \sum_{\cp \in \mathscr{K}^\reg_n}  \iint \chi\Bigl(x + \frac{\epsilon}{\lambda^2} t\xi,\xi,\zeta\Bigr) \bigl( \mathcal{E}^{\cp}_{t/\lambda^2,k}(x,\zeta) - \mathcal{E}^{\cp,\infty}_{t,k}(x,\zeta) \bigr)\diff x \diff \zeta, \\
    \mathcal{R}^3_L(t,k) 
    &= \sum_{\cp \in \mathscr{K}^\reg_n}  \iint \Bigl\{\chi\Bigl(x + \frac{\epsilon}{\lambda^2} t\xi,\xi,\zeta\Bigr) - \chi\Bigl(x + \bm{1}_{\alpha=\beta} tk,k,\zeta\Bigr)\Bigr\}\mathcal{E}^{\cp,\infty}_{t,k}(x,\zeta)\diff x \diff \zeta.
\end{align*}
By Propositions~\ref{prop:E-upper-bound-inhom}, \ref{prop:E-approx-infty}, and~\ref{prop:E-infty-upper-bound}, for all $j \in \{1,2,3\}$ we have
\begin{equation*}
    \lim_{L \to \infty} \frac{1}{L^d} \sup_{t \in [0,T]}\sup_{k \in \mathbb{Z}^d} |\mathcal{R}^j_L(t,k)| = 0,
\end{equation*}
which establishes the limits~\eqref{eq:u-second-ml} by summing up in~$n$.

\printindex

\printbibliography

\end{document}